\newcommand{\mA}{\mathcal{A}}
\newcommand{\ba}{\bf{a}}
\newcommand{\dist}{\operatorname{dist}}
\renewcommand{\div}{\mbox{div}\,}
\newcommand{\Tr}{\operatorname{tr}}
\newcommand{\abs}[1]{|{#1}|}
\newtheorem{definition}{Definition}
\newcommand{\boundellipse}[3]
{(#1) ellipse (#2 and #3)
}
\newcommand{\norm}[1]{\|{#1}\|}
\newtheorem{theorem}{Theorem}[section]
\newtheorem{lemma}[theorem]{Lemma}
\newcommand{\suchthat}{\;\ifnum\currentgrouptype=16 \middle\fi|\;}
\numberwithin{equation}{section}
\newtheorem{thm}{Theorem}
\numberwithin{thm}{section}
\newaliascnt{lemma}{thm}
\newtheorem{lem}[lemma]{Lemma}
\newaliascnt{proposition}{thm}
\newtheorem{prop}[proposition]{Proposition}
\newaliascnt{corollary}{thm}
\newtheorem{corollary}[corollary]{Corollary}
\newaliascnt{remark}{thm}
\newtheorem{remark}[remark]{Remark}
\def\eR{{\mathbb{R} }}
\begin{document}

	\include{formatAndDefs}
	
	\title[Fluid-structure interaction with heat]{Thermal effects in fluid structure interactions}

	\author[ S. Mitra, \& S. Schwarzacher]{
		\small
		Sourav Mitra$^{\dagger}$, and 
		Sebastian Schwarzacher$^\ddagger$$^*$
	}
	\address{
		$^\dagger$Department of Mathematics, 
		Indian Institute of Technology Indore, \\ 
		Simrol, Indore, 453552, Madhya Pradesh, India
	}
	
	\email{souravmitra@iiti.ac.in}
	
	\address{
		$^\ddagger$ Department of Mathematical Analysis, Faculty of Mathematics and Physics, Charles University, Sokolovska 83, 186 75 Praha 8, Czech Republic
	}
	
	\address{
		$^*$ Department of Mathematics, Analysis and Partial Differential Equations, Uppsala University, Lagerhyddsvagen 1, 752 37 Uppsala, Sweden
	}
	
	\email{schwarz@karlin.mff.cuni.cz}
	
	\date{\today}
	
	\begin{abstract}
		In this article we consider two different heat conducting fluids each modeled by the incompressible Navier-Stokes-Fourier system separated by a non-linear elastic Koiter shell. The motion of the shell changes the domain of definition of the two separated fluids. For this setting we show the existence of a weak solution. The heat capacity of the shell is given energetically. It allows to consider transmission laws ranging from insulation to superconductivity.
		We follow a variational approach for fluid-structure interactions. To include temperature a novel two step minimization scheme is used to produce an approximation. The weak solutions are energetically closed and include a strictly positive temperature. 
	\end{abstract}
	
	\maketitle
	\noindent{\bf{Key words}.} Fluid-structure interaction, Navier-Stokes-Fourier fluids, heat exchange, elastic interface, Koiter shell, and minimizing movements.
	\smallskip\\
	\noindent{\bf{AMS subject classifications}.} 35Q30, 35R37, 35Q35, 74F10, 76D05.  
	\section{Introduction}
	In the present article we are interested about the global existence of weak solutions of a fluid structure interaction problem involving two Newtonian incompressible heat conducting fluids separated by a nonlinear elastic Koiter shell~\cite{CiarletIII}. The elastic shell involved allows heat transfer between the two fluids of various types ranging from superconductivity to insulation~\eqref{eq:heat}.
	The motion of the shell changes the domain of definition of the two fluids. For more details and a derivation, see Subsection~\ref{sec:Geomtry}.
	We show the global existence of weak solutions up to the point of a geometric singularity. This means either the Koiter energy degenerates or the structure undergoes a self intersection, or it touches the rigid boundary $\partial\Omega$. The two fluids involved are assumed to have different prescribed viscosity laws, which each depend on the respective temperatures. The temperature dependence of the viscocities is modeled by the celebrated  Vogel-Fulcher-Tammann equation~\eqref{viscosity}.

	\begin{figure}[h!]
		\centering
		\begin{tikzpicture}[scale=0.75]
			\draw \boundellipse{0,0}{6}{4};
			\coordinate  (O) at (0,0);
			\draw [line width=0pt, black]  plot[smooth, tension=.7] coordinates {(-4,2.5) (-3,3) (-2,2.8) (-0.8,2.5) (-0.5,1.5) (0.5,0) (0,-2)(-1.5,-2.5) (-4,-2) (-3.5,-0.5) (-5,1) (-4,2.5)};
			\draw (3,0)node [below right] {${\Omega^{1}(t)}$};
			\draw (-0.2,0)node [below right] {${\Omega^{2}(t)}$};
			\coordinate (A) at (-0.8,2.5);
			\draw (-2,0)node [] {$\Omega^{2}_{o}$};
			\coordinate (B) at (0,3.5);
			\draw [->,blue](A) -- (B) ;
			\draw (0,3.2)node [below right] {$\nu_{\eta}$};
			\draw (0.5,1)node [below right] {$\Sigma_{\eta}$};
			\draw \boundellipse{-2,0}{1.8}{3};
		\end{tikzpicture}
		\caption{Sketch of the domain $\Omega$ where $\Omega=\overline{{\Omega^{1}(t)}}\cup{\Omega^{2}(t)}$ and the boundary of ${\Omega^{2}(t)}$ is $\Sigma_{\eta}.$}
	\end{figure}
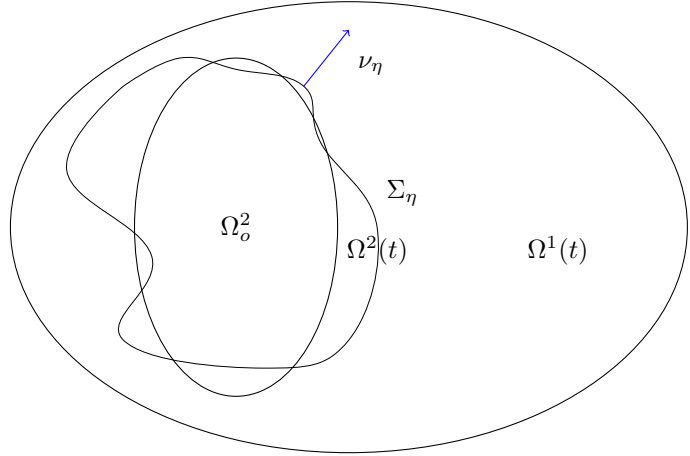 
	The geometry of the fluid domains is variable in time and depends on the deformation of the shell. In explicit we consider a domain $\Omega$ and two incompressible fluids confined in time dependent sub-domains ${\Omega^{1}(t)}$ and ${\Omega^{2}(t)}$ separated by an Koiter shell, $i.e.$
	\begin{equation}\label{Omegat12}
		\begin{array}{ll}
			&\Omega=\overline{{\Omega^{1}(t)}}\cup{\Omega^{2}(t)},\quad \partial{\Omega^{2}(t)}=\Sigma_{\eta},\\[2.mm] &\partial({\Omega^{1}(t)})=\partial\Omega\cup \Sigma_{\eta}.
		\end{array}
	\end{equation}  
	We denote by $\nu_{\eta}$ the unit outward normal to $\Sigma_{\eta}$, that is the time-changing surface representing the Koiter shell. For that we make the assumption that it moves in the normal direction of a reference configuration. 
	From a physical point of view this assumption is made to model the phenomenon where the fluid pressure is the dominant force acting on the structure in which case it is reasonable to assume that the shell is deforming in the normal direction to a reference configuration ($c.f.$ \cite{canic2020moving}).
	
	In order to state the equations some notation is required. We assume that $\Omega^{2}_{o}\Subset \Omega$ (as in Figure $1.$) such that $\dist(\partial\Omega^{2}_{o},\partial\Omega)>0$. Further we assume that $\partial\Omega^{2}_{o}$ (the boundary of $\Omega^{2}_{o}$) is parametrized by a $C^{4}$ injective mapping $\varphi:\Gamma\rightarrow \mathbb{R}^{3},$ where $\Gamma\subset \mathbb{R}^{2}.$ Now $\varphi(\Gamma)=\partial\Omega^{2}_{o}$ is a two dimensional surface, which corresponds to the flat middle surface of the closed shell. Let $\nu:\Gamma\to \mathbb{R}^3$ be the fixed outer normal of the reference surface $\partial\Omega^{2}_{o}$. Then the displacement becomes $\eta\nu$ where $\eta:\Gamma\rightarrow\mathbb{R}$ is the unknown function determining the shell motion.   
	Actually, as the surface is closed we can identify it with a torus inferring that displacement $\eta$ and the velocity $\partial\eta$ of the structure are space periodic.
	The time dependent fluid boundary $\Sigma_{\eta}$ can then be characterized by an injective mapping $\varphi_{\eta}$ such that for all pairs $x=(x_{1},x_{2})\in \Gamma,$ the pair of tangent vectors $\partial_{i}\varphi_\eta(x),$ $i=1,2,$ is linearly independent. More precisely, $\varphi_{\eta}$ is defined as follows
	\begin{equation}\label{varphieta}
		\varphi_{\eta}(t,x)=\varphi(x)+\eta(t,x)\nu(x) \text{ for } x\in \Gamma,
	\end{equation} 
	\begin{equation}\label{Sigmat}
		\Sigma_{\eta}(t)=\{\varphi_\eta(x,t): x\in\Gamma\}.
	\end{equation} 
	It is a well known result on the tubular neighborhood, see e.g. \cite[Section 10]{Lee} that 
	\begin{equation}\label{bjtparam}
		\text{there are }\,a_{\partial\Omega},b_{\partial\Omega},\,\, \mbox{such that for}\,\, \eta\in(a_{\partial\Omega},b_{\partial\Omega})\footnote{By $\eta\in(a_{\partial\Omega},b_{\partial\Omega})$ we mean that $\eta(x,t) \in(a_{\partial\Omega},b_{\partial\Omega})$ for $ x \in \Gamma$ and $t\in I$. We will use this slight abuse of notation through out since no confusion arises.} ,
		\varphi_{\eta}(t,\cdot)\,\,\mbox{ bijectively parametrizes }\,\,\Sigma_\eta(t). 
	\end{equation}
	For that we define 
	the tubular neighborhood of $\partial\Omega^{2}_{o}$ as
	\begin{equation}\label{safedis}
		\begin{array}{l}
			N^b_a=\{\varphi(x)+\nu(x)z; \,\,x\in\Gamma, z\in(a_{\partial\Omega},b_{\partial\Omega}),
		\end{array}
	\end{equation}
	where we assume that 
	\begin{equation}\label{tblrnbd}
		\displaystyle	\partial\Omega\cap N^{b}_{a}=\emptyset.
	\end{equation}
	%


	Contrary to the representation of the solid with respect to the reference configuration, the fluid velocities, $u_{i}$ for $i\in\{1,2\}$ will be written in the time varying domains ${\Omega^{i}(t)}$, $i\in\{1,2\}$. In a nutshell the solid is described in $\mathit{Lagrangian\,\, coordinates}$ whereas the fluids are represented in $\textit{Eulerian\,\, coordinates}.$
	
	We consider the following model: 
	\begin{align}
		&\displaystyle( \partial_{t}{ u_{i}}+ \mbox{div}({u_{i}}\otimes u_{i}))
		-\mbox{div}\,T_{i}=0\quad \displaystyle\mbox{in} \quad \bigcup\limits_{t\in(0,T)}{\Omega^{i}(t)}\times\{t\},\label{momentumbalanceheat1}\\[1.mm]
		&\displaystyle \mbox{div}\,u_{i}=0\quad \displaystyle\mbox{in} \quad \bigcup\limits_{t\in(0,T)}{\Omega^{i}(t)}\times\{t\},\label{divfreeheat1}\\[1.mm]
		&\displaystyle c_i\big(\partial_{t}\theta_{i}+\mbox{div}(\theta_{i} u_{i})\big)-k_i \Delta\theta_{i}= {\mu_{i}(\theta_{i})}|D u_{i}|^{2}\quad \displaystyle\mbox{in} \quad \bigcup\limits_{t\in(0,T)}{\Omega^{i}(t)}\times\{t\},\label{entropyequation1}\\[1.mm]
		&\displaystyle\partial_{tt}\eta+
		DK(\eta)
		=G\quad \displaystyle\mbox{on}\quad \Gamma\times(0,T),\label{plateeqheat1}\\[1.mm]
		&\displaystyle u_{i}\circ\varphi_{\eta}=\partial_{t}\eta\nu\quad \displaystyle\mbox{on}\quad \Gamma\times(0,T),\label{interfaceveheatl1}\\[1.mm]
		&\displaystyle u_{1}=0\quad\mbox{on}\quad \partial\Omega\times(0,T),\label{NVS1heat1}\\[1.mm]
		&\displaystyle k_{i}\partial_{\nu_{\eta}}(\theta_{i})=\lambda(\theta_{1}-\theta_{2})\quad\mbox{on}\quad \bigcup\limits_{t\in(0,T)}\Sigma_{\eta}\times\{t\},\label{transmissioncond}\\[1.mm]
		&\displaystyle \partial_{\nu}\theta_{1}=0\quad\mbox{on}\quad \partial\Omega\times(0,T),\\[1.mm]
		&\displaystyle (u_{i},\theta_{i},\eta,\partial_{t}\eta)(\cdot,0)=(u^{0}_{i},\theta^{0}_{i},\eta^{0},\eta^{0}_{1}),\label{initialcondheat1}
	\end{align}
	where $e_{i}$ denotes the internal energy equation of the $i-$th fluid and \eqref{entropyequation1} models the balance of internal energy. The stress tensor of the $i-$th fluid is denoted by $T_{i}$ and is defined as:
	\begin{equation}
		\begin{array}{l}
			T_{i}=T_{i}(u_{i},p_{i})=\mu_{i}(\theta_{i})D({ u_{i}})-p_{i}\mathbb{I}_{d},
		\end{array}
	\end{equation}
	where $$D(u_{i})=\frac{\nabla u_{i}+\nabla^{T} u_{i}}{2}$$
	denotes the symmetric part of the gradient and $\mathbb{I}_{d}$ is the $d\times d$ identity matrix. The viscosity $\mu_{i}(\theta_{i})$ of the $i-$th fluid is given by the celebrated \textit{Vogel-Fulcher-Tammann equation:}
	\begin{equation}\label{viscosity}
		\begin{array}{l}
			\displaystyle\mu_{i}(\theta_{i})=\mu^{i}_{0}\cdot e^{\frac{\beta^{i}}{(\theta_{i}-\gamma^{i})}},
		\end{array}
	\end{equation}
	where $\mu^{i}_{0}>0,$ $\beta^{i}>0$ and $\gamma^{i}>0$ are empirical material-dependent parameters and are constants for individual fluids. The constant $\lambda\geqslant0$ appearing in \eqref{transmissioncond} stands for the thermal conductivity of the elastic structure and the heat conductivity of the $i-$th fluid ${k}_{i}$ is a positive constant.
		The elastic plate is driven by the resultant force due to the two fluids:
		\begin{equation}\label{defG1}
			\begin{array}{l}
				G=-|\det(\nabla\varphi_{\eta})|\left[(-T_{1}+T_{2})\circ\varphi_{\eta}\right]\nu_{\eta}\cdot\nu,
			\end{array}
		\end{equation} 
		where $\nu_\eta$ is the normal of $\Sigma_\eta$.
		One recalls the assumption that the structure moves normal to the channel direction. By $K(\eta)$ we mean the non-linear Koiter energy penalizing stretching and bending of the deformation, which will be introduced in Section~\ref{GeometryKoi}. By $DK(\eta)$ we denote its Frechet derivative.
		
		In the present article we are interested in the analysis of the FSI problem (\eqref{momentumbalanceheat1}--\eqref{initialcondheat1} with all possible non negative values of $\lambda$ which correspond to the following situations respectively:
		\begin{equation}
			\label{eq:heat}
			\left\{ \begin{array}{ll}
				\lambda=0\,& \mbox{insulating material, i.e. no heat flux, }\,\,\partial_{\nu_{\eta}}(\theta_{i})=0\,\,\mbox{on}\,\,\bigcup\limits_{t\in(0,T)}\Sigma_{\eta}\times\{t\},\\[2.mm]
				\lambda=	\infty\,& \mbox{heat superconductor, i.e.
					identical temperatures}\\
				&\mbox{on both sides of the structure, }\,\,\theta_{1}=\theta_{2}\,\,\mbox{on}\,\bigcup\limits_{t\in(0,T)}\Sigma_{\eta}\times\{t\},\\[2.mm]
				\lambda \in (0,\infty) \,& \mbox{finite heat conductance of the elastic material}.
			\end{array}\right.
		\end{equation}
		The relation $k_{1}\partial_{\nu_{\eta}}\theta_{1}=k_{2}\partial_{\nu_{\eta}}\theta_{2}$ represents the fact that the structure do not store any energy and hence outward heat flux from one of the fluids equals the inward heat flux of the other.\\
		In relation to a more general notation we introduce the internal energy of the $i-$ th fluid $e_{i}=e_i(\theta)$  and the internal energy heat flux $q_{i}=q_{i}(\nabla\theta_{i})$ by 
		\begin{equation}\label{ei1}
			e_{i}(\theta_{i})=c_{i}\theta_{i}
			\text{ and }
			q_{i}=q_{i}(\nabla\theta_{i})=-{k}_{i}\nabla\theta_{i},
		\end{equation}
		where $c_i,k_i$ are given positive numbers.
		The term $\mu_{i}(\theta_{i})$ appearing in the right hand side of \eqref{entropyequation1} represents the contribution of the fluid dissipation which acts as a source term to the heat evolution of the fluid.
		Further the entropy of the $i-$th fluid is denoted by $s_{i}$ and admits of the following dependence on $\theta_{i}:$
		\begin{equation}\label{etai1}
			\begin{array}{l}
				\displaystyle s_{i}=c_{i}\ln\theta_{i}.
			\end{array}
		\end{equation}
		In view of the constitutive relations \eqref{ei1} and \eqref{etai1}, the internal energy balance equation \eqref{entropyequation1} furnishes  the following entropy balance:
		\begin{equation}\label{rephraseentropyinq1}
			\begin{array}{l}
				\displaystyle \partial_{t}s_{i}+\mbox{div}(s_{i}u_{i})-k_i\Delta s_i= \tilde{\mu}_i(s_i)|D u_{i}|^{2}+k_i|\nabla s_i |^{2}\quad \displaystyle\mbox{in} \quad \bigcup\limits_{t\in(0,T)}{\Omega^{i}(t)}\times\{t\},
			\end{array}
		\end{equation}
		where $\tilde{\mu}_i(s):=\frac{\mu_i( e^s)}{e^s}$.
		In the present article as a part of our existence result we will recover both the heat evolution \eqref{entropyequation1} and the entropy equation \eqref{rephraseentropyinq1} as inequalities. On the other hand the discretization strategy used in the present article allows us to obtain heat and entropy balance equalities until the penultimate discretization. This makes our strategy suitable for numerical implementations.
		\subsection{Definition of a bounded energy weak solution and main result} 
		First we introduce \textit{bounded energy weak solutions} concerning the system \eqref{momentumbalanceheat1}-\eqref{initialcondheat1}. 
		\begin{definition}\label{WSDef}
			The triplet $(u_{i},\theta_{i},\eta)$ ($i\in\{1,2\}$) is a suitable weak solution to the problem \eqref{momentumbalanceheat1}-\eqref{initialcondheat1} if
			\begin{equation}\label{regularity}
				\begin{array}{ll}
					&\displaystyle u_{i}\in L^{\infty}(0,T;L^{2}(\Omega^{i}))\cap L^{2}(0,T;W^{1,2}(\Omega^{i})),\,\,\mathrm{div}\,u_{i}=0\,\,\mbox{in}\,\,\Omega^{i},\\
					&\displaystyle\theta_{i}\in L^{\infty}(0,T,L^{1}(\Omega^{i}))\cap L^{q}(\Omega^{i}\times(0,T))\cap L^{s}(0,T;W^{1,s}(\Omega^{i})),\,\,\mbox{for all}\,\,(q,s)\in[1,\frac{5}{3})\times [1,\frac{5}{4}),\\
					&\displaystyle \eta \in L^{\infty}(0,T;W^{2,2}(\Gamma))\cap L^{2}(0,T;W^{2+\sigma^{*},2}(\Gamma)),\,\,\mbox{for}\,\, 0<\sigma^{*}<\frac{1}{2},\\
					&\displaystyle \partial_{t}\eta\in L^{\infty}(0,T,L^{2}(\Gamma))\cap L^{2}(0,T,W^{\sigma^{*},2}(\Gamma)),\,\,\mbox{for}\,\, 0<\sigma^{*}<\frac{1}{2},
				\end{array}
			\end{equation}
			and the following hold.\\
			(i) A decomposition of $\Omega$ of the form \eqref{Omegat12} with \eqref{varphieta}-\eqref{Sigmat} hold.\\[2.mm]
			(ii) {\textit{The momentum balance holds in the sense:}}\\
			\begin{equation}\label{momentplvlhmain}
				\begin{array}{ll}
					&\displaystyle \int_{0}^{t}\bigg(\langle DK(\eta),b\rangle-\int_{\Gamma}\partial_{t}\eta\partial_{t}b\bigg)\\
					&\displaystyle+\sum_{i}\int^{t}_{0}\bigg(-\int_{\Omega^i} u_{i}\cdot(\partial_{t}\psi+u_{i}\cdot\nabla\psi)+\int_{\Omega^{i}}\mu_{i}(\theta_{i})D(u_{i}):D(\psi)\bigg)\\
					&\displaystyle=-\langle \partial_{t}\eta(t,\cdot)b(t,\cdot)\rangle +\langle \eta_{1}^{0},b(0,\cdot)\rangle+\sum_i \int_{\Omega^i(0)} u^{0}_{i}\cdot \psi(\cdot,0)\int_{\Omega^i(t)} \big(u_{i}(t,\cdot)\cdot\psi(t, \cdot)+  u^{0}_{i}\cdot \psi(0,\cdot)
				\end{array}
			\end{equation}
			for $a.e.$ $t\in (0,T),$ for all $b\in L^{\infty}(0,T;W^{2+\sigma^{*},2}(\Gamma))\cap W^{1,\infty}(0,T; L^{2}(\Gamma)),$  $\psi\in C^{\infty}_0([0,T]\times\Omega)),$ $\mathrm{div}\,\psi=0$ and $\Tr_{\Sigma_{\eta}}\phi=b\nu$ (the notion of trace $\Tr_{\Sigma_{\eta}}$ will be made clear in Lemma \ref{Lem:TrOp}).\\[2.mm]
			(iii) The coupling of $u$ and $\partial_{t}\eta$ reads $\mathrm{tr}_{\Sigma_{\eta}}u_1=\partial_{t}\eta \nu=\mathrm{tr}_{\Sigma_{\eta}}u_2,$ where the operator $\mathrm{tr}_{\Sigma_{\eta}}$ denotes the trace of a function on the moving interface $\Sigma_{\eta}$ and will precisely be introduced in Lemma \ref{Lem:TrOp}.\\[2.mm] 
			(iv) \textit{The heat evolution holds in the sense: }\\
			\begin{equation}\label{heatfnlhlevmain}
				\begin{array}{ll}
					&\displaystyle \sum_i \left(c_{i}\int^{t}_{0}\frac{d}{dt}\int_{\Omega^i} \theta_{i}\cdot \zeta_i-c_{i}\int^{t}_{0}\int_{\Omega^i} \theta_{i}\cdot (\partial_{t}+u_{i}\cdot\nabla)\zeta_i+k_{i}\int^{t_{1}}_{0}\int_{\Omega^{i}}\nabla\theta_{i}\cdot\nabla\zeta_i\right)
					\\
					&\displaystyle +\lambda\int^{t}_{0}\int_{\Sigma_{\eta}}(\theta_{1}-\theta_{2})(\zeta_1-\zeta_2)\geqslant\sum_i\int^{t}_{0}\int_{\Omega^{i}}\mu_{i}(\theta_{i})|Du_{i}|^{2}\zeta_i,
				\end{array}
			\end{equation}
			if $\lambda\in [0,\infty)$
			for $a.e.$ $t\in [0,T]$ and for $\zeta_i\in C^{\infty}([0,T]\times \Omega_i;[0,\infty))$, if ``$\lambda=\infty$" we mean that $\theta_1=\theta_2$ on $\Sigma_\eta$ and
			\begin{equation}\label{heatfnlhlevmain*}
				\begin{array}{ll}
					&\displaystyle \sum_i \left(c_{i}\int^{t}_{0}\frac{d}{dt}\int_{\Omega^i} \theta_{i}\cdot \zeta-c_{i}\int^{t}_{0}\int_{\Omega^i} \theta_{i}\cdot (\partial_{t}+u_{i}\cdot\nabla)\zeta+k_{i}\int^{t_{1}}_{0}\int_{\Omega^{i}}\nabla\theta_{i}\cdot\nabla\zeta\right)
					\displaystyle
					\geqslant\sum_i\int^{t}_{0}\int_{\Omega^{i}}\mu_{i}(\theta_{i})|Du_{i}|^{2}\zeta,
				\end{array}
			\end{equation}
			for $a.e.$ $t\in [0,T]$ and for $\zeta\in C^{\infty}([0,T]\times \Omega;[0,\infty))$.
			\\[2.mm] 
			(v) \textit{Positivity of temperature:}\\
			\begin{equation}\label{postempht0levmain}
				\begin{array}{l}
					\displaystyle \theta_{i}\geqslant \gamma\,\,\mbox{in}\,\,\Omega^{i}\times(0,T)\,\,\mbox{provided}\,\,\theta^{0}_{i}\geqslant\gamma>\gamma_i>0\,\,\mbox{on}\,\,\Omega^{i}_{0}.
				\end{array}
			\end{equation}
			(vi) \textit{The
				total energy of the system is conserved:}\\
			\begin{equation}\label{energybalhlevmain}
				\begin{array}{ll}
					\mathbb{E}_{tot}(\theta_{i},u_{i},\partial_{t}\eta,\eta)(t)=\mathbb{E}_{tot}(\theta^{0}_{i},u^{0}_{i},\eta^{0},\eta^{0}_{1}),
				\end{array}
			\end{equation}
			where the total energy of the system is defined as
			\[\mathbb{E}_{tot}(\theta_{i},u_{i},\partial_{t}\eta,\eta):=\sum^{2}_{i=1}\int_{\Omega^{i}}e_i +\int_{\Gamma}\bigg(\frac{|\partial_{t}\eta|^{2}}{2}+K(\eta)\bigg),
			\]
			with \[
			e_i:=c_{i}\theta_{i}+\frac{|u_{i}|^{2}}{2}\]
			(vii) \textit{A general entropy inequality holds in the following sense:}\\
			\begin{equation}\label{entrpybalhlevmain}
				\begin{array}{l}
					\displaystyle\sum_{i=1}^{2}\left(c_{i}\frac{d}{dt}\int_{\Omega^{i}}\varphi(\theta_{i})+k_{i}\int_{\Omega^{i}}|\nabla\theta_{i}|^{2}\varphi''(\theta_{i})\right)+\lambda\int_{\Sigma_{\eta}}\left(\theta_{1}-\theta_{2}\right)\left(\varphi'(\theta_{1})-\varphi'(\theta_{2}))\right)\geqslant 0,
				\end{array}
			\end{equation}
			for any $\varphi(\theta)$ such that $\varphi(\cdot)$ is concave and monotone for $\theta>0,$ such that all the integrals above make sense.
			
		\end{definition}
		\begin{remark}[Compatibility to strong solutions]
			Note that by the above definition of a weak bounded energy solution possessing the necessary regularity would actually become a strong solution. For most parts it follows by the usual integration by parts procedure. This does however at first only imply that 
			\[
			c_i\big(\partial_{t}\theta_{i}+\mathrm{div}(\theta_{i} u_{i})\big)-k_i \Delta\theta_{i}\geq {\mu_{i}(\theta_{i})}|D u_{i}|^{2}\quad \displaystyle\mbox{in} \quad \bigcup\limits_{t\in(0,T)}{\Omega^{i}(t)}\times\{t\},
			\]
			However the total energy balance actually implies that equality is satisfied.
			This can be seen by contradiction. 
			Assume that there is a subset of positive measure $A\subset [0,t)\times \Omega^{i}$, where $c_i\big(\partial_{t}\theta_{i}+\mathrm{div}(\theta_{i} u_{i})\big)-k_i \Delta\theta_{i}> {\mu_{i}(\theta_{i})}|D u_{i}|^{2}$. Then this implies that
			\[
			\sum^{2}_{i=1}\bigg(\int_{\Omega^{i}(t)}\theta_i(t)
			-\int_{\Omega^{i}_{0}}\theta_i^0\bigg)
			>\sum_i\int^{t}_{0}\int_{\Omega^{i}}\mu_{i}(\theta_{i})|Du_{i}|^{2}.
			\]
			Using this inequality, one infers:
			\begin{equation}\nonumber
				\begin{array}{ll}
					&\mathbb{E}_{tot}\displaystyle(\theta_{i},u_{i},\partial_{t}\eta,\eta)(t)-\mathbb{E}_{tot}(\theta^{0}_{i},u^{0}_{i},\eta^{0},\eta^{0}_{1})\\[2.mm]
					&\displaystyle =\bigg\{\sum^{2}_{i=1}\int_{\Omega^{i}}\bigg(c_{i}\theta_{i}+\frac{|u_{i}|^{2}}{2}\bigg)+\int_{\Gamma}\bigg(\frac{|\partial_{t}\eta|^{2}}{2}+K(\eta)\bigg)-\\
					&\displaystyle\qquad\quad\sum^{2}_{i=1}\int_{\Omega^{i}(0)}\bigg(c_{i}\theta^{0}_{i}+\frac{|u^{0}_{i}|^{2}}{2}\bigg)+\int_{\Gamma}\bigg(\frac{|\eta^{0}_{1}|^{2}}{2}+K(\eta^{0}\bigg)
					\bigg\}\\[2.mm]
					&\displaystyle > \sum^{2}_{i=1}\bigg(\int_{\Omega^{i}}\frac{|u_{i}|^{2}}{2}-\int_{\Omega^{i}(0)}\frac{|u^{0}_{i}|^{2}}{2}\bigg)+\int_{\Gamma}\bigg(\frac{|\partial_{t}\eta|^{2}}{2}+K(\eta)-\frac{|\eta^{0}_{1}|^{2}}{2}-K(\eta^{0})\bigg)+\sum^{2}_{i=1}\int^{t}_{0}\int_{\Omega^{i}}\mu_{i}(\theta_{i})|Du_{i}|^{2}\\[2.mm]
					&\displaystyle=0,
				\end{array}
			\end{equation}
			where the last line in the previous calculation is obtained from the energy dissipation identity obtained by testing the mechanical equation with the test function $(u_1,u_2,\partial_t \eta).$
			This contradicts the total energy balance \eqref{energybalhlevmain}.
			This strikingly simple observation was found by Feireisl et. al for the compressible Navier-Stokes Fourier system~\cite{FeiNov} and was already used in the context of fluid-structure interactions in \cite{Breit}.
		\end{remark}
		\begin{remark}
			The fluid-structure interaction system \eqref{momentumbalanceheat1}-\eqref{initialcondheat1} models the governing dynamics of some engineering devices known as `heat exchangers' which are designed to allow transfer of heat between two incompressible fluids (via an elastic/ rigid interface of high thermal conductivity) without allowing them to mix. A variety of these devices are used in different industries: petroleum refineries, furnaces and cryogenics, condensers in power plant, etc.
		\end{remark}
		\begin{remark}[Pressure from temperature]
			Certainly it would be very physical to include some pressure force in dependence of the temperature in the momentum equation of the fluid, for example $c_i \nabla \theta_i$ or similar. As the term can be included without changing any of the arguments of construction we excluded it for the sake of better readability.
		\end{remark}
		
		\begin{remark}
			In view of the constitutive relations \eqref{ei1} and \eqref{etai1}, and the internal energy balance equation \eqref{entropyequation1}, one can formally obtain the entropy identity \eqref{rephraseentropyinq1}.
			Indeed integrating \eqref{rephraseentropyinq1} in the spatial domain $\Omega^{i},$ using by parts integration and Reynold's transport theorem and summing the resulting expressions over $i\in\{1,2\}$ one obtains the inequality \eqref{entrpybalhlevmain} with $\varphi(\theta_{i})=\log(\theta_i)=s_{i}.$ The lack of  compactness of $\nabla u_{i}$ only allows us to recover the entropy inequality \eqref{entrpybalhlevmain} (using weak lower semi-continuity) and not an identity similar to \eqref{rephraseentropyinq1}.
			In addition to the physical entropy \eqref{etai1}, the inequality \eqref{entrpybalhlevmain} also includes entropy functions of the form $\varphi(\theta_{i})=\theta_{i}^{\beta+1},$ for $\beta\in(-1,0),$ furnishing estimates of the temperatures in fractional order Sobolev spaces. These estimates serve as a key tool to conclude the strong compactness of $\theta_{i}.$
		\end{remark}
		\begin{remark}[Entropy-inequality]
			The heat evolution \eqref{heatfnlhlevmain} holds as an inequality due to the lack of compactness of $\nabla u_{i}.$ Despite of this fact the discretization strategy used in the present article allows us to obtain heat and entropy balance as identities at all the discrete levels. 
		\end{remark}
		
		Let us now state the main theorem of the present article on the existence of bounded energy weak solutions to the model \eqref{momentumbalanceheat1}-\eqref{initialcondheat1}.
		\begin{theorem}\label{Th:main}
			Assume that:\\[1.mm] 
			$(i)$ $\Omega\subset\mathbb{R}^{3}$ is a given bounded domain with $C^{\infty}$ boundary $\partial\Omega.$ Further suppose $\Omega^{2}_{o}\Subset \Omega$ (as in Figure $1.$) such that $\dist(\partial\Omega^{2}_{o},\partial\Omega)>0$ and $\partial\Omega^{2}_{o}$ (the boundary of $\Omega^{2}_{o}$) is parametrized by a $C^{4}$ injective mapping $\varphi:\Gamma\rightarrow \mathbb{R}^{3},$ where $\Gamma\subset \mathbb{R}^{2},$ via $\partial\Omega^{2}_{o}=\varphi(\Gamma).$\\[2.mm]
			$(ii)$ \textit{Regularity on initial data:} $\eta_{0}\in W^{2,2}(\Gamma),$ $\eta^{0}_{1}\in L^{2}(\Gamma),$ $\theta^{0}_{i}\in L^{1}(\Omega^{i}_{0})$ and $u^{0}_{i}\in L^{2}(\Omega^{i}_{0}).$ \\[2.mm]
			$(iii)$ Given initial data $\theta^{0}_{i},$ $\eta^{0},$ $\Omega=\Omega^{1}_{0}\cup\overline{\Omega^{2}_{0}},$ with $\partial\Omega^{2}_{0}=\Sigma_{\eta^{0}}=\varphi_{\eta^{0}}(\Gamma,0)$ and $\partial\Omega^{1}_{0}=\Sigma_{\eta^{0}}\cup\partial\Omega.$\\[2.mm]
			$(iv)$ \textit{Positivity of initial temperature:} $\theta^{0}_{i}\geqslant\gamma>\gamma_i>0,$.\\[2.mm]
			$(v)$ $\eta_{0}\in (a_{\partial\Omega},b_{\partial\Omega})$ (where the existence of the numbers $a_{\partial\Omega}$ and $b_{\partial\Omega}$ is asserted in \eqref{bjtparam}) holds, \eqref{tblrnbd}-\eqref{safedis} are true and $\overline{\gamma}(\eta_{0})>0$ where the geometric quantity $\overline{\gamma}$ is related with the structure of the Koiter energy (will be introduced in \eqref{ovgamma}).\\[2.mm]
			Then there is $T_F\in(0,\infty]$ and a weak solution to the problem \eqref{momentumbalanceheat1}-\eqref{initialcondheat1} along with the non-linear Koiter energy (the details of the structure of the Koiter energy is presented in \eqref{Koiterenergy}) on the interval $(0,T)$ for any $T<T_F$ in the sense of Definition \ref{WSDef}.\\
			Finally, $T_F$ is finite only if 
			\begin{equation}\label{degenfstkind}
				\text{ either }\lim_{s\to T_F}\eta(s,y)\searrow a_{\partial\Omega}\text{ or }\lim_{s\to T_F}\eta(s,y)\nearrow b_{\partial\Omega}
			\end{equation}
			for some $y\in\Gamma$ or the Koiter energy degenerates, i.e., 
			\begin{equation}\label{degensndkind}
				\lim_{s\to T_F}\bar\gamma(\eta(s,y))=0 
			\end{equation}
			for some $y\in\Gamma.$
		\end{theorem}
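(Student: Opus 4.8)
The plan is to construct the weak solution as a limit of approximate solutions produced by a two–step minimizing movements scheme, following the variational approach to fluid–structure interaction adapted to include temperature. First I would fix a time step $\tau = T/N$ and, on each interval $[k\tau, (k+1)\tau]$, discretize the structure displacement $\eta$ by a minimizing movements (De Giorgi) scheme for the dissipative/Koiter part, coupled with a Galerkin or regularized approximation of the two fluid velocities $u_i$ subject to the kinematic coupling $\mathrm{tr}_{\Sigma_\eta}u_i = \partial_t\eta\,\nu$. The novelty, as announced in the abstract, is that the temperature fields $\theta_i$ are resolved in a \emph{second} minimization step: given the velocities and the updated geometry, $\theta_i$ is obtained as the minimizer (or solution of the associated Euler–Lagrange problem) of an entropy-type functional, with the dissipation $\mu_i(\theta_i)|Du_i|^2$ entering as a source and the transmission condition $k_i\partial_{\nu_\eta}\theta_i = \lambda(\theta_1-\theta_2)$ encoded through a boundary penalization term with weight $\lambda$ (interpreting $\lambda=0$ and $\lambda=\infty$ as the two extreme cases). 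Because the structure is assumed to move only in the normal direction of the reference configuration, the time-dependent domains $\Omega^i(t)$ are parametrized through $\varphi_\eta$ on the tubular neighborhood $N^b_a$, so all the function spaces can be pulled back to fixed reference domains; this is what keeps the geometry under control as long as $\eta\in(a_{\partial\Omega},b_{\partial\Omega})$ and $\bar\gamma(\eta)>0$.

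\textbf{Key steps.} (1) Establish solvability of each discrete step: existence of minimizers for the structure/fluid step (convexity in $\partial_t\eta$ and coercivity from the Koiter energy plus the $L^2$ kinetic term), and existence and \emph{strict positivity} of $\theta_i$ in the temperature step — positivity follows from a maximum-principle/truncation argument using that $\theta_i^0 \ge \gamma > \gamma_i$ and that the source term is nonnegative, which also guarantees $\mu_i(\theta_i)$ in \eqref{viscosity} stays bounded. (2) Derive the discrete energy balance: summing the Euler–Lagrange relations over $k$ yields the total energy identity \eqref{energybalhlevmain} at the discrete level, with no loss, because the scheme is built to be energy-consistent; similarly one gets the discrete heat balance and, by testing with $\varphi'(\theta_i)$ for concave monotone $\varphi$, the discrete entropy balance \eqref{entrpybalhlevmain}. (3) Obtain uniform bounds from these balances: $u_i$ bounded in $L^\infty L^2 \cap L^2 W^{1,2}$, $\eta$ in $L^\infty W^{2,2}$ and $\partial_t\eta$ in $L^\infty L^2$; the entropy estimates with $\varphi(\theta)=\theta^{\beta+1}$, $\beta\in(-1,0)$, give the fractional Sobolev bounds on $\theta_i$ that yield $\theta_i\in L^s W^{1,s}$ and $L^q$ for the stated exponents $(q,s)\in[1,5/3)\times[1,5/4)$, plus the $L^\infty L^1$ bound from the energy. (4) Interpolate in time (piecewise-constant/affine interpolants), extract weakly/weak-$*$ convergent subsequences as $\tau\to 0$, and pass to the limit. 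The kinematic coupling, the geometry $\Sigma_\eta$, and the compactness of $\eta$ (Aubin–Lions, giving the extra half-derivative in the $L^2 W^{2+\sigma^*,2}$ regularity via the parabolic smoothing of the structure equation, if present, or via interpolation) let one identify the limit domains and the trace relations in (iii). The convective terms are handled by the Aubin–Lions compactness of $u_i$ on moving domains (the standard FSI argument, with geometry controlled by $\eta$), and the temperature compactness needed to pass to the limit in $\mu_i(\theta_i)|Du_i|^2$ and $\mu_i(\theta_i)Du_i$ comes from the fractional entropy estimates giving strong $L^1$-type convergence of $\theta_i$. (5) The heat equation and entropy relation are recovered only as inequalities \eqref{heatfnlhlevmain}, \eqref{entrpybalhlevmain} because weak lower semicontinuity is all that survives for the quadratic dissipation term $\int \mu_i(\theta_i)|Du_i|^2\zeta$ with $\zeta\ge 0$ — this is the standard compensated-compactness obstruction for Navier–Stokes–Fourier. (6) Finally, the continuation/blow-up alternative: as long as $\eta$ stays strictly inside $(a_{\partial\Omega}, b_{\partial\Omega})$ (no self-intersection or contact with $\partial\Omega$) and $\bar\gamma(\eta)>0$ (Koiter energy nondegenerate), the a priori bounds are uniform and the solution can be extended; hence a finite $T_F$ forces one of \eqref{degenfstkind} or \eqref{degensndkind}. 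This is a covering/open-closed argument on the maximal existence time.

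\textbf{Main obstacle.} The hard part is the simultaneous handling of the moving geometry and the low regularity of the temperature. On the one hand, the a priori estimates for $\theta_i$ are only sub-critical (the $L^{5/3-}$, $W^{1,5/4-}$ scaling typical of Navier–Stokes–Fourier), so one must carefully propagate these through the time-dependent change of variables $\varphi_\eta$ and ensure the convective term $\mathrm{div}(\theta_i u_i)$ and the dissipation source remain well-defined and weakly continuous along the approximation; this forces a delicate choice of test functions ($C^\infty$ in the current geometry, pulled back) and of the regularization so that the two minimization steps are compatible and energy-closed. On the other hand, the positivity of $\theta_i$ (needed both physically and to control $\mu_i$ through the Vogel–Fulcher–Tammann law, which blows up as $\theta_i\searrow\gamma^i$) must be uniform in $\tau$; securing a uniform lower bound $\theta_i\ge\gamma$ that is not destroyed by the transmission coupling when $\lambda$ is large (or infinite) is the most technically delicate point, and is presumably where the two-step structure of the scheme — decoupling the velocity update from the temperature update — is essential. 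Compared to the existing FSI literature without heat, everything else (structure compactness, construction of the moving domain, trace operator $\mathrm{tr}_{\Sigma_\eta}$, Aubin–Lions on moving domains) is by now fairly standard and will be invoked rather than redone.
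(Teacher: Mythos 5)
Your high-level skeleton does match the paper's: a two-step minimization (velocity/shell step, then a temperature step with the dissipation as source and a $\lambda$-weighted interface penalization), positivity by comparison with a truncated competitor, entropy test functions $\varphi(\theta)=\theta^{\beta+1}$ for the sub-critical bounds on $\theta_i$, recovery of the heat relation only as an inequality, and a continuation argument for $T_F$. But the proposal has genuine gaps exactly where the real work lies. A single time-scale ``minimizing movements for the dissipative/Koiter part'' does not make sense as stated, since the shell equation $\partial_{tt}\eta+DK(\eta)=G$ is hyperbolic and carries no structural dissipation; the paper needs the two-time-scale construction of the variational FSI approach (acceleration scale $h$ with flow maps discretizing the material derivatives, and a velocity scale $\tau\ll h$ for the minimizations \eqref{min1}--\eqref{min2}), plus a third regularizing parameter $\kappa$ (higher-order dissipation for $u_i$ and $\partial_t\eta$ and the regularized energy $K_\kappa$) without which the flow maps are not well defined, the pairing $\langle DK(\eta),\partial_t\eta\rangle$ has no meaning, and the Koiter terms cannot be passed to the limit. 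Your scheme has no analogue of these three layers, and each of the limits $\tau\to0$, $h\to0$, $\kappa\to0$ is a separate nontrivial step.

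Two specific assertions would fail. First, the regularity $\eta\in L^2(0,T;W^{2+\sigma^*,2}(\Gamma))$ does not come from ``parabolic smoothing of the structure equation \dots or via interpolation'': there is no parabolic smoothing for the shell, and interpolation of the energy bounds only yields $W^{2,2}$. It is obtained by testing the momentum balance with solenoidal extensions of double difference quotients of $\eta$ (the argument adapted from the non-linear Koiter shell literature, Lemma \ref{improvebndetadelta}), and this estimate is precisely what permits the removal of the $\kappa$-regularizer and the compactness of $DK(\eta)$. Second, the compactness of the velocities is not ``the standard FSI Aubin--Lions argument'': because of the time delay and the moving geometry, the equation only controls quotients of the form $(u^{h}_{i}(\cdot)-u^{h}_{i}(\cdot-h)\circ\Phi^{i,h}_{-h})/h$, and the paper must prove strong convergence of the time-averaged velocities through a tailored Aubin--Lions result (Theorem \ref{thmBbL}, Proposition \ref{strngconvavg}) whose equicontinuity hypothesis is verified by inserting mollified solenoidal extensions of $(\partial_t\eta^{h})_{h}$ into the momentum balance. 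Without these strong convergences one cannot identify the limit of the dissipative source terms as an $L^1$ function (the paper does this via the weighted testing with $(h-t)$), and hence cannot close the total energy equality (vi) of Definition \ref{WSDef} in the limit; your proposal asserts that energy conservation survives the limit but provides no mechanism for it.
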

		
		\subsection{Novelty and significance}
		
		In the framework of weak solutions to fluid-structure interactions there are only a few works that consider temperature effects. On the one hand thermo-elastic plates were considered~\cite{TW,TW2} both interacting with an incompressible and a compressible fluid. On the other hand the compressible Navier-Stokes-Fourier system was coupled to a non-linear elastic shell~\cite{Breit} or a thermo-elastic, linear and viscous shell~\cite{MMNRT}. The authors constructed some weak concept of solution up to the point of degeneracy. These works include temperature in the fluid or in the structure or in both. However an existence analysis taking into account the {\em interacting effect of temperature} seems to be missing so far. An exception is maybe~\cite{FMNT} that considers the adiabatic piston problem in the one dimensional setting.
		The aim of this article is to start investigating on this very relevant topic in the physical space of three dimensions. For that we chose a setting where temperature would follow the classical heat equation. Unfortunately this setting is still excluded in the theory for weak solutions for compressible fluids. But as at small speeds the dynamic is expected to be incompressible even when temperature effects are leading order we decided for an incompressible Navier-Stokes-Fourier system~\cite{bulicek}.
		
		Generally we follow an energetic point of view. Already in the construction we compute the heat distribution resulting from the given fluid dissipation. The time-changing interaction surface, which is the shell is assumed to posses a certain heat capacity. The capacity is related to a potential. Here for the sake of simplicity we take a square potential, but the methodology is independent of this structure. By introducing a conduction parameter $\lambda$ the potential ranges in between super conductivity and insulation, which are relevant endpoints that are included in the theory~\eqref{eq:heat}.
		
		The construction follows the variational approach for fluid-structure interaction and hyperbolic PDE developed in~\cite{BenKamSch}. The approach was successfully used before in various scenarios including solid only applications including inertia~\cite{CesGraKam24,CesSch25}, temperature~\cite{ABFS} and fluid-structure interactions of various type~\cite{BenKamSch,BenKamSch2,BreitKamSch,malSpSch}.
		In particular it allows to derive energetically the heat equation from the very start~\cite{ABFS}. 
		
		Technically the procedure is here rather involving, in particular since several artificial dissipation terms are collectively transferred into heat. Indeed, even so the construction imitates the variational physical background three layers of approximation seem to be necessary. It means besides the velocity scale $\tau$ and the acceleration scale $h$ another regularizing  parameter $\kappa$. This is in contrast to~\cite{BenKamSch}, where regularization was necessary but could be coupled to the acceleration scale.

		The first result following the variational approach of fluid-structure interaction in the regime of thin deformable and elastic solids was~\cite{malSpSch}. Here some non-linear beam including tangential deformations inside a two dimensional fluid was considered. While there the focus was on obtaining tangential deformations, in the present work the focus is on temperature effects. Actually since the present paper involves non-linear Koiter shells it methodologically relates also to \cite{Breit,MuhaSch,MitNes}. A technical highlight of the present paper are therefore the adaptation of the compactness theorems developed in \cite{MuhaSch} to the variational setup including temperature. 
		
		In conclusion, the paper introduces a novel energetic scheme including temperature for fluid-structure interactions. Moreover, all necessary technical innovations are performed. This includes respective regularity estimates for the temperature and the shell displacement, as well as compactness results for time-averages. 
		
		\subsection{Ideas, strategy and some further comments} \underline{\textit{Comments on the choice of some test functions:}} The system \eqref{momentumbalanceheat1}-\eqref{initialcondheat1} models the heat transfer dynamics between two Navier-Stokes-Fourier fluids separated by an elastic inteface. The elastic interface possessing high thermal conductivity facilitates heat transfer (resulting in heating or cooling of the fluid streams) without allowing them to mix. The viscosity coefficient of the fluids involved follow a physical law given by \textit{Vogel-Fulcher-Tammann equation,} \eqref{viscosity}. In order to prevent the blow-up of this viscosity coefficients one needs to guarantee that the fluid temperatures do not degenerate to zero provided they have a positive lower bound initially. Informally to prove this \textit{minimal principle} one tests \eqref{entropyequation1} by $$\omega_{i}(x,t)=\mathcal{X}_{[0,\tau]}(t)\min\bigg(0,\theta_{i}-\gamma\bigg)\leqslant 0,$$ 
		sum over $i\in\{1,2\},$ and obtains
		\begin{equation}\nonumber
			\begin{array}{ll}
				&\displaystyle\frac{1}{2}\int_{0}^{\tau}\partial_{t}\bigg(\|\omega_{1}\|^{2}_{L^{2}(\Omega^{1}(t))}+\|\omega_{2}\|_{L^{2}(\Omega^{2}(t))}\bigg)+\int_{0}^{\tau}\bigg(\int_{\Omega^{1}(t)}|\nabla \omega_{1}|^{2}+\int_{\Omega^{2}(t)}|\nabla \omega_{2}|^{2}\bigg)\\
				&\displaystyle+\lambda\int^{\tau}_{0}\int_{\Sigma_{\eta}}(\theta_{1}-\theta_{2})\cdot\bigg(\omega_{1}-\omega_{2}\bigg)\leqslant 0.
			\end{array}
		\end{equation}
		As soon as we realize that in the last inequality, the third term in the left hand side is always non-negative, we infer that $\theta_{i}\geqslant\gamma>0$ (for some positive constant $\gamma$), provided the same holds initially. Although the underlying idea remains the same, it is a little more intricate to prove a similar \textit{minimal principle} at discrete levels.\\ 
		Further in order to deal with the nonlinear dependence of the viscosity coefficients on the temperature one requires the regularity of the temperatures beyond the information furnished by the energy balance. The usual strategy (inspired from \cite{bulicek}) is to test the internal energy balance equation by $\varphi'(\theta_{i}),$ where $\varphi(\theta_{i})=\theta_{i}^{\beta+1}$ (often termed as mathematical entropy) is concave and monotone. Consequently one obtains a uniform bound for $\theta_{i}$ in $L^{s}(W^{1,s}),$  for $s\in[1,\frac{5}{4}).$\\
		Concerning the proof of the existence of a weak solution to our system, we use a variational approximation method motivated from \cite{BenKamSch}. Compared to other  strategies involving Galerkin approximation or a fixed point argument, a variational approach seems to be more natural to deal with an non-linear, non-convex elastic energy functional and the temperature flux. Nevertheless, the involvement of the Fourier equation along with the heat exchange dynamics makes our variational approximation extremely delicate. In the following discussion, let us detail our strategy which extends  $(i)$ \textit{minimizing movements approximation} and $(ii)$ \textit{a two time-scale} approach, (previously introduced in \cite{BenKamSch} for FSI problems without the Fourier equation in the context of heat exchange between two Navier-Stokes-Fourier fluids via an elastic interface with a non-linear, non-convex Koiter energy.\\[2.mm]
		\underline{\textit{Two-time scale approach and minimizing movements:}}
		$\textbf{(i)}$ To begin with, one introduces a time delayed approximation, discretising the inertial/ hyperbolic terms $\partial_{tt}\eta$ and $\frac{Du_{i}}{dt}=( \partial_{t}{ u_{i}}+ \mbox{div}({u_{i}}\otimes u_{i}))$ (the material derivative), in the coupled momentum equation in form of difference quotients involving a time delay parameter $h>0,$ (the acceleration scale) $c.f.$ \eqref{discretemomen1}. This is done by subsequently introducing the flow maps
		\begin{equation}\nonumber
			\left\{ \begin{array}{ll}
				&\displaystyle\partial_{t}\Phi_{i}(y,t)=u_{i}(t+s,\Phi_{i}(y,t)),\\
				&\displaystyle \Phi(\cdot,0)=Id|_{\Omega^{i}_{0}},
			\end{array}\right.
		\end{equation}
		where the role of $\Phi_{i}$ is simply to transport the fluid domains along the respective velocity fields $u_{i}.$ The existence of a unique solution to the last ODE is guaranteed by a higher order artificial dissipation $\displaystyle\kappa\int^{t}_{0}\int_{\Omega^{i}(t)}\nabla^{k_{0}}u_{i}:\nabla^{k_{0}}\psi,$ ($\psi$ is a suitable test function), for $k_{0}>3$ added to the Euler-Lagrange equation ($c.f.$ \eqref{discretemomen1}). Using the same parameter $\kappa$ we further regularize the Koiter energy as: $\displaystyle K_{\kappa}(\eta)=K(\eta)+\frac{\kappa}{2}\int_{\Gamma}|\nabla^{k_{0}+1}\eta|^{2},\,\,\mbox{for}\,\,k_{0}>3.$ This regularization plays a crucial role to prove the compactness of $\displaystyle\int^{t}_{0}\langle DK_{\kappa}(\eta^{h}),b^{h}\rangle,$ for suitably constructed test functions $b^{h}$ ($c.f.$ Section \ref{ctestfn} for details). Indeed this artificial regularization is used only in the intermediate level (for a quasi-stedy evolution problem) and at the final stage we prove the compactness of the Frechet dervative of the Koiter energy by showing an improved regularity of the structural displacement $\eta^{\kappa}$. We further add an artificial structural dissipation $\displaystyle\kappa\int^{t}_{0}\int_{\Gamma}\nabla^{k_{0}+1}\partial_{t}\eta:\nabla^{k_{0}+1}b$ in the weak formulation, which furnishes enough regularity of $\partial_{t}\eta,$ needed to give a sense to the duality pairing $\langle DK_{\kappa}(\eta),\partial_{t}\eta\rangle.$ This term arises while testing the discrete momentum balance equation using the solution $(u_{i},\partial_{t}\eta)$ with a goal to furnish an energy balance. Note that unlike the momentum equation \eqref{discretemomen1} (where we use an \textit{Lagrangian approximation of the material time derivative} on the acceleration scale), we do not use the acceleration scale $h>0$ to discretize the heat convection ($c.f.$ \eqref{intenballh}). In a nut-shell for a fixed $h>0,$ we get a coupled time-delayed momentum balance and an unsteady heat evolution.\\[2.mm]
		\textbf{$\textbf{(ii)}$} In order to solve this approximate problem \eqref{discretemomen1} and \eqref{intenballh} with the correct interfacial coupling, we discretise the structural velocity $\partial_{t}\eta$ and the material derivative $(\partial_{t}\theta_{i}+u_{i}\cdot\nabla\theta_{i})$ of the temperature using a velocity scale $0<\tau<< h.$ Now we have reduced our problem to a completely steady set of equations, where we can use a simultaneous minimization process to show the existence of solutions. More precisely, concerning the momentum balance equation, given an interval $[0,h],$ we divide it into $N$ equidistant time steps of length $\tau,$ and then for $k\in\{0,....,N-1\},$ given $(\eta_{k}, \theta_{i,k})$ and  
		\begin{equation}\label{decompintro}
			\begin{array}{ll}
				&\displaystyle \Omega=\Omega^{1}_{k}\cup\overline{\Omega^{2}_{k}}\,\,
				\mathrm{such\,\, that}\,\,
				\partial\Omega^{2}_{k}=\varphi_{\eta_{k}}(\Gamma)=\Sigma_{\eta_{k}},\,\, \Omega^{1}_{k}=\Omega\setminus\overline{\Omega^{2}_{k}},\mbox{and}\,\,\partial\Omega^{1}_{k}=\Sigma_{\eta_{k}}\cup\partial\Omega,
			\end{array}
		\end{equation}
		we define $(u_{i,k+1},\eta_{k+1})$ (for $i\in\{1,2\}$) to be the minimizer of a suitable functional ($c.f.$ \eqref{min1}). Each minimization produces the subsequent deformation and a new interface is used in the next time step to separate the two fluids. Further an affine coupling condition $	\displaystyle\frac{\eta_{k+1}-\eta_{k}}{\tau}\nu=u_{i,k+1}\circ \varphi_{\eta_{k}}\,\,\mbox{on}\,\,\Gamma,$ holds, which is a discrete analogue of the adherence type condition \eqref{interfaceveheatl1} for the unsteady FSI system \eqref{momentumbalanceheat1}--\eqref{initialcondheat1}. The existence of a minimizer to the functional \eqref{min1} and the corresponding \textit{Euler-Lagrange} equation are stated as a part of Lemma \ref{lemmasolmin1}. The proof uses the \textit{direct method of calculus of variations} which can be adapted without any difficulty from \cite{BenKamSch, BreitKamSch}.\\[2.mm]
		\underline{\textit{Explaining the approximation of the heat equation using the velocity scale $\tau$:}} Next concerning the discrete heat equation, we solve a second minimization problem \eqref{min2}-\eqref{defmA}. At a first glance, the structure of the functional being minimized seems to be a little intricate. So let us discuss step by step the underlying idea behind our consideration. Consider a coupled system of two heat equations (for $i\in\{1,2\}$) in the domain $\Omega^{i}$ solving \eqref{Omegat12}-\eqref{Sigmat}  with interfacial heat exchange (see Figure 1. for the set-up):
		\begin{equation}\label{heatintro}
			\left\{ \begin{array}{ll}
				\displaystyle(\partial_{t}+u_{i}\cdot\nabla)\theta_{i}-\Delta\theta_{i}=g_{i}&\displaystyle\quad\mbox{in}\quad \bigcup\limits_{t\in(0,T)}{\Omega^{i}(t)}\times\{t\},\\
				\partial_{\nu}(\theta_{i})=\lambda(\theta_{1}-\theta_{2})&\displaystyle \quad \mbox{on}\quad \bigcup\limits_{t\in(0,T)}\Sigma_{\eta}\times\{t\},\\
				\partial_{\nu}\theta_{1}=0&\displaystyle \quad\mbox{on}\quad\partial\Omega\times(0,T),
			\end{array}\right.
		\end{equation}
		with a known velocity field $u_{i}.$ One first discretises the convection $(\partial_{t}+u_{i}\cdot\nabla)\theta_{i}$ using a time scale $\tau$ and to solve the approximate problem one considers:\\
		\begin{equation}\label{minheatintro}
			\begin{array}{ll}
				\displaystyle\min_{\theta_{i}}&\displaystyle\bigg\{\sum\limits_{i} \frac{ \tau }{2}\int_{\Omega^{i}_{k}}\bigg|\frac{\theta_{i}\circ\Psi_{u_i,{k+1}}-\theta_{i,k}}{\tau}\bigg|^{2}+\sum_{i}\frac{1}{2}\int_{\Omega^{i}_{k+1}}|\nabla\theta_{i}|^{2}
				+\frac{\lambda}{2} \int_{\partial\Omega^{2}_{k+1}}\left|\theta_{1}-\theta_{2}\right|^{2}\\
				&\displaystyle-\sum\limits_{i}\int_{\Omega^{i}_{k}}g_{i}\theta_{i}\circ\Psi_{u_{i},k+1}\bigg\},
			\end{array}
		\end{equation} 
		where $\Omega^{i}_{k}$ and $\Omega^{i}_{k+1}$ are determined by using the interfacial deformations $\eta_{k}$ and $\eta_{k+1}$ respectively, such that \eqref{decompintro} holds. In \eqref{minheatintro}, $\Psi_{u_{i}}$ is defined in $\overline{\Omega^{i}_{k}}$ and solves $\Psi_{u_{i}}=Id+\tau u_{i}(\cdot,\tau k).$
		Indeed the corresponding \textit{Euler-Lagrange equation} gives us the weak formulation to the discrete, time-delayed approximation of the heat evolution \eqref{heatintro}. Now in \eqref{min2}, $\mu_{i}^{M}(\theta_{i,k})|Du_{i,k+1}|^{2}$ (due to fluid dissipation) plays the role of the source term $g_{i}.$ Further compared to \eqref{minheatintro}, there appears a few extra terms in \eqref{min2}. 
		The fifth to eighth terms in the expression of $T_{k}(\theta_{1},\theta_{2})$ (c.f.\eqref{min2}) signifies the contribution of the artificial dissipations in the formation of heat.\\
		Since throughout our analysis we keep track of all the dissipative forcing (whether physical/ numerical), we  recover the \textit{conservation of total energy} at the approximation layers (first for fixed $h$ and next for $\kappa$). We refer to the obtainment of \eqref{hlevelenergy} and \eqref{energybalhlev} for more details. 
		After we write down the \textit{Euler-Lagrange equations} corresponding to the minimization problems (for \textit{momentum balance} and \textit{heat evolution}), we obtain apriori estimates for $(\eta_{k+1},u_{i,k+1},\theta_{i,k+1})$ by plugging suitable test functions into them ($c.f.$ Lemma \ref{bounduetakp1} and Lemma \ref{estthetakp1}). One next defines suitable interpolants ($c.f.$ Section \ref{interpole}) and pass $\tau\rightarrow 0$ in the equations solved by them using the information obtained from the apriori estimates. At this stage we have achieved the compactness of the fluid temperatures by using the classical \textit{Aubin-Lions theorem} in parabolic cylinders (possible since the interface is uniformly continuous) away from the moving interface ($c.f.$ Section \ref{strongconvtheta}).\\[2.mm] 
		Once we pass $\tau\rightarrow 0,$ we are left with two more parameters, \textit{viz.} the acceleration scale $h$ and the regularizing parameter $\kappa.$ While passing both of these remaining parameters to zero, an intricate part is to guarantee the strong convergences of the fluid and the structural velocities. Although for this part we do not rely on entirely novel arguments but their proofs require non trivial adaptations from the existing literature \cite{Breit, MuhaSch}. Especially we would like to refer to the proof of Proposition \ref{strngconvavg} where we show the compactness of the average velocities via a non standard version of `Theorem \ref{thmBbL}' of the celebrated \textit{Aubin-Lions compactness lemma} suitably tailored for time-delayed approximations of coupled PDE systems with time-variable geometry.\\[2.mm]
		During the final limit passage $\kappa\rightarrow 0,$ we need improved regularity of the structural displacement and velocity, $c.f.$ Lemma \ref{improvebndetadelta}. The idea behind the regularity improvement appeared in \cite{MuhaSch} and in the present article we adapt their result for our set-up.
		\subsection{Bibliographical remarks.}  In view of its wide range of applications in analysing the dynamics of aircraft wings, stability of bridges, blood flow through arteries etc., the mathematical analysis of fluid-structure interaction (FSI) problems has become an active area of research. The fluids involved in FSI models can show incompressible/ compressible nature depending on the Mach number. Majority of studies on the Cauchy theory of FSI models concerns the motion of a rigid body in a viscous incompressible Newtonian fluid described by Navier-Stokes equations ($cf.$ \cite{Conca,Desjardin,Feir3,Maday,TakTuc}). While most of these articles deal with solutions until any possible collision of the rigid body and the container wall, special weak solutions after  collision have been constructed in \cite{MarStrTuc,Feir3}. The contact issues (proving no-collision) further been investigated in \cite{Hillairet1} and \cite{HilTak}.\\
		The study of the dynamics of a bulk elastic structure within a viscous fluid is coparatively limited. Concerning the motion of a three-dimensional elastic structure interacting with a three dimensional fluid, we quote \cite{Galdi, Grandmont} for the stedy-state case and \cite{Cout1, Cout2, Kukavica, Raymond} for the full unsteady case. All the articles latter consider the existence of strong solutions for small enough data locally in time. The structure considered in \cite{Cout2, Raymond} are modelled by Lamé system of linear elasticity while \cite{Cout1} deals with quasilinear elastodynamics. Very recently one of the authors of the present article introduced a novel variational strategy in \cite{BenKamSch} (incompressible case), and \cite{BreitKamSch} (compressible case) to prove the existence of weak solutions to FSI models involving a Navier-Stokes fluid interacting with a bulk elastic structure. The bulk structures considered in these articles are called \textit{generalized standard materials}, for which the first Piola–Kirchhoff stress tensor $\sigma$ can be derived from underlying energy and dissipation potentials; i.e.
		$$\div\,\sigma=DE(\eta)+D_{2}R(\eta,\partial_{t}\eta),$$
		with $E$ being the energy functional describing the elastic properties while $R$ is the dissipation	functional used to model the viscosity of the solid. Here $D$ and $D_{2}$ denote respectively the Fr\'echet derivative
		and the Fr\'echet derivative with respect to the second argument. The \textit{two time scale} approach and the \textit{minimizing movement} approximation constructed in the present article can be considered as an extension of the works \cite{BenKamSch, BreitKamSch} designed to take into account the thermal effects.\\
		In the present article we are concerned about a fluid-shell interaction where the elastic structure appears at the fluid boundary. Most of the articles in the literature dealt with such a interactive model with the assumption that the solid involved is linear elastic/ visco-elastic. The first break through result in the direction of considering a full non-linear hyperbolic Koiter shell appeared in \cite{MuhaSch} (incompressible FSI) and later suitably adapted in \cite{Breit, MitNes} (compressible case). The idea of \cite{MuhaSch} was to suitably use the effect of the fluid dissipation in the governing dynamics of the structure. We would further like to quote some articles devoted to develope the Cauchy theory of similar FSI models. The article \cite{Chambolle} proves the existence of weak solutions of a 3D-2D fluid-plate intercative system when the structure is a damped plate in flexion. In a rather recent article \cite{MuCan}, the authors construct weak solutions to a non-linear FSI problem modelling a 2D incompressible, viscous fluid (driven by a time dependent pressure data) in a cylinder interacting with a deformable wall which is given by a linear viscoelastic Koiter shell equation. In \cite{MuCan}, the fluid and the structure are coupled via the Kinemetic and dynamic boundary conditions describing respectively the continuity of velocities and the balance of contact forces at the fluid solid interfacer. One of the technical novelties of \cite{MuCan} is the use of a semi-discrete operator splitting scheme to analyse the existence issues of  fluid-structure evolutionary dynamics. Concerning the existence of strong solutions for incompressible FSI models involving an elastic structure/ damped wave equation appearing at the fluid boundary, we refer the readers to \cite{Veiga, Leq}. The article \cite{Veiga}
		proves the local in time existence of strong solutions for a 2D-1D FSI model in a channel like geometry. Whereas a local in time existence of strong solutions for arbitrary data/ global in time strong solutions for small initial data for a 3D-2D FSI system is obtained in \cite{Leq}. In a non-flat geometry (very similar to that of ours), the existence of weak solutions (until possible degeneracies) of a FSI problem involving an incompressible Newtonian fluid interacting with a linear elastic Koiter shell is being shown in \cite{LeRu14}. The existence of strong solutions shown in \cite{Veiga, Leq} are local in nature, while the authors of the article \cite{GranHill} have shown that for a 2D-1D FSI model involving an incompressible fluid and a damped beam equation in a channel like geometry, the structure never touches the fluid boundary and the strong solution exist globally in time. We would further like to quote \cite{MaityRoy}, where the existene of a unique maximal strong solution of a FSI problem involving an incompressible fluid confined in a cylindrical domain and interacting with an elastic structure at the interface is being proved by using a semigroup approach. The interested readers may also consult \cite{AbelsLiu1, AbelsLiu2}, where the authors investigate the local in time existence of strong solutions of models of \textit{blood flow through arteries involving plaque formation} via maximal regularity theory of linearized equations and a suitable fixed point argument.\\
		Although limited compared to the incompressible FSI problems, in the past few years there has been some works devoted to the Cauchy theory of compressible fluid-structure interactions. Rather than providing a detailed review of the compressible FSI literature, let us quote \cite{MaityTakahahi, Mitra} (strong solutions with damped elastic structure), \cite{Avalos2} (semigroup well posedness with an undamped structure), \cite{RoyMaity} (strong solution with a wave equation), \cite{Mitra*} (strong solution and controllability issues) for the analysis of compressible fluid structure interaction models with the structure/ wave appearing at the fluid boundary.\\
		To the best of our knowledge there exist no literature dealing with the existence issues of systems involving heat exchange between two Navier-Stokes-Fourier fluids. Although the article \cite{MMNRT} proves the global existence of weak solutions of a compressible FSI model involving a thermal equilibrium between an elastic structure and a Navier-Stokes-Fourier fluid, the model they consider is way different from that of ours where we involve a heat conductive (with different values of the conductivity coefficients ranging from zero to infinity) elastic interface (with non-linear, non-convex Koiter energy) located between two incompressible Navier-Stokes-Fourier fluids. The system we consider models physical devices known as \textit{heat exchangers} widely used now a days in power plant broilers, HVAC systems, refrigeration and many other condensers and chillers. One can further consult \cite{Breit} and \cite{Haak} for analysis of compressible FSI models involving interaction between a Navier-Stokes-Fourier fluid with a thermally insulating elastic boundary. Apart from the context of fluid-structure interaction dynamics, we would like to mention a very interesting article \cite{Helemr}, proving the global in time existence of bounded energy weak solutions to the Maxwell-Stefan-Fourier equations in Fich-Onsager form. The model they consider involves heat flux through boundary which is proportional to the temperature difference of the fluid and the back ground temperatures (which has some resemblance with our situation). 
		\subsection{The structure of the paper.}
		\begin{enumerate}
			\item[Section 2:]Here we introduce the Koiter energy $K(\eta)$. Moreover, we will define the notion of trace for a moving domain, introduce embedding of Sobolev spaces in non-Lipschitz set-up, Solenoidal extension properties, notion of convergence and an Aubin-Lions type result.
			\item[Section 3:]In this section the double minimisation is performed on the so-called $\tau$ layer for a fixed acceleration scale $h$. In order to possess a positive temperature a couple of carefully chosen $\tau$-dependent regularizers are included. These suffice to derive uniform in $\tau,h$ and $\kappa$ estimates for $\theta$. A time-delayed solution is established including a weak equation of the temperature and an energy equality.
			\item[Section 4:]Here a regularised solution is established. This includes a subtle argument of Aubin-Lions for the time-averages of the velocities.
			\item[Section 5:] The finite limit passage is the removal of the regulariser, the so-called $\kappa$ level. For that higher regularity for the shell has to be shown. As in this setting it is not possible anymore to test the limit equation with the velocity the heat equation is lost and replaced by an entropy inequality.
		\end{enumerate}
		In the sections 3-5 we consider the case $\lambda\in [0,\infty)$. The insulating case "$\lambda=\infty$" is explained at the end of section 5.
		
		\section{Non-linear Koiter energy, geometry near the interface and some key lemmas}\label{sec:Geomtry}
		In the following we will discuss the structure of the Koiter energy, the geometry of the time dependent moving interface, embedding and trace results for domains which are not uniformly Lipschitz. We will further define the notion of convergences for our set up and recall an Aubin-Lions type result which is a key tool in proving the strong compactness of the fluid and structural velocities.
		\subsection{Non-linear Koiter energy}\label{GeometryKoi}
		The following discussion on the non-linear Koiter shell energy and its properties is a summary of \cite[Section 4]{MuhaSch} and \cite{Breit} (some of them are inspired from the reference literature \cite{CiarletIII}).\\
		The non-linear Koiter model is given in terms of the difference of the first and the second fundamental forms of $\Sigma_{\eta}$ and $\Gamma.$ We recall that $\nu_{\eta}$ denotes the normal-direction to the deformed middle surface $\varphi_{\eta}(\Gamma)$ at the point $\varphi_{\eta}(x)$ and is given by
		$$\nu_{\eta}(x)=\partial_{1}\varphi_{\eta}(x)\times \partial_{2}\varphi_{\eta}(x)={\bf{a}}_{1}(\eta)\times{\bf{a}_{2}}(\eta).$$
		In view of \eqref{varphieta}, these tangential derivatives $\bf{a}_{i}(\eta)$ can be computed as follows
		\begin{equation}\label{compdertan}
			\begin{array}{ll}
				\displaystyle {\bf{a}}_{i}(\eta)=\partial_{i}\varphi_{\eta}={{a}}_{i}+\partial_{i}\eta\nu+\eta\partial_{i}\nu,\quad\mbox{in}\quad i\in\{1,2\},
			\end{array}
		\end{equation}
		where ${{a}}_{i}=\partial_{i}\varphi(x).$\\
		Hence the components of the first fundamental form of the deformed configuration are given by
		\begin{equation}\label{fff}
			\begin{array}{ll}
				\displaystyle a_{ij}(\eta)={\ba}_{i}(\eta)\cdot{\ba}_{j}(\eta)
				=a_{ij}+\partial_{i}\eta\partial_{j}\eta+\eta({a}_{i}\cdot\partial_{j}\nu+{a}_{j}\cdot\partial_{i}\nu)+\eta^{2}\partial_{i}\nu\cdot\partial_{j}\nu,
			\end{array}
		\end{equation}
		where $a_{ij}=\partial_{i}\varphi(x)\cdot\partial_{j}\varphi(x).$\\
		Now in order to introduce the elastic energy $K=K(\eta)$ associated with the non-linear Koiter model we will use the description presented in \cite{MuhaSch} (which is inspired from \cite{CiarletIII}).\\
		In order to introduce the Koiter shell energy we first define two quantities $\mathbb{G}(\eta)$ and $\mathbb{R}(\eta).$ The change of metric tensor $\mathbb{G}(\eta)=(G_{ij}(\eta))_{i,j}$ is defined as follows
		\begin{equation}\label{Geta}
			\begin{array}{ll}
				\displaystyle G_{ij}(\eta)&\displaystyle=\partial_{i}\varphi_{\eta}\cdot\partial_{j}\varphi_{\eta}-\partial_{i}\varphi\cdot\partial_{j}\varphi=a_{ij}(\eta)-a_{ij}\\
				&\displaystyle =\partial_{i}\eta\partial_{j}\eta+\eta({a}_{i}\cdot\partial_{j}\nu+{a}_{j}\cdot\partial_{i}\nu)+\eta^{2}\partial_{i}\nu\cdot\partial_{j}\nu.
			\end{array}
		\end{equation}
		Further we define the tensor $\mathbb{R}(\eta)=(R_{ij}(\eta))_{i,j}$ which is a variant of the second fundamental form to measure the change of curvature
		\begin{equation}\label{Rsharp}
			\begin{array}{ll}
				\displaystyle R_{ij}(\eta)=\frac{\partial_{ij}\varphi_{\eta}\cdot \nu_{\eta}}{|\partial_{1}\varphi\times\partial_{2}\varphi|}-\partial_{ij}\varphi\cdot\nu=\frac{1}{{a}_{1}\times {a}_{2}}\partial_{i}{\ba}_{j}(\eta)\cdot\nu_{\eta}-\partial_{i}a_{j}\cdot\nu,\quad i,j=1,2.
			\end{array}
		\end{equation}
		Next, the elasticity tensor is defined as
		\begin{equation}\label{elastictytensor}
			\begin{array}{ll}
				&\displaystyle \mathcal{A}\mathbb{E}=\frac{4\lambda_{s}\mu_{s}}{\lambda_{s}+2\mu_{s}}(\mathbb{A}:\mathbb{E})\mathbb{A}+4\mu_{s}\mathbb{A}\mathbb{E}\mathbb{A},\qquad \mathbb{E}\in \mbox{Sym}(\mathbb{R}^{2\times 2}),
			\end{array}
		\end{equation}
		where $\mathbb{A}$ is the contra-variant metric tensor associated with $\partial\Omega^{2}$ and the constants $\lambda_{s},\mu_{s}>0$ are the Lam\'{e} coefficients. The Koiter energy of the shell is given by:
		\begin{equation}\label{Koiterenergy}
			\displaystyle K(\eta)=\frac{h_{th}}{4}\int_{\Gamma}\mathcal{A}\mathbb{G}(\eta(\cdot,t)):\mathbb{G}(\eta(\cdot,t))+\frac{h_{th}^{3}}{48}\int_{\Gamma}\mathcal{A}\mathbb{R}(\eta(\cdot,t))\otimes \mathbb{R}(\eta(\cdot,t)),
		\end{equation} 
		where $h_{th}>0$ is the thickness of the shell.\\ 
		Now in view of the Koiter energy \eqref{Koiterenergy} we write (following \cite{MuhaSch}) the elasticity operator $K'(\eta)$ as follows
		\begin{equation}\label{elasticityoperator}
			\begin{array}{ll}
				&\displaystyle \langle K'(\eta),b\rangle=\langle DK(\eta),b\rangle = a_{G}(t,\eta,b)+a_{R}(t,\eta,b),\qquad\forall b\in W^{2,p}(\Gamma)\,\,\mbox{where}\,\, p>2.
			\end{array}
		\end{equation}
		In the previous expression $a_{G}(t,\eta,b)$ and $a_{R}(t,\eta,b)$ are defined respectively as
		\begin{equation}\label{amab}
			\begin{array}{ll}
				&\displaystyle a_{G}(t,\eta,b)=\frac{h}{2}\int_{\Gamma}\mathcal{A}\mathbb{G}(\eta(\cdot,t)):\mathbb{G}'(\eta(\cdot,t))b,\\[2.mm]
				&\displaystyle a_{R}(t,\eta,b)=\frac{h^{3}}{24}\int_{\Gamma}\mathcal{A}\mathbb{R}(\eta,\cdot,t):\mathbb{R}^{'}(\eta(\cdot,t))b,
			\end{array}
		\end{equation}
		where $\mathbb{G}'$ and $\mathbb{R}'$ represent respectively the Fr\'{e}chet derivative of $\mathbb{G}$ and $\mathbb{R}.$ 
		It is important to know the structure of $a_{G}(t,\eta,b)$ and $a_{R}(t,\eta,b),$ which will play a key role during the passage of limits in suitably constructed approximate equations. Since $G_{ij}(\eta)$ is given by \eqref{Geta}, $G'_{ij}(\eta)b$ can simply be calculated as
		\begin{equation}\label{Gij'}
			\begin{array}{l}
				G_{ij}'(\eta)b=\partial_{i}b\partial_{j}\eta+\partial_{i}\eta\partial_{j}b+b(a_{i}\cdot\partial_{j}\nu+a_{j}\cdot\partial_{i}\nu)+2\eta b\partial_{i}\nu\cdot\partial_{j}\nu.
			\end{array}
		\end{equation}
		Hence one checks that $a_{G}(t,\eta,b)$ is a polynomial in $\eta$ and $\nabla\eta$ of order three and further the coefficients are in $L^{\infty}(\Gamma).$\\
		As in \cite[Section 4.1.]{MuhaSch}, $R_{ij}(\eta)$ (introduced in \eqref{Rsharp}) can be written in the following form which is easier to handle
		\begin{equation}\label{Rijcomp}
			\begin{array}{ll}
				R_{ij}(\eta)=\overline{\gamma}(\eta)\partial^{2}_{ij}\eta+P_{0}(\eta,\nabla\eta),
			\end{array}
		\end{equation}
		where $P_{0}$ is a polynomial of order three in $\eta$ and $\nabla\eta$ such that all terms are at most quadratic in $\nabla\eta$ and the coefficients of $P_{0}$ depend on $\varphi$ and the geometric quantity $\overline{\gamma}(\eta)$ (depending on $\partial\Omega$ and $\eta$) is defined as follows
		\begin{equation}\label{ovgamma}
			\begin{array}{l}
				\displaystyle\overline{\gamma}(\eta)=\frac{1}{|a_{1}\times a_{2}|}\bigg(|a_{1}\times a_{2}|+\eta(\nu\cdot(a_{1}\times\partial_{2}\nu+\partial_{1}\nu\times a_{2}))+\eta^{2}\nu\cdot(\partial_{1}\nu\times\partial_{2}\nu)\bigg).
			\end{array}
		\end{equation}
		Hence $R_{ij}(\eta)$ can be written as follows
		\begin{equation}\label{rewriteRij}
			\begin{array}{l}
				\displaystyle R_{ij}'(\eta)b=\overline{\gamma}(\eta)\partial^{2}_{ij}b+(\overline{\gamma}'(\eta)b)\partial^{2}_{ij}\eta+P'_{0}(\eta,\nabla\eta)b,
			\end{array}
		\end{equation}
		$i.e.$ we have
		\begin{equation}\label{exaR}
			\begin{array}{ll}
				\displaystyle a_{R}(t,\eta,b)=&\displaystyle\frac{h^{3}}{24}\int_{\Gamma}\bigg[\mA(\overline{\gamma}(\eta)\nabla^{2}\eta):(\overline{\gamma}(\eta)\nabla^{2}b)+\mA(\overline{\gamma}(\eta)\nabla^{2}\eta):(\overline{\gamma}'(\eta)b\nabla^{2}\eta)\\
				&\displaystyle +\mA(\overline{\gamma}(\eta)\nabla^{2}\eta):P'_{0}(\eta,\nabla\eta)b+\mA(P_{0}(\eta,\nabla\eta)):(\overline{\gamma}(\eta)\nabla^{2}b)\\
				&\displaystyle +\mA(P_{0}(\eta,\nabla\eta)):(\overline{\gamma}'(\eta)b\nabla^{2}\eta)+\mA(P_{0}(\eta,\nabla\eta)):(P'_{0}(\eta,\nabla\eta))b\bigg].
			\end{array}
		\end{equation}
		One notices that $\overline{\gamma}'(\eta)$ is a linear in $\eta$ and $\overline\gamma({\eta})$ is quadratic in $\eta.$\\[4.mm] 	
		The following section is a summary about the description of a moving domain and a few lemmas on the Sobolev embedding, extension operators and Korn's inequality concerning domains with H\"{o}lder continuous boundary.
		\subsection{Geometry of the moving interface, trace and embeddings}\label{Sec:GEE}
		This section contains a collection of facts related to domains with a moving boundary.\\
		Recall that in the introductory part of the present article we have introduced the tubular neighborhood of $\partial\Omega^{2}_{o}$ as
		\begin{equation*}
			N^b_a=\{\varphi(x)+\nu(x)z; \,\,x\in\Gamma, z\in(a_{\partial\Omega},b_{\partial\Omega})\}.
		\end{equation*}
		Further let us define the projection $\pi:N^b_a\to\partial\Omega^{2}$ as a mapping that assigns to each $x$ a unique $\pi(x)\in \partial\Omega^{2}_{o}$ such that there holds $\pi(x)=\varphi(y(x))$ where
		\begin{equation}
			\displaystyle 	y(x)=\arg\min_{y\in\Gamma}|x-\varphi(y)|.
		\end{equation}
		The signed distance function $d:N^b_a\to(a_{\partial\Omega},b_{\partial\Omega})$ is defined as
		\begin{equation*}
			d:x\mapsto (x-\pi(x))\cdot\nu(\varphi^{-1}(\pi(x))).
		\end{equation*}
		Since it is assumed that $\varphi\in C^4(\Gamma)$, it is well known that $\pi$ is well defined and possesses the $C^3$--regularity and $d$ is $C^4$ in a neighborhood of $\partial\Omega^{2}_{o}$ containing $N^b_a$, see \cite[Theorem 1 and Lemma 2]{Foote84}.
		Let $\eta:[0,T]\times\Gamma\to\eR$ be a given displacement function with $a_{\partial\Omega}<m\leq \eta\leq M<b_{\partial\Omega}$. We fix such a pair $\{m,M\}$ from the beginning.\\
		Let  $f_{m,M}\in C^\infty_{c}(\eR)$ be a cut-off function defined as follows
		\begin{equation}\label{propfmM}
			\begin{array}{l}
				0\leqslant f_{m,M}\leqslant 1,\,\,	f_{m,M}(d)=1 \,\,\mbox{for}\,\,d\in(m',M'),\,\,\mbox{and}\,\,\mbox{supp}\,f_{m,M}\Subset (m'',M'').
			\end{array}
		\end{equation}
		where $a_{\partial_\Omega}<m''<m'<m< \eta< M<M'<M''<b_{\partial\Omega}.$\\
		Further observe that
		\begin{equation}\label{suppf'}
			\begin{array}{ll}
				\displaystyle \mbox{supp}\,f'_{m,M}(d(\cdot))\subset \mathcal{A}_{m,M}=\{\varphi(x)+z\nu(x)\suchthat (z,x)\in [m'',M'']\times \Gamma\}.
			\end{array}
		\end{equation}
		For $\eta(x)\in (m,M),$ this allows us to introduce the mapping
		$$\widetilde{\varphi}_{\eta}(\cdot,t):\overline{\Omega^{2}_{o}}\times[0,T]\rightarrow \overline{\Omega^{2}(t)}$$ by
	\begin{equation}\label{deftilphi}
		\widetilde\varphi_\eta(x,t)=(1-f_{m,M}(d(x))x+f_{m,M}(d(x))(\pi(x)+(d(x)+\eta(t,\varphi^{-1}(\pi(x))))\nu(\varphi^{-1}(\pi(x))).
	\end{equation}
	Hence for the inverse $(\widetilde\varphi_\eta)^{-1}: {\Omega^{2}(t)}\times[0,T]\rightarrow\Omega^{2}_{o}$ one has
	\begin{align*}
		(\widetilde\varphi_\eta)^{-1}(z,t)=(1-f_{m,M}(d(z))z+f_{m,M}(d(z))(\pi(z)+(d(z)-\eta(t,\varphi^{-1}(\pi(z))))\nu(\varphi^{-1}(\pi(z))).
	\end{align*}
	Obviously, the mapping $\widetilde\varphi_\eta$ and its inverse inherit the regularity of $\eta$. \\
	Now one notes that $\widetilde{\varphi}$ admits of an invertible extension $i.e.$ one can define $\widetilde{\varphi}_{\eta}: \mathbb{R}^{3}\times [0,T]\rightarrow \mathbb{R}^{3}$ and $(\widetilde{\varphi}_{\eta})^{-1}: \mathbb{R}^{3}\times[0,T]\rightarrow \mathbb{R}^{3}$ as
	\begin{equation}\label{extentphi}
		\begin{array}{l}
			\widetilde\varphi_\eta(x,t)=(1-f_{m,M}(\mathfrak{d}(x))x+f_{m,M}(\mathfrak{d}(x))(\pi(x)+(\mathfrak{d}(x)+\eta(t,\varphi^{-1}(\pi(x))))\nu(\varphi^{-1}(\pi(x))),\\[2.mm]
			(\widetilde\varphi_\eta)^{-1}(z,t)=(1-f_{m,M}(\mathfrak{d}(z))z+f_{m,M}(\mathfrak{d}(z))(\pi(z)+(\mathfrak{d}(z)-\eta(t,\varphi^{-1}(\pi(z))))\nu(\varphi^{-1}(\pi(z))),
		\end{array}
	\end{equation}
	where $\mathfrak{d}$ is defined as
	\begin{equation*}
		\mathfrak d(x,\partial\Omega)=\begin{cases}-\dist(x,\partial\Omega)&\text{ if }x\in\overline\Omega\\
			\dist(x,\partial\Omega)&\text{ if }x\in\eR^3\setminus\Omega.\end{cases}
	\end{equation*} 
	Note that $d$ and $\mathfrak d$ coincide in $N^b_a$.\\ 
		Let us summarize the assumptions on the geometry for the assertions in the rest of this section.\\
		Since we have already characterized ${\Omega^{2}(t)}$ as an image of $\Omega^{2}_{o}$ under the invertible map $\widetilde{\varphi}_{\eta},$ (both of the maps $\widetilde{\varphi}_{\eta}$ and its inverse inherit regularity from $\eta$) we can now write the outer moving domain ${\Omega^{1}(t)}$ as follows
		\begin{equation}\label{chOmt1}
			{\Omega^{1}(t)}=\Omega\setminus \overline{{\Omega^{2}(t)}}=\Omega\setminus\overline{\widetilde{\varphi}_{\eta}(\Omega^{2}_{o})}.
		\end{equation}
		\hypertarget{Assumption}{\textbf{Assumptions (A)}}: Let $\Omega^{2}_{o}\Subset \Omega\subset\eR^3$ be two domain of class $C^4$. We define $\Omega^{1}_{o}=\Omega\setminus\overline{\Omega^{2}_{o}}$, see Figure 1. Let the boundary $\partial\Omega^{2}_{o}$ of $\Omega^{2}_{o}$ be parametrized as $\partial\Omega^{2}_{o}=\varphi(\Gamma)$, where $\varphi$ is a $C^4$ injective--mapping and $\Gamma\subset\eR^2$ is the torus. We take $N_a^b$ as the tubular neighborhood of $\partial\Omega^{2}_{o}$ defined in \eqref{safedis} 
		and assume that $$\partial\Omega\cap N^{b}_{a}=\emptyset$$.
		\\[3.mm]
		Under the validity of Assumption (A) for $\eta\in C([0,T]\times\Gamma;[a_{\partial \Omega} ,b_{\partial \Omega}])$ we define the underlying function spaces on variable domains in the following way for $p,r\in[1,\infty]$
		\begin{align*}
			L^p(0,T;L^r({\Omega^{i}(t)}))=&\{v(x,t): v(t)\in L^r({\Omega^{i}(t)})\text{ for a.e. }t\in (0,T),
			\,\,\|v(t)\|_{L^r({\Omega^{i}(t)})}\in L^p((0,T))\},\\
			L^p(0,T;L^r({\Omega^{i}(t)}))=&\{v(x,t):v\in L^p(0,T;L^r({\Omega^{i}(t)})),\,\,\nabla v\in L^p(0,T;L^r(\Omega^{i}(t)))\}.
		\end{align*}
		For the purposes of further discussion of this section we define 
		\begin{equation}\label{defX}
			X=L^\infty(0,T;W^{2,2}(\Gamma;[a_{\partial \Omega} ,b_{\partial \Omega}]))\cap W^{1,\infty}(0,T;L^2(\Gamma;[a_{\partial \Omega} ,b_{\partial \Omega}])).
		\end{equation}
		Since 
		\begin{equation*}
			L^\infty(0,T;W^{2,2}(\Gamma))\cap W^{1,\infty}(0,T;L^2(\Gamma))\hookrightarrow C^{0,1-\theta}([0,T];C^{0,2\theta-1}(\Gamma;[a_{\partial \Omega} ,b_{\partial \Omega}]))
		\end{equation*}
		for $\theta\in(\frac{1}{2},1)$, cf. \cite[(2.29)]{LeRu14}, we get $X\hookrightarrow C([0,T]\times\Gamma)$ and the above defined function spaces are meaningful for $\eta\in X$.\\
		In the following we will introduce the notion of trace for a moving interface which is not Lipschitz uniformly in time. We will further comment on Sobolev embeddings for such kind of moving domains. In what follows in the rest of this article we will be using these results quite often without reference.
		\begin{lemma}\label{Lem:TrOp}
			\cite[Lemma 2.3.]{Breit}	Let \hyperlink{Assumption}{Assumptions (A)} hold true with $\eta\in X$ and $p\in[1,\infty]$, $q\in(1,3)$. Then the linear mapping $\Tr_{\Sigma_\eta}:v\mapsto v \circ \widetilde\varphi_\eta|_{\partial\Omega^{2}_{o}}$ (recall that the extension of the map $\widetilde{\varphi}_{\eta}$ as defined in \eqref{extentphi}) is well defined and continuous from $L^p(0,T;W^{1,q}(\Omega^{i}(t)))$ to $L^p(L^r(\partial\Omega^{2}_{o}))$ for all $r\in (1,\frac{2q}{3-q})$, respectively from $L^p(0,T;W^{1,q}(\Omega^{i}(t)))$ to $L^p(0,T;W^{1-\frac{1}{r},r}(\Sigma_\eta(t))$ for any $1\leq r<q$.
		\end{lemma}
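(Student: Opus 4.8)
The statement is a trace theorem for a domain whose boundary $\Sigma_\eta(t)$ is only Hölder regular, uniformly in $t$; the natural strategy is to transport everything to the fixed smooth reference domain $\Omega^i_o$ via the homeomorphism $\widetilde\varphi_\eta(t,\cdot)$ of \eqref{extentphi}, to invoke the classical trace theory there, and then, for the fractional assertion, to carry the boundary values back onto $\Sigma_\eta(t)$. The workhorse is that $\eta(t,\cdot)\in W^{2,2}(\Gamma)\hookrightarrow W^{1,s}(\Gamma)$ for every $s<\infty$ (recall $\Gamma\subset\eR^2$), so that the explicit formula \eqref{extentphi} gives $\widetilde\varphi_\eta(t,\cdot)\in W^{1,s}(\Omega^i_o;\eR^3)$ for all $s<\infty$, with inverse of the same regularity; its bijectivity and a two-sided bound $0<c_0\le\det\nabla\widetilde\varphi_\eta(t,\cdot)\le C_0$ uniform in $t$ follow from the tubular-neighborhood construction \eqref{safedis}--\eqref{tblrnbd} together with the pinching $a_{\partial\Omega}<m\le\eta\le M<b_{\partial\Omega}$.

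\emph{Step 1: pull-back and a $W^{1,q'}$ estimate.} For fixed $t$ I would set $\hat v(\cdot,t)=v\circ\widetilde\varphi_\eta(\cdot,t)$ on $\Omega^i_o$, using the $\eR^3$-extension of $\widetilde\varphi_\eta$ so that the case $i=1$ (where $\widetilde\varphi_\eta=\mathrm{id}$ near $\partial\Omega$, by \eqref{tblrnbd}) is covered. The chain rule gives $|\nabla\hat v|\le(|\nabla v|\circ\widetilde\varphi_\eta)\,|\nabla\widetilde\varphi_\eta|$, and for $q'\in[1,q)$ a Hölder inequality with exponents $q/q'$ and $q/(q-q')$, followed by the change of variables $y=\widetilde\varphi_\eta(x,t)$ (licit thanks to the two-sided Jacobian bound) and by $\nabla\widetilde\varphi_\eta\in L^{s}$ for all $s<\infty$, yields $\norm{\hat v(t)}_{W^{1,q'}(\Omega^i_o)}\le C(q',\norm{\eta}_X)\,\norm{v(t)}_{W^{1,q}(\Omega^i(t))}$; taking $L^p(0,T)$-norms in $t$ transfers this to the spaces in the statement.

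\emph{Step 2: trace on $\Omega^i_o$ and the $L^r$-claim.} Since $\Omega^i_o$ has $C^4$, hence Lipschitz, boundary, the classical trace operator is bounded $W^{1,q'}(\Omega^i_o)\to W^{1-1/q',q'}(\partial\Omega^i_o)$, and by the Sobolev embedding on the two-dimensional manifold $\partial\Omega^i_o$ also $W^{1,q'}(\Omega^i_o)\to L^\rho(\partial\Omega^i_o)$ with $\rho=2q'/(3-q')$. For $v\in C^1(\overline{\Omega^i(t)})$ one has pointwise $(\Tr_{\partial\Omega^i_o}\hat v)(x)=v(\widetilde\varphi_\eta(x,t),t)$ on $\partial\Omega^2_o$ (on the $\partial\Omega$-part of $\partial\Omega^1_o$ it coincides with the usual trace of $v$, since $\widetilde\varphi_\eta=\mathrm{id}$ there), and a density argument in $W^{1,q}(\Omega^i(t))$ based on the estimate of Step 1 applied to the difference shows that $\Tr_{\Sigma_\eta}v$ is well defined as $\Tr_{\partial\Omega^i_o}\hat v|_{\partial\Omega^2_o}$. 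Combining with Step 1 and letting $q'\uparrow q$ yields the continuity $\Tr_{\Sigma_\eta}:L^p(0,T;W^{1,q}(\Omega^i(t)))\to L^p(L^r(\partial\Omega^2_o))$ for every $r\in(1,2q/(3-q))$.

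\emph{Step 3 and the main obstacle.} For the fractional target I would transport $\Tr_{\partial\Omega^2_o}\hat v(t)\in W^{1-1/q',q'}(\partial\Omega^2_o)$ (any $q'<q$, from Step 2) onto $\Sigma_\eta(t)$ through the parametrization $\varphi_\eta(t,\cdot)=\varphi+\eta(t,\cdot)\nu$, a $W^{1,s}(\Gamma;\eR^3)$-map for every $s<\infty$ whose tangent vectors are everywhere linearly independent; writing the fractional norm as a Gagliardo double integral and estimating the composition --- equivalently, interpolating the $L^\rho$-bound of Step 2 against the $W^{1,q'}$-bound of $\hat v|_{\partial\Omega^2_o}$ --- gives, after taking $L^p(0,T)$-norms, continuity into $L^p(0,T;W^{1-1/r,r}(\Sigma_\eta(t)))$ for every $1\le r<q$. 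The delicate point throughout is that $\eta$ is only Hölder, never Lipschitz: $\widetilde\varphi_\eta$ is merely a $W^{1,s}$-homeomorphism, so each change of variables or composition costs an arbitrarily small loss of integrability (this is exactly why one ends up with $q'<q$ and $r<q$, not equalities), and one must keep $\det\nabla\widetilde\varphi_\eta$ uniformly bounded away from zero --- which is precisely what \eqref{safedis}--\eqref{tblrnbd} and the bound $a_{\partial\Omega}<m\le\eta\le M<b_{\partial\Omega}$ secure. Carrying fractional-order Sobolev functions through this non-Lipschitz parametrization in Step 3 is the technically heaviest part of the argument.
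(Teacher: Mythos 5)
The paper offers no proof of this lemma: it is quoted verbatim from \cite[Lemma 2.3]{Breit}, whose argument (going back to Lengeler--R\r{u}\v{z}i\v{c}ka) is exactly the route you take — pull back by the Hanzawa-type map $\widetilde\varphi_\eta$, accept an arbitrarily small loss of integrability because $\nabla\widetilde\varphi_\eta$ only lies in $L^s$ for $s<\infty$, and then invoke the classical trace theory on the fixed $C^4$ domain. Your Steps 1--2 are in substance that proof and are sound; one small imprecision is that the lower bound on $\det\nabla\widetilde\varphi_\eta$ is not a consequence of the pinching $m\le\eta\le M$ alone but of the choice of the cut-off $f_{m,M}$ in \eqref{deftilphi} (in tubular coordinates the determinant is essentially $1+f_{m,M}'(d)\,\eta$, so one needs $\|f_{m,M}'\|_\infty$ adapted to $m'',M''$), which is part of the construction you are allowed to use but should be said.

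The genuine soft spot is Step 3, where the claim into $L^p(0,T;W^{1-1/r,r}(\Sigma_\eta(t)))$ is only gestured at: ``estimating the composition'' or ``interpolating'' on $\partial\Omega^2_o$ does not by itself transport fractional regularity onto $\Sigma_\eta(t)$, and with only the bi-H\"older information you invoke the Gagliardo seminorm estimate would actually fail, since a H\"older lower bound $|\varphi_\eta(y)-\varphi_\eta(y')|\ge c|y-y'|^{1/\beta}$, $\beta<1$, makes the kernel $|z-z'|^{-(r+1)}$ more singular after the change of variables. The missing observation is that $\varphi_\eta^{-1}$ is uniformly Lipschitz even though $\varphi_\eta$ is not: on $\Sigma_\eta(t)$ one has $\varphi_\eta^{-1}=\varphi^{-1}\circ\pi$, the restriction of the smooth normal projection of the tubular neighborhood, so $|\varphi_\eta(y)-\varphi_\eta(y')|\ge c\,|y-y'|$ with $c$ depending only on $\varphi$ and $m,M$. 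With this lower bound in the denominator, the surface-measure Jacobians $|\partial_1\varphi_\eta\times\partial_2\varphi_\eta|\in L^s(\Gamma)$ for all $s<\infty$ can be absorbed by H\"older's inequality, raising the exponents from $r$ to some $\rho>r$ and the order from $1-1/r$ to some $\sigma>1-1/r$, which is admissible precisely because $r<q'<q$ and $W^{1-1/q',q'}(\Gamma)\hookrightarrow W^{\sigma,\rho}(\Gamma)$ for the perturbed pair; this is where the strict inequality $r<q$ is consumed. Adding this observation (and the corresponding two-line Gagliardo computation) closes the argument; as it stands, Step 3 asserts the hardest part rather than proving it.
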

		Let us now state a Sobolev embedding result for moving domains.
		\begin{lemma}
			\cite[Corollary 2.10.]{LeRu14}	Let $1<p<3$ and let  \hyperlink{Assumption}{Assumptions (A)} hold true with $\eta\in X.$ Then one has the following compact embedding 
			$$W^{1,p}({\Omega^{i}(t)})\hookrightarrow\hookrightarrow L^{s}({\Omega^{i}(t)})$$
			for $1\leqslant s<p^{*}=\frac{3p}{(3-p)}.$ The embedding constant depends only on $\Omega,$ $\Omega^{2}_{o},$ $p,$ $s$ and $\|\eta\|_{X}.$ 
		\end{lemma}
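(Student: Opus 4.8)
The statement to be shown is a Sobolev/Rellich embedding on the time‑dependent (and merely Hölder‑regular) domains $\Omega^{i}(t)$. My plan is to transport the problem to the \emph{fixed} reference domains $\Omega^{i}_{o}$ via the deformation maps $\widetilde\varphi_{\eta}(\cdot,t)$ from \eqref{deftilphi}--\eqref{extentphi}, paying a harmless loss of integrability in order to cope with the fact that $\widetilde\varphi_{\eta}$ is only bi‑Hölder, not bi‑Lipschitz, in space.

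\emph{Step 1: geometry and two‑sided Jacobian bounds.} Recall from \eqref{chOmt1} that $\widetilde\varphi_{\eta}(\cdot,t)$ is a homeomorphism of $\Omega^{i}_{o}$ onto $\Omega^{i}(t)$ ($i\in\{1,2\}$; for $i=1$ use that $\widetilde\varphi_{\eta}$ equals the identity outside $\mathcal{A}_{m,M}\subset N^{b}_{a}$ and hence fixes $\partial\Omega$, so $\widetilde\varphi_{\eta}(\Omega\setminus\overline{\Omega^{2}_{o}})=\Omega^{1}(t)$), and that it and its inverse inherit the regularity of $\eta$. Since $\eta\in X\hookrightarrow C([0,T]\times\Gamma)$ and, in two space dimensions, $W^{2,2}(\Gamma)\hookrightarrow W^{1,q}(\Gamma)$ for every $q<\infty$ with constant depending only on $q,\Gamma$, we get $\widetilde\varphi_{\eta}(\cdot,t)\in W^{1,q}(\Omega^{i}_{o})$ and $(\widetilde\varphi_{\eta})^{-1}(\cdot,t)\in W^{1,q}(\Omega^{i}(t))$ for all $q<\infty$, uniformly in $t$, with norm bounded by $C(\Omega,\Omega^{2}_{o},q,\|\eta\|_{X})$. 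Moreover, in the normal tubular coordinates on $N^{b}_{a}$ the map $\widetilde\varphi_{\eta}$ is the normal graph shift $(y,z)\mapsto(y,z+f_{m,M}(z)\eta(t,y))$, whose Jacobian determinant is $1+f'_{m,M}(z)\eta(t,y)$ times a fixed nonvanishing $C^{3}$ factor; in particular $\nabla_{y}\eta$ does \emph{not} enter the determinant (the same mechanism that keeps $\nabla\eta$ out of $\overline\gamma(\eta)$ in \eqref{ovgamma}), and by the invertibility built into the construction $\det\nabla\widetilde\varphi_{\eta}$ and $\det\nabla(\widetilde\varphi_{\eta})^{-1}$ are bounded above and below by positive constants depending only on the fixed tubular data and on $\|\eta\|_{L^{\infty}(\Gamma)}\le C(\|\eta\|_{X})$.

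\emph{Step 2: pullback with an $\varepsilon$‑loss.} Fix $s<p^{*}=\tfrac{3p}{3-p}$ and choose $\varepsilon\in(0,p-1)$ so small that $s<(p-\varepsilon)^{*}:=\tfrac{3(p-\varepsilon)}{3-(p-\varepsilon)}$. For $v\in W^{1,p}(\Omega^{i}(t))$ put $w:=v\circ\widetilde\varphi_{\eta}$ on $\Omega^{i}_{o}$. By the chain rule $\nabla w=\big((\nabla v)\circ\widetilde\varphi_{\eta}\big)\,\nabla\widetilde\varphi_{\eta}$, and Hölder's inequality with $\tfrac1{p-\varepsilon}=\tfrac1p+\tfrac1q$, $q=q(p,\varepsilon)<\infty$, together with the change of variables controlled by the lower Jacobian bound of Step 1, gives
\[
\|w\|_{W^{1,p-\varepsilon}(\Omega^{i}_{o})}\le C\big(\Omega,\Omega^{2}_{o},p,\varepsilon,\|\eta\|_{X}\big)\,\|v\|_{W^{1,p}(\Omega^{i}(t))}.
\]
Since $\Omega^{i}_{o}$ is a fixed bounded domain of class $C^{4}$, the classical Sobolev embedding yields $w\in L^{(p-\varepsilon)^{*}}(\Omega^{i}_{o})\hookrightarrow L^{s}(\Omega^{i}_{o})$ and Rellich--Kondrachov yields $W^{1,p-\varepsilon}(\Omega^{i}_{o})\hookrightarrow\hookrightarrow L^{s}(\Omega^{i}_{o})$, both with absolute constants. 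Pushing forward by $(\widetilde\varphi_{\eta})^{-1}$, again via the two‑sided Jacobian bounds, we obtain $\|v\|_{L^{s}(\Omega^{i}(t))}\le C(\Omega,\Omega^{2}_{o},p,s,\|\eta\|_{X})\,\|v\|_{W^{1,p}(\Omega^{i}(t))}$. For compactness: a bounded sequence in $W^{1,p}(\Omega^{i}(t))$ pulls back to a bounded sequence in $W^{1,p-\varepsilon}(\Omega^{i}_{o})$, has a subsequence converging strongly in $L^{s}(\Omega^{i}_{o})$, and pushes forward to an $L^{s}(\Omega^{i}(t))$‑convergent subsequence. This proves both assertions with the stated dependence of the constants.

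\emph{Main obstacle.} The one genuinely non‑classical point is that $\widetilde\varphi_{\eta}$ is not bi‑Lipschitz: $\nabla\widetilde\varphi_{\eta}$ contains $\nabla\eta$, which for $\eta\in W^{2,2}(\Gamma)$ lies in every $L^{q}$ but not in $L^{\infty}$, so $W^{1,p}$ regularity cannot be transported verbatim to $\Omega^{i}_{o}$. Absorbing the rough factor $\nabla\widetilde\varphi_{\eta}$ through Hölder's inequality costs an arbitrarily small amount of integrability (landing in $W^{1,p-\varepsilon}$), which is exactly why the conclusion is restricted to exponents $s$ strictly below the critical $p^{*}$. The accompanying subtlety is that, even though $\nabla\widetilde\varphi_{\eta}$ is unbounded, $\det\nabla\widetilde\varphi_{\eta}$ is two‑sidedly bounded thanks to the normal‑graph structure of the deformation; establishing this bound uniformly in $t$ (hence in terms of $\|\eta\|_{X}$ only) is what legitimises the changes of variables and gives the uniform‑in‑time constant that is needed later in the analysis.
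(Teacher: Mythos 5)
The paper offers no proof of this lemma---it is quoted verbatim from \cite[Corollary 2.10]{LeRu14}---and your pull-back argument (transport to the fixed reference domain via $\widetilde\varphi_{\eta}$, absorb the only-$L^{q}$-integrable factor $\nabla\eta$ by H\"older at the price of an arbitrarily small loss $p\mapsto p-\varepsilon$, apply the classical Sobolev and Rellich--Kondrachov theorems on the smooth domain $\Omega^{i}_{o}$, and push forward using the two-sided Jacobian bounds) is correct and is essentially the mechanism underlying the cited result. The only point to state more carefully is the lower Jacobian bound: positivity of $1+f'_{m,M}(d)\,\eta$ is not a consequence of invertibility per se but of the fact that $\eta$ takes values in the fixed range $[m,M]\Subset(a_{\partial\Omega},b_{\partial\Omega})$ with the cut-off $f_{m,M}$ chosen accordingly, so the constant depends on this fixed pair $\{m,M\}$ (as in the paper's standing convention) rather than on $\|\eta\|_{X}$ alone.
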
 
		
			\subsection{Solenoidal extension}
			In this section we recall two results from \cite{MuhaSch} and \cite{Breit} on the solenoidal extension of a vector field defined at the moving fluid interface.\\
			Before stating the results let us recall from \cite{MuhaSch} the construction of a suitable corrector map which  will be used to construct a solenoidal extension operator (this is essential since not all functions at the interface admits of a solenoidal extension).\\
			In that direction let us first recall the notations introduced in Section \ref{Sec:GEE}. We define for a function $\xi:\Gamma\rightarrow \mathbb{R}$
			\begin{equation}\label{txi}
				\begin{array}{l}
					\widetilde{\xi}: \partial\Omega^{2}_{o} \rightarrow\mathbb{R}\,\,\mbox{with}\,\, \widetilde{\xi}(\pi(x))=\widetilde{\xi}(x)=\xi(\varphi^{-1}(\pi(x))),
				\end{array}
			\end{equation}
			where the notion of the projection $\pi$ is introduced in Section \ref{Sec:GEE}.\\
			In order to construct a solenoidal extension, we apply the Bogovskii theorem (in the spirit of \cite{MuhaSch}) in the set $\mathcal{A}_{m,M}$ (one recalls the construction of $\mathcal{A}_{m,M}$ from \eqref{suppf'}). Note that $\mathcal{A}_{m,M}$ is a $C^{2}$ and recall the support of $f'_{m,M}(d(\cdot))$ from \eqref{suppf'}.\\
			Next one introduces the following weighted mean-value over $\mathcal{A}_{m,M}.$ Let  $\varLambda\in L^{\infty}(\mathcal{A}_{m,M}),$ $\varLambda\geqslant 0$ and $\int_{\mathcal{A}_{m,M}}\varLambda(x) >0$ be a given weight function. Then
			\begin{equation}\label{defavgLam}
				\begin{array}{l}
					\displaystyle \langle \psi\rangle_{\varLambda}=\frac{\int_{\mathcal{A}_{m,M}}\psi(x)\varLambda(x)}{\int_{\mathcal{A}_{m,M}}\varLambda(x)}\,\,\mbox{for}\,\,\psi\in L^{1}(\mathcal{A}_{m,M}).
				\end{array}
			\end{equation}
			Further denote 
			\begin{equation}\label{varLameta}
				\begin{array}{l}
					\displaystyle \varLambda_{\eta}(x,t)=e^{(d(x)-\eta(\varphi^{-1}(\pi(x)),t))\div(\nu(\pi(x)))}f'_{m,M}(d(x))\geqslant 0,
				\end{array}
			\end{equation}
			which has compact support in $\mathcal{A}_{m,M}$ and satisfies (uniformly in t):
			\begin{equation}\label{bndvarlameta}
				\begin{array}{l}
					c_{1}\leqslant \|\varLambda_{\eta}\|_{L^{1}(\mathcal{A}_{m,M})}\leqslant c_{2}\|\varLambda_{\eta}\|_{L^{\infty}(\mathcal{A}_{m,M})}\leqslant c_{3}
				\end{array}
			\end{equation}
			for some positive constants $c_{1}\leqslant c_{2}\leqslant c_{3}$ depending on $m,M$ and lower bounds of $\eta.$\\
			Next let us define the corrector map. Assume $a_{\partial\Omega}<m\leq \eta\leq M<b_{\partial\Omega}$ then the corrector map
			\begin{equation}\label{corrector}
				\begin{array}{l}
					\displaystyle	\mathcal{K}_{2,\eta}:L^{1}(\Gamma)\rightarrow \mathbb{R},\,\,\xi\mapsto\mathcal{K}_{2,\eta}(\xi)=\langle \widetilde{\zeta}\rangle_{\varLambda_{\eta}}=\frac{\int_{\mathcal{A}_{m,M}}\widetilde{\xi}(\pi(x))\varLambda_{\eta}(x,t)}{\int_{\mathcal{A}_{m,M}}\varLambda_{\eta}(x,t)}
				\end{array}
			\end{equation}
			Note that $\mathcal{K}_{2,\eta}(\xi)$ is only a function of time and solves 
			$$\displaystyle \|\mathcal{K}_{2,\eta}(\xi)\|_{L^{q}(0,T)}\lesssim \|\xi\|_{L^{q}(0,T;L^{1}(\Gamma))}$$
			and $$\displaystyle \|\partial_{t}\mathcal{K}_{2,\eta}(\xi)\|_{L^{q}(0,T)}\lesssim \left(\|\partial_{t}\xi\|_{L^{q}(0,T;L^{1}(\Gamma))}+\|\xi\partial_{t}\eta\|_{L^{q}(0,T;L^{1}(\Gamma))}\right)$$
			for $q\in[1,\infty].$
			For the proof of the estimates above we refer to \cite[Corollary 3.2]{MuhaSch}.\\
			We will next see that for a function $\xi\in L^{1}(0,T;W^{1,1,}(\Gamma)),$ one can construct a divergence free lifting of $(\xi-\mathcal{K}_{2,\eta}(\xi))$ in a neighborhood $\Omega^{2}_{m,M}$ of $\Omega^{2}_{o}.$ In the same spirit one constructs $\mathcal{K}_{1,\eta}$ (indeed the construction here requires negative of the signed distance function and unit normals used in \eqref{varLameta}-\eqref{corrector}) and a solenoidal lifting of $(\xi-\mathcal{K}_{1,\eta}(\xi))$ in a neighborhood  $\Omega^{1}_{m,M}$ of $\Omega^{1}_{o}.$\\
			To be precise, one has the following.
			\begin{prop}\label{smestdivfrex}
				Let \hyperlink{Assumption}{Assumptions (A)} hold true for a given $\eta \in X$ with $a_{\partial\Omega}<m\leqslant \eta\leqslant M< b_{\partial\Omega},$ there exists a tubular neighborhood $S_{m,M}$ of $\partial\Omega^{2}_{o}$ (in view of the construction done in Section \ref{Sec:GEE}, one can define $S_{m,M}$ as $$S_{m,M}=\{\varphi(y)+d\nu(y):(d,y)\in [m,M]\times\Gamma\}$$) such that
				\begin{equation}\label{tbnbd}
					\{\varphi(x)+z\nu(x)\suchthat m\le z\le M\}\Subset S_{m,M}
				\end{equation}
				and there are linear operators
				$$\mathcal{K}_{i,\eta}:L^{1}(\Gamma)\rightarrow \mathbb{R},\,\,\mathcal{F}^{\div}_{i,\eta}:\{\xi\in L^{1}(0,T;W^{1,1}(\Gamma))\suchthat \mathcal{K}_{i,\eta}(\xi)=0\}\rightarrow L^{1}(0,T;W^{1,1}_{\div}(\Omega^{i}_{m,M})),$$ 
				such that the couple $(\mathcal{F}^{\div}_{i,\eta}(\xi-\mathcal{K}_{i,\eta}(\xi)),\xi-\mathcal{K}_{i,\eta}(\xi))$ solves
				\begin{equation}\nonumber
					\begin{array}{ll}
						&\displaystyle\mathcal{F}^{\div}_{i,\eta}(\xi-\mathcal{K}_{i,\eta}(\xi))\in L^{\infty}(0,T;L^{2}(\Omega^{i}(t)))\cap L^{2}(0,T;W^{1,2}_{\div}(\Omega^{i}(t))),\\
						& \displaystyle\xi-\mathcal{K}_{i,\eta}(\xi)\in L^{\infty}(0,T;W^{2,2}(\Gamma))\cap W^{1,\infty}(0,T;L^{2}(\Gamma)),\\
						&\displaystyle tr_{\Sigma_\eta}(\mathcal{F}^{\div}_{i,\eta}(\xi-\mathcal{K}_{i,\eta}(\xi)))=(\xi-\mathcal{K}_{i,\eta}(\xi))\nu(\cdot),\\
						&\displaystyle \mathcal{F}^{\div}_{i,\eta}(\xi-\mathcal{K}_{i,\eta}(\xi))(t,x)=0\,\,\mbox{for}\,\,(t,x)\in (0,T)\times (\Omega^{i}_{o}\setminus S_{m,M}),
					\end{array}
				\end{equation}
				where $\Omega^{i}_{m,M}=\Omega^{i}_{o}\cup S_{m,M}.$
				Provided that $\eta,\xi\in L^{\infty}(0,T;W^{2,2}(\Gamma))\cap W^{1,\infty}(0,T;L^{2}(\Gamma)),$ one has the following estimates
				\begin{equation}\label{Fdivetaest1}
					\begin{split}
						&\|\mathcal{F}^{\div}_{i,\eta}(\xi-\mathcal{K}_{i,\eta}(\xi))\|_{L^{q}(0,T;W^{1,p}(\Omega^{i}_{m,M}))}\lesssim \|\xi\|_{L^{q}(0,T;W^{1,p}(\Gamma))}+\|\xi\nabla\eta\|_{L^{q}(0,T;L^{p}(\Gamma))},\\
						& \|\partial_{t}\mathcal{F}^{\div}_{i,\eta}(\xi-\mathcal{K}_{i,\eta}(\xi))\|_{L^{q}(0,T;L^{p}(\Omega^{i}_{m,M}))}\lesssim\|\partial_{t}\xi\|_{L^{q}(0,T;L^{p}(\Gamma))}+\|\xi\partial_{t}\eta\|_{L^{q}(0,T;L^{p}(\Gamma))},
					\end{split}
				\end{equation}
				for any $p\in (1,\infty),$ $q\in (1,\infty].$\\
				Furthermore in the same spirit of the proof of \cite[Proposition 3.3]{MuhaSch} (using chain rule of derivatives) and with the assumption $\eta(s),\xi(s)\in W^{k_{0},2}(\Gamma)$ for $a.e.$ $s\in (0,T),$ ($k_{0}\in\mathbb{N}$ and $k_{0}>3$) one in particular proves that for $a.e.$ $s\in(0,T)$ the following holds
				\begin{equation}\label{higherderest1*}
					\begin{split}
						&\displaystyle\|\nabla^{k_{0}}\mathcal{F}^{\div}_{\eta(s)}(\xi-\mathcal{K}_{i,\eta}(\xi))\|_{L^{2}(\Omega^{i}_{m,M})}\\
						&\displaystyle \lesssim \|\nabla^{k_{0}}\xi(s)\|_{L^{2}(\Gamma)}+\sum^{k_{0}}_{i=1}\||\nabla^{k_{0}-i}\xi(s)||\nabla\eta|^{i}(s)\|_{L^{2}(\Gamma)}\\
						&\displaystyle\qquad+\sum^{k_{0}-1}_{i,j=1}\||\nabla^{k_{0}-i}\xi(s)||\nabla\eta(s)|^{i-j}|\nabla^{j}\eta(s)|\|_{L^{2}(\Gamma)} +\||\xi(s)||\nabla^{k_{0}}\eta(s)|\|_{L^{2}(\Gamma)} \\
						&\displaystyle \lesssim \|\nabla^{k_{0}}\xi(s)\|_{L^{2}(\Gamma)}+\sum^{k_{0}}_{i=1}\|\nabla^{k_{0}-1}\xi(s)\|_{L^{4}(\Gamma)}\|\nabla\eta(s)\|^{i}_{L^{\infty}(\Gamma)}\\
						&\displaystyle \qquad + \sum^{k_{0}-1}_{i,j=1,i\geqslant j}\|\nabla^{k_{0}-1}\xi(s)\|_{L^{4}(\Gamma)}\|\nabla\eta(s)\|^{i-j}_{L^{\infty}(\Gamma)}\|\nabla^{k_{0}-1}\eta(s)\|_{L^{4}(\Gamma)}\\
						&\displaystyle\qquad+\|\xi(s)\|_{L^{\infty}(\Gamma)}\|\nabla^{k_{0}}\eta(s)\|_{L^{2}(\Gamma)}.
					\end{split}
				\end{equation}
		\end{prop}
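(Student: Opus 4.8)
The statement to prove is Proposition~\ref{smestdivfrex}, concerning the construction of a divergence-free extension operator $\mathcal{F}^{\div}_{i,\eta}$ together with the corrector $\mathcal{K}_{i,\eta}$, with the associated regularity and the higher-derivative bound~\eqref{higherderest1*}.

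\medskip

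\textbf{Strategy.} The plan is to follow the construction of \cite[Section 3]{MuhaSch} (and its adaptation in \cite[Lemma 2.8]{Breit}) and simply track the additional derivatives needed for~\eqref{higherderest1*}. First I would work on the inner domain $\Omega^2$; the outer case $\Omega^1$ is symmetric after replacing $d$ by $-d$ and $\nu$ by $-\nu$, and touching $\partial\Omega$ is excluded by Assumption~(A) (the condition $\partial\Omega\cap N_a^b=\emptyset$). The idea is to produce a vector field supported in the tubular shell $S_{m,M}$ whose trace on $\Sigma_\eta$ equals $(\xi-\mathcal{K}_{i,\eta}(\xi))\nu$ and which is solenoidal. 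The natural first guess is the ``radial'' lifting $w(x)=(\widetilde{\xi-\mathcal{K}_{i,\eta}(\xi)})(\pi(x))\,f_{m,M}(d(x))\,\nu(\pi(x))$, pushed by $\widetilde\varphi_\eta$ into the moving configuration. Its divergence is not zero, but—this is the crux of the Muha--Schwarzacher trick—the obstruction to solenoidality is, up to the Piola transform, exactly a function of the form $(\widetilde{\xi-\mathcal{K}_{i,\eta}(\xi)})(\pi(x))\,f'_{m,M}(d(x))\,e^{(d-\eta)\div\nu}$, i.e.\ it is supported in $\mathcal{A}_{m,M}$ and its integral over $\mathcal{A}_{m,M}$ equals a constant multiple of $\langle \widetilde{\xi-\mathcal{K}_{i,\eta}(\xi)}\rangle_{\varLambda_\eta}$. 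By the very definition~\eqref{corrector} of $\mathcal{K}_{i,\eta}$, this mean vanishes on the admissible class $\{\mathcal{K}_{i,\eta}(\xi)=0\}$, so the obstruction has zero mean and one may apply the Bogovskii operator on the $C^2$ domain $\mathcal{A}_{m,M}$ to correct it; the Bogovskii correction is then added to $w$ to obtain $\mathcal{F}^{\div}_{i,\eta}(\xi-\mathcal{K}_{i,\eta}(\xi))$.

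\medskip

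\textbf{Key steps, in order.} (1) Recall the regularity of $\pi$, $d$ ($C^3$ and $C^4$ respectively, from \cite{Foote84}) and of $\widetilde\varphi_\eta$ (inherits the regularity of $\eta$, from~\eqref{deftilphi}--\eqref{extentphi}); fix $m'',m',m,M,M',M''$ as in~\eqref{propfmM}. (2) Define the radial lifting $w$ and compute, using the Piola identity $\div(J^{-1}\nabla\widetilde\varphi_\eta^{-1}(v\circ\widetilde\varphi_\eta))=J^{-1}(\div v)\circ\widetilde\varphi_\eta$, that the divergence of the pushed-forward field is supported in $\mathcal{A}_{m,M}$ and equals the weight $\varLambda_\eta$ from~\eqref{varLameta} times $(\widetilde{\xi-\mathcal{K}_{i,\eta}(\xi)})(\pi(\cdot))$. (3) Verify the zero-mean condition via~\eqref{defavgLam}--\eqref{corrector}: $\int_{\mathcal{A}_{m,M}} \varLambda_\eta \widetilde{(\xi-\mathcal{K}_{i,\eta}\xi)}(\pi)=0$ precisely because $\mathcal{K}_{i,\eta}(\xi-\mathcal{K}_{i,\eta}(\xi))=0$, using the bounds~\eqref{bndvarlameta} on $\varLambda_\eta$. (4) Apply Bogovskii's theorem on $\mathcal{A}_{m,M}$ to obtain a correction $w_B$ with $\div w_B=-\div(\text{pushed }w)$, $w_B$ compactly supported in $\mathcal{A}_{m,M}\Subset S_{m,M}$ (so that the trace on $\Sigma_\eta$ is unaffected and the field vanishes outside $S_{m,M}$), and $\|w_B\|_{W^{1,p}}\lesssim\|\div(\text{pushed }w)\|_{L^p}$; set $\mathcal{F}^{\div}_{i,\eta}(\xi-\mathcal{K}_{i,\eta}(\xi))=(\text{pushed }w)+w_B$. (5) Read off the trace identity and the vanishing-outside-$S_{m,M}$ property. (6) For the quantitative bounds~\eqref{Fdivetaest1}: the pushed lifting depends on $\xi$ through $\xi\circ\varphi^{-1}\circ\pi$ composed with $\widetilde\varphi_\eta$ and on $\eta$ through $\widetilde\varphi_\eta$ and the weight; differentiating once and using $\|\eta\|_X<\infty$ (so $\nabla\eta\in L^\infty$ by the embedding $X\hookrightarrow C([0,T];C^{0,2\theta-1}(\Gamma))$ quoted above, actually one uses $W^{2,2}\hookrightarrow W^{1,4}$ in two dimensions for the $\nabla\eta$-terms) produces exactly the two terms $\|\xi\|_{W^{1,p}(\Gamma)}+\|\xi\nabla\eta\|_{L^p(\Gamma)}$; the time-derivative estimate is identical with $\partial_t$ landing either on $\xi$ or, through $\widetilde\varphi_\eta$ and $\varLambda_\eta$, on $\partial_t\eta$, and the corrector estimates from \cite[Corollary 3.2]{MuhaSch} control the $\mathcal{K}_{i,\eta}$ contribution. (7) For~\eqref{higherderest1*}: apply the Faà di Bruno / chain-rule expansion of $\nabla^{k_0}$ of the composition $(\xi-\mathcal{K}_{i,\eta}(\xi))\circ(\varphi^{-1}\circ\pi)$ evaluated at $\widetilde\varphi_\eta(\cdot)$; since $\varphi,\pi,d,f_{m,M}$ are $C^{k_0}$ (as $k_0>3$ and these maps are $C^4$ near the shell—here one uses that the geometry is fixed and smooth, only $\eta$ carries variable regularity), every term is a product of one factor $\nabla^{k_0-\ell}\xi$ with a polynomial of total order $\ell$ in the derivatives of $\eta$ (up to order $k_0$), which is the first displayed line of~\eqref{higherderest1*}; the second line is then a routine interpolation/Hölder splitting using $W^{1,4}(\Gamma)\hookrightarrow L^\infty(\Gamma)$ in two dimensions to move intermediate $\eta$-derivatives into $L^\infty$ and keeping the top-order ones in $L^2$ (for $\nabla^{k_0}\eta$) or $L^4$ (for $\nabla^{k_0-1}\eta$). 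The Bogovskii part contributes only lower-order terms controlled by its $\nabla^{k_0-1}$-continuity on $\mathcal{A}_{m,M}$, hence is absorbed.

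\medskip

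\textbf{Main obstacle.} The routine part is large but genuinely routine once one commits to the Muha--Schwarzacher scheme; the real bookkeeping burden—and the step I expect to be most delicate—is the higher-derivative estimate~\eqref{higherderest1*}. The subtlety is that the lifting is a composition of $\xi$ with a map $\widetilde\varphi_\eta$ that itself contains $\eta$, so $\nabla^{k_0}$ generates, via Faà di Bruno, a combinatorial sum in which one must carefully verify that (a) no term carries more than one ``top-order'' $\eta$-derivative together with a top-order $\xi$-derivative, and (b) all intermediate $\eta$-derivatives can be absorbed in $L^\infty(\Gamma)$ using the two-dimensional embedding $W^{k_0,2}(\Gamma)\hookrightarrow W^{k_0-1,\infty}(\Gamma)$ and the a priori $W^{k_0,2}$-bound that the $\kappa$-regularisation provides. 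Keeping the structure ``one derivative of $\xi$ or $\eta$ at top order, the rest in $L^\infty$'' is exactly what makes the right-hand side of~\eqref{higherderest1*} linear in the top-order norms of $\xi$ and $\eta$, which is in turn what later yields compactness of $\int_0^t\langle DK_\kappa(\eta^h),b^h\rangle$; getting the exponents right in this splitting is where care is needed. The application of the Bogovskii operator with higher regularity on the fixed $C^2$ (in fact smoother, since the geometry is $C^4$ and one only needs finitely many derivatives there after noting $\mathcal{A}_{m,M}$ inherits smoothness from $\varphi$) set $\mathcal{A}_{m,M}$ is standard and contributes no obstruction.
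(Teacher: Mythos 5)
Your blueprint is, at top level, the same as the paper's (which essentially invokes \cite[Proposition 3.3]{MuhaSch} and records the explicit structure \eqref{consf2eta}--\eqref{deff2eta}): lift $\xi$ along the normal rays, identify the divergence defect as a multiple of $\varLambda_\eta$ supported in $\mathcal{A}_{m,M}$, make it mean-free via $\mathcal{K}_{i,\eta}$, correct with the Bogovskii operator on $\mathcal{A}_{m,M}$, and get \eqref{higherderest1*} by the chain rule plus two-dimensional Sobolev embeddings. However, your step (2) contains a genuine gap: for the field you actually define, $w=\widetilde{(\xi-\mathcal{K}_{i,\eta}\xi)}(\pi)\,f_{m,M}(d)\,\nu(\pi)$, one has $\div w=\widetilde{(\xi-\mathcal{K}_{i,\eta}\xi)}(\pi)\,\big[f'_{m,M}(d)+f_{m,M}(d)\,\div(\nu\circ\pi)\big]$, because $\nabla(\widetilde{\xi}\circ\pi)\perp\nu\circ\pi$ while $\div(\nu\circ\pi)\neq 0$ (curvature of the parallel surfaces). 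The curvature term is supported on the whole collar, up to and including $\Sigma_\eta$, so the defect is neither supported in $\mathcal{A}_{m,M}$ nor of the form $\varLambda_\eta\,\widetilde{(\xi-\mathcal{K}_{i,\eta}\xi)}$; consequently the zero-mean condition you check in step (3) is not the relevant one, and a Bogovskii correction able to remove this defect could not be supported away from the interface, which would destroy the trace identity. The exponential factor is not an output of the computation but must be built into the lifting, exactly as in \eqref{deff2eta}: it is the integrating factor along the normal rays that cancels $\div(\nu\circ\pi)$ in the region where $f_{m,M}\equiv 1$ and equals $1$ on $\{d=\eta\}$, so that simultaneously the defect collapses to $\varLambda_\eta\widetilde{\xi}$ (supported in $\mathcal{A}_{m,M}$, cf. \eqref{varLameta}) and the trace is exactly $(\xi-\mathcal{K}_{i,\eta}(\xi))\nu$.

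A second, related problem is the Piola push-forward: the construction behind \eqref{deff2eta} is purely Eulerian (written directly in $x$ through $d$ and $\pi$), and no transport by $\widetilde\varphi_\eta$ is needed. If you do push forward by the Piola transform, the scalar factor $(\det\nabla\widetilde\varphi_\eta)^{-1}$ survives at the interface (the normal shift $x\mapsto \pi(x)+(d(x)+\eta)\nu$ is not volume preserving even though $\nabla\widetilde\varphi_\eta\,\nu=\nu$ there), so the trace would come out as $(\xi-\mathcal{K}_{i,\eta}\xi)\,\nu/\det\nabla\widetilde\varphi_\eta$ rather than $(\xi-\mathcal{K}_{i,\eta}\xi)\,\nu$, and the divergence defect would sit in a time-dependent image set, whereas the Bogovskii operator must act on the fixed $C^{2}$ domain $\mathcal{A}_{m,M}$ uniformly in time. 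Once the lifting is taken as in \eqref{deff2eta}, the rest of your plan---the mean-zero verification via \eqref{corrector}, the estimates \eqref{Fdivetaest1}, and the chain-rule/embedding argument for \eqref{higherderest1*}---is in line with what the paper (and \cite{MuhaSch}) does.
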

		The proof of Proposition \ref{smestdivfrex} until \eqref{higherderest1*} can be found in \cite[Proposition 3.3]{MuhaSch}. Further performing chain rule of derivatives and suitably estimating the product terms one concludes the proof of  \eqref{higherderest1*}. Indeed one has to use the structure of $\mathcal{F}^{div}_{i,\eta}$ for the purpose of proving estimates \eqref{Fdivetaest1} and \eqref{higherderest1*}. In particular for $i=2,$ $\mathcal{F}^{div}_{2,\eta}$ is given as (we refer to \cite[Section 3]{MuhaSch})
		\begin{equation}\label{consf2eta}
			\begin{array}{ll}
				\displaystyle \mathcal{F}^{div}_{2,\eta}(\xi)(x)={\mathcal{F}_{2,\eta}}(\xi)(x)-\mbox{Bog}_{m,M}(\div(\mathcal{F}_{2,\eta}(\xi)))(x)
			\end{array}
		\end{equation} 
		where 
		\begin{equation}\label{deff2eta}
			\begin{array}{ll}
				\mathcal{F}_{2,\eta}(\xi)(x)=e^{(d(x)-\eta(\varphi^{-1}(\pi(x))))\div(\nu(\pi(x)))}\widetilde{\xi}(\pi(x))\nu(\pi(x))
			\end{array}
		\end{equation}
		and $\mbox{Bog}_{m,M}$ is the classical Bogovskii operator constructed for the $C^{2}$ domain $\mathcal{A}_{m,M}.$\\
		The next result is a corollary to Proposition \ref{smestdivfrex} and provides a key tool towards proving the compactness of velocities.
		\begin{corollary}\label{corextension}
			Let $a,r\in[2,\infty],$ $p,q\in(1,\infty)$ and $s\in[0,1].$ Assume that $\eta\in L^{r}(0,T;W^{2,a}(\Gamma))\cap W^{1,r}(0,T;L^{a}(\Gamma))$ such that  $a_{\partial\Omega}<m\leqslant \eta\leqslant M< b_{\partial\Omega}.$ Further let $b\in W^{s,p}(\Gamma)$ and take $(b)_{\delta}$ as a smooth approximation of $b$ in $\Gamma.$ Then $E_{i,\eta,\delta}(b)=\mathcal{F}^{\div}_{i,\eta}((b)_{\delta}-\mathcal{K}_{i,\eta}((b)_{\delta}))$ satisfies all the regularity of Proposition \ref{smestdivfrex}. In particular
			\begin{equation}\label{estEetadel}
				\begin{array}{ll}
					&\displaystyle \|E_{i,\eta(t),\delta}(b)-\mathcal{F}^{\div}_{i,\eta}(b-\mathcal{K}_{i,\eta}(b))\|_{L^{p}(\Omega^{i}_{m,M})}\leqslant c\|(b)_{\delta}-b\|_{L^{p}(\Gamma)},\\
					&\displaystyle \|\partial_{t} E_{i,\eta(t),\delta}(b)\|_{L^{r}(0,T;L^{a}(\Omega^{i}_{m,M}))}\leqslant c \|(b)_{\delta}\partial_{t}\eta(t)\|_{L^{r}(0,T;L^{a}(\Gamma))}
				\end{array}
			\end{equation} 
			uniformly in $t\in (0,T).$
		\end{corollary}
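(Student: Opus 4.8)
The plan is to deduce everything from Proposition \ref{smestdivfrex} by viewing $E_{i,\eta,\delta}(b)=\mathcal{F}^{\div}_{i,\eta}\big((b)_\delta-\mathcal{K}_{i,\eta}((b)_\delta)\big)$ as the image of the fixed smooth function $(b)_\delta$ under the (time-dependent) linear operators already constructed there, and then to quantify the $\delta\to 0$ discrepancy using linearity and the continuity estimates \eqref{Fdivetaest1}. First I would observe that since $(b)_\delta\in C^\infty(\Gamma)\subset W^{1,1}(\Gamma)$ for each fixed $\delta>0$, and since $\eta\in L^r(0,T;W^{2,a}(\Gamma))\cap W^{1,r}(0,T;L^a(\Gamma))$ with $a_{\partial\Omega}<m\le\eta\le M<b_{\partial\Omega}$ puts us exactly in the hypotheses of Proposition \ref{smestdivfrex}, the quantities $\mathcal{K}_{i,\eta}((b)_\delta)$ and $E_{i,\eta,\delta}(b)$ are well defined, $E_{i,\eta,\delta}(b)$ is divergence free, vanishes on $(0,T)\times(\Omega^i_o\setminus S_{m,M})$, has the trace $\mathrm{tr}_{\Sigma_\eta}E_{i,\eta,\delta}(b)=\big((b)_\delta-\mathcal{K}_{i,\eta}((b)_\delta)\big)\nu$, and enjoys all the time-regularity listed in that proposition. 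This is just a matter of checking that the input $(b)_\delta$ meets the admissibility condition $\mathcal{K}_{i,\eta}\big((b)_\delta-\mathcal{K}_{i,\eta}((b)_\delta)\big)=0$, which holds because $\mathcal{K}_{i,\eta}$ is linear in its argument and idempotent on constants (its output being a function of time only).

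For the first estimate in \eqref{estEetadel}, the key point is linearity of $\mathcal{F}^{\div}_{i,\eta}$ and $\mathcal{K}_{i,\eta}$ in the argument: we have
\begin{equation}\nonumber
E_{i,\eta,\delta}(b)-\mathcal{F}^{\div}_{i,\eta}\big(b-\mathcal{K}_{i,\eta}(b)\big)=\mathcal{F}^{\div}_{i,\eta}\Big(\big((b)_\delta-b\big)-\mathcal{K}_{i,\eta}\big((b)_\delta-b\big)\Big),
\end{equation}
so it suffices to bound the right-hand side in $L^p(\Omega^i_{m,M})$ by $\|(b)_\delta-b\|_{L^p(\Gamma)}$. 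I would use the first line of \eqref{Fdivetaest1} with $q$ replaced by $\infty$ (or rather the pointwise-in-$t$ version of the construction, which is legitimate since the operator acts instantaneously in time through $\eta(t)$), together with the explicit formula \eqref{consf2eta}–\eqref{deff2eta} for $\mathcal{F}^{\div}_{i,\eta}$: the map $\mathcal{F}_{i,\eta}$ multiplies the lift of the boundary datum by the bounded factor $e^{(d-\eta\circ\varphi^{-1}\circ\pi)\,\div\nu}$ and the normal $\nu$, both of which are $L^\infty$ uniformly in $t$ by \eqref{bndvarlameta}-type bounds, while the Bogovskii correction $\mathrm{Bog}_{m,M}$ is a bounded operator on $L^p(\mathcal{A}_{m,M})$ for the fixed $C^2$ domain $\mathcal{A}_{m,M}$; finally the trivial bound $\|\mathcal{K}_{i,\eta}((b)_\delta-b)\|\lesssim\|(b)_\delta-b\|_{L^1(\Gamma)}\lesssim\|(b)_\delta-b\|_{L^p(\Gamma)}$ absorbs the corrector term. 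This yields the claimed constant $c$ depending only on $\Omega,\Omega^2_o,m,M,p$.

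For the second estimate, $\|\partial_t E_{i,\eta(t),\delta}(b)\|_{L^r(0,T;L^a(\Omega^i_{m,M}))}\lesssim\|(b)_\delta\,\partial_t\eta\|_{L^r(0,T;L^a(\Gamma))}$, I would apply the second line of \eqref{Fdivetaest1} directly with $\xi=(b)_\delta$, which is constant in time, so that $\partial_t\xi=0$ and only the term $\|\xi\partial_t\eta\|_{L^r(0,T;L^a(\Gamma))}=\|(b)_\delta\partial_t\eta\|_{L^r(0,T;L^a(\Gamma))}$ survives; one checks that the admissible exponent ranges $p\in(1,\infty)$, $q\in(1,\infty]$ in \eqref{Fdivetaest1} accommodate the given $a,r$ (using $a,r\in[2,\infty]$ and, where $a=\infty$ or $r=\infty$ is needed, the boundedness of $\Gamma$ to pass to a finite exponent before applying the estimate). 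The remaining regularity claims of Proposition \ref{smestdivfrex} for $E_{i,\eta,\delta}(b)$ are inherited verbatim since $(b)_\delta$ is smooth, hence certainly in $L^\infty(0,T;W^{2,2}(\Gamma))\cap W^{1,\infty}(0,T;L^2(\Gamma))$. I expect the only mildly delicate point to be the bookkeeping of which norm of $\eta$ is actually needed for the time-derivative estimate — one must make sure the product $(b)_\delta\partial_t\eta$ is controlled in $L^r(L^a)$ using $\partial_t\eta\in L^r(0,T;L^a(\Gamma))$ and $(b)_\delta\in L^\infty(\Gamma)$ (for fixed $\delta$), and that the constant, while allowed to blow up as $\delta\to 0$ through $\|(b)_\delta\|_{L^\infty}$, does not enter the two displayed inequalities because the right-hand sides there retain the explicit $(b)_\delta$ factor. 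Since all ingredients are either quoted from Proposition \ref{smestdivfrex} or are the elementary linearity/boundedness observations above, there is no genuine obstacle; the statement is essentially a repackaging of the proposition for mollified boundary data.
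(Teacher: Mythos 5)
The paper offers no separate proof of Corollary \ref{corextension}: it is meant as an immediate consequence of Proposition \ref{smestdivfrex}, using the linearity of $\mathcal{F}^{\div}_{i,\eta}$ and $\mathcal{K}_{i,\eta}$ together with the explicit structure \eqref{consf2eta}--\eqref{deff2eta}, which is exactly the route you take; in particular your identity $E_{i,\eta,\delta}(b)-\mathcal{F}^{\div}_{i,\eta}(b-\mathcal{K}_{i,\eta}(b))=\mathcal{F}^{\div}_{i,\eta}\big(((b)_\delta-b)-\mathcal{K}_{i,\eta}((b)_\delta-b)\big)$ and the application of the second line of \eqref{Fdivetaest1} with the time-independent datum $\xi=(b)_\delta$ (so that only the $\|(b)_\delta\partial_t\eta\|$ term survives) are the intended argument. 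Two points deserve sharper wording. First, as you correctly sense, the $L^p(\Gamma)\to L^p(\Omega^i_{m,M})$ bound behind the first line of \eqref{estEetadel} cannot come from \eqref{Fdivetaest1}$_{1}$ (which costs a full derivative of $\xi$); it comes from \eqref{consf2eta}--\eqref{deff2eta}, but there the relevant fact is the negative-norm estimate for $\mathrm{Bog}_{m,M}\circ\mathrm{div}$ on the fixed $C^2$ domain $\mathcal{A}_{m,M}$ (Bogovskii extended to distributional divergences), not boundedness of $\mathrm{Bog}_{m,M}$ on $L^p$ itself. Second, your parenthetical treatment of the endpoint $a=\infty$ does not work: passing to a finite exponent by boundedness of $\Gamma$ goes the wrong way for an $L^\infty$ spatial target, and the Bogovskii correction is not $L^\infty$-bounded, so your argument as written covers only $a<\infty$; this endpoint is an artifact of the statement and is never needed downstream (the corollary is invoked in Proposition \ref{strngconvavg} with $a=r=2$), but it should either be excluded or argued separately rather than waved through.
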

		We devote the next section to introduce the notion of convergence in moving domains and we would further state a compactness result of Aubin-Lions type which is a key tool to show compactness of fluid and structural velocities later in the present article.
		\subsection{Notion of convergence and an Aubin-Lions type theorem}
		Let us first introduce the notion of convergence in the framework of time dependent variable domains (in the spirit of \cite{Breit}) .\\[2.mm] 
		\textbf{Definition:} Let  \hyperlink{Assumption}{Assumptions (A)} hold true with the role of $\eta$ replaced by $\eta_{j},$ where $(\eta_{j})_{j}\subset C(\Gamma\times[0,T];(a_{\partial\Omega},b_{\partial\Omega})).$ Further assume that $\eta_{j}\rightarrow\eta$ uniformly in $\Gamma\times[0,T].$ Let $p,q\in[1,\infty]$ and $k\in\mathbb{N}_{0}.$ Corresponding to the displacement $\eta_{j},$ we will denote the moving domains by $\Omega^{i}_{j}(t).$ \\
		$(a)$ We say that a sequence $(g_{j})\subset L^{p}(0,T;L^{q}(\Omega^{i}_{j}(t)))$ (indeed $i\in\{1,2\}$ is fixed) converges to $g$ in $L^{p}(0,T;L^{q}(\Omega^{i}_{j}(t)))$ strongly with respect to $(\eta_{j}),$ in symbols $g_{j}\rightarrow g$ in $L^{p}(0,T;L^{q}(\Omega^{i}_{j}(t))),$ if 
		$$\chi_{\Omega^{i}_{j}(t)}g_{j}\rightarrow \chi_{\Omega^{i}(t)}g\,\,\mbox{in}\,\, L^{p}(0,T;L^{q}(\mathbb{R}^{3})).$$\\[3.mm]
		$(b)$ Let $p,q<\infty.$ We say that a sequence $(g_{j})\subset L^{p}(0,T;L^{q}(\Omega^{i}_{j}(t)))$ converges to $g$ in $L^{p}(0,T;L^{q}(\Omega^{i}_{j}(t)))$ weakly with respect to $(\eta_{j}),$ in symbols $g_{j}\rightharpoonup g$ in $L^{p}(0,T;L^{q}(\Omega^{i}_{j}(t))),$ if 
		$$\chi_{\Omega^{i}_{j}(t)}g_{j}\rightharpoonup \chi_{\Omega^{i}(t)}g\,\,\mbox{in}\,\, L^{p}(0,T;L^{q}(\mathbb{R}^{3})).$$\\[3.mm]
		$(c)$ let $p=\infty$ and $q<\infty.$ We say that a sequence $(g_{j})\subset L^{\infty}(0,T;L^{q}(\Omega^{i}_{j}(t)))$ converges to $g$ in $L^{\infty}(0,T;L^{q}(\Omega^{i}_{j}(t)))$ weak$^{*}$ with respect to $(\eta_{j}),$ in symbols $g_{j}\rightharpoonup^{*}g$ in $L^{\infty}(0,T;L^{q}(\Omega^{i}_{j}(t))),$ if 
		$$\chi_{\Omega^{i}_{j}(t)}g_{j}\rightharpoonup^{*} \chi_{\Omega^{i}(t)}g\,\,\mbox{in}\,\, L^{\infty}(0,T;L^{q}(\mathbb{R}^{3})).$$\\[3.mm]
		Further note that the space $L^{p}(0,T;L^{q}({\Omega^{i}(t)}))$ (with $1\leqslant p< \infty$ and $1<q<\infty$) is reflexive and we have the usual duality pairing
		$$L^{p}(0,T;L^{q}({\Omega^{i}(t)}))\cong L^{p'}(0,T;L^{q'}({\Omega^{i}(t)}))$$
		provided $\eta$ is smooth enough. Further the previous definition can be extended in a canonical way to Sobolev spaces. We say that a sequence $(g_{j})\subset L^{p}(0,T;W^{1,q}(\Omega^{i}_{j}(t)))$ converges to $g\in L^{p}(0,T;W^{1,q}({\Omega^{i}(t)}))$ strongly with respect to $(\eta_{j}),$ in symbols
		$$g_{j}\rightarrow  g\,\,\mbox{in}\,\, L^{p}(0,T;W^{1,q}(\Omega^{i}_{j}(t))),$$
		iff both $g_{j}$ and $\nabla g_{j}$ converges (to $g$ and $\nabla g$ respectively) in $L^{p}(0,T;W^{1,q}(\Omega^{i}_{j}(t)))$ strongly with respect to $(\eta_{j})$ (in the sense of item $(a)$ of the previous definition). We can further define weak and weak$^{*}$ convergence in Sobolev spaces with respect to $(\eta_{j})$ with an obvious meaning.\\[2.mm]  
		Now let us  recall a compactness result of Aubin-Lions type which is suitably constructed to obtain compactness by using minimal assumptions on the functions involved.  The following theorem is taken from \cite[Theorem 5.1.]{MuhaSch}.
		\begin{thm}\label{thmBbL}
			Let $X$ and $Z$ be two Banach spaces, such that $X^{*}\subset Z^{*}$ (they respectively represent the dual spaces of $X$ and $Z$). Assume that $f_{n}:(0,T)\rightarrow X$ and $g_{n}: (0,T)\rightarrow X^{*}$ such that $g_{n}\in L^{\infty}(0,T;Z^{*})$ uniformly. Moreover assume the following:\\
			$(1)$ The weak convergence: for some $s\in [1,\infty]$ we have that $f_{n}\rightharpoonup^* f$ in $L^{s}(X)$ and $g_{n}\rightharpoonup^*g$ in $L^{s'}(X^{*}).$\\
			$(2)$ The approximability condition is satisfied: For every $\delta\in(0,1]$ there exists a $f_{n,\delta}\in L^{s}(0,T;X)\cap L^{1}(0,T;Z),$ such that for every $\epsilon\in (0,1)$ there exists a $\delta_{\epsilon}\in (0,1)$ (depending only on $\epsilon$) such that
			$$\displaystyle \|f_{n}-f_{n,\delta}\|_{L^{s}(0,T;X)}\leqslant \epsilon\,\,\mbox{for all}\,\,\delta\in (0,\delta_{\epsilon}]$$
			and for every $\delta\in (0,1]$ there is a $C(\delta)$ such that
			$$\|f_{n,\delta}\|_{L^{1}(0,T;Z)}dt\leqslant C(\delta).$$
			Moreover, we assume that for every $\delta$ there is a function $f_{\delta}$ and a subsequence such that $f_{n,\delta}\rightharpoonup^* f_{\delta}$ in $L^{s}(0,T;X).$\\
			$(3)$ The equi-continuity of $g_{n}:$ We require that there exists an $\alpha\in (0,1]$ a functions $A_{n}$ with $A_{n}\in L^{1}(0,T)$ uniformly, such that for every $\delta>0$ that there exist a $C(\delta)>0$ and an $n_{\delta}\in\mathbb{N}$ such that for $\tau>0$ and $a.e.$ $t\in [0,T-\tau]$
			$$\sup_{n\geqslant n_{\delta}}\bigg|\frac{1}{\tau}\int^{\tau}_{0}\langle g_{n}(t)-g_{n}(t+s),f_{n,\delta}(t)\rangle_{X',X}ds \bigg|\leqslant C(\delta)\tau^{\alpha}(A_{n}(t)+1).$$
			$(4)$ The compactness assumption is satisfied: $X^{*}\hookrightarrow\hookrightarrow Z^{*}.$ More precisely, every uniformly bounded sequence in $X^{*}$ has a strongly converging subsequence in $Z^{*}.$\\
			Then there is a sub-sequence, such that
			$$\displaystyle \int^{T}_{0}\langle f_{n}, g_{n}\rangle _{X,X^{*}}dt\rightarrow \int^{T}_{0}\langle f,g \rangle _{X,X^{*}}dt.$$
		\end{thm}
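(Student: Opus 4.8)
\emph{Strategy.} The plan is to regularise $g_n$ by a time average (Steklov average) at a scale $\tau>0$ and to combine this with the approximants $f_{n,\delta}$ of hypothesis $(2)$, passing to the limit in the strict order $n\to\infty$, then $\tau\to0$, then $\delta\to0$. Since $f_n\rightharpoonup^{*}f$ in $L^{s}(0,T;X)$ and $g_n\rightharpoonup^{*}g$ in $L^{s'}(0,T;X^{*})$, both sequences are bounded, and by weak lower semicontinuity together with $(2)$ we get $\|f-f_\delta\|_{L^{s}(0,T;X)}\le\liminf_n\|f_n-f_{n,\delta}\|_{L^{s}(0,T;X)}\le\varepsilon$ whenever $\delta\le\delta_\varepsilon$, hence also $f_\delta\to f$ in $L^{s}(0,T;X)$ as $\delta\to0$. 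Writing $\int_0^T\langle f_n,g_n\rangle=\int_0^T\langle f_n-f_{n,\delta},g_n\rangle+\int_0^T\langle f_{n,\delta},g_n\rangle$, the first term is $\le\|f_n-f_{n,\delta}\|_{L^{s}(0,T;X)}\|g_n\|_{L^{s'}(0,T;X^{*})}\le C\varepsilon$ for $\delta\le\delta_\varepsilon$, uniformly in $n$, and likewise $\int_0^T\langle f_\delta,g\rangle\to\int_0^T\langle f,g\rangle$ as $\delta\to0$. So it suffices to show that for each fixed $\delta$ one has $\int_0^T\langle f_{n,\delta},g_n\rangle\to\int_0^T\langle f_\delta,g\rangle$ as $n\to\infty$.

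\emph{The two pieces.} Set $(g_n)_\tau(t)=\tfrac1\tau\int_0^\tau g_n(t+s)\,ds$ (extending $g_n$ past $T$ by a fixed bounded extension, a routine technicality) and split $\int_0^T\langle f_{n,\delta},g_n\rangle=\int_0^T\langle f_{n,\delta},g_n-(g_n)_\tau\rangle+\int_0^T\langle f_{n,\delta},(g_n)_\tau\rangle$. For the first piece, write $g_n(t)-(g_n)_\tau(t)=\tfrac1\tau\int_0^\tau\big(g_n(t)-g_n(t+s)\big)\,ds$, so that hypothesis $(3)$ gives, for $n\ge n_\delta$ and a.e.\ $t$, the bound $|\langle f_{n,\delta}(t),g_n(t)-(g_n)_\tau(t)\rangle|\le C(\delta)\tau^{\alpha}(A_n(t)+1)$; integrating in $t$ and using the uniform $L^1(0,T)$ bound on $A_n$ yields $|\int_0^T\langle f_{n,\delta},g_n-(g_n)_\tau\rangle|\le C(\delta)\,C\,\tau^{\alpha}$ for all $n\ge n_\delta$.

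\emph{The regularised piece.} For fixed $\tau$ the averages $(g_n)_\tau$ are equi-Lipschitz in time with values in $Z^{*}$ (by the uniform $L^\infty(0,T;Z^{*})$ bound on $g_n$) and satisfy $\|(g_n)_\tau(t)\|_{X^{*}}\le\tau^{-1/s'}\|g_n\|_{L^{s'}(0,T;X^{*})}$ uniformly in $n$ and $t$; hence by hypothesis $(4)$ and the Arzelà--Ascoli theorem the whole sequence converges, $(g_n)_\tau\to g_\tau$ in $C([0,T];Z^{*})$, where $g_\tau(t)=\tfrac1\tau\int_0^\tau g(t+s)\,ds$ (the limit being identified through the weak-$*$ convergence of $g_n$). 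Decomposing $\int_0^T\langle f_{n,\delta},(g_n)_\tau\rangle=\int_0^T\langle f_{n,\delta},g_\tau\rangle+\int_0^T\langle f_{n,\delta},(g_n)_\tau-g_\tau\rangle$, the first term converges to $\int_0^T\langle f_\delta,g_\tau\rangle$ since $g_\tau\in L^{s'}(0,T;X^{*})$ and $f_{n,\delta}\rightharpoonup^{*}f_\delta$ in $L^{s}(0,T;X)$, while the second is $\le\|f_{n,\delta}\|_{L^{1}(0,T;Z)}\|(g_n)_\tau-g_\tau\|_{L^\infty(0,T;Z^{*})}\to0$ by the uniform $L^1(0,T;Z)$ bound of $(2)$ and the $C([0,T];Z^{*})$ convergence. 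Combining the two pieces, $\limsup_{n}|\int_0^T\langle f_{n,\delta},g_n\rangle-\int_0^T\langle f_\delta,g_\tau\rangle|\le C(\delta)\,C\,\tau^{\alpha}$; letting $\tau\to0$, using $g_\tau\to g$ in $L^{s'}(0,T;X^{*})$ (or dominated convergence when $s'=\infty$) and $f_\delta\in L^{s}(0,T;X)$, proves the reduction of the first paragraph, and a $3\varepsilon$ argument then concludes.

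\emph{Main obstacle.} The crux is that $(4)$ provides compactness only in the direction $X^{*}\hookrightarrow\hookrightarrow Z^{*}$, which is the \emph{wrong} one for pairing the strongly convergent averages $(g_n)_\tau$ with $f_{n,\delta}\in L^{s}(0,T;X)$ directly; this is exactly why the $L^{1}(0,T;Z)$ bound on $f_{n,\delta}$ in $(2)$ is indispensable, forcing the pairing of $(g_n)_\tau-g_\tau$ with $f_{n,\delta}$ to be carried out in the $Z$--$Z^{*}$ duality. A second delicate point is that the equicontinuity $(3)$ is required only against the regular approximants $f_{n,\delta}$, with $C(\delta)$ allowed to blow up as $\delta\to0$, so the three limits must be taken exactly in the order $n\to\infty$, then $\tau\to0$, then $\delta\to0$; that the $\tau$-remainder estimate is uniform in $n\ge n_\delta$ --- and therefore survives the first limit --- is what makes the scheme close. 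The in-time boundary effect of the Steklov average is handled by a fixed bounded extension of $g_n$ beyond $[0,T]$.
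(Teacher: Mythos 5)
The paper does not prove Theorem \ref{thmBbL} at all: it quotes the statement from \cite{MuhaSch} (Theorem 5.1 there), and your argument reproduces exactly the scheme of that proof --- split off $f_n-f_{n,\delta}$ using (1)--(2), compare $g_n$ with its forward time average $(g_n)_\tau$ and control the difference by the equicontinuity (3), obtain $(g_n)_\tau\to g_\tau$ in $C([0,T];Z^{*})$ for fixed $\tau$ from (4) via Arzel\`a--Ascoli and identify the limit through the weak-$*$ convergence of $g_n$, pair the remaining error with $f_{n,\delta}$ in the $Z$--$Z^{*}$ duality using the $L^{1}(0,T;Z)$ bound, and take the limits in the order $n\to\infty$, $\tau\to0$, $\delta\to0$. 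Your reading of why the compactness direction $X^{*}\hookrightarrow\hookrightarrow Z^{*}$ forces the pairing into the $Z$--$Z^{*}$ duality, and why the order of limits is rigid, is correct.

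The one step you should not wave off as routine is the time boundary. Hypothesis (3) is assumed only for a.e.\ $t\in[0,T-\tau]$, and ``extending $g_n$ past $T$ by a fixed bounded extension'' does not make it available on $(T-\tau,T]$: the extension alters $g_n(t+s)$ precisely where the estimate would be needed, and no equicontinuity is assumed for the extended function. So the strip contribution $\int_{T-\tau}^{T}\langle f_{n,\delta}(t),\,g_n(t)-(g_n)_\tau(t)\rangle\,dt$ must be handled separately, and the only bound the stated hypotheses give is $2\|g_n\|_{L^{\infty}(0,T;Z^{*})}\int_{T-\tau}^{T}\|f_{n,\delta}(t)\|_{Z}\,dt$, which is \emph{not} small as $\tau\to0$ uniformly in $n$ for fixed $\delta$, because only $\|f_{n,\delta}\|_{L^{1}(0,T;Z)}\leq C(\delta)$ is assumed and there is no equi-integrability in $n$. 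Since you take $n\to\infty$ before $\tau\to0$, this term does not disappear by itself. The clean repair is to carry out the comparison with the Steklov average on $[0,T-\tau]$ only, and to dispose of the strip by whatever uniform-in-$n$ integrability of $t\mapsto\langle f_{n,\delta}(t),g_n(t)\rangle$ is actually available; in the application in this paper (Proposition \ref{strngconvavg}) the approximants $f_{n,\delta}$ are, for fixed $\delta$, bounded in $L^{\infty}(0,T;Z)$, so the strip is $O_{\delta}(\tau)$ and harmless, but at the level of the abstract statement this point deserves an explicit argument (or an explicit extra assumption) rather than an appeal to an extension of $g_n$.
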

		
		\section{A time delayed approximation layer}
		At this stage we solve a time-delayed problem which results after discretizing the material derivative of the fluid and its elastic counterpart at level $h>0$ (indeed $h$ is fixed at this stage). 
		\subsection{Definition of solution to a time delayed problem and existence result}
		Let us now introduce the set up of weak solutions at a discrete layer with the discretization parameter $h>0.$
		\begin{definition}\label{Def:taulayer}
			Given 
			a known previous solid velocity $\beta:  \Gamma\times[0,h] \rightarrow \mathbb{R}$ and a corresponding quantity $w_{i}:\Omega^{i}_{0}\times[0,h]\rightarrow\mathbb{R}^{3}$ for the fluids, we call a triplet $(u_{i},\theta_{i},\eta)$ to be a solution to a time delayed problem in $(0,h)$ if
			\begin{equation}\label{regularity2}
				\begin{array}{ll}
					&\displaystyle u_{i}\in  L^{2}(0,h;W^{k_{0},2}(\Omega^{i})),\,\,\mathrm{div}\,u_{i}=0\,\,\mbox{in}\,\,\Omega^{i},\\
					& \displaystyle\theta_{i}\in L^{\infty}(0,h;L^{1}(\Omega^{i}))\cap L^{p}(\Omega^{i}\times(0,T))\cap L^{s}(0,h;W^{1,s}(\Omega^{i})),\,\,\mbox{for all}\,\,(p,s)\in[1,\frac{5}{3})\times [1,\frac{5}{4}),\\
					&\displaystyle \eta \in L^{\infty}(0,h;W^{k_{0}+1,2}(\Gamma))\cap W^{1,2}(0,h;W^{k_{0}+1,2}(\Gamma)),\,\,\mbox{for}\,\, k_{0}>3,\\
				\end{array}
			\end{equation}
			and the following hold\\[2.mm]
			$(i)$ A decomposition of $\Omega$ of the form \eqref{Omegat12} with \eqref{varphieta}-\eqref{Sigmat} hold.\\[2.mm]
			$(ii)$ \textit{The following time delayed momentum equation holds:}
		\begin{equation}\label{discretemomen1}
			\begin{array}{ll}
				&\displaystyle \int^{t_{1}}_{0}\langle DK_{\kappa}(\eta),b\rangle+\frac{1}{h}\int^{t_{1}}_{0}\int_{\Gamma}\left(\partial_{t}{\eta}-\beta\right)b+\sum_{i}\frac{1}{h}\int^{t_{1}}_{0}\int_{\Omega^{i}(t)}\left(u_{i}-w_{i}\circ(\Phi_{i})^{-1}\right)\psi\\
				&\displaystyle+{\kappa}\sum_{i}\int^{t_{1}}_{0}\int_{\Omega^{i}(t)}\nabla^{k_{0}}u_{i}:\nabla^{k_{0}}\psi+\sum_{i}\int^{t_{1}}_{0}\int_{\Omega^{i}(t)}\mu_{i}(\theta_{i})D(u_{i}):D(\psi)\\
				&\displaystyle +\kappa\int^{t_{1}}_{0}\int_{\Gamma}\nabla^{k_{0}+1}\partial_{t}\eta:\nabla^{k_{0}+1}b=0.
			\end{array}
		\end{equation}
		for $a.e.$ $t_{1}\in[0,h],$ for all $b\in L^{\infty}(0,h;W^{k_{0}+1,2}(\Gamma))\cap W^{1,\infty}(0,h;L^{2}(\Gamma)),$ for all $\psi\in C^{\infty}(\overline{[0,h]\times\Omega})$  with $\mbox{div}\,\psi=0$ on $\Omega$ satisfying
		\begin{equation}
			\begin{array}{l}
				\displaystyle \Tr_{\Sigma_{\eta}}u_{i}=\partial_{t}\eta\nu\quad\mbox{and}\quad \Tr_{\Sigma_{\eta}}\psi=b\nu\quad\mbox{on}\quad \Gamma.
			\end{array}
		\end{equation}
		In \eqref{discretemomen1}, $\Phi_{i}:\overline{\Omega^{i}_{0}}\times[0,h]\rightarrow \overline{\Omega^{i}}$ denotes the flow maps which solve
		\begin{equation}\label{flowmap}
			\left\{ \begin{array}{ll}
				&\displaystyle\partial_{t}\Phi_{i}=u_{i}\circ\Phi_{i},\\
				&\displaystyle \Phi(\cdot,0)=Id|_{\Omega^{i}_{0}}
			\end{array}\right.
		\end{equation} 
		and
		\begin{equation}\label{strKkappa}
			\displaystyle K_{\kappa}(\eta)=K(\eta)+\frac{\kappa}{2}\int_{\Gamma}|\nabla^{k_{0}+1}\eta|^{2},\,\,\mbox{for}\,\,k_{0}>3.
		\end{equation}
		Indeed \eqref{strKkappa} implies that
		\begin{equation}\label{DefDk}
			\displaystyle \langle DK_{\kappa}(\eta),b\rangle=\langle DK(\eta),b\rangle +\kappa\langle \nabla^{k_{0}+1}\eta,\nabla^{k_{0}+1}b \rangle.
		\end{equation}
		$(iii)$ The following heat evolution holds
		\begin{equation}\label{intenballh}
			\begin{array}{ll}
				&\displaystyle c_{i}\int^{t_{1}}_{0}\frac{d}{dt}\int_{\Omega^{i}(t)}\theta_{i}(x,t)\zeta_{i}(x,t)-c_{i}\int^{t_{1}}_{0}\int_{\Omega^{i}(t)}\theta_{i}(x,t)(\partial_{t}+u_{i}\cdot\nabla)\zeta_{i}+k_{i}\int^{t_{1}}_{0}\int_{\Omega^{i}(t)}\nabla\theta_{i}\cdot\nabla\zeta_{i}\\
				&\displaystyle+\lambda\mathcal{I}_{i}\int^{t_{1}}_{0}\int_{\partial\Omega^{2}(t)}(\theta_{1}-\theta_{2})\zeta_{i}\\
				\displaystyle  
				&\displaystyle = \int^{t_{1}}_{0}\int_{\Omega^{i}}\mu_{i}(\theta_{i})|Du_{i}|^{2}\zeta_{i}+ {\kappa}\int^{t_{1}}_{0}\int_{\Omega^{i}}|\nabla^{k_{0}}u_{i}|^{2}\zeta_{i}+\mathcal{X}_{i}\frac{1}{2h}\int^{t_{1}}_{0}\int_{\Gamma}\bigg|\partial_{t}{\eta}-\beta\bigg|^{2}\zeta_{i}\\[4.mm]
				&\displaystyle+\frac{1}{h}\int^{t_{1}}_{0}\int_{\Omega^{i}}|u_{i}-w_{i}\circ(\Phi^{i})^{-1}|^{2}\zeta_{1}
				+\kappa\mathcal{X}_{i}\int^{t_{1}}_{0}\int_{\Gamma}\bigg|\nabla^{k_{0}+1}\partial_{t}{\eta}\bigg|^{2}\zeta_{1},
			\end{array}
		\end{equation}
		for all $\zeta_{i}\in C^{\infty}_{c}([0,h];C^{\infty}(\overline{\Omega^{i}})),$ where $\mathcal{I}_{1}=1,$ $\mathcal{I}_{2}=-1$ and $\mathcal{X}_{1}=1,$ $\mathcal{X}_{2}=0.$ \\[2.mm]
		$(iv)$ The themperature solves the positivity criterian, $\theta_{i}\geqslant\gamma>\gamma_i>0$ in $\Omega^{i}\times(0,h),$.\\[2.mm]
		$(v)$ The total energy balance holds in the sense
			\begin{equation}\label{hlevelenergy}
				\begin{array}{ll}
					&\displaystyle \sum_{i}c_{i}\int_{\Omega^{i}(t_{1})}\theta_{i}(t_{1})+K_{\kappa}(t_{1})+\frac{1}{2h}\int^{t_{1}}_{0}\bigg(\int_{\Gamma}|\partial_{t}\eta|^{2}+\sum_{i}\int_{\Omega^{i}(t)}|u_{i}|^{2}\bigg)\\
					&\displaystyle = K_{\kappa}(0)+\frac{1}{2h}\int^{t_{1}}_{0}\int_{\Gamma}|\beta|^{2}+\frac{1}{2h}\int^{t_{1}}_{0}\int_{\Omega^{i}(t)}|w_{i}\circ(\Phi^{i})^{-1}|^{2}
				\end{array}
			\end{equation}
			for $a.e.$ $t_{1}\in(0,h).$\\[2.mm]
			$(vi)$ The following identity involving the dissipative terms (both physical and artificial) holds:\\
			\begin{equation}\label{frommomentumaftertestingdiss}
				\begin{array}{ll}
					&\displaystyle K_{\kappa}\eta(t_{1})+\frac{1}{2h}\int^{t_{1}}_{0}\int_{\Gamma}|\partial_{t}\eta|^{2}
					\displaystyle+\sum_{i}\frac{1}{2h}\int^{t_{1}}_{0}\int_{\Omega^{i}(t)}|u_{i}|^{2}+\sum_{i}\left\langle \mathcal{D}^{i},1\right\rangle\\
					&\displaystyle =K_{\kappa}(0)+\frac{1}{2h}\int^{t_{1}}_{0}\int_{\Gamma}|\beta|^{2}+\sum_{i}\frac{1}{2h}\int^{t_{1}}_{0}\int_{\Omega^{i}(t)}|w_{i}\circ(\Phi^{i})^{-1}|^{2}\\
				\end{array}
			\end{equation}
			for $a.e.$ $t_{1}\in(0,h),$ where the dissipation $\left\langle \mathcal{D}^{i},1\right\rangle$ is given as
			\begin{equation}
				\begin{array}{ll}
					&\displaystyle \left\langle \mathcal{D}^{i},1\right\rangle=\bigg(\frac{1}{2h}\int^{t_{1}}_{0}\int_{\Gamma}|\partial_{t}\eta-\beta|^{2}+\frac{1}{2h}\int^{t_{1}}_{0}\int_{\Omega^{i}(t)}|u_{i}-w_{i}\circ(\Phi^{i})^{-1}|^{2}\\
					&\displaystyle\qquad\qquad+\int^{t_{1}}_{0}\int_{\Omega^{i}(t)}\mu^{M}_{i}(\theta_{i})|Du_{i}|^{2}+\kappa\int^{t_{1}}_{0}\int_{\Gamma}|\nabla^{k_{0}+1}\partial_{t}\eta|^{2}+\kappa\int^{t_{1}}_{0}\int_{\Omega^{i}(t)}|\nabla^{k_{0}}u_{i}|^{2}\bigg).
				\end{array}
			\end{equation}
			$(vii)$ An entropy evolution holds in the sense
			\begin{equation}\label{entropybal}
				\begin{array}{ll}
					\displaystyle\sum_{i=1}^{2}\left(c_{i}\frac{d}{dt}\int_{\Omega^{i}}\varphi(\theta_{i})+k_{i}\int_{\Omega^{i}}|\nabla\theta_{i}|^{2}\varphi''(\theta_{i})\right)+\lambda\int_{\partial\Omega^{2}}\left(\theta_{1}-\theta_{2}\right)\left(\varphi'(\theta_{1})-\varphi'(\theta_{2}))\right)\geqslant 0.
				\end{array}
			\end{equation}
			where $\varphi(\theta)$ is monotone and concave for $\theta>0$ such that all the integrals in \eqref{entropybal} make sense.
		\end{definition}
		Now let us state the result on the existence of a solution $(u_{i},\theta_{i},\eta)$ of the weak formulation \eqref{discretemomen1}-\eqref{entropybal}.
		\begin{theorem}\label{Thmmaintaulev}
			Suppose $h>0$ is sufficiently small. Let all the assumptions $(i)-(v)$ stated in Theorem \ref{Th:main} hold along with the further regularity $\eta_{0}\in W^{k_{0}+1,2}(\Gamma).$ Moreover, suppose a known/ previous solid velocity $\beta:  \Gamma\times[0,h] \rightarrow \mathbb{R}^{n}$ and a corresponding quantity $w_{i}:\Omega^{i}_{0}\times[0,h]\rightarrow\mathbb{R}^{n}$ for the fluids, such that $\beta\in L^{2}(\Gamma\times(0,h))$ and $w_{i}\in L^{2}(\Omega^{i}_{0}\times(0,h)).$ Then there exists a triplet $(u_{i},\theta_{i},\eta)$ solving \eqref{regularity2} and the criteria $(i)$ to $(vii)$ listed in Definition \ref{Def:taulayer} hold.
		\end{theorem}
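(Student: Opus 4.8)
The plan is to construct the triplet by a further time discretisation at a velocity scale $\tau$ with $0<\tau\ll h$, via the two‑step minimising‑movements scheme outlined in the introduction, and then to let $\tau\to0$ with $h$ and $\kappa$ fixed. Fix $N\in\mathbb{N}$, put $\tau=h/N$, and set $(\eta_0,\theta_{i,0})=(\eta^0,\theta^0_i)$ together with the initial decomposition $\Omega=\Omega^1_0\cup\overline{\Omega^2_0}$. Inductively, given $(\eta_k,\theta_{i,k})$ and the decomposition \eqref{decompintro}, one first minimises a momentum‑type functional (of the form \eqref{min1}) over the admissible class
\[
\mathcal{C}_k=\Big\{(u_i,\eta):\ \mathrm{div}\,u_i=0\text{ in }\Omega^i_k,\ u_1=0\text{ on }\partial\Omega,\ \tfrac{\eta-\eta_k}{\tau}\nu=u_i\circ\varphi_{\eta_k}\text{ on }\Gamma\Big\},
\]
obtaining $(u_{i,k+1},\eta_{k+1})$, which defines the new interface $\Sigma_{\eta_{k+1}}$ and the decomposition at level $k+1$; then, with $u_{i,k+1}$ and the two interfaces frozen, one minimises the heat‑type functional $T_k(\theta_1,\theta_2)$ (of the form \eqref{min2}--\eqref{defmA}) to obtain $(\theta_{1,k+1},\theta_{2,k+1})$.

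Existence of the minimisers is the direct method of the calculus of variations. For the momentum functional, $\mathcal{C}_k$ is non‑empty by the solenoidal extension of Proposition~\ref{smestdivfrex}; coercivity in $W^{k_0,2}(\Omega^i_k)\times W^{k_0+1,2}(\Gamma)$ is furnished by the artificial term $\kappa\int|\nabla^{k_0}u_i|^2$ and the $\kappa$‑part of $K_\kappa$; the convex leading terms are weakly lower semicontinuous, the non‑convex lower‑order Koiter contributions converge by continuity along a compact Sobolev embedding, and the affine constraint is weakly closed since $\Tr_{\Sigma_{\eta_k}}$ is continuous (Lemma~\ref{Lem:TrOp}) and $\varphi_{\eta_k}$ is a fixed diffeomorphism -- the argument of \cite{BenKamSch,BreitKamSch}. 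The heat functional is, up to an affine term, a sum of nonnegative quadratics -- in particular the transmission term $\frac{\lambda}{2}\int_{\partial\Omega^2_{k+1}}|\theta_1-\theta_2|^2$ is convex -- hence strictly convex and coercive on $W^{1,2}(\Omega^1_{k+1})\times W^{1,2}(\Omega^2_{k+1})$, giving a unique minimiser. The Euler--Lagrange equations are the $\tau$‑discrete analogues of \eqref{discretemomen1} and \eqref{intenballh}, the latter carrying the transmission term. The \emph{discrete minimum principle} $\theta_{i,k+1}\ge\gamma>\gamma_i$ follows by testing the Euler--Lagrange equation of the heat problem with $\min(0,\theta_{i,k+1}-\gamma)\le0$: the source $\mu^M_i(\theta_{i,k})|Du_{i,k+1}|^2$ is nonnegative, the discrete transport term is handled by the change of variables $\Psi_{u_i,k+1}=\mathrm{Id}+\tau u_i(\cdot,k\tau)$ with Jacobian $1+O(\tau)$, and the transmission term has the right sign -- the discrete version of the informal computation in the introduction. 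Finally, since the $\kappa$‑dissipation controls $\sum_k\tau\|\tfrac{\eta_{k+1}-\eta_k}{\tau}\|_{W^{k_0+1,2}(\Gamma)}^2$, one gets $\|\eta_{k+1}-\eta^0\|_{C^2(\Gamma)}\lesssim\sqrt{h/\kappa}$, so \emph{for $h$ sufficiently small} every iterate stays in the admissible geometric range $a_{\partial\Omega}<m\le\eta_{k+1}\le M<b_{\partial\Omega}$ with $\bar\gamma(\eta_{k+1})>0$, and the scheme runs up to $t=h$.

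For the $\tau$‑uniform estimates, testing the discrete momentum equation with its own solution $(u_{i,k+1},\tfrac{\eta_{k+1}-\eta_k}{\tau})$ and summing over $k$ gives the discrete dissipation identity (the $\tau$‑version of \eqref{frommomentumaftertestingdiss}), and adding the discrete heat equation with $\zeta_i\equiv1$ gives the discrete energy balance \eqref{hlevelenergy}; together these bound $u_i$ in $L^2(0,h;W^{k_0,2})$, $\eta$ in $L^\infty(0,h;W^{k_0+1,2}(\Gamma))$ with a $\kappa$‑weighted discrete $W^{1,2}$‑in‑time bound, $\partial_t\eta$ in $L^\infty(0,h;L^2(\Gamma))$, and $\theta_i$ in $L^\infty(0,h;L^1(\Omega^i))$, uniformly in $\tau$. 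To reach the Sobolev regularity in \eqref{regularity2} I would test the discrete heat equation with $\varphi'(\theta_{i,k+1})$, $\varphi(\theta)=\theta^{\beta+1}$, $\beta\in(-1,0)$, following \cite{bulicek}: summation yields the discrete entropy inequality \eqref{entropybal} and, after interpolation, a uniform bound for $\theta_i$ in $L^s(0,h;W^{1,s}(\Omega^i))$, $s<5/4$, and in $L^p(\Omega^i\times(0,h))$, $p<5/3$. One then introduces the piecewise‑affine‑in‑time interpolant of $\eta$ and the piecewise‑constant interpolants of $u_i$, $\theta_i$ and of the domains $\Omega^i$, checks that they solve an approximate system, and sends $\tau\to0$.

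The main obstacle is this last limit passage on a time‑varying geometry, namely the nonlinear terms $\mu_i(\theta_i)D(u_i):D(\psi)$ in the momentum equation and $\mu_i(\theta_i)|Du_i|^2\zeta_i$ on the right‑hand side of the heat equation, which require \emph{strong} convergence of both $\nabla u_i$ and $\theta_i$. For $u_i$ the $\kappa$‑regularisation is decisive: the uniform $L^2(0,h;W^{k_0,2})$ bound with $k_0>3$ -- strong enough that, combined with the time‑increment bounds provided by the scheme, a standard Aubin--Lions argument on the (for fixed $h,\kappa$ uniformly smooth) moving domains yields strong convergence of $u_i$ and its derivatives up to order $k_0-1\ge3$, in particular of $Du_i$; since $\mu_i$ is continuous and, by the discrete minimum principle, evaluated away from its singularity at $\theta=\gamma_i$ (and is there bounded above and below), it then suffices to have $\theta_i\to\theta_i$ a.e. This follows from the classical Aubin--Lions lemma applied in interior parabolic cylinders compactly contained in $\bigcup_t\Omega^i(t)\times\{t\}$ -- admissible since for fixed $h$ the interface $\Sigma_\eta$, transported by a fixed‑regularity flow, is uniformly continuous and such cylinders exhaust the moving domain -- combining the $L^sW^{1,s}$ bound with a bound on the $\tau$‑difference quotients of $\theta_i$ read off from the heat equation. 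The rest is bookkeeping under $\eta_k\to\eta$ in $C(\Gamma\times[0,h])$: the trace coupling $\mathrm{tr}_{\Sigma_\eta}u_i=\partial_t\eta\,\nu$ survives by continuity of the trace operator, the affine kinematic constraint converges to its continuous counterpart, the interface integrals converge through the uniform convergence of the parametrisations $\varphi_{\eta_k}$, the positivity criterion $(iv)$ of Definition~\ref{Def:taulayer} is inherited, and -- $u_i$ converging strongly -- the relations \eqref{hlevelenergy}, \eqref{frommomentumaftertestingdiss} and \eqref{entropybal} pass to the limit as equalities (respectively as an inequality), yielding items $(i)$--$(vii)$ of Definition~\ref{Def:taulayer} and completing the proof.
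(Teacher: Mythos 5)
Your overall architecture matches the paper: the same double minimisation \eqref{min1}--\eqref{min2} on the $\tau$-scale, Euler--Lagrange equations, the discrete dissipation/energy identities, the entropy-type testing with $\varphi(\theta)=\theta^{\beta+1}$ for the $L^sW^{1,s}$ bounds, interior Aubin--Lions for $\theta_{i,\tau}$, and a limit passage $\tau\to0$ at fixed $h,\kappa$. But there is a genuine gap at the decisive step. You claim that the uniform $L^2(0,h;W^{k_0,2})$ bound, ``combined with the time-increment bounds provided by the scheme'', yields via Aubin--Lions strong convergence of $u_{i,\tau}$ and of its derivatives up to order $k_0-1$, in particular of $Du_{i,\tau}$. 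No such time-increment bounds exist at the $\tau$-layer: the velocities $u_{i,k+1}$ are produced by independent quasi-steady minimisations, and the inertial penalisation $\frac{\tau}{2h}\int|u_{i}\circ\Phi^i_k-w_{i,k}|^2$ couples $u_{i,k+1}$ only to the \emph{fixed} $h$-level data $w_{i,k}$, not to $u_{i,k}$; there is no summable control of $u_{i,k+1}-u_{i,k}$, hence no equicontinuity in time and no Aubin--Lions compactness for $u_{i,\tau}$ (let alone for $\nabla^{k_0-1}u_{i,\tau}$). This is exactly where your argument would be needed: without strong convergence of $Du_{i,\tau}$ the quadratic source terms $\mu^M_i(\theta_{i,\tau})|Du_{i,\tau}|^2\zeta_i$ (and the artificial dissipation terms $\kappa\min\{|\nabla^{k_0}u_{i,\tau}|^2,\tau^{-1}\}$, $\frac1{2h}|\partial_t\tilde\eta_\tau-\beta_\tau|^2$, etc.) in the heat equation only pass to the limit as a $\liminf$ inequality, and then the heat identity, the energy balance \eqref{hlevelenergy} and the dissipation identity \eqref{frommomentumaftertestingdiss} of Definition \ref{Def:taulayer} are not recovered as equalities, which is what Theorem \ref{Thmmaintaulev} asserts.

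The paper closes this differently, and the distinction matters. For the momentum equation only weak convergence of $u_{i,\tau}$ is needed, since after testing the viscous term is linear in $Du_{i,\tau}$ and the strong convergence of $\theta_{i,\tau}$ handles the coefficient. The limits of the quadratic dissipation terms are then identified by a weighted-in-time testing trick: one tests the discrete Euler--Lagrange equation with $-\big(\frac{\eta_{k+1}-\eta_k}{\tau},u_{k+1}\big)(h-\tau(k+1))$ and the limit equation \eqref{momentumafterpassingtau} with $-(\partial_t\eta,u)(h-t)$, compares the two resulting identities \eqref{testaproxTdiff}--\eqref{testlimitTdiff}, and deduces convergence of the weighted $L^2$-norms of all dissipative quantities; weak convergence plus norm convergence (uniform convexity of $L^2$) then upgrades to the strong convergences \eqref{weightedconv}--\eqref{dropweight}, which is what permits \eqref{strngconvHtauizeta1} and hence the heat identity, the energy balance, and finally the removal of the truncation $\mu^M_i$ by $M\to\infty$ (a step you omit, though your minimum-principle observation makes it routine). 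A smaller remark: your minimum principle via testing the Euler--Lagrange equation with $\min(0,\theta_{i,k+1}-\gamma)$ is not directly available when the constraint $\theta_i\geqslant\gamma$ is built into the admissible set $\mathcal A_{i,k+1}$ (only one-sided variations are admissible); the paper instead proves positivity at the level of the minimisation by the comparison $T_k(\max\{\theta_{1,k+1},\theta_0\},\max\{\theta_{2,k+1},\theta_0\})\leq T_k(\theta_{1,k+1},\theta_{2,k+1})$, which sidesteps this issue.
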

		In order to solve for $(u_{i},\theta_{i},\eta)$ satisfying \eqref{discretemomen1}-\eqref{entropybal} we split $[0,h]$ into small time steps of length $\tau<< h$ and solve suitably constructed time discrete equations. 
		\subsection{$\tau$ layer, the itirative approximations}
		We assume that $h,\kappa>0$ are fixed and define
		\begin{equation}\label{betawk}
			\begin{array}{l}
				\displaystyle\beta_{k}=\frac{1}{\tau}\int\limits_{\tau k}^{\tau(k+1)}\beta:\Gamma\rightarrow \mathbb{R},\,\,\mbox{and}\,\,\displaystyle w_{i,k}=\frac{1}{\tau}\int\limits_{\tau k}^{\tau(k+1)}w_{i}:\Omega^{i}_{0}\rightarrow\mathbb{R}^{3}
			\end{array}
		\end{equation}
		are given. We will also fix a suitably small $\tau$ after the proof of Corollary \ref{lemmasolmin2}.  Further let a known function $\eta_{k}\nu:\Gamma\rightarrow \Omega$ decomposes $\Omega$ into two disjoint sets as follows $$\Omega=\Omega^{1}_{k}\cup\overline{\Omega^{2}_{k}}$$
		such that
		$\partial\Omega^{2}_{k}=\varphi_{\eta_{k}}(\Gamma,\cdot)=\Sigma_{\eta_{k}},$ $\Omega^{1}_{k}=\Omega\setminus\overline{\Omega^{2}_{k}}$ and $\partial\Omega^{1}_{k}=\Sigma_{\eta_{k}}\cup \partial\Omega.$ We also assert the existence of $\theta_{i,k}:\Omega^{i}_{k}\rightarrow \mathbb{R}$ and a diffeomorphism $\Phi^{i}_{k}:\Omega^{i}_{0}\rightarrow \Omega^{i}_{k}.$\\ 
		Let us introduce an approximate of the fluid viscosities in a way so that they do not blow off as the temperature approaches $\gamma_{i}$ (as introduced in \eqref{viscosity}) as follows
		\begin{equation}\label{approxmui}
			\mu_{i}^{M}(x)=\left\{ \begin{array}{ll}
				&\displaystyle \mu_{i}(x),\,\,\mbox{when}\,\,x\geqslant \gamma_{i}+\frac{1}{M},\\
				&\displaystyle \mu_0^ie^{M\beta_i},\,\,\mbox{otherwise}.
			\end{array}\right.
		\end{equation}
		We need this approximation since we a priori do not have the positivity of $\theta_{i}$ at our disposal. Once this relation is guaranteed in an intermediate step we can get rid of the artificial approximation of the viscosity. 
		Observe that
		\begin{equation}\label{muiN}
			\begin{array}{l}
				\displaystyle\mu^{i}_{0}\leqslant \mu^{M}_{i}(x) \leqslant \mu_0^ie^{M\beta_i}=:\mu_i\left(\gamma_{i}+\frac{1}{M}\right).
			\end{array}
		\end{equation}
		Next we define $\eta_{k+1}:\Gamma\rightarrow \Omega,$ $u_{i,k+1}:\Omega^{i}_{k}\rightarrow\mathbb{R}^{3}$ and $\theta_{i,k+1}:\Omega^{i}_{k+1}\rightarrow \mathbb{R}$ so that they consecutively solve the following minimization problems.\\
		$(a)$ {\textit{First minimization problem:}}
		solve for $(u_{i,k+1},\eta_{k+1})$ (for $i\in\{1,2\}$):
		\begin{equation}\label{min1}
			\begin{array}{ll}
				\displaystyle\min_{(u_{i},\eta)}&\displaystyle \bigg\{K_{\kappa}(\eta)+\frac{\tau}{2h}\bigg[\int_{\Gamma}\bigg|\frac{\eta-\eta_{k}}{\tau}-\beta_{k}\bigg|^{2}+\sum_{i}\int_{\Omega^{i}_{0}} \left|u_{i}\circ\Phi^{i}_{k}-w_{i,k}\right|^{2}\bigg]+\frac{\tau\kappa}{2}\sum_{i}\int_{\Omega^{i}_{k}}\left|\nabla^{k_{0}}u_{i}\right|^{2}\\
				&\displaystyle +\frac{\tau}{2}\sum_{i} \int_{\Omega^{i}_{k}} \mu^{M}_{i}(\theta_{i,k})||Du_{i}|^{2}+\frac{\tau\kappa}{2}\int_{\Gamma}\left|\nabla^{k_{0}+1}\bigg(\frac{\eta-\eta_{k}}{\tau}\bigg)\right|^{2}\bigg\}
			\end{array}
		\end{equation}
		where we require $\eta\in  W^{k_{0}+1,2}(\Gamma),$ $u_{i}\in W^{k_{0},2}(\Omega_{k}),$ $\mbox{div}\,u_{i}=0$ on $\Omega^{i}_{k}$ and subject to the inter-facial coupling condition 
		\begin{equation}\label{interfacecoupdis}
			\begin{array}{ll}
				\displaystyle\frac{\eta-\eta_{k}}{\tau}\nu=u_{i}\circ \varphi_{\eta_{k}}\,\,\mbox{on}\,\,\Gamma.
			\end{array}
		\end{equation}
		Next we define $\Psi_{u_{i,k+1}}$ in $\overline{\Omega^{i}_{k}}$ which solves $\Psi_{u}=Id+\tau u$ and update $\Phi^{i}_{k}$ to $\Phi^{i}_{k+1}$ by setting 
		$$\Phi^{i}_{k+1}=\Psi_{u_{i,k+1}}\circ \Phi^{i}_{k}.$$ 
		$(b)$ \textit{Second minimization problem:} solve for $\theta_{i,k+1}$ as $\displaystyle \min_{\theta_{i}\in\mathcal{A}_{i,k+1}}T_{k}(\theta_{1},\theta_{2})$ (for $i\in\{1,2\}$), where
		\begin{align}\label{min2}
			\begin{aligned}
				T_k:&\mathcal{A}_{1,k+1}\times \mathcal{A}_{2,k+1}\to \mathbb{R},
				\\
				T_k(\theta_1,\theta_2):=&\displaystyle\bigg\{\sum\limits_{i} \frac{ \tau c_{i}}{2}\int_{\Omega^{i}_{k}}\bigg|\frac{\theta_{i}\circ\Psi_{u_i,{k+1}}-\theta_{i,k}}{\tau}\bigg|^{2}+\sum_{i}\frac{k_{i}}{2}\int_{\Omega^{i}_{k+1}}|\nabla\theta_{i}|^{2}
				\\
				&\displaystyle {+\frac{\lambda}{2} \int_{\partial\Omega^{2}_{k+1}}\left|\theta_{1}-\theta_{2}\right|^{2}-\sum\limits_{i}\int_{\Omega^{i}_{k}}{\mu_{i}^{M}(\theta_{i,k})}|D u_{i,k+1}|^{2}\theta_{i}\circ\Psi_{u_{i,k+1}}}\\
				&\displaystyle{
					-\frac{1}{2h}\int_{\Gamma}\theta_{1}\left|\frac{\eta_{k+1}-\eta_{k}}{\tau}-\beta_{k}\right|^{2}
				}\\
				&\displaystyle -{{\kappa}{}\sum_{i}\int_{\Omega^{i}_{k}}\min\{|\nabla^{k_{0}}u_{i,k+1}|^{2},\tau^{-1}\}\theta_{i}\circ\Psi_{u_{i,k+1}}}{	-\frac{1}{2h}\sum_{i}\int_{\Omega^{i}_{0}}\left|u_{i,k+1}\circ\Phi^{i}_{k}-w_{i,k}\right|^{2}\theta_{i}\circ\Phi^{i}_{k+1}}\\
				&\displaystyle {-\kappa\int_{\Gamma}\min\Big\{\left|\nabla^{k_{0}+1}\bigg(\frac{\eta_{k+1}-\eta_{k}}{\tau}\bigg)\right|^{2},\tau^{-1}\Big\}\theta_1\bigg\}
				}
			\end{aligned}
		\end{align}
		and
		\begin{equation}\label{defmA}
			\begin{array}{l}
				\displaystyle \mathcal{A}_{i,k+1}=\{\theta_{i}\in W^{1,2}(\Omega^{i}_{k+1})\suchthat \theta_{i}\geqslant\gamma\,\,\mbox{in}\,\,\overline{\Omega^{i}_{k+1}}\}
			\end{array}
		\end{equation}
		\begin{remark}
			The above introduced double minimization allows for the following physical interpretation/speculation. First in (a) we produce an optimal direction to follow (for a given setting of a state with given temperature/viscosity/elasticity). The direction is chosen optimally in order to reduce the energy under minimal dissipative cost. The produced dissipation then is turned into heat via the second minimzation procedure. Here the dissipative forcing is appropriately turned into heat distributed w.r.t. the assumed heat conductivity. 
		\end{remark}
		The next two sections are devoted for solving the minimization problems \eqref{min1} and \eqref{min2}, deriving the related Euler-Lagrange equations and obtaining some a priori estimates for $(u_{i,k+1},\eta_{k+1},\theta_{i,k+1}).$
		\subsubsection{Solving \ref{min1} and some a priori estimates}
		The first lemma of this section corresponds to the existence of a minimizer $(u_{i,k+1},\eta_{k+1})$ of \eqref{min1} and the Euler-Lagrange equation.
		\begin{lem}\label{lemmasolmin1}
			$(i)$	Let $\eta_{k}$ $w_{i,k},$ $\beta_{k}$ and $\theta_{i,k}$ are given functions such that
			\begin{equation}\label{assumthwek}
				\begin{array}{ll}
					&\displaystyle \eta_{k}\in  W^{k_{0}+1,2}(\Gamma),\,\, \theta_{i,k}\in W^{1,2}(\Omega_{k}),\,\,w_{i,k}\in L^{2}(\Omega_{0}^{i}),\,\,\beta_{k}\in L^{2}(\Gamma),\\[3.mm]
					&\displaystyle \Omega=\Omega^{1}_{k}\cup\overline{\Omega^{2}_{k}}\,\,
					\mathrm{such\,\, that}\,\,
					\partial\Omega^{2}_{k}=\varphi_{\eta_{k}}(\Gamma)=\Sigma_{\eta_{k}},\,\, \Omega^{1}_{k}=\Omega\setminus\overline{\Omega^{2}_{k}}\\[3.mm]
					&\displaystyle\mbox{and}\,\,\partial\Omega^{1}_{k}=\Sigma_{\eta_{k}}\cup\partial\Omega.
				\end{array}
			\end{equation}
			Then there exists a positive constant $\tau_{1}<1$ such that for all $\tau\in (0,\tau_{1})$ the minimization problem \eqref{min1} has  a solution $(u_{i,k+1},\eta_{k+1})$ such that
			\begin{equation}\label{etauikp1}
				\begin{array}{ll}
					&\displaystyle u_{i,k+1}\in W^{k_{0},2}(\Omega_{k}),\,\,\eta_{k+1}\in  W^{k_{0}+1,2}(\Gamma),\,\,\inf_{\Omega_{k}}\det\left(Id+\tau\nabla u_{i,k+1}\right)>0,\\
					&\displaystyle \Omega=\Omega^{1}_{k+1}\cup\overline{\Omega^{2}_{k+1}}\,\,
					\mathrm{such\,\, that}\,\,
					\partial\Omega^{2}_{k+1}=\varphi_{\eta_{k+1}}(\Gamma)=\Sigma_{\eta_{k+1}},\,\, \Omega^{1}_{k+1}=\Omega\setminus\overline{\Omega^{2}_{k+1}}\\[3.mm]
					&\displaystyle\mbox{and}\,\,\partial\Omega^{1}_{k+1}=\Sigma_{\eta_{k+1}}\cup\partial\Omega.
				\end{array}
			\end{equation}
			$(ii)$ Any solution of the minimization problem \eqref{min1} solves
			\begin{equation}\label{ELagmin1}
				\begin{array}{ll}
					&\displaystyle \langle DK_{\kappa}(\eta_{k+1}),b\rangle+\frac{1}{h}\int_{\Gamma}\bigg(\frac{\eta_{k+1}-\eta_{k}}{\tau}-\beta_{k}\bigg)b+\kappa\int_{\Gamma}\nabla^{k_{0}+1}\bigg(\frac{\eta_{k+1}-\eta_{k}}{\tau}\bigg):\nabla^{k_{0}+1}b\\
					&\displaystyle+\sum_{i}\bigg\langle\frac{u_{i,k+1}\circ\Phi^{i}_{k}-w_{i,k}}{h},\psi\circ\Phi^{i}_{k}\bigg\rangle_{\Omega^{i}_{0}}\displaystyle+{\kappa}\sum_{i} \int_{\Omega^{i}_{k}}\nabla^{k_{0}}u_{i,k+1}:\nabla^{k_{0}}\psi\\
					&\displaystyle+\sum_{i}\int_{\Omega^{i}_{k}}\mu^{M}_{i}(\theta_{i,k})D(u_{i,k+1}):D(\psi)=0
				\end{array}
			\end{equation}
			and the interfacial coupling condition $	\displaystyle\frac{\eta_{k+1}-\eta_{k}}{\tau}\nu=u_{i,k+1}\circ \varphi_{\eta_{k}}\,\,\mbox{on}\,\,\Gamma,$ for $a.e$ $t\in[0,h]$ and all $b\in W^{k_{0}+1,2}(\Gamma),$ $\psi\in W^{k_{0},2}(\overline{\Omega})$ with $\mathrm{div}\,\psi=0$ on $\Omega$ satisfying $\psi\circ\varphi_{\eta_{k}}=b$ on $\Gamma.$
		\end{lem}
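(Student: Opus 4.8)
\medskip\noindent\emph{Proof strategy.} The plan is to prove $(i)$ by the direct method of the calculus of variations, following \cite{BenKamSch,BreitKamSch}, and then to read $(ii)$ off the first variation; the two features that are new with respect to those works — the frozen coefficient $\mu^M_i(\theta_{i,k})$ in the dissipative term and the two fluids sitting on the two sides of $\Sigma_{\eta_k}$ — do not change the structure of the argument. I would first fix the admissible class
\[
\mathcal{U}_k:=\big\{(u_1,u_2,\eta):\ \eta\in W^{k_0+1,2}(\Gamma),\ u_i\in W^{k_0,2}(\Omega^i_k),\ \div u_i=0\text{ in }\Omega^i_k,\ \tfrac{\eta-\eta_k}{\tau}\,\nu=u_i\circ\varphi_{\eta_k}\text{ on }\Gamma\big\}.
\]
As $\eta_k$ is \emph{fixed}, the coupling \eqref{interfacecoupdis} is an affine constraint, so $\mathcal{U}_k$ is a closed affine subspace; this affineness is also what will turn the Euler--Lagrange relation into an \emph{equality}. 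Moreover $(0,0,\eta_k)\in\mathcal{U}_k$, so $\mathcal{U}_k\neq\emptyset$, and evaluating the functional in \eqref{min1} at this point (all $\tau$-weighted dissipative terms and the last regulariser then vanish) shows that for $\tau<1$ its infimum $m_k$ satisfies $0\le m_k\le\mathcal{E}_0:=K_\kappa(\eta_k)+\tfrac1{2h}\big(\|\beta_k\|_{L^2(\Gamma)}^2+\sum_i\|w_{i,k}\|_{L^2(\Omega^i_0)}^2\big)<\infty$; this bound is used both for compactness and, later, for the Jacobian estimate.

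Next I would run the direct method on a minimising sequence $(u^n_i,\eta^n)\subset\mathcal{U}_k$. \emph{Coercivity:} the regulariser $\tfrac{\tau\kappa}2\sum_i\|\nabla^{k_0}u^n_i\|_{L^2}^2$ bounds $\|\nabla^{k_0}u^n_i\|_{L^2}$; the term $\tfrac{\tau}{2h}\int_{\Omega^i_0}|u^n_i\circ\Phi^i_k-w_{i,k}|^2$ bounds $\|u^n_i\|_{L^2(\Omega^i_k)}$ after the change of variables $\Phi^i_k$ (a diffeomorphism with Jacobian bounded above and below on $\overline{\Omega^i_0}$); and, since $K\ge0$ (the elasticity tensor $\mathcal A$ in \eqref{elastictytensor} is positive definite on symmetric matrices), $K_\kappa(\eta^n)$ bounds $\|\nabla^{k_0+1}\eta^n\|_{L^2}$ while $\tfrac{\tau}{2h}\int_\Gamma|\tfrac{\eta^n-\eta_k}{\tau}-\beta_k|^2$ bounds $\|\eta^n\|_{L^2(\Gamma)}$. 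Hence $(u^n_i,\eta^n)$ is bounded in $W^{k_0,2}(\Omega^i_k)\times W^{k_0+1,2}(\Gamma)$, and along a subsequence $(u^n_i,\eta^n)\rightharpoonup(u_i,\eta)$ there. \emph{Weak closedness of $\mathcal{U}_k$:} $\div u_i=0$ passes to the weak limit; since $\eta_k\in W^{k_0+1,2}(\Gamma)\hookrightarrow C^3(\Gamma)$ the domains $\Omega^i_k$ are of class $C^3$, so by Rellich--Kondrachov $u^n_i\to u_i$ in $W^{k_0-1,2}(\Omega^i_k)\hookrightarrow C^0(\overline{\Omega^i_k})$ and $\eta^n\to\eta$ in $C^2(\Gamma)$, whence both sides of \eqref{interfacecoupdis} converge uniformly and the constraint survives. \emph{Weak lower semicontinuity:} each quadratic term is convex — in particular $\tfrac\tau2\int_{\Omega^i_k}\mu^M_i(\theta_{i,k})|Du_i|^2$ is a positive quadratic form by \eqref{muiN} — hence weakly lower semicontinuous; for the non-convex Koiter part, the regulariser $\tfrac\kappa2\|\nabla^{k_0+1}\eta\|^2$ of \eqref{strKkappa} promotes the weak $W^{k_0+1,2}$-convergence to strong $W^{2,2}$-convergence through the compact embedding $W^{k_0+1,2}(\Gamma)\hookrightarrow\hookrightarrow W^{2,2}(\Gamma)$, on which $K$ is continuous — it is, by \eqref{Geta}--\eqref{exaR}, a polynomial functional of $(\eta,\nabla\eta,\nabla^2\eta)$ with bounded coefficients and with the continuous factor $\bar\gamma(\eta)$ — so that $K(\eta^n)\to K(\eta)$, while the convex top-order $\kappa$-term is weakly lower semicontinuous. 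Thus the functional in \eqref{min1} is weakly lower semicontinuous on $\mathcal{U}_k$ and attains its infimum at some $(u_{i,k+1},\eta_{k+1})\in\mathcal{U}_k$.

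For the rest of $(i)$ I would use $m_k\le\mathcal{E}_0$ again: it yields $\tfrac{\tau\kappa}2\|\nabla^{k_0}u_{i,k+1}\|_{L^2}^2\le\mathcal{E}_0$ and $\tfrac{\tau}{2h}\|u_{i,k+1}\circ\Phi^i_k-w_{i,k}\|_{L^2}^2\le\mathcal{E}_0$, hence $\|u_{i,k+1}\|_{W^{k_0,2}(\Omega^i_k)}\le C_0\,\tau^{-1/2}$ with $C_0$ independent of $\tau$. Since $k_0>3$ gives $W^{k_0,2}(\Omega^i_k)\hookrightarrow C^1(\overline{\Omega^i_k})$, this provides $\tau\|\nabla u_{i,k+1}\|_{L^\infty}\le C\,\tau^{1/2}$ and, through the trace, $\|\eta_{k+1}-\eta_k\|_{L^\infty(\Gamma)}=\tau\|(u_{i,k+1}\circ\varphi_{\eta_k})\cdot\nu\|_{L^\infty(\Gamma)}\le C\,\tau^{1/2}$. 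Therefore there is $\tau_1\in(0,1)$, depending on the step-$k$ data (its uniformity in $k$ being part of the later a priori estimates), such that for $\tau\in(0,\tau_1)$ one has $\det(Id+\tau\nabla u_{i,k+1})\ge\tfrac12>0$ on $\Omega^i_k$ and $\eta_{k+1}$ still takes values in $(a_{\partial\Omega},b_{\partial\Omega})$; by \eqref{bjtparam} this gives the decomposition of $\Omega$ recorded in \eqref{etauikp1}, and the same smallness makes $\Psi_{u_{i,k+1}}=Id+\tau u_{i,k+1}$, hence $\Phi^i_{k+1}=\Psi_{u_{i,k+1}}\circ\Phi^i_k$, a diffeomorphism, so that the scheme can be continued. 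This proves $(i)$.

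For $(ii)$ I would take the first variation. For an admissible direction — a pair $(\psi,b)$ with $\div\psi=0$ on $\Omega$ and $\psi\circ\varphi_{\eta_k}=b$ on $\Gamma$, taken in the rescaled form in which $(u_{i,k+1}+s\psi,\eta_{k+1}+sb)$ runs along a line inside $\mathcal{U}_k$ — the curve $s\mapsto(u_{i,k+1}+s\psi,\eta_{k+1}+sb)$ is admissible for every $s\in\mathbb{R}$ by affineness, and for $|s|$ small $Id+\tau(u_{i,k+1}+s\psi)$ is still a diffeomorphism by the Jacobian bound, so all composition terms are well defined; since the functional in \eqref{min1} is quadratic in $u_i$ and polynomial in $\eta$ through $K_\kappa$ (cf. \eqref{elasticityoperator}--\eqref{exaR}), its value along this curve is a polynomial in $s$, and differentiating at $s=0$ and setting the derivative to zero produces exactly \eqref{ELagmin1}, with the coupling $\tfrac{\eta_{k+1}-\eta_k}{\tau}\nu=u_{i,k+1}\circ\varphi_{\eta_k}$ inherited from $\mathcal{U}_k$. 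The two genuinely delicate points I expect are $(a)$ the weak lower semicontinuity of the \emph{non-convex} Koiter energy — which is precisely why the higher-order term $\tfrac\kappa2\|\nabla^{k_0+1}\eta\|^2$ is built into \eqref{strKkappa}, since it buys strong convergence in the space where $K$ is continuous — and $(b)$ the $\tau^{-1/2}$ bookkeeping behind the Jacobian positivity; both, however, are routine adaptations of the arguments in \cite{BenKamSch,BreitKamSch,MuhaSch}.
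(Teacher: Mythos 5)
Your proposal follows essentially the same route as the paper: the paper disposes of this lemma by citing the direct-method arguments of \cite[Lemma 3.2, Corollary 3.3]{BreitKamSch} and \cite[Theorem 4.2, Proposition 4.3]{BenKamSch}, and your coercivity/weak-closedness/lower-semicontinuity scheme plus the $\tau^{-1/2}$ bookkeeping for $\det(Id+\tau\nabla u_{i,k+1})$ is exactly that adaptation, so in substance the proof is correct. The one point you should make explicit — and it is the only point the paper itself singles out — is the scaling of the admissible variation: with the compatibility $\psi\circ\varphi_{\eta_k}=b\nu$ the curve $(u_{i,k+1}+s\psi,\eta_{k+1}+sb)$ that you write down does \emph{not} satisfy the affine coupling \eqref{interfacecoupdis} (it would force $\tau=1$); the admissible perturbation is $(u_{i,k+1}+s\psi/\tau,\ \eta_{k+1}+sb)$, and it is precisely this $1/\tau$ rescaling of the fluid direction relative to the shell direction that cancels the $\tau$-weights in \eqref{min1} and yields \eqref{ELagmin1} with the stated coefficients. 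Your phrase ``taken in the rescaled form'' gestures at this, but as literally written the first-variation computation would not reproduce \eqref{ELagmin1}; also, the remark about $Id+\tau(u_{i,k+1}+s\psi)$ being a diffeomorphism is unnecessary for the Euler--Lagrange step, since the only composition in \eqref{min1} is with the fixed map $\Phi^i_k$, which is linear in $u_i$.
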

		The proof of Lemma \ref{lemmasolmin1} can be adapted from \cite[Lemma 3.2, Corollary 3.3]{BreitKamSch} (please see also \cite[Theorem 4.2, Proposition 4.3]{BenKamSch}) in a straight forward manner. Note that while deriving the Euler-Lgrange equation \eqref{ELagmin1} from \eqref{min1} one requires the perturbation 
		$(\eta_{k+1}+\epsilon b, u_{i,k+1}+\epsilon\frac{\psi}{\tau})$
		for some divergencfree smooth testfunction $\psi$ and $\epsilon>0$. The different scaling of the structural displacement and the fluid velocities allows us to remove most of the occurrences of $\tau$ in the Euler-Lagrange equation.\\
		We would next show some bounds of $(u_{i,k},\eta_{k})^{N}_{k=1}$ in from of the following lemma and its corollary.
		\begin{lem}\label{bounduetakp1}
			Suppose $\eta^{0},$ $\beta$ and $w_{i}$ (one also recalls the definition of $\beta_{k}$ and $w_{i,k}$ from \eqref{betawk}) are given such that
			\begin{equation}\label{assuminbw}
				\begin{array}{l}
					\displaystyle\eta^{0}\in  W^{k_{0}+1,2}(\Gamma),
					\displaystyle \beta\in L^{2}(\Gamma\times[0,h]),\displaystyle w_{i}\in L^{2}(\Omega^{i}_{0}\times[0,h])
				\end{array}
			\end{equation}
			holds. Let $\tau\in (0,\tau_{1})$ (where $\tau_{1}$ is as obtained in Lemma \ref{lemmasolmin1}).
			Further suppose that $(\theta_{i,k})^{N-1}_{k=0}$ are known and $\theta_{i,k}$ belong to $W^{1,2}(\Omega^{i}_{k}).$ Then the solution $(u_{i,k},\eta_{k})^{N}_{k=1}$ to \eqref{ELagmin1} satisfy
			\begin{subequations}
				\begin{alignat}{2}
					&\displaystyle K_{\kappa}(\eta_{N})+\sum\limits_{k=0}^{N-1}\tau{\kappa}\sum\limits_{i} \int\limits_{\Omega^{i}_{k}}|\nabla^{k_{0}}u_{i,k+1}|^{2}+\sum\limits_{k=0}^{N-1}\tau\sum\limits_{i}\int_{\Omega^{i}_{k}}\mu^{M}_{i}(\theta_{i,k})|D(u_{i,k+1})|^{2}\nonumber\\
					&\displaystyle+\sum\limits_{k=0}^{N-1}\frac{\tau}{2h}\bigg[\int\limits_{\Gamma} \bigg|\frac{\eta_{k+1}-\eta_{k}}{\tau}\bigg|^{2}+\sum\limits_{i}\int\limits_{\Omega^{i}_{0}}\bigg|u_{i,k+1}\circ\Phi^{i}_{k}\bigg|^{2}\bigg]+\sum_{k=0}^{N-1}\tau\kappa\int_{\Gamma}\left|\nabla^{k_{0}+1}\bigg(\frac{\eta_{k+1}-\eta_{k}}{\tau}\bigg)\right|^{2}\nonumber\\
					&\displaystyle+\sum_{k=0}^{N-1}D^{2}K_{\kappa}(\xi_{\eta_{k},\eta_{k+1}})\bigg((\eta_{k+1}-\eta_{k}),(\eta_{k+1}-\eta_{k})\bigg)+\sum_{k=0}^{N-1}\frac{\tau}{2h}\int_{\Gamma}\bigg|\frac{\eta_{k+1}-\eta_{k}}{\tau}-\beta_{k}\bigg|^{2}\nonumber\\[2.mm]
					&\displaystyle+\sum_{k=0}^{N-1}\frac{\tau}{2h}\int_{\Omega^{i}_{0}}|u_{i,k+1}\circ\Phi^{i}_{k}-w_{i,k}|^{2}\nonumber\\
					&\displaystyle = K_{\kappa}(\eta^{0})+\sum\limits_{k=0}^{N-1}\frac{\tau}{2h}\bigg[\int\limits_{\Gamma}|\beta_{k}|^{2}+\int\limits_{\Omega^{i}_{0}} |w_{i,k}|^{2}\bigg]\label{consmin1}\\
					&\displaystyle \leqslant C\bigg(\eta^{0},h,\|\beta\|_{L^{2}(\Gamma\times[0,h])},\|w_{i}\|_{L^{2}(\Omega^{i}_{0}\times[0,h])}\bigg)\label{consmin1*}.
				\end{alignat}
			\end{subequations}
		\end{lem}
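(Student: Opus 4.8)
The plan is to turn the Euler--Lagrange equation \eqref{ELagmin1} into a discrete energy identity by testing it, at each time step, with the discrete unknowns themselves, to telescope the resulting identity over $k$, and then to bound the fully explicit right-hand side by Jensen's inequality. First I would fix $\tau\in(0,\tau_{1})$ and apply Lemma \ref{lemmasolmin1} inductively. Starting from $\eta_{0}=\eta^{0}\in W^{k_{0}+1,2}(\Gamma)$ and the given $\theta_{i,0}\in W^{1,2}(\Omega^{i}_{0})$, part $(i)$ of that lemma produces $(u_{i,1},\eta_{1})$ with $\eta_{1}\in W^{k_{0}+1,2}(\Gamma)$ and $\inf_{\Omega_{0}}\det(Id+\tau\nabla u_{i,1})>0$, so that $\Psi_{u_{i,1}}=Id+\tau u_{i,1}$ is a diffeomorphism onto $\Omega^{i}_{1}$ and $\Phi^{i}_{1}=\Psi_{u_{i,1}}\circ\Phi^{i}_{0}$ is well defined; since the $\theta_{i,k}\in W^{1,2}(\Omega^{i}_{k})$ are assumed known for all $k$, the construction repeats and yields $(u_{i,k+1},\eta_{k+1})$, $k=0,\dots,N-1$, each solving \eqref{ELagmin1} together with the affine coupling $\tfrac{\eta_{k+1}-\eta_{k}}{\tau}\nu=u_{i,k+1}\circ\varphi_{\eta_{k}}$ on $\Gamma$.

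Next, for each fixed $k$ I would test \eqref{ELagmin1} with $b=\tfrac{\eta_{k+1}-\eta_{k}}{\tau}$ and $\psi=u_{i,k+1}$; this is admissible because the $u_{i,k+1}$ are divergence free, belong to $W^{k_{0},2}(\Omega^{i}_{k})$, and satisfy $u_{i,k+1}\circ\varphi_{\eta_{k}}=b\nu$ on $\Gamma$. Three terms become exact squares: $\kappa\sum_{i}\int_{\Omega^{i}_{k}}|\nabla^{k_{0}}u_{i,k+1}|^{2}$, $\sum_{i}\int_{\Omega^{i}_{k}}\mu^{M}_{i}(\theta_{i,k})|Du_{i,k+1}|^{2}$, and $\kappa\int_{\Gamma}|\nabla^{k_{0}+1}\tfrac{\eta_{k+1}-\eta_{k}}{\tau}|^{2}$. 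For the two affine penalties I would use the polarization identity $2a\cdot(a-b)=|a|^{2}-|b|^{2}+|a-b|^{2}$, first with $a=\tfrac{\eta_{k+1}-\eta_{k}}{\tau}$, $b=\beta_{k}$ in $L^{2}(\Gamma)$, then with $a=u_{i,k+1}\circ\Phi^{i}_{k}$, $b=w_{i,k}$ in $L^{2}(\Omega^{i}_{0})$; this isolates the kinetic quantities $\tfrac{\tau}{2h}\int_{\Gamma}|\tfrac{\eta_{k+1}-\eta_{k}}{\tau}|^{2}$ and $\tfrac{\tau}{2h}\sum_{i}\int_{\Omega^{i}_{0}}|u_{i,k+1}\circ\Phi^{i}_{k}|^{2}$, the extra dissipation $\tfrac{\tau}{2h}\big(\int_{\Gamma}|\tfrac{\eta_{k+1}-\eta_{k}}{\tau}-\beta_{k}|^{2}+\sum_{i}\int_{\Omega^{i}_{0}}|u_{i,k+1}\circ\Phi^{i}_{k}-w_{i,k}|^{2}\big)$, and the source $-\tfrac{\tau}{2h}\big(\int_{\Gamma}|\beta_{k}|^{2}+\sum_{i}\int_{\Omega^{i}_{0}}|w_{i,k}|^{2}\big)$. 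For the Koiter term I would use a second-order Taylor expansion of $\eta\mapsto K_{\kappa}(\eta)$ (which is $C^{2}$ on displacements with values in $(a_{\partial\Omega},b_{\partial\Omega})$) about $\eta_{k+1}$ evaluated at $\eta_{k}$, giving $\langle DK_{\kappa}(\eta_{k+1}),\eta_{k+1}-\eta_{k}\rangle=K_{\kappa}(\eta_{k+1})-K_{\kappa}(\eta_{k})+\tfrac12 D^{2}K_{\kappa}(\xi_{\eta_{k},\eta_{k+1}})(\eta_{k+1}-\eta_{k},\eta_{k+1}-\eta_{k})$ for some intermediate $\xi_{\eta_{k},\eta_{k+1}}$.

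Multiplying the resulting per-step identity by $\tau$, summing over $k=0,\dots,N-1$, and telescoping the differences $K_{\kappa}(\eta_{k+1})-K_{\kappa}(\eta_{k})$ into $K_{\kappa}(\eta_{N})-K_{\kappa}(\eta^{0})$, all terms accumulate exactly into the sums displayed in \eqref{consmin1}. The bound \eqref{consmin1*} then follows: since $\beta_{k}$ and $w_{i,k}$ are the time-averages in \eqref{betawk}, Jensen's inequality gives $\tau\int_{\Gamma}|\beta_{k}|^{2}\le\int_{\tau k}^{\tau(k+1)}\int_{\Gamma}|\beta|^{2}$ and $\tau\int_{\Omega^{i}_{0}}|w_{i,k}|^{2}\le\int_{\tau k}^{\tau(k+1)}\int_{\Omega^{i}_{0}}|w_{i}|^{2}$, so summing over $k$ the right-hand side of \eqref{consmin1} is at most $K_{\kappa}(\eta^{0})+\tfrac{1}{2h}\big(\|\beta\|_{L^{2}(\Gamma\times[0,h])}^{2}+\sum_{i}\|w_{i}\|_{L^{2}(\Omega^{i}_{0}\times[0,h])}^{2}\big)$, which is finite and of the asserted form because $\eta^{0}\in W^{k_{0}+1,2}(\Gamma)$ (and $K$ is finite on $W^{2,2}(\Gamma)$) makes $K_{\kappa}(\eta^{0})$ finite.

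The point that needs care is the non-convexity of the Koiter energy: the remainder $D^{2}K_{\kappa}(\xi_{\eta_{k},\eta_{k+1}})(\eta_{k+1}-\eta_{k},\eta_{k+1}-\eta_{k})$ has no definite sign, so it cannot be dropped and must be retained on the left of \eqref{consmin1} — it is exactly this term, together with the $\kappa$-regularizations, that will later be needed to control the displacement; for the a priori bound \eqref{consmin1*} itself, however, only the fully explicit right-hand side of \eqref{consmin1} is used, so the undetermined sign of the remainder is harmless at this stage. A secondary and routine point is to check inductively, at each step, that $\Phi^{i}_{k}$ is a diffeomorphism so that $u_{i,k+1}\circ\Phi^{i}_{k}$ and the pairing $\langle\cdot,\cdot\rangle_{\Omega^{i}_{0}}$ are meaningful; this propagates from the non-degeneracy $\inf_{\Omega_{k}}\det(Id+\tau\nabla u_{i,k+1})>0$ provided by Lemma \ref{lemmasolmin1}, which also fixes the smallness of $\tau$ used throughout.
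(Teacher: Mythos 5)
Your proposal follows essentially the same route as the paper's proof: testing \eqref{ELagmin1} with $(u_{1,k+1}\chi_{\Omega^{1}_{k}}+u_{2,k+1}\chi_{\Omega^{2}_{k}},\tfrac{\eta_{k+1}-\eta_{k}}{\tau})$, expanding the affine penalty terms with the polarization identity, handling the Koiter term via the second-order Taylor/mean-value identity with intermediate point $\xi_{\eta_{k},\eta_{k+1}}$, telescoping over $k$, and closing with Jensen's inequality for the time-averaged data $\beta_{k},w_{i,k}$. The only cosmetic difference is the factor $\tfrac12$ you carry in front of the second-derivative remainder, which the paper absorbs into its notation; this does not affect the argument.
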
  
		\begin{proof}
			First to show \eqref{consmin1} we use the test function $(\psi,b)=(u_{1,k+1}\chi_{\Omega_k^1}+u_{2,k+1}\chi_{\Omega_k^2},\frac{\eta_{k+1}-\eta_{k}}{\tau})$ in \eqref{ELagmin1} to find
			\begin{equation}\label{testingdmomen}
				\begin{array}{ll}
					&\displaystyle \frac{1}{\tau}\bigg(K_{\kappa}(\eta_{k+1})-K_{\kappa}(\eta_{k})+D^{2}K_{\kappa}(\xi_{\eta_{k},\eta_{k+1}})\bigg((\eta_{k+1}-\eta_{k}),(\eta_{k+1}-\eta_{k})\bigg) \bigg)\\
					&\displaystyle+\frac{1}{2h}\int_{\Gamma}\bigg(\left|\frac{\eta_{k+1}-\eta_{k}}{\tau}\right|^{2}-|\beta_{k}|^{2}
					\displaystyle	+\left|\frac{\eta_{k+1}-\eta_{k}}{\tau}-\beta_{k}\right|^{2}\bigg)\\
					&\displaystyle+\frac{1}{2h}\sum_{i}\int_{\Omega^{i}_{0}}\bigg(|u_{i,k+1}\circ\Phi^{i}_{k}|^{2}-|w_{i,k}|^{2}+|u_{i,k+1}\circ\Phi^{i}_{k}-w_{i,k}|^{2}\bigg)\\
					&\displaystyle +{\kappa}\sum_{i}\int_{\Omega^{i}_{k}}|\nabla^{k_{0}}u_{i,k+1}|^{2}+\sum_{i}\int_{\Omega^{i}_{k}}\mu^{M}_{i}(\theta_{i,k})|D(u_{i,k+1})|^{2}+\kappa\int_{\Gamma}\bigg|\nabla^{k_{0}+1}\bigg(\frac{\eta_{k+1}-\eta_{k}}{\tau}\bigg)\bigg|^{2}=0,
				\end{array}
			\end{equation}
			where we have used  the formula
			$$A\cdot(A-B)=\frac{1}{2}\left(|A|^{2}-|B|^{2}+|A-B|^{2}\right),\,\,\mbox{for\,\,all}\, A,B\in \mathbb{R}^{m},\,m\geqslant 1$$
			to expand $\displaystyle\left(\frac{\eta_{k+1}-\eta_{k}}{\tau}-\beta_{k}\right)\frac{\eta_{k+1}-\eta_{k}}{\tau}$ and $\displaystyle \left(u_{i,k+1}\circ\Phi^{i}_{k}-w_{i,k}\right) u_{i,k+1}\circ\Phi^{i}_{k}.$ In \eqref{testingdmomen} $\xi_{k,k+1}$ is a point lying on the line joining $\eta_{k}$ and $\eta_{k+1}$ such that there holds the identity
			\begin{equation}\label{meanvalue}
				\begin{array}{ll}
					&	\displaystyle K_{\kappa}(\eta_{k+1})-K_{\kappa}(\eta_{k})+D^{2}K_{\kappa}(\xi_{\eta_{k},\eta_{k+1}})\bigg((\eta_{k+1}-\eta_{k}),(\eta_{k+1}-\eta_{k})\bigg)\\
					&\displaystyle=\left\langle DK_{\kappa}(\eta_{k+1}),(\eta_{k+1}-\eta_{k})\right\rangle.
				\end{array}
			\end{equation} 
			In the expression above the notation $D^{2}K_{\kappa}(\xi_{\eta_{k},\eta_{k+1}})\bigg((\eta_{k+1}-\eta_{k}),(\eta_{k+1}-\eta_{k})\bigg)$ stands for the action
			\begin{equation}\nonumber
				\begin{array}{ll}
					&\displaystyle D^{2}K_{\kappa}(\xi_{\eta_{k},\eta_{k+1}})\bigg((\eta_{k+1}-\eta_{k}),(\eta_{k+1}-\eta_{k})\bigg)\\
					&\displaystyle= \left(D^{2}K_{\kappa}(\xi_{\eta_{k},\eta_{k+1}}) \bigg(\eta_{k+1}-\eta_{k}\bigg),(\eta_{k+1}-\eta_{k})\right).
				\end{array}
			\end{equation}
			Now summing \eqref{testingdmomen} over $k=0,...,N-1$ we conclude the proof of \eqref{consmin1}.\\
			The proof of \eqref{consmin1*} follows from \eqref{consmin1} since
			\begin{equation}\nonumber
				\begin{array}{l}
					\displaystyle \sum_{k=0}^{N-1}\tau\|w_{i,k}\|^{2}_{L^{2}(\Omega^{i}_{0})}=\sum_{k=0}^{N-1}\tau\bigg\|\frac{1}{\tau}\int_{k\tau}^{(k+1)\tau}w_{i}\bigg\|^{2}_{L^{2}(\Omega^{i}_{0})}\leqslant \sum_{k=0}^{N-1}\tau\frac{1}{\tau}\int_{k\tau}^{(k+1)\tau}\|w_{i}\|^{2}_{L^{2}(\Omega^{i}_{0})}=\int_{0}^{h}\|w_{i}\|^{2}_{L^{2}(\Omega^{i}_{0})}
				\end{array}
			\end{equation}
			(as a consequence of Jensen's inequality) and a similar estimate holds for $\sum_{k=0}^{N-1}\tau\|\beta_{k}\|^{2}_{L^{2}(\Gamma)}.$ 
			
		\end{proof}	
		\begin{corollary}\label{Coretaukp1}
			Let all the assumptions of Lemma \ref{bounduetakp1} hold. Then $(u_{i,k})^{N}_{k=1}$ (where $(u_{i,k+1},\eta_{k+1})^{N-1}_{k=0}$ solves \eqref{ELagmin1}) satisfy the following inequalities
			\begin{equation}\label{C1alphaestuikp1}
				\begin{array}{l}
					\displaystyle\sum_{k=0}^{N-1}\tau\sum_{i}\|u_{i,k+1}\|^{2}_{C^{1,\alpha}(\Omega^{i}_{k})}\leqslant C\bigg(\eta^{0},h,\|\beta\|_{L^{2}(\Gamma\times[0,h])},\|w_{i}\|_{L^{2}(\Omega^{i}_{0}\times[0,h])}\bigg)
				\end{array}
			\end{equation}
			for some $\alpha>0,$
			more precisely
			\begin{equation}\label{LinftyDuikp1}
				\begin{array}{ll}
					&\displaystyle\sum_{k=0}^{N-1}\tau\sum_{i}\|Du_{i,k+1}\|^{2}_{L^{\infty}(\Omega^{i}_{k})}\\
					&\displaystyle\leqslant C\bigg(\eta^{0},h,\|\beta\|_{L^{2}(\Gamma\times[0,h])},\|w_{i}\|_{L^{2}(\Omega^{i}_{0}\times[0,h])}\bigg).
				\end{array}
			\end{equation} 
		\end{corollary}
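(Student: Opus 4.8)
The corollary is obtained by upgrading the dissipative bound already contained in Lemma~\ref{bounduetakp1} to one in $C^{1,\alpha}$, using the Sobolev embedding $W^{k_0,2}\hookrightarrow C^{1,\alpha}$ valid in three space dimensions whenever $k_0\in\eN$ with $k_0>3$. First I would read off the two facts needed here from \eqref{consmin1}--\eqref{consmin1*}: every left-hand term of the identity \eqref{consmin1} is non-negative except for the mean-value remainder $\sum_k D^2K_\kappa(\xi_{\eta_k,\eta_{k+1}})(\eta_{k+1}-\eta_k,\eta_{k+1}-\eta_k)$, which is controlled in the course of proving \eqref{consmin1*}, and since $\kappa$ and $h$ are fixed throughout this layer, Lemma~\ref{bounduetakp1} yields
\[
\sum_{k=0}^{N-1}\tau\sum_i\|\nabla^{k_0}u_{i,k+1}\|_{L^2(\Omega^i_k)}^2+\sum_{k=0}^{N-1}\tau\sum_i\|u_{i,k+1}\circ\Phi^i_k\|_{L^2(\Omega^i_o)}^2\le C\big(\eta^0,h,\kappa,\|\beta\|_{L^2},\|w_i\|_{L^2}\big),
\]
the $\kappa$-dependence being harmless and suppressed below.

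Next I would pin down the geometry. The bound on $K_\kappa(\eta_n)=K(\eta_n)+\tfrac\kappa2\|\nabla^{k_0+1}\eta_n\|_{L^2(\Gamma)}^2$ available for every $n\le N$ (again from \eqref{consmin1}, using $K\ge0$ and the control of the remainder), together with the admissibility constraint $\eta_n\in[m,M]$, gives $\sup_n\|\eta_n\|_{W^{k_0+1,2}(\Gamma)}\le C$; since $\Gamma\subset\eR^2$ and $k_0>3$ this embeds into a uniform $C^{k_0-1,\alpha_0}(\Gamma)$ bound, so the maps $\widetilde\varphi_{\eta_n},(\widetilde\varphi_{\eta_n})^{-1}$ of Section~\ref{Sec:GEE} are $C^{k_0}$-diffeomorphisms with norms bounded uniformly in $n$. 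Hence the domains $\Omega^i_k$ have uniformly controlled $C^{k_0}$ character, and the interpolation and embedding constants used below are independent of $k$; moreover $\Phi^i_k=\Psi_{u_{i,k}}\circ\cdots\circ\Psi_{u_{i,1}}$ has $\det\nabla\Phi^i_k$ bounded away from $0$ and $\infty$ uniformly in $k$ (each factor $\det(Id+\tau\nabla u_{i,j})$ deviates from $1$ only by $O(\tau^2\|\nabla u_{i,j}\|_\infty^2)$ because $\div u_{i,j}=0$; cf.\ Lemma~\ref{lemmasolmin1}$(i)$ and a routine induction over $k$ exploiting the smallness of $\tau$). Changing variables $y=\Phi^i_k(x)$ then turns the second sum above into $\sum_k\tau\sum_i\|u_{i,k+1}\|_{L^2(\Omega^i_k)}^2\le C$, and the elementary interpolation inequality $\|v\|_{W^{k_0,2}(\Omega^i_k)}\lesssim\|\nabla^{k_0}v\|_{L^2(\Omega^i_k)}+\|v\|_{L^2(\Omega^i_k)}$ on the uniformly regular domains $\Omega^i_k$ gives
\[
\sum_{k=0}^{N-1}\tau\sum_i\|u_{i,k+1}\|_{W^{k_0,2}(\Omega^i_k)}^2\le C\big(\eta^0,h,\|\beta\|_{L^2},\|w_i\|_{L^2}\big).
\]

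Finally, $W^{k_0,2}(\Omega^i_k)\hookrightarrow C^{1,\alpha}(\overline{\Omega^i_k})$ for any $\alpha<k_0-\tfrac52$ — in particular for some $\alpha\in(0,1)$, since $k_0\ge4$ — with embedding constant uniform in $k$ by the previous paragraph; this gives \eqref{C1alphaestuikp1}, and \eqref{LinftyDuikp1} is then immediate from $\|Du_{i,k+1}\|_{L^\infty(\Omega^i_k)}\le\|u_{i,k+1}\|_{C^{1,\alpha}(\Omega^i_k)}$. I expect the only genuinely delicate point to be the uniformity in $k$ of the geometric constants — controlling the regularity of the moving domains $\Omega^i_k$ and the Jacobians of the composed flow maps $\Phi^i_k$ over the $N\sim h/\tau$ steps — whereas the elliptic and embedding inequalities themselves are routine once the geometry is under control.
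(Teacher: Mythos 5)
Your argument is correct and takes essentially the same route as the paper: the paper's proof simply extracts from \eqref{consmin1*} the bound $\sum_{k}\tau\big(\kappa\|\nabla^{k_0}u_{i,k+1}\|^2_{L^2(\Omega^i_k)}+\|u_{i,k+1}\|^2_{L^2(\Omega^i_k)}\big)\leqslant C$ and invokes the embedding $W^{3,2}(\Omega^i_k)\hookrightarrow C^{1,\alpha}(\Omega^i_k)$ to conclude \eqref{C1alphaestuikp1} and \eqref{LinftyDuikp1}. The geometric points you make explicit — uniform regularity of the domains $\Omega^i_k$ via the bound on $K_\kappa(\eta_n)$, and the inductive control of $\det\nabla\Phi^i_k$ needed to pass from $\|u_{i,k+1}\circ\Phi^i_k\|_{L^2(\Omega^i_0)}$ to $\|u_{i,k+1}\|_{L^2(\Omega^i_k)}$ — are exactly what the paper leaves implicit and defers to Proposition \ref{diffPhi} and the arguments of \cite{BenKamSch}, so they are a careful elaboration rather than a different approach.
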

		\begin{proof}
			One derives the following from \eqref{consmin1*}
			\begin{equation}\label{1stinq}
				\begin{array}{l}
					\displaystyle\sum\limits_{k=0}^{N-1}\tau{\kappa}\sum\limits_{i} \int\limits_{\Omega^{i}_{k}}|\nabla^{k_{0}}u_{i,k+1}|^{2}+\sum_{k=0}^{N-1}\tau\sum_{i}\int_{\Omega^{i}_{k}}|u_{i,k+1}|^{2}\leqslant C(\eta^{0},h,\|\beta\|_{L^{2}(\Gamma\times[0,h])},\|w_{i}\|_{L^{2}(\Omega^{i}_{0}\times[0,h])}).
				\end{array}
			\end{equation}
			Next we use the fact that $k_{0}\geqslant 3$ and the continuous embedding $W^{3,2}(\Omega^{i}_{k})\hookrightarrow C^{1,\alpha}(\Omega^{i}_{k})$ for some $\alpha>0$ to render \eqref{C1alphaestuikp1} and consequently \eqref{LinftyDuikp1} from \eqref{1stinq}.
		\end{proof}
		\subsubsection{Solving \ref{min2}} 
		Before we state and prove a lemma corresponding to the solution of the minimization problem \ref{min2}, we introduce a result on the diffeomorphic properties of the map $\Phi^{i}_{k}:\Omega^{i}_{0}\rightarrow\Omega^{i}_{k}$ for suitably small $\tau.$
		\begin{prop}\label{diffPhi}
			There is a $\tau_{2}\in (0,\tau_{1})>0$ (where $\tau_{1}$ is as obtained in Lemma \ref{lemmasolmin1}) such that for all $\tau\in(0,\tau_{2}),$ we have that $\Phi^{i}_{k}:\Omega_{0}\rightarrow\Omega^{i}_{k}$ is a diffeomorphism with
			\begin{equation}\label{bnddetnablaPhi}
				\begin{array}{ll}
					1-C\tau\leqslant \det{\nabla\Phi^{i}_{k}}\leqslant 1+C\tau,\quad\forall \quad k\leqslant N<\frac{h}{\tau}.
				\end{array}
			\end{equation}
			Also $\Psi_{u_{i},k}:\Omega_{k-1}^{i}\rightarrow\Omega^{i}_{k}$ is a diffeomorphism and
			\begin{equation}\label{bnddetnablaPsi}
				\begin{array}{l}
					1-C\tau^2\leqslant\det\nabla\Psi^{i}_{k}\leqslant 1+C\tau^2,\quad\forall\quad k\leqslant N<\frac{h}{\tau}.
				\end{array}
			\end{equation}
		\end{prop}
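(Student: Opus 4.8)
The plan is to bootstrap everything from the velocity estimates of Corollary~\ref{Coretaukp1}, proving the two claims simultaneously for all $k\le N$. Fix $i\in\{1,2\}$ and recall $\Psi^i_k=\mathrm{Id}+\tau u_{i,k}$ on $\overline{\Omega^{i}_{k-1}}$, where $u_{i,k}\in W^{k_0,2}(\Omega^{i}_{k-1})$ and $\div u_{i,k}=0$. Since $k_0\geq 3$, the embedding $W^{k_0,2}\hookrightarrow C^{1,\alpha}$ and \eqref{C1alphaestuikp1}--\eqref{LinftyDuikp1} give, for \emph{each single} index $k$, the bound $\tau\|\nabla u_{i,k}\|_{L^\infty(\Omega^i_{k-1})}^2\le C(\eta^0,h,\kappa,\beta,w_i)$, hence $\tau\|\nabla u_{i,k}\|_{L^\infty}\le C\tau^{1/2}\to 0$. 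Therefore there is $\tau_2\in(0,\tau_1)$ such that for $\tau<\tau_2$ one has $\tau\|\nabla u_{i,k}\|_{L^\infty}\le\tfrac12$ for all $k\le N$. For such $\tau$ the matrix $\nabla\Psi^i_k=\mathrm{Id}+\tau\nabla u_{i,k}$ is everywhere invertible, and the elementary estimate $|(\Psi^i_k(x)-\Psi^i_k(y))-(x-y)|\le\tfrac12|x-y|$ shows $\Psi^i_k$ is globally injective on $\overline{\Omega^i_{k-1}}$; as $u_{i,k}\in C^{1,\alpha}$, $\Psi^i_k$ is a $C^{1,\alpha}$--diffeomorphism onto its image.

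Next I would identify that image with $\Omega^i_k$. Using the interfacial coupling \eqref{interfacecoupdis}, for $x\in\Gamma$ one computes $\Psi^i_k(\varphi_{\eta_{k-1}}(x))=\varphi_{\eta_{k-1}}(x)+\tau u_{i,k}(\varphi_{\eta_{k-1}}(x))=\varphi(x)+\eta_{k-1}(x)\nu(x)+(\eta_k(x)-\eta_{k-1}(x))\nu(x)=\varphi_{\eta_k}(x)$, so $\Psi^i_k$ maps $\Sigma_{\eta_{k-1}}$ diffeomorphically onto $\Sigma_{\eta_k}$, and for $i=1$ it fixes $\partial\Omega$ pointwise (the no-slip condition built into the admissible class of \eqref{min1}). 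Since $\Omega$ is split by $\Sigma_{\eta_k}$ into the two components $\Omega^1_k\cup\overline{\Omega^2_k}$ (part of \eqref{etauikp1}), a connectedness/degree argument forces the injective continuous image of the connected open set $\Omega^i_{k-1}$ with the correct boundary behaviour to be exactly $\Omega^i_k$, i.e.\ $\Psi^i_k(\Omega^i_{k-1})=\Omega^i_k$. Consequently $\Phi^i_k=\Psi^i_k\circ\cdots\circ\Psi^i_1$ (with $\Phi^i_0=\mathrm{Id}$) is a composition of diffeomorphisms along the chain $\Omega^i_0\to\Omega^i_1\to\cdots\to\Omega^i_k$, hence a $C^{1,\alpha}$--diffeomorphism $\Omega^i_0\to\Omega^i_k$.

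For the Jacobian bounds I would expand the determinant of the $3\times3$ matrix: $\det\nabla\Psi^i_k=\det(\mathrm{Id}+\tau\nabla u_{i,k})=1+\tau\,\Tr(\nabla u_{i,k})+\tau^2 P_2(\nabla u_{i,k})+\tau^3\det(\nabla u_{i,k})$, where $P_2$ is the quadratic sum of principal $2\times2$ minors. The linear term is $\tau\div u_{i,k}=0$ by incompressibility, so $|\det\nabla\Psi^i_k-1|\le C\bigl(\tau^2\|\nabla u_{i,k}\|_{L^\infty}^2+\tau^3\|\nabla u_{i,k}\|_{L^\infty}^3\bigr)$, which yields \eqref{bnddetnablaPsi}. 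For $\Phi^i_k$ use the chain rule $\det\nabla\Phi^i_k(x)=\prod_{j=1}^{k}\det\nabla\Psi^i_j(\Phi^i_{j-1}(x))$; taking logarithms, using $|\log(1+s)|\le 2|s|$ for $|s|$ small, and summing the per-step estimate,
\[
\bigl|\log\det\nabla\Phi^i_k\bigr|\le C\sum_{j=1}^{N}\Bigl(\tau^2\|\nabla u_{i,j}\|_{L^\infty}^2+\tau^3\|\nabla u_{i,j}\|_{L^\infty}^3\Bigr)\le C\tau\sum_{j=1}^{N}\tau\|\nabla u_{i,j}\|_{L^\infty}^2\le C\tau,
\]
using \eqref{LinftyDuikp1}, the per-step bound $\|\nabla u_{i,j}\|_{L^\infty}\le C\tau^{-1/2}$ for the cubic terms, and that $h$ is fixed. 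Exponentiating and shrinking $\tau_2$ if necessary gives \eqref{bnddetnablaPhi}.

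The main obstacle is the bookkeeping between the only available estimate --- the \emph{summed} bound $\sum_k\tau\|\nabla u_{i,k}\|_{L^\infty}^2\le C$ --- and the per-step quantities: a single $\Psi^i_k$ a priori loses a factor $\tau^{-1/2}$, so the decisive structural point is that incompressibility annihilates the $O(\tau)$ term in the determinant expansion, after which the surviving $O(\tau^2\|\nabla u\|^2)$ contribution is both small for one step and telescopes over the $\sim h/\tau$ steps into a total of size $O(\tau)$ for $\Phi^i_k$. The secondary delicate point is the surjectivity ``$\Psi^i_k(\Omega^i_{k-1})=\Omega^i_k$'', which is not purely local and must be closed using the decomposition of $\Omega$ together with the coupling condition rather than the contraction estimate alone.
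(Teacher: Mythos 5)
Your route is essentially the one behind the citation the paper gives for this proposition (\cite[Prop.~4.6]{BenKamSch}): per-step smallness of $\tau\|\nabla u_{i,k}\|_{L^\infty}$ extracted from Corollary~\ref{Coretaukp1}, invertibility and injectivity of $\mathrm{Id}+\tau u_{i,k}$, identification of the image $\Psi^i_k(\Omega^i_{k-1})=\Omega^i_k$ through the coupling condition \eqref{interfacecoupdis} and the no-slip condition on $\partial\Omega$, and the expansion of $\det(\mathrm{Id}+\tau\nabla u_{i,k})$ in which $\div u_{i,k}=0$ removes the $O(\tau)$ term, followed by telescoping the product along the chain of maps. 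The diffeomorphism property and the bound \eqref{bnddetnablaPhi} are correctly established this way (with the standard caveat that on the non-convex domains $\Omega^i_{k-1}$ the injectivity estimate uses the comparability of intrinsic and Euclidean distances, which is uniform in $k$ thanks to the uniform bound on $K_\kappa(\eta_k)$ from \eqref{consmin1}).

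There is, however, a gap at the single sentence ``which yields \eqref{bnddetnablaPsi}''. Your expansion gives $|\det\nabla\Psi^i_k-1|\le C\bigl(\tau^2\|\nabla u_{i,k}\|_{L^\infty}^2+\tau^3\|\nabla u_{i,k}\|_{L^\infty}^3\bigr)$, and the only per-step information available from Corollary~\ref{Coretaukp1} is $\|\nabla u_{i,k}\|_{L^\infty}\le C\tau^{-1/2}$ --- exactly the $\tau^{-1/2}$ loss you yourself invoke for the cubic terms in the $\Phi$-estimate. Inserted here it gives $|\det\nabla\Psi^i_k-1|\le C\tau$ uniformly in $k\le N$, i.e.\ the same rate as \eqref{bnddetnablaPhi}, not $1\pm C\tau^2$ with a constant uniform in $k$; the $O(\tau^2)$ form holds only with a $k$-dependent constant proportional to $\|\nabla u_{i,k}\|_{L^\infty}^2$, since the scheme provides no $\tau$-independent per-step $L^\infty$ bound on $\nabla u_{i,k}$ (each minimization in \eqref{min1} only controls $\tau\kappa\|\nabla^{k_0}u_{i,k+1}\|_{L^2}^2$, whence the $\tau^{-1/2}$ scaling). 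So either state the second estimate with that explicit dependence, or prove the uniform $O(\tau)$ version; note that the weaker uniform bound is all that is actually needed downstream --- in Lemma~\ref{estthetakp1} the per-step error term then reads $C\int_{\Omega^2_k}\phi(\theta_{2,k+1}\circ\Psi_{u_{2,k+1}})$ instead of $C\tau\int(\cdots)$, and after multiplying by $\tau$ and summing, the discrete Gronwall argument still closes. As a proof of \eqref{bnddetnablaPsi} as literally stated, though, the step does not go through from the estimates you cite.
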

		\begin{proof}[Comments on the proof of Proposition \ref{diffPhi}] That the map $\Phi^{i}_{k}$ is a diffeomorphism for small enough values of $\tau$ and the fact \eqref{bnddetnablaPhi} holds can be derived by using \eqref{C1alphaestuikp1}, c.f. \cite[Prop. 4.6]{BenKamSch} (applied for the $i-$th fluid, $i\in\{1,2\}$).\\
			The inequality \eqref{bnddetnablaPsi} follows directly from the proof of \cite[Prop. 4.6]{BenKamSch}. 
		\end{proof}
		
		\begin{prop}\label{propexminth}
			Assume that $\theta_{i,k}\in W^{1,2}(\Omega_{k})$, $\Psi_{u_i,k}\in C^1(\Omega_k^i,\Omega_{k+1}^{i+1})$ is a diffeomorphism,  $\theta_{i,k}\geq \gamma_0\geqslant 0$ and
			\\
			$
			|\nabla^{k_{0}}u_{i,k+1}|^{2},\left|u_{i,k+1}\circ\Phi^{i}_{k}-w_{i,k}\right|^{2}\in L^1(\Omega_k^i)$,
			$\left|\frac{\eta_{k+1}-\eta_{k}}{\tau}-\beta_{k}\right|^{2},\left|\nabla^{k_{0}+1}\bigg(\frac{\eta_{k+1}-\eta_{k}}{\tau}\bigg)\right|^{2}\in L^1(\Gamma)$,
			then there exists a minimizer of $T_k :\mathcal{A}_{1,k+1}\times\mathcal{A}_{2,k+1} \to [\gamma_0,\infty), $ defined in \eqref{min2}-\eqref{defmA}. 
		\end{prop}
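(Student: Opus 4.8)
The plan is to establish Proposition~\ref{propexminth} by the direct method of the calculus of variations applied to the joint functional $T_k$ on the product of convex sets $\mathcal{A}_{1,k+1}\times\mathcal{A}_{2,k+1}$, following the pattern used for the first minimization \eqref{min1} in Lemma~\ref{lemmasolmin1} and in \cite[Lemma~3.2, Corollary~3.3]{BreitKamSch}, \cite[Theorem~4.2]{BenKamSch}; the genuinely new feature is the presence in $T_k$ of several forcing terms that are \emph{linear} in $\theta_i$ and carry an indefinite sign. Concretely, I would (i) show $T_k$ is bounded below and coercive on $\mathcal{A}_{1,k+1}\times\mathcal{A}_{2,k+1}$, so that $m_k:=\inf T_k$ is a finite real number (consistent with the range in the statement) and any minimizing sequence is bounded in $W^{1,2}(\Omega^1_{k+1})\times W^{1,2}(\Omega^2_{k+1})$; (ii) extract a weak limit; (iii) check that $\mathcal{A}_{1,k+1}\times\mathcal{A}_{2,k+1}$ is weakly closed and $T_k$ weakly lower semicontinuous; and conclude that the limit attains $m_k$.

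For (i), I would isolate the nonnegative quadratic part of $T_k$, namely $\sum_i\frac{\tau c_i}{2}\int_{\Omega^i_k}\big|\frac{\theta_i\circ\Psi_{u_i,k+1}-\theta_{i,k}}{\tau}\big|^2+\sum_i\frac{k_i}{2}\int_{\Omega^i_{k+1}}|\nabla\theta_i|^2+\frac{\lambda}{2}\int_{\partial\Omega^2_{k+1}}|\theta_1-\theta_2|^2$. Since $\Psi_{u_i,k+1}$ is a diffeomorphism with Jacobian close to $1$ by \eqref{bnddetnablaPsi} (Proposition~\ref{diffPhi}) and $\theta_{i,k}\in W^{1,2}(\Omega^i_k)\subset L^2$, a change of variables bounds the first term from below by $c\|\theta_i\|_{L^2(\Omega^i_{k+1})}^2-C$; together with the Dirichlet energy this gives $\|\theta_i\|_{W^{1,2}(\Omega^i_{k+1})}^2\lesssim T_k(\theta_1,\theta_2)+\mathcal{N}(\theta_1,\theta_2)+C$, where $\mathcal{N}\ge 0$ collects the negative contributions. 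These I would split according to the regularity of their densities. The density $\mu_i^M(\theta_{i,k})|Du_{i,k+1}|^2$ lies in $L^\infty$ by \eqref{muiN} and Corollary~\ref{Coretaukp1}, and the truncated densities $\min\{|\nabla^{k_0}u_{i,k+1}|^2,\tau^{-1}\}$, $\min\{|\nabla^{k_0+1}((\eta_{k+1}-\eta_k)/\tau)|^2,\tau^{-1}\}$ are bounded by $\tau^{-1}$ by construction; each such contribution is $\le\|g\|_{L^\infty}\|\theta_i\|_{L^1}\le\varepsilon\|\theta_i\|_{L^2}^2+C_\varepsilon$ after transporting the integral back to $\Omega^i_{k+1}$, hence absorbable into the quadratic part. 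The remaining densities $\big|\frac{\eta_{k+1}-\eta_k}{\tau}-\beta_k\big|^2$ and $|u_{i,k+1}\circ\Phi^i_k-w_{i,k}|^2$ are, by hypothesis, only in $L^1$ and nonnegative; here I would decompose each as a bounded part plus a part of arbitrarily small $L^1$-norm, absorb the bounded part as above, and estimate the small part against $\theta_i$ using the bulk embedding $W^{1,2}(\Omega^i_{k+1})\hookrightarrow L^p$, $p<6$, and the trace embedding $W^{1,2}(\Omega^i_{k+1})\hookrightarrow L^r(\partial\Omega^2_{k+1})$, $r<4$, the smallness of the splitting parameter compensating for the lack of higher integrability of the density. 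Since also $T_k(\gamma,\gamma)<\infty$ — all forcing densities are $L^1$ — this gives the lower bound on $m_k$ and boundedness of minimizing sequences.

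For (ii)--(iii), take a minimizing sequence $(\theta_1^n,\theta_2^n)$; by (i) and reflexivity a subsequence satisfies $\theta_i^n\rightharpoonup\theta_i$ in $W^{1,2}(\Omega^i_{k+1})$, and by Rellich--Kondrachov $\theta_i^n\to\theta_i$ in $L^p(\Omega^i_{k+1})$ for all $p<6$ and a.e., while the traces converge strongly in $L^2(\partial\Omega^2_{k+1})$. The set $\mathcal{A}_{i,k+1}$ is convex and closed in $W^{1,2}$, hence weakly closed, so $\theta_i\ge\gamma$ a.e. and the limit is admissible. The three quadratic terms are convex and continuous (for the first one after the change of variables, for the interface one via the trace), hence weakly lower semicontinuous, and the linear forcing terms converge along the subsequence thanks to the strong $L^p$- and trace-convergence just recorded (trivially for the bounded densities; for the $L^1$ densities using again the bounded-plus-small splitting together with strong $L^p$-convergence, $p<6$). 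Therefore $T_k(\theta_1,\theta_2)\le\liminf_n T_k(\theta_1^n,\theta_2^n)=m_k$, so $(\theta_1,\theta_2)$ is a minimizer, with components valued in $[\gamma,\infty)\subset[\gamma_0,\infty)$ by construction.

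The main obstacle is the coercivity in step (i): several forcing densities appear with an indefinite sign and, a priori, only $L^1$ regularity, so one must rule out both that $T_k$ is unbounded below and that the $W^{1,2}$-bound along minimizing sequences fails. This is precisely why the artificial higher-order dissipations are inserted into \eqref{min2} through the truncations $\min\{\,\cdot\,,\tau^{-1}\}$ rather than directly — the truncation turns the (only $L^1$) densities $|\nabla^{k_0}u_{i,k+1}|^2$ and $|\nabla^{k_0+1}((\eta_{k+1}-\eta_k)/\tau)|^2$ into $L^\infty$ ones — and why the full-strength quadratic term $\frac{\tau c_i}{2}\int|\cdot|^2$ must be kept: it supplies, via \eqref{bnddetnablaPsi}, exactly the $L^2(\Omega^i_{k+1})$-coercivity needed to absorb all the forcing by Young's inequality, in combination with the $C^{1,\alpha}$-bounds for $u_{i,k+1}$ from Corollary~\ref{Coretaukp1}. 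Once this estimate is in place the rest is the routine lower-semicontinuity/compactness argument and proceeds exactly as in \cite[Corollary~3.3]{BreitKamSch}, \cite[Theorem~4.2]{BenKamSch}.
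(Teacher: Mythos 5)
Your existence skeleton (direct method: $L^2$-coercivity from the transport quadratic term via the Jacobian bound \eqref{bnddetnablaPsi}, the Dirichlet and interface terms, weak closedness of the convex sets $\mathcal{A}_{i,k+1}$, weak lower semicontinuity) is the same as the paper's, and your treatment of the bounded densities ($\mu_i^{M}(\theta_{i,k})|Du_{i,k+1}|^{2}$ and the $\tau^{-1}$-truncated terms) is fine. However, the device you propose for the two non-truncated densities $|u_{i,k+1}\circ\Phi^{i}_{k}-w_{i,k}|^{2}$ and $\bigl|\frac{\eta_{k+1}-\eta_{k}}{\tau}-\beta_{k}\bigr|^{2}$ does not work: splitting such a density $g=g_{1}+g_{2}$ with $g_{1}$ bounded and $\|g_{2}\|_{L^{1}}$ small does not yield $\int g_{2}\,\theta_i\leq\varepsilon\|\theta_i\|^{2}_{W^{1,2}}+C_{\varepsilon}$ (nor the analogous trace estimate), because $W^{1,2}$ does not embed into $L^{\infty}$ in three dimensions and the trace does not land in $L^{\infty}(\Gamma)$; by duality one would need $g_{2}$ small in $L^{6/5}$ (resp. $L^{4/3}$ on the interface), and smallness in $L^{1}$ gives no control of the pairing against $L^{p}$ with $p<\infty$. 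So this step of your coercivity argument fails as written; the paper instead derives the $\tau$-dependent lower bound $T_{k}(\theta_1,\theta_2)\geq c\bigl(\sum_i\tau\|\theta_i\|^{2}_{W^{1,2}(\Omega^{i}_{k+1})}-c_{k}(\tau)\bigr)$ directly from \eqref{1stmin2}, without the splitting you introduce; if you want to give full detail here you must explain how the merely $L^{1}$ forcing densities act as admissible perturbations, which your argument does not do.

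The second, and larger, discrepancy is that you dispose of the lower bound for the minimizer by appealing to the constraint set, concluding that the limit lies in $\mathcal{A}_{i,k+1}$, hence takes values at least $\gamma$ and therefore at least $\gamma_0$. The hypothesis only provides $\theta_{i,k}\geq\gamma_0\geq0$ and no relation between $\gamma$ in \eqref{defmA} and $\gamma_0$ is assumed, so that final inclusion is unjustified; more importantly, the substantive half of the paper's proof is a comparison/truncation argument that you omit entirely: one checks term by term that $T_{k}(\max\{\theta_{1},\theta_{0}\},\max\{\theta_{2},\theta_{0}\})\leq T_{k}(\theta_{1},\theta_{2})$ — the gradient term vanishes where the truncation acts, the transport term can only decrease since $\theta_{i,k}\geq\theta_{0}$, the interface term and the negative linear terms have the favourable sign — with strict inequality if $\theta_{i}$ drops below $\theta_{0}$ on a set of positive measure. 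This is precisely what shows the minimizer inherits the lower bound from $\theta_{i,k}$ (a discrete minimum principle), which is the content behind the range $[\gamma_{0},\infty)$ in the statement and what is later used for \eqref{positivitytheta} and for working with the Euler--Lagrange equations without the obstacle being active. Your proposal, even granting the coercivity, proves existence of a constrained minimizer but not this key positivity property.
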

		\begin{proof}
			The existence of $\theta_i$ follows by the direct method in the calculus of variations. Indeed, by the choices of $k_0$ the functional $T$ is bounded from below a-priori depending on $\tau$. Indeed one uses the following inequality
			\begin{equation}\label{1stmin2}
				\begin{array}{ll}
					\displaystyle\sum_{i}\frac{c_{i}}{2\tau}\|\theta_{i}\circ\Psi_{u_{i},k+1}-\theta_{i,k}\|^{2}_{L^{2}(\Omega^{i}_{k})}&\displaystyle\geqslant\sum_{i}\frac{c_{i}}{2\tau}\bigg(\|\theta_{i}\circ\Psi_{u_{i},k+1}\|_{L^{2}(\Omega^{i}_{k})}-\|\theta_{i,k}\|_{L^{2}(\Omega^{i}_{k})}\bigg)^{2},\\
					&\displaystyle \geqslant  \sum_{i}\frac{1}{2\tau}\bigg(c_{1}\|\theta_{i}\|^{2}_{L^{2}(\Omega^{i}_{k+1})}-C_{1}\|\theta_{i,k}\|^{2}_{L^{2}(\Omega^{i}_{k})}\bigg)
				\end{array}
			\end{equation}
			to infer the following concerning a lower bound of $T_{k}:$
			$$\displaystyle T_{k}(\theta_{1},\theta_{2})\geqslant c\bigg(\sum_{i}{\tau}\|\theta_{i}\|^{2}_{W^{1,2}(\Omega^{i}_{k+1})}-c_{k}(\tau)\bigg),$$
			for possibly small positive constant $c.$
			Now based on this coercivity estimate, it is now standard  to show the weak lower semicontiunuity of the functional concerned and hence a desired minimiser can be found. 
			
			The positivity follows by the simple observation, that
			\[
			T_{k}(\max\{\theta_{1,k+1},\theta_{0}\},\max\{\theta_{2,k+1},\theta_0\})\leq T_{k}(\theta_{1,k+1},\theta_{2,k+1}).
			\]
			Indeed, on the set where $\max\{\theta_{1,k+1},\theta_{0}\}=\theta_0$ the gradient term becomes $0$. For the other terms there is a sign, as on the set where  $\max\{\theta_{1,k+1},\theta_{0}\}\leq \theta_0$, we have in particular that
			$\theta_{1,k+1}\circ{\bf \Psi}_u\leq \theta_0\leq  \theta_{1,k}$, but this implies that
			\[
			\abs{\theta_{1,k+1}\circ{ \Psi}_{u_{1,k+1}}-\theta_{1,k}}=\theta_{1,k}-\theta_{1,k+1}\circ{ \Psi}_{u_{1,k+1}}\geq \theta_{1,k}-\theta_0.
			\]
			Similarly if $\theta_{2,k+1}\geq \theta_0\geq \theta_{1,k+1}$ we find
			\[
			\abs{\theta_{1,k+1}-\theta_{2,k+1}}=\theta_{2,k+1}-\theta_{1,k+1}\geq \theta_{2,k+1}-\theta_0.
			\]
			As in case $\theta_0\geq \theta_{1,k+1}$ and $\theta_0\geq \theta_{2,k+1}$ the term vanishes, we are left only with the terms that have a negative sign, but they obviously contribute to decrease the functional as the temperature rises.
			
			Moreover, observe that the inequality would become strict, if $\theta_{1,k+1}(x)<\theta_{0}$ on a set of positive measure as on this set
			\[
			\abs{\theta_{1,k+1}\circ{\bf \Psi}_u-\theta_{1,k}}=\theta_{1,k}-\theta_{1,k+1}\circ{\bf \Psi}_u>\theta_{1,k}-\theta_0,
			\]
			hence the minimizer satisfies the bound.
		\end{proof}
		The following result is a direct corollary of Proposition \ref{propexminth}.
		\begin{corollary}\label{lemmasolmin2}
			$(i)$ Let $(u_{i,k+1},\eta_{k+1})$ is a solution to \eqref{min1} as obtained in Lemma \ref{lemmasolmin1}.
			Any solution $\theta_{i,k+1}$ ($i\in\{1,2\}$) of the minimization problem \eqref{min2} satisfies
			
			\begin{align}\label{ELagmin2}
				\begin{aligned}
					&c_{1}\int_{\Omega^{1}_{k}}\frac{\theta_{1}\circ\Psi_{u_1,{k+1}}-\theta_{1,k}}{\tau}\zeta_{1}\circ\Psi_{u_{1},k+1}+k_{1}\int_{\Omega^{1}_{k+1}}\nabla\theta_{1,k+1}\cdot\nabla\zeta_{1}+{\lambda}\int_{\partial\Omega^{2}_{k+1}}(\theta_{1,k+1}-\theta_{2,k+1})\zeta_{1}\\
					&-\int_{\Omega^{1}_{k}}\mu_{1}^{M}(\theta_{1,k})|Du_{1,k+1}|^{2}\zeta_{1}\circ\Psi_{u_{1,k+1}}
					-{\kappa}\int_{\Omega^{1}_{k}}
					\min\Big\{|\nabla^{k_{0}}u_{1,k+1}|^{2},\tau^{-1}\Big\}\zeta_{1}\circ\Psi_{u_{1,k+1}}
					\\
					&-\frac{1}{2h}\int_{\Gamma}\left|\frac{\eta_{k+1}-\eta_{k}}{\tau}-\beta_{k}\right|^{2}\zeta_{1}-\kappa\int_{\Gamma}\min\bigg\{\left|\nabla^{k_{0}+1}\bigg(\frac{\eta_{k+1}-\eta_{k}}{\tau}\bigg)\right|^{2},\tau^{-1}\bigg\}\zeta_{1}
					\\
					&-\frac{1}{2h}\int_{\Omega^{1}_{0}}\left|u_{1,k+1}\circ\Phi^{1}_{k}-w_{1,k}\right|^{2}\zeta_{1}\circ\Phi^{1}_{k+1}
				\end{aligned}
			\end{align}
			for all $\zeta_{1}\in  \mathcal{A}_{1,k+1}\cap W^{1,2}(\Omega)$.
			Further
			\begin{align}\label{ELagmin2*}
				\begin{aligned}
					& c_{2}\int_{\Omega^{2}_{k}}\frac{\theta_{2,k+1}\circ\Psi_{u_{2},k+1}-\theta_{2,k}}{\tau}\zeta_{2}\circ\Psi_{u_{2},k+1}+k_{2}\int_{\Omega^{2}_{k+1}}\nabla\theta_{2,k+1}\cdot\nabla\zeta_{2}-{\lambda}\int_{\partial\Omega^{2}_{k+1}}(\theta_{1,k+1}-\theta_{2,k+1})\zeta_{2}
					\\
					&-\int_{\Omega^{2}_{k}}\mu_{2}^{M}(\theta_{2,k})|Du_{2,k+1}|^{2}\zeta_{2}\circ\Psi_{u_{2,k+1}}
					-{\kappa}\int_{\Omega^{2}_{k}}\min\Big\{|\nabla^{k_{0}}u_{2,k+1}|^{2},\tau^{-1}\}\zeta_{2}\circ\Psi_{u_{2,k+1}}
					\\
			&	-\frac{1}{2h}\int_{\Omega^{2}_{0}}\left|u_{2,k+1}\circ\Phi^{2}_{k}-w_{2,k}\right|^{2}\zeta_{2}\circ\Phi^{2}_{k+1}
			=0.
		\end{aligned}
	\end{align}
	for all $\zeta_{2}\in \mathcal{A}_{2,k+1}\cap W^{1,2}(\Omega),$ where the set $\mathcal{A}_{i,k+1}$ is introduced in \eqref{defmA}. Note that we have considered test functions $\zeta_{i}$ in $\Omega,$ since we need them to be defined on $\Gamma$ to give proper sense to the terms involving integrals over $\Gamma$. Indeed any $\zeta_{i}\in \mathcal{A}_{i,k+1}$ modulo a canonical extension of $W^{1,2}(\Omega^{i}_{k+1})$ functions to $W^{1,2}(\Omega)$ can be used as a test function in the above weak formulations. 
\end{corollary}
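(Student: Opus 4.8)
The plan is to read off \eqref{ELagmin2}--\eqref{ELagmin2*} as the first-order optimality conditions for the minimizer $(\theta_{1,k+1},\theta_{2,k+1})$ of $T_k$ over $\mathcal{A}_{1,k+1}\times\mathcal{A}_{2,k+1}$, whose existence and lower bound $\theta_{i,k+1}\geqslant\gamma$ are supplied by Proposition \ref{propexminth}. Fix $i\in\{1,2\}$ and an admissible competitor $\zeta_i\in\mathcal{A}_{i,k+1}\cap W^{1,2}(\Omega)$. Since $\mathcal{A}_{i,k+1}$ is convex, the segment $\theta_{i,k+1}+\epsilon(\zeta_i-\theta_{i,k+1})$ stays in $\mathcal{A}_{i,k+1}$ for $\epsilon\in[0,1]$, and minimality forces $\tfrac{d}{d\epsilon}\big|_{\epsilon=0^+}T_k\geqslant 0$ (varying only the $i$-th slot). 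The two things to check are then: \emph{(a)} $T_k$ is G\^ateaux differentiable along such directions, with differential equal to the linear form written in \eqref{ELagmin2} (resp.\ \eqref{ELagmin2*}), and \emph{(b)} the resulting one-sided inequality upgrades to the stated equality.

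For \emph{(a)} I would differentiate $T_k$ term by term, in the order the terms appear in \eqref{min2}. The Dirichlet term $\tfrac{k_i}{2}\int_{\Omega^{i}_{k+1}}|\nabla\theta_i|^2$ and the interface term $\tfrac{\lambda}{2}\int_{\partial\Omega^{2}_{k+1}}|\theta_1-\theta_2|^2$ are quadratic with fixed coefficients, producing $k_i\int_{\Omega^{i}_{k+1}}\nabla\theta_{i,k+1}\cdot\nabla\zeta_i$ and $\pm\lambda\int_{\partial\Omega^{2}_{k+1}}(\theta_{1,k+1}-\theta_{2,k+1})\zeta_i$ (the traces being well defined by Lemma \ref{Lem:TrOp}). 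Every remaining term is \emph{affine} in $\theta_i$, since the weights $\mu^M_i(\theta_{i,k})|Du_{i,k+1}|^2$, $\min\{|\nabla^{k_0}u_{i,k+1}|^2,\tau^{-1}\}$, $|u_{i,k+1}\circ\Phi^i_k-w_{i,k}|^2$, $|\tfrac{\eta_{k+1}-\eta_k}{\tau}-\beta_k|^2$ and $\min\{|\nabla^{k_0+1}(\tfrac{\eta_{k+1}-\eta_k}{\tau})|^2,\tau^{-1}\}$ are fixed functions of $x$, independent of $\theta_i$; the first two are in $L^\infty$ by \eqref{muiN} and Corollary \ref{Coretaukp1}, and the remaining ones are integrable against the admissible temperatures by the a priori bounds of Lemma \ref{bounduetakp1}, \eqref{betawk} and the embedding/trace results of Section~\ref{sec:Geomtry}, so they pass straight through the differentiation. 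The only genuinely non-routine term is the discretized material derivative $\tfrac{\tau c_i}{2}\int_{\Omega^{i}_{k}}\bigl|\tfrac{\theta_i\circ\Psi_{u_{i,k+1}}-\theta_{i,k}}{\tau}\bigr|^2$: to view it as a functional of $\theta_i\in W^{1,2}(\Omega^{i}_{k+1})$ one transports the integral through the diffeomorphism $\Psi_{u_{i,k+1}}:\Omega^{i}_{k}\to\Omega^{i}_{k+1}$, which is licit for $\tau<\tau_2$ with Jacobian $\det\nabla\Psi_{u_{i,k+1}}=1+O(\tau^2)$ by \eqref{bnddetnablaPsi} of Proposition \ref{diffPhi}; differentiating then gives exactly $c_i\int_{\Omega^{i}_{k}}\tfrac{\theta_{i,k+1}\circ\Psi_{u_{i,k+1}}-\theta_{i,k}}{\tau}\,\zeta_i\circ\Psi_{u_{i,k+1}}$, the leading term of \eqref{ELagmin2}. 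Collecting the pieces yields the linear forms displayed, with the $\Gamma$-terms attached to $\zeta_1$ only because in \eqref{min2} the structural dissipation multiplies $\theta_1$.

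For \emph{(b)}, the one-sided inequality obtained above is a priori only a variational inequality, active where $\theta_{i,k+1}=\gamma$. To reach the equalities \eqref{ELagmin2}--\eqref{ELagmin2*} I would invoke the strict-decrease observation in the proof of Proposition \ref{propexminth}: truncation toward $\gamma$ strictly lowers $T_k$ whenever $\theta_{i,k+1}<\gamma$ on a set of positive measure, which rules out the constraint being active and hence makes the first variation two-sided, so $\tfrac{d}{d\epsilon}\big|_{\epsilon=0}T_k=0$ for all directions $\zeta_i-\theta_{i,k+1}$; by linearity of the differential in $\zeta_i$ this is precisely \eqref{ELagmin2} and \eqref{ELagmin2*}. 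The last bookkeeping point — why the test functions are taken in $\mathcal{A}_{i,k+1}\cap W^{1,2}(\Omega)$ rather than only in $W^{1,2}(\Omega^{i}_{k+1})$ — is that the terms carried over $\Gamma$ require $\zeta_1$ to be defined on $\Gamma$; this is handled by composing with a bounded extension operator $W^{1,2}(\Omega^{i}_{k+1})\to W^{1,2}(\Omega)$ so that the trace on $\Sigma_{\eta_{k+1}}=\varphi_{\eta_{k+1}}(\Gamma)$ is meaningful, the extension not affecting any of the $\Omega^{i}_{k+1}$- or $\Omega^{i}_{k}$-integrals. I expect the main obstacle to be making \emph{(b)} fully rigorous — keeping the passage from the variational inequality to the equation consistent with the constraint $\theta_i\geqslant\gamma$ — together with the uniform-in-$\tau$ change of variables in the convection term; the remainder is a routine first-variation computation that can be imported almost verbatim from \cite{BenKamSch,BreitKamSch}.
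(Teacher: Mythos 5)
Your proposal follows the same route as the paper: the paper's entire proof of this corollary is the single remark that \eqref{ELagmin2}--\eqref{ELagmin2*} are the Euler--Lagrange equations of the functional $T_k$ from \eqref{min2}--\eqref{defmA}, and your term-by-term first-variation computation (the quadratic Dirichlet and interface terms, the affine source terms with fixed weights, and the convection term handled through the diffeomorphism $\Psi_{u_{i,k+1}}$ via Proposition \ref{diffPhi}), together with the extension remark for test functions in $W^{1,2}(\Omega)$, is exactly the computation the paper leaves implicit. Note that \eqref{ELagmin2} is, like \eqref{ELagmin2*}, meant to be set equal to zero.

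The one point where your reasoning does not quite work as written is step \emph{(b)}. The strict-decrease observation in the proof of Proposition \ref{propexminth} shows that a minimizer cannot lie \emph{strictly below} the truncation level on a set of positive measure; it does not exclude that the constraint set $\{\theta_{i,k+1}=\gamma\}$ has positive measure, so ``the constraint is inactive'' is not a consequence, and from constrained minimality alone you only obtain the variational inequality $DT_k(\theta_{k+1})(\zeta-\theta_{k+1})\geqslant 0$. The clean repair uses the same truncation inequality in a different way: since $T_k(\max\{\theta_1,\gamma\},\max\{\theta_2,\gamma\})\leqslant T_k(\theta_1,\theta_2)$ holds for \emph{arbitrary} competitors (the argument in Proposition \ref{propexminth} only uses $\theta_{i,k}\geqslant\gamma$ and the signs of the source terms), the infimum of $T_k$ over $W^{1,2}$ coincides with its infimum over $\mathcal{A}_{1,k+1}\times\mathcal{A}_{2,k+1}$; hence any constrained minimizer is in fact an unconstrained global minimizer, the first variation vanishes in every direction, and the equalities \eqref{ELagmin2}--\eqref{ELagmin2*} follow. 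With that adjustment your argument is complete and coincides with the paper's intended proof.
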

The proof of the lemma follows by writing the Euler-Lagrange equation corresponding to the functional associated with the minimization problem \eqref{min2}-\eqref{defmA}.  
Next we derive an estimate of $\theta_{i,k+1}$ ($k=1,..,N-1$) which will be used later in Section \ref{interpole} to obtain suitable $\tau-$independent estimates of the interpolants approximating temperature. The estimate is a key tool to show that the gradient of the fluid temperatures admits of some integrability in fractional order Lebesgue spaces even with $L^{1}$ source terms.

\begin{lem}\label{estthetakp1}
For any convex $\phi\in C^2((0,\infty),(0,\infty)$, with $\phi' \geqslant 0$ and $\phi'\in L^\infty((0,\infty),(0,\infty))$, We find
\begin{equation}\label{estL2W12tg}
	\begin{array}{l}
		\displaystyle \sum_{i=1}^{2}\bigg(\int_{\Omega^{i}_{N-1}}\phi(\theta_{i,N-1})+\sum_{k=0}^{N-1}\tau\int_{\Omega^{i}_{k}}\kappa_i\phi''(\theta_{i,k})\abs{\nabla \theta_{i,k}}^2\bigg)\leq C,
	\end{array}
\end{equation}
where the constant on the right hand side only depends on the initial data, $w_{i}$ and $\beta.$\\
In particular using $\phi$, such that $\phi'(\theta)=1-\frac{1}{(1+\theta)^\beta}$ for $\beta>0$ implies that 
\begin{equation}\label{estL2W12tp}
	\begin{array}{l}
		\displaystyle \sum_{i=1}^{2}\bigg(\int_{\Omega^{i}_{N-1}}\bigg(\theta^{i}_{N-1}-\frac{(1+\theta^{i}_{N-1})^{1-\beta}}{(1-\beta)}\bigg)+\sum_{k=0}^{N-1}\tau\int_{\Omega^{i}_{k}}\kappa_i\frac{\abs{\nabla \theta_{i,k}}^2}{(1+\theta_{i,k})^{\beta+1}}\bigg)\leq C.
	\end{array}
\end{equation}
\end{lem}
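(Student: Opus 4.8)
The plan is to test the discrete heat equations \eqref{ELagmin2}--\eqref{ELagmin2*} of Corollary \ref{lemmasolmin2} with $\zeta_i=\phi'(\theta_{i,k+1})$, sum over the time steps $k$, and use the convexity of $\phi$ to turn the result into a telescoping discrete-energy structure that is then closed by a discrete Gr\"onwall argument. First I would check that $\phi'(\theta_{i,k+1})$ is an admissible test function: it is bounded since $\phi'\in L^\infty$, and since the minimiser $\theta_{i,k+1}$ satisfies $\theta_{i,k+1}\geq\gamma$ strictly above the smaller effective lower bound used in Proposition \ref{propexminth}, the Euler--Lagrange relations hold as genuine equalities when tested against any bounded $W^{1,2}$ function. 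If $\phi''$ is not bounded (so that $\phi'(\theta_{i,k+1})\in W^{1,2}$ is not immediate) one first runs the whole argument with $\phi'$ replaced by $\theta\mapsto\phi'(\min\{\theta,R\})$ and passes $R\to\infty$ at the end by monotone convergence; this also shows one may assume $\phi\geq 0$ on $[\gamma,\infty)$ after an additive normalisation, which changes neither $\phi'$ nor $\phi''$. Finally, $\phi'\in L^\infty$ forces $\phi$ to grow at most linearly, so $\phi(\theta^0_i)\in L^1$ because $\theta^0_i\in L^1$.

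Testing \eqref{ELagmin2} with $\zeta_1=\phi'(\theta_{1,k+1})$ and \eqref{ELagmin2*} with $\zeta_2=\phi'(\theta_{2,k+1})$, multiplying by $\tau$, and adding, the left-hand side consists of a transport term, the diffusion term $\tau\sum_i k_i\int_{\Omega^i_{k+1}}\phi''(\theta_{i,k+1})|\nabla\theta_{i,k+1}|^2$, and the interface term $\tau\lambda\int_{\partial\Omega^2_{k+1}}(\theta_{1,k+1}-\theta_{2,k+1})(\phi'(\theta_{1,k+1})-\phi'(\theta_{2,k+1}))$, while the right-hand side $\mathrm{RHS}_k$ collects all the dissipative sources tested against $\phi'$. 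The diffusion term is $\geq 0$ since $\phi''\geq 0$, and the interface term is $\geq 0$ because $\phi'$ is nondecreasing, so $(\theta_1-\theta_2)$ and $\phi'(\theta_1)-\phi'(\theta_2)$ have the same sign. For the transport term I would use the pointwise convexity inequality $\phi'(b)(b-a)\geq\phi(b)-\phi(a)$ with $b=\theta_{i,k+1}\circ\Psi_{u_i,k+1}$, $a=\theta_{i,k}$, then change variables through the diffeomorphism $\Psi_{u_i,k+1}\colon\Omega^i_k\to\Omega^i_{k+1}$, using the Jacobian bound \eqref{bnddetnablaPsi} and $\phi\geq 0$, to obtain
\begin{equation*}
c_i\int_{\Omega^i_k}\big(\theta_{i,k+1}\circ\Psi_{u_i,k+1}-\theta_{i,k}\big)\,\phi'(\theta_{i,k+1})\circ\Psi_{u_i,k+1}\ \geq\ c_i(1-C\tau^2)\int_{\Omega^i_{k+1}}\phi(\theta_{i,k+1})\ -\ c_i\int_{\Omega^i_k}\phi(\theta_{i,k}).
\end{equation*}
Writing $a^i_k:=c_i\int_{\Omega^i_k}\phi(\theta_{i,k})$, summation over $k=0,\dots,M-1$ converts the right-hand side of this into the telescoping difference $a^i_M-a^i_0$ plus an error $-C\tau^2\sum_{k=1}^{M}a^i_k$. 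For $\mathrm{RHS}_k$ one bounds $\phi'$ by $\|\phi'\|_{L^\infty}$, notes that each capped quantity $\min\{|\nabla^{k_0}u_{i,k+1}|^2,\tau^{-1}\}$ and $\min\{|\nabla^{k_0+1}(\tfrac{\eta_{k+1}-\eta_k}{\tau})|^2,\tau^{-1}\}$ is dominated by the uncapped one, and invokes the dissipation estimates \eqref{consmin1}--\eqref{consmin1*} of Lemma \ref{bounduetakp1} to conclude $\sum_{k=0}^{M-1}\tau\,\mathrm{RHS}_k\leq C\|\phi'\|_{L^\infty}$ uniformly in $\tau$.

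Dropping the nonnegative diffusion and interface contributions, one obtains for every $M\leq N_F:=\lfloor h/\tau\rfloor$ the inequality $S_M\leq 2S_0+C\tau^2\sum_{k=1}^{M-1}S_k+C\|\phi'\|_{L^\infty}$ with $S_k:=a^1_k+a^2_k$ (after absorbing the $k=M$ term for $\tau$ small). Since $\tau^2 M\leq\tau h\leq h$, discrete Gr\"onwall yields $S_M\leq C$ uniformly in $\tau$ and $M$, the constant depending only on $h,\kappa,\|\phi'\|_{L^\infty},\|\phi(\theta^0_i)\|_{L^1},\|w_i\|_{L^2}$ and $\|\beta\|_{L^2}$. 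Feeding this uniform bound back into the summed inequality, this time keeping the diffusion and interface terms, gives \eqref{estL2W12tg} up to a harmless relabelling of the time index. The estimate \eqref{estL2W12tp} is then the special case $\phi'(\theta)=1-(1+\theta)^{-\beta}$, for which $\phi''(\theta)=\beta(1+\theta)^{-\beta-1}$, $\phi$ grows linearly, and $\beta$ is absorbed into the constant. I expect the two genuinely delicate points to be the admissibility of $\phi'(\theta_{i,k+1})$ in the Euler--Lagrange equalities under the one-sided constraint $\theta_i\geq\gamma$ (handled via Proposition \ref{propexminth} and, if necessary, the $R$-truncation of $\phi'$), and the careful bookkeeping of the $O(\tau^2)$ Jacobian errors so that the discrete Gr\"onwall closes uniformly in $\tau$.
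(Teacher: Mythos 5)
Your proposal is correct and follows essentially the same route as the paper: testing the Euler--Lagrange equations \eqref{ELagmin2}--\eqref{ELagmin2*} with $\phi'(\theta_{i,k+1})$, using the convexity inequality $\phi(b)-\phi(a)\leq\phi'(b)(b-a)$ together with the Jacobian bound \eqref{bnddetnablaPsi}, bounding the source terms via $\|\phi'\|_{L^\infty}$ and \eqref{consmin1*}, and closing with a discrete Gr\"onwall argument. The only cosmetic difference is that you drop the summed interface term by its sign (monotonicity of $\phi'$) whereas the paper bounds each interface contribution by $\phi$-differences that cancel upon summation; your extra remarks on admissibility of the test function are a welcome refinement of the same argument.
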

\begin{proof}
For that observe the following identity:
\begin{align}
	\label{eq:observ}
	\phi(b)-\phi(a)\leq \phi'(b)(b-a)
\end{align}
Now for $i\in\{1,2\}$ we test \eqref{ELagmin2}--\eqref{ELagmin2*} by $\zeta_{i}=\phi'(\theta_{i,k+1})$, with $\phi\in C^2((0,\infty);(0,\infty))$ and $\phi',\phi''\geq 0$. This implies  the forllowing for $\theta_{2}$ by using \eqref{bnddetnablaPsi}:
\begin{align*}
	&\frac{c_2}{\tau}\bigg(\int_{\Omega^{2}_{k+1}}\phi(\theta_{2,k+1})-\int_{\Omega_{2,k}}\phi(\theta_{2,k})\bigg)
	= 
	c_2\int_{\Omega^{2}_{k}}\frac{\phi(\theta_{2,k+1}\circ\Psi_{u_{2,k+1}})\det(\nabla \Psi_{u_{2,k+1}})-\phi(\theta_{2,k})}{\tau}
	\\
	&\leq c_2\int_{\Omega^{2}_{k}}\frac{\phi(\theta_{2,k+1}\circ\Psi_{u_{2,k+1}})-\phi(\theta_{2,k})}{\tau} + C\tau\int_{\Omega^{2}_{k}}\phi(\theta_{2,k+1}\circ\Psi_{u_{2,k+1}}).
\end{align*}
Now this inequality, \eqref{eq:observ} and \eqref{ELagmin2*} implies that
\begin{align*}
	&\frac{c_2}{\tau}\bigg(\int_{\Omega^{i}_{k+1}}\phi(\theta_{2,k+1})-\int_{\Omega^2_k}\phi(\theta_{2,k})\bigg) 
	-C\tau\int_{\Omega^{2}_{k}}\phi(\theta_{2,k+1}\circ\Psi_{u_{2,k+1}})
	\\
	&\leq c_2\int_{\Omega^{2}_{k}}\frac{\phi'(\theta_{2,k+1}\circ\Psi_{u_{2,k+1}})(\theta_{2,k+1}\circ\Psi_{u_{2,k+1}}-\theta_{2,k})}{\tau} 
	\\
	&\leq  -k_2\int_{\Omega^{i}_{k+1}} \phi''(\theta_{2,k+1})\abs{\nabla \theta_{2,k+1}}^2 + \int_{\Omega^{2}_{k}}\mu^{M}_{i}(\theta_{2,k})|Du_{2,k+1}|^{2}\phi'(\theta_{2,k+1}\circ\Psi_{u_{2,k+1}})\\
	&\qquad\displaystyle +{\kappa}\int_{\Omega^{2}_{k}}\min\Big\{|\nabla^{k_{0}}u_{2,k+1}|^{2},\tau^{-1}\}\phi'(\theta_{2,k+1}\circ\Psi_{u_{2,k+1}})
	+\frac{1}{2h}\int_{\Omega^{2}_{0}}\left|u_{2,k+1}\circ\Phi^{2}_{k}-w_{2,k}\right|^{2}\phi'(\theta_{2,k+1}\circ\Phi^{2}_{k+1})\\
	&\qquad\displaystyle +\lambda\int_{\partial\Omega^{2}_{k+1}}\bigg(\phi(\theta_{1,k+1})-\phi(\theta_{2,k+1})\bigg)
\end{align*}
Now this inequality combined with the fact that $\phi'\in L^\infty((0,\infty),(0,\infty)),$ \eqref{consmin1*} and further summing over $0\leq k\leq N-1$ renders
\begin{equation}\label{aftusel1est}
	\begin{array}{ll}
		&\displaystyle{c_1}\int_{\Omega^{2}_{N-1}}\phi(\theta_{2,N-1})+ \sum^{N-1}_{k=0}k_2\tau\int_{\Omega^{i}_{k+1}} \phi''(\theta_{i}^{k+1})\abs{\nabla \theta_{2,k+1}}^2\\
		&\displaystyle \leq C+C\sum^{N-1}_{k=0}\tau^{2}\int_{\Omega^{2}_{k}}\phi(\theta_{2,k+1}\circ\Psi_{u_{2,k+1}})+\sum^{N-1}_{k=0}\lambda\tau\int_{\partial\Omega^{2}_{k+1}}\bigg(\phi(\theta_{1,k+1})-\phi(\theta_{2,k+1})\bigg),
	\end{array}
\end{equation}
where the generic constant $C$ may depend on the initial data, $h,$ $\beta$ and $w_{i}.$\\
In the same spirit the testing of \eqref{ELagmin2} infers:
\begin{equation}\label{aftrtest1steq}
	\begin{array}{ll}
		&\displaystyle {c_1}\int_{\Omega^{1}_{N-1}}\phi(\theta_{1,N-1})+ \sum^{N-1}_{k=0}k_1\tau\int_{\Omega^{1}_{k+1}} \phi''(\theta_{1,k+1})\abs{\nabla \theta_{1,k+1}}^2\\
		&\displaystyle \leq C+C\sum^{N-1}_{k=0}\tau^{2}\int_{\Omega^{1}_{k}}\phi(\theta_{1,k+1}\circ\Psi_{u_{1,k+1}})-\sum^{N-1}_{k=0}\lambda\tau\int_{\partial\Omega^{2}_{k+1}}\bigg(\phi(\theta_{1,k+1})-\phi(\theta_{1,k+1})\bigg).
	\end{array}
\end{equation}
Adding \eqref{aftusel1est} and \eqref{aftrtest1steq} and further applying discrete Gronwall inequality one obtains \eqref{estL2W12tg}.
\end{proof}

	Next using the so far constructed discrete in time solutions we are going to define suitable interpolants ($\tau-$layer) and pass limit $\tau\rightarrow 0$ in order to prove \eqref{discretemomen1}-\eqref{entropybal}.
	\subsection{Interpolants and limits}\label{Intlim}
	So far we have constructed $(u_{i,k+1},\eta_{k+1},\theta_{i,k+1})^{N-1}_{k=0}$ at discrete time points. Using them let us now define the following piecewise constant and piecewise affine interpolants and investigate their convergence properties.
	\subsubsection{Definition of interpolants, $\tau-$ independent bounds and weak limits}\label{interpole}	
	\textit{Piecewise constant interpolants:}
	\begin{equation}\label{pcivelo}
		\left\{\begin{array}{lll}
			&\displaystyle u_{i,\tau}(x,t)=u_{i,k+1}(x)\,\,&\displaystyle\mbox{for}\,\,t\in[\tau k,\tau(k+1))\cap[0,h],\,\,x\in\Omega_{k},\,\,k\in\{0,1,...,N-1,..\},\\
			&\displaystyle u_{i,\tau}(x,t)=\frac{\eta_{k+1}-\eta_{k}}{\tau}\circ(\eta_{k})^{-1}\,\,&\displaystyle\mbox{for}\,\,t\in[\tau k,\tau(k+1))\cap[0,h],\,\,x\in\partial\Omega^{2}_{k}.
		\end{array}\right.
	\end{equation}
	The interpolants corresponding to $\theta_{i}$ and $\eta$ will be defined on a interval slightly bigger than $(0,h),$more precisely we will define them on $(0,h+\tau).$
	\begin{equation}\label{pcitheta}
		\left\{ \begin{array}{lll}
			&\displaystyle\theta_{i,\tau}(x,t)=\theta_{i,k}\,\,&\displaystyle\mbox{for}\,\,t\in[\tau k,\tau(k+1))\cap[0,h+\tau],\,\,x\in\Omega^{i}_{k},\,\,k\in\{1,...,N-1,...\},\\
			&\displaystyle \theta_{i,\tau}(x,t)=\theta^{0}_{i}\,\,&\displaystyle\mbox{for}\,\,0\leqslant t<\tau,\,\,x\in\Omega_{0},\\
			&\displaystyle \theta_{i,\tau}^{+}(x,t)=\theta(x,t+\tau)\,\,&\displaystyle \mbox{for}\,\,t\in[\tau k,\tau(k+1)),\,\,x\in\Omega^{i}_{k+1},\,\,k\in\{1,...,N-1,...\}.
		\end{array}\right.
	\end{equation}
	Further
	\begin{equation}\label{pcieta}
		\left\{ \begin{array}{lll}
			&\displaystyle \eta_{\tau}(y,t)=\eta_{k}\,\,&\displaystyle\mbox{for}\,\,t\in[\tau k,\tau(k+1))\cap[0,h+\tau],\,\,y\in\Gamma,\,\,k\in\{1,...,N-1,...\},\\
			&\displaystyle \eta_{\tau}^{+}(y,t)=\eta_{k+1}\,\,&\displaystyle\mbox{for}\,\,t\in[\tau k,\tau(k+1)),\,\,y\in\Gamma,\\
			&\displaystyle \eta_{\tau}(y,t)=\eta^{0}\,\,&\displaystyle\mbox{for}\,\,0\leqslant t<\tau,\,\,y\in\Gamma
		\end{array}\right.
	\end{equation}
	and hence $$\eta_{\tau}^{+}(y,t)=\eta_{\tau}(y,t+\tau),\,\,\mbox{for any}\,\,t\in(0,h),\,\,y\in\Gamma.$$
	We also define the interpolant approximating the time dependent domain and the diffeomorphism connecting the initial domain with the one at a time $t\in(0,h)$ as follows:
	\begin{equation}\label{domaindef}
		\begin{array}{lll}
			&\displaystyle\Omega^{i}_{\tau}(t)=\Omega^{i}_{k}\,\,&\mbox{for}\,\, t\in[\tau k,\tau(k+1))\cap [0,h+\tau],\,\,k\in\{0,1...,N-1,...\}\\
			&\displaystyle\Omega^{i,+}_{\tau}(t)=\Omega^{i}_{k+1}\,\,&\mbox{for}\,\, t\in[\tau k,\tau(k+1)),\\
			&\displaystyle \Phi^{i}_{\tau}(x,t)=\Phi^{i}_{k}(x)\,\,&\mbox{for}\,\, t\in[\tau k,\tau(k+1))\cap[0,h+\tau],\,\,k\in\{0,1...,N-1,...\},\,\,x\in\Omega^{i}_{0},\\
			&\displaystyle \Phi^{i,+}_{\tau}(x,t)=\Phi^{i}_{k+1}(x)\,\,&\mbox{for}\,\, t\in[\tau k,\tau(k+1)),\,\,,x\in\Omega^{i}_{0},\\
			&\displaystyle \Psi^{i}_{\tau}(x,t)=\Psi_{u_{i},k+1}(x)\,\,&\mbox{for}\,\,t\in[\tau k,\tau(k+1))\cap[0,h+\tau],\,\,k\in\{0,1...,N-1,...\},\,\,x\in\Omega^{i}_{k}.
		\end{array}
	\end{equation}
	We further define some piecewise affine interpolants associated with the temperature and the structural displacement.\\
	\textit{Piecewise affine interpolants:}\\
	\begin{equation}\label{paieta}
		\begin{array}{ll}
			\displaystyle\widetilde{\eta}_{\tau}(y,t)=\frac{\tau(k+1)-t}{\tau}\eta_{k}(y)+\frac{t-\tau k}{\tau}\eta_{k+1}(y)\,\,\mbox{for}\,\,&\displaystyle t\in[\tau k,\tau(k+1))\cap[0,h],\\
			&\displaystyle\,\,y\in\Gamma,\,\,k\in\{0,...,N-1,...\}
		\end{array}
	\end{equation}	
	where $\eta_{0}(y)=\eta^{0}(y)$ for all $y\in\Gamma.$
	\begin{equation}
		\begin{array}{ll}
			\displaystyle \widetilde{\theta}_{i,\tau}(x,t)=\frac{\tau(k+1)-t}{\tau}\theta_{i,k}(x)+\frac{t-\tau k}{\tau}\theta_{i,k+1}\circ\Psi_{u_{i,k+1}}(x)\,\,\mbox{for}\,\,&\displaystyle t\in[\tau k,\tau(k+1))\cap[0,h],\\
			&\displaystyle x\in\Omega_{k},\,\,k\in\{0,...,N-1,...\}
		\end{array}
	\end{equation}
	where $\theta_{i,0}(x)=\theta^{0}_{i}(x)$ for all $x\in \Omega^{i}_{0}.$\\
	
	With the interpolants defined so far, we present the following corollary which can be proved by using Corollary \ref{Coretaukp1} and Lemma \ref{estthetakp1}.
	\begin{corollary}\label{estimatesinterpole1}
		Under the assumptions of Corollary \ref{Coretaukp1} and Lemma \ref{estthetakp1} we have 
		\begin{subequations}
			\begin{alignat}{2}
				&\displaystyle{(a)}{\mbox{A dissipation type estimate:}}\nonumber\\
				&\displaystyle K_{\kappa}(\eta^{+}_{\tau}(t_{1}))+{\kappa}\sum\limits_{i}\int_{0}^{t_{1}}\int_{\Omega^{i}_{\tau}(t)}|\nabla^{k_{0}}u_{i,\tau}|^{2}+\sum\limits_{i}\int_{0}^{t_{1}}\int_{\Omega^{i}_{\tau}(t)}\mu^{M}_{i}(\theta_{i,\tau})|D u_{i,\tau}|^{2}\nonumber\\
				&\displaystyle+\int_{0}^{t_{1}}\frac{1}{2h}\int_{\Gamma}|\partial_{t}\widetilde{\eta}_{\tau}|^{2} +\sum\limits_{i}\int_{0}^{t_{1}}\frac{1}{2h}\int_{\Omega^{i}_{\tau}}|u_{i,\tau}|^{2}(\det\nabla\Phi_{\tau})^{-1}+\kappa\int^{t_{1}}_{0}\int_{\Gamma}|\nabla^{k_{0}+1}\partial_{t}\widetilde{\eta}_{\tau}|^{2}\nonumber\\
				&\displaystyle + \int^{t_{1}}_{0}\tau\bigg(\kappa 1\bigg(\nabla^{k_{0}+1}\partial_{t}\widetilde{\eta}_{\tau},\nabla^{k_{0}+1}\partial_{t}\widetilde{\eta}_{\tau}\bigg)+ D^{2}K(\xi_{k\tau,(k+1)\tau})\bigg(\partial_{t}\widetilde{\eta}_{\tau},\partial_{t}\widetilde{\eta}_{\tau}\bigg)\bigg)\label{L1bndpenultimate}\\
				&\displaystyle+\frac{1}{2h}\int^{t_{1}}_{0}\int_{\Gamma}\bigg|\partial_{t}\widetilde{\eta}_{\tau}-\beta_{\tau}\bigg|^{2}+\frac{1}{2h}\sum_{i}\int^{t_{1}}_{0}\int_{\Omega^{i}_{\tau}}|u_{i,\tau}-w_{i,\tau}\circ(\Phi^{i}_{\tau})^{-1}|^{2}(\det\nabla\Phi^{i}_{\tau})^{-1}\label{L1bndlastline}\\
				&\displaystyle =K_{\kappa}(\eta^{0})+\frac{1}{2h}\int^{t_{1}}_{0}\bigg(\int_{\Gamma}|\beta_{\tau}|^{2}+\sum_{i}\int_{\Omega^{i}_{\tau}}|w_{i,\tau}\circ(\Phi^{i}_{\tau})^{-1}|(\det\nabla\Phi^{i}_{\tau})^{-1}\bigg)\label{leveltauintereq}\\
				&\displaystyle\leqslant C\bigg(\eta^{0},\theta^{0}_{i},h,\|\beta\|_{L^{2}(\Gamma\times[0,h])},\|w_{i}\|_{L^{2}(\Omega^{i}_{0}\times[0,h])}\bigg),\,\,\mathrm{for\,\,all}\,\,t_{1}\in  [0,h],\label{fromCoret}
			\end{alignat}
		\end{subequations}
		\begin{equation}\label{fromCoret2}
			\begin{array}{ll}
				(b)&\displaystyle \sum\limits_{i}\int_{0}^{t_{1}}\|D u_{i,\tau}\|^{2}_{L^{\infty}(\Omega^{i}_{\tau})}\leqslant C\bigg(\eta^{0},\theta^{0}_{i},h,\|\beta\|_{L^{2}(\Gamma\times[0,h])},\|w_{i}\|_{L^{2}(\Omega^{i}_{0}\times[0,h])}\bigg),\\
				&\displaystyle\mathrm{for\,\,all}\,\,t_{1}\in  [0,h]
			\end{array}
		\end{equation}
		and
		\begin{subequations}
			\begin{alignat}{2}
				& \displaystyle (c) \sum^{2}_{i=1}\int_{\Omega^{i}_{\tau}}\theta_{i,\tau}(\cdot,t_{1})+\int^{t}_{0}\int_{\Omega^{i,+}_{\tau}}|\theta_{i,\tau}|^{p}+\int^{t_{1}}_{0}\int_{\Omega^{i,+}_{\tau}}{|\nabla\theta_{i,\tau}|^{q}}\bigg)\nonumber\\
				&\displaystyle \qquad\leq C\bigg(\eta^{0},\theta^{0}_{i},h,\|\beta\|_{L^{2}(\Gamma\times[0,h])},\|w_{i}\|_{L^{2}(\Omega^{i}_{0}\times[0,h])}\bigg),	\label{fromesttheta}
			\end{alignat}
			for all $t_{1}\in[0,h],$ $\beta>1$ where $p\in[1,\frac{5}{3})$ and $q\in [1,\frac{5}{4}).$\\
			\begin{equation}\label{positivitytheta}
				\begin{array}{l}
					\displaystyle (d)\,\,	\theta_{i,\tau}\geqslant \gamma_{0}>0\,\,\mbox{a.e. in}\,\,\Omega_{\tau}\times[0,h].
				\end{array}
			\end{equation}
			
		\end{subequations}
			
		\end{corollary}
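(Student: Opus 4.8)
The plan is to translate the discrete estimates of Corollary \ref{Coretaukp1} and Lemma \ref{estthetakp1} into statements about the interpolants defined in \eqref{pcivelo}--\eqref{paieta}. The central observation is that all the sums $\sum_{k=0}^{N-1}\tau(\cdots)$ appearing in \eqref{consmin1}, \eqref{C1alphaestuikp1}, \eqref{estL2W12tg} are, by construction, exactly the time-integrals $\int_0^{t_1}(\cdots)$ of the corresponding piecewise-constant interpolants, up to the change of variables $\Psi_{u_{i,k+1}}$ and $\Phi^i_k$ whose Jacobians are controlled by Proposition \ref{diffPhi} (inequalities \eqref{bnddetnablaPhi}--\eqref{bnddetnablaPsi}). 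So the proof is essentially bookkeeping: identify each term of \eqref{L1bndpenultimate}--\eqref{fromesttheta} with a term of the discrete identities.

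First I would establish part $(a)$. The identity \eqref{consmin1} is precisely the telescoped sum of \eqref{testingdmomen} over $k=0,\dots,N-1$; rewriting $\sum_k \tau\,\kappa\int_{\Omega^i_k}|\nabla^{k_0}u_{i,k+1}|^2 = \kappa\int_0^{t_1}\int_{\Omega^i_\tau(t)}|\nabla^{k_0}u_{i,\tau}|^2$ and similarly for the viscous, structural-dissipation and difference-quotient terms gives \eqref{L1bndpenultimate}--\eqref{leveltauintereq}. The terms $\frac{1}{2h}\int_\Gamma|\frac{\eta_{k+1}-\eta_k}{\tau}|^2$ become $\frac{1}{2h}\int_\Gamma|\partial_t\widetilde\eta_\tau|^2$ since $\partial_t\widetilde\eta_\tau = \frac{\eta_{k+1}-\eta_k}{\tau}$ on $[\tau k,\tau(k+1))$; the factors $(\det\nabla\Phi^i_\tau)^{-1}$ enter when one pulls the fluid integrals back from $\Omega^i_0$ (where $w_{i,k}$ lives) to $\Omega^i_\tau(t)$. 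The right-hand side $K_\kappa(\eta^0)+\sum_k\frac{\tau}{2h}[\int_\Gamma|\beta_k|^2+\int_{\Omega^i_0}|w_{i,k}|^2]$ equals \eqref{leveltauintereq} after the same change of variables, and the final bound \eqref{fromCoret} is just \eqref{consmin1*}. Part $(b)$ is the integrated form of \eqref{LinftyDuikp1}, again using that $\sum_k\tau\|Du_{i,k+1}\|^2_{L^\infty(\Omega^i_k)} = \int_0^{t_1}\|Du_{i,\tau}\|^2_{L^\infty(\Omega^i_\tau)}$. Part $(d)$ is immediate from \eqref{defmA} (each $\theta_{i,k+1}\geq\gamma>\gamma_0$) together with the positivity of $\theta^0_i$.

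For part $(c)$ I would invoke Lemma \ref{estthetakp1}: the bound $\sum_i\int_{\Omega^i_{N-1}}\phi(\theta_{i,N-1})\le C$ with $\phi'(\theta)=1-(1+\theta)^{-\beta}$ controls $\sum_i\int_{\Omega^i_\tau}\theta_{i,\tau}(\cdot,t_1)$ after noting $\phi(\theta)\sim\theta$ for large $\theta$, which gives the $L^\infty(L^1)$ piece; and $\sum_k\tau\int_{\Omega^i_k}\frac{|\nabla\theta_{i,k}|^2}{(1+\theta_{i,k})^{\beta+1}}\le C$ combined with this $L^\infty(L^1)$ bound yields, via Hölder with the exponent-splitting $\int|\nabla\theta|^q = \int\frac{|\nabla\theta|^q}{(1+\theta)^{q(\beta+1)/2}}(1+\theta)^{q(\beta+1)/2}$ for $q<\tfrac54$ (a standard computation, cf. \cite{bulicek}), both the $L^q(W^{1,q})$ and, by interpolation with the $L^\infty(L^1)$ bound and Sobolev embedding, the $L^p(L^p)$ bound for $p<\tfrac53$. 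The Jacobian factors from $\Psi_{u_i,k+1}$ are again harmless by \eqref{bnddetnablaPsi}.

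The only point requiring genuine care — and the main obstacle — is the bookkeeping of the change-of-variables factors and of the interface terms $\lambda\int_{\partial\Omega^2_{k+1}}(\theta_1-\theta_2)(\cdots)$ when passing between the two fluid domains and their common moving boundary: one must check that the Jacobian perturbations $1\pm C\tau$ (resp. $1\pm C\tau^2$) and the $\tau^2\int\phi(\theta_{k+1}\circ\Psi)$ error terms in \eqref{aftusel1est}--\eqref{aftrtest1steq} are absorbed by the discrete Grönwall argument uniformly in $\tau$, so that the constant $C$ in \eqref{fromCoret}, \eqref{fromCoret2}, \eqref{fromesttheta} genuinely depends only on the data listed and not on $\tau,N$. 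This is exactly what Lemma \ref{estthetakp1} and Lemma \ref{bounduetakp1} were proved to guarantee, so the corollary follows without further essential difficulty.
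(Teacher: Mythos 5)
Your proposal is correct and follows essentially the same route as the paper: parts (a), (b) and (d) are exactly the translation of \eqref{consmin1*}, \eqref{LinftyDuikp1} and Proposition \ref{propexminth} into interpolant notation (the paper simply notes the interpolants are piecewise constant so the bounds extend to all $t_1\in[0,h]$), and part (c) uses Lemma \ref{estthetakp1} with the same Bul\'{\i}\v{c}ek-style choices of $\phi$, Sobolev embedding, interpolation with the $L^\infty(L^1)$ bound, and the H\"older exponent-splitting for $\nabla\theta_{i,\tau}$. The only cosmetic difference is the order in (c): the paper first derives the $L^p(L^p)$ bound ($p<\tfrac53$) from $(1+\theta_{i,\tau})^{(1-\beta)/2}\in L^2(W^{1,2})$ and only then feeds it into the splitting $\int|\nabla\theta_{i,\tau}|^s=\int|\nabla\theta_{i,\tau}|^s(1+\theta_{i,\tau})^{(1+\beta)s/2}(1+\theta_{i,\tau})^{-(1+\beta)s/2}$ to get $W^{1,s}$, $s<\tfrac54$, which is the order you should also keep since the splitting needs the $L^{5/3-}$ integrability.
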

		\begin{proof}
			Using the definition of the interpolants the proof of item $(a)$ and $(b)$ for $t_{1}\in\tau\mathbb{N}\cap[0,h]$ follow respectively from \eqref{consmin1*} and \eqref{LinftyDuikp1}. For any $t_{1}\in[0,h],$ all the estimates still hold since the interpolants are constants in the intervals $[\tau k,\tau(k+1))\cap[0,h].$\\
			Now considering in particular $\beta>1$ in \eqref{estL2W12tp}, the bound of $\int_{\Omega^{i}_{\tau}}\theta_{i,\tau}(\cdot,t_{1})$ in \eqref{fromesttheta} follows directly. Now considering $\beta\in(0,1)$ in \eqref{estL2W12tp} one has the follwoing: 
			\begin{equation}\label{gradthetaibnd2}
				\begin{array}{ll}
					\displaystyle	\bigg\|\nabla (1+\theta_{i,\tau})^{\frac{1-\beta}{2}}\bigg\|^{2}_{L^{2}(\Omega^{i}_{\tau}\times (0,h))}\leqslant C.
				\end{array}
			\end{equation}
			Consequently
			\begin{equation}\label{thetaibndH12}
				\begin{array}{ll}
					\displaystyle\bigg\| (1+\theta_{i,\tau})^{\frac{1-\beta}{2}}\bigg\|^{2}_{L^{2}(0,h;W^{1,2}(\Omega^{i}_{\tau}))}\leqslant C\Rightarrow \bigg\| (1+\theta_{i,\tau})^{\frac{1-\beta}{2}}\bigg\|^{2}_{L^{2}(0,h;L^{6}(\Omega^{i}_{\tau}))}\leqslant C\,\,\mbox{for}\,\,\beta\in (0,1),
				\end{array}
			\end{equation}
			$i.e.$ one has $\theta_{i,\tau}\in L^{1-\beta}(0,h;L^{3(1-\beta)}(\Omega^{i}_{\tau}))$
			One can now use standard interpolation of Lebesgue spaces to furnish that
			\begin{equation}\label{Lpp2}
				\begin{array}{ll}
					\displaystyle\|\theta_{i,\tau}\|_{L^{p}(0,h;L^{p}(\Omega^{i}_{\tau}))}\leqslant C\,\,\mbox{for}\,\,p\in[1,\frac{5}{3}).
				\end{array}
			\end{equation}
			Further in view of  \eqref{Lpp2} one observes from
			\begin{equation}\label{gradthetas2}
				\begin{array}{ll}
					\displaystyle\int_{\Omega^{i}_{\tau}\times (0,h)}|\nabla\theta_{i,\tau}|^{s}&\displaystyle= \int_{\Omega^{i}_{\tau}\times (0,h)} |\nabla \theta_{i,\tau}|^{s}(1+\theta_{i,\tau})^{(1+\beta)\frac{s}{2}}(\theta_{i,\tau})^{-(1+\beta)\frac{s}{2}}\\
					&\displaystyle \leqslant\left(\int_{\Omega^{i}_{\tau}\times (0,h)}\frac{|\nabla\theta_{i,\tau}|^{2}}{(1+\theta)^{(1+\beta)}} \right)^{\frac{s}{2}}\left(\int_{\Omega^{i}_{\tau}\times (0,h)}(1+\theta_{i,\tau})^{(1+\beta)\frac{s}{2-s}}\right)^{\frac{2-s}{2}}
				\end{array}
			\end{equation}
			that the following inequality holds
			\begin{equation}\label{thetaW1s2}
				\begin{array}{l}
					\displaystyle\int_{\Omega^{i}_{\tau}\times(0,h)}\|\theta_{i,\tau}\|^{s}_{W^{1,s}(\Omega^{i}_{\tau})}\leqslant C\,\,\mbox{for all}\,\, s\in[1,\frac{5}{4}).
				\end{array}
			\end{equation}
			The uniform positive lower bound $(d)$ of the temperature is a consequence of Proposition \ref{propexminth}.
		\end{proof}

	Next we are going to list the weak type convergences of the interpolants as $\tau\rightarrow 0.$ Since $D^{2}K(\xi_{k\tau,(k+1)\tau})$ is bounded, for small enough values of $\tau,$ the term $\displaystyle\tau\int^{t_{1}}_{0}D^{2}K(\xi_{k\tau,(k+1)\tau})\bigg(\partial_{t}\widetilde{\eta}_{\tau},\partial_{t}\widetilde{\eta}_{\tau}\bigg)$ can be absorbed in the left hand side of \eqref{fromCoret} to conclude the following:
	\begin{equation}\label{weakconv}
		\begin{array}{lll}
			&\displaystyle \eta_{\tau}, \widetilde{\eta}_{\tau}\rightharpoonup^* \eta\,\,&\displaystyle\mbox{in}\,\, L^{\infty}(0,h;W^{k_{0}+1,2}(\Gamma))\,\,k_{0}>3,\\[2.mm]
			&\displaystyle \tilde{\eta}_{\tau}\rightarrow \eta\,\,&\displaystyle\mbox{in}\,\,C^{0}([0,h];C^{1,\alpha}(\Gamma))\,\,\mbox{for some}\,\,\alpha>0,\\[2.mm]
			&\displaystyle \partial_{t}\widetilde{\eta}_{\tau}\rightharpoonup \partial_{t}\eta\,\,&\displaystyle\mbox{in}\,\, L^{2}(0,h;W^{k_{0}+1,2}(\Gamma)),\\[2.mm]
			&\displaystyle \theta_{i,\tau}\rightharpoonup \theta_{i}\,\,&\displaystyle\mbox{in}\,\,L^{p}(0,h;L^{p}(\Omega^{i}_{\tau}))\,\,\mbox{for}\,\,p\in[1,\frac{5}{3}),\\[2.mm]
			&\displaystyle \theta_{i,\tau}\rightharpoonup^{*} \theta_{i}\,\,&\displaystyle \mbox{in}\,\,L^{\infty}(0,h;L^{1}(\Omega^{i}_{\tau})),\\[2.mm]
			&\displaystyle \theta_{i,\tau}\rightharpoonup \theta_{i}\,\,&\displaystyle \mbox{in}\,\,L^{s}(0,h;W^{1,s}(\Omega^{i}_{\tau}))\,\,\mbox{for}\,\,s\in[1,\frac{5}{4}),\\[2.mm]
			&\displaystyle u_{i,\tau}\rightharpoonup u_{i}\,\,&\displaystyle \mbox{in}\,\, L^{2}(0,h;W^{k_{0},2}(\Omega^{i}_{\tau})),\\[2.mm]
			&\displaystyle \Phi^{i}_{\tau}\rightarrow \Phi^{i}\,\,&\displaystyle \mbox{in}\,\, C^{0}([0,h];C^{1,\alpha}(\Omega^{i}_{0}))\,\,\mbox{for some}\,\,\alpha>0.
		\end{array}
	\end{equation}
	The fourth convergence above is a consequence of \eqref{fromesttheta} and \eqref{positivitytheta}.
	Further in the same spirit of \cite[Corollary 4.7]{BenKamSch}, one proves that
	$$\lim_{\tau\rightarrow 0}\det\nabla\Phi^{i}_{\tau}=1.$$
	\subsubsection{Equations solved by the interpolants}
	In this section we list the weak formulations of the equations solved by the interpolants defined in \eqref{interpole}. In the later stage we will pass limit $\tau\rightarrow 0$ in these weak formulations by using the convergences \eqref{weakconv}.\\[2.mm]
	For any pair of functions $(\psi,b)\in C^{0}([0,h];W^{k_{0},2}(\Omega))\times C^{0}([0,h];W^{k_{0}+1,2}(\Gamma))$ solving $\psi\circ\eta=b$ we consider test functions of the form $\int^{(k+1)\tau}_{k\tau}\psi$ and $\int^{(k+1)\tau}_{k\tau}b$ in \eqref{ELagmin1} and sum over $k=0,...,N-1$ to have the following 
	\begin{equation}\label{equetatau}
		\begin{array}{ll}
			&\displaystyle \int^{t_{1}}_{0}\langle DK_{\kappa}(\eta^{+}_{\tau}),b\rangle+\frac{1}{h}\int^{t_{1}}_{0}\int_{\Gamma}\left(\partial_{t}\widetilde{\eta}_{\tau}-\beta_{\tau}\right)b+\sum_{i}\frac{1}{h}\int^{t_{1}}_{0}\int_{\Omega^{i}_{\tau}}\left(u_{i,\tau}-w_{i,\tau}\circ(\Phi^{i}_{\tau})^{-1}\right)\psi(\det\nabla\Phi_{\tau})^{-1}\\
			&\displaystyle+\kappa\int^{t_{1}}_{0}\int_{\Gamma}\nabla^{k_{0}+1}\partial_{t}\widetilde{\eta}_{\tau}:\nabla^{k_{0}+1}b+{\kappa}\sum_{i}\int^{t_{1}}_{0}\int_{\Omega^{i}_{\tau}}\nabla^{k_{0}}u_{i,\tau}:\nabla^{k_{0}}\psi_{i}\\
			&\displaystyle+\sum_{i}\int^{t_{1}}_{0}\int_{\Omega^{i}_{\tau}}\mu^{M}_{i}(\theta_{i,\tau})D(u_{i,\tau}):D(\psi)=0
		\end{array}
	\end{equation}
	for a.e. $t_{1}\in[0,h]$ where $\psi$ is a divergence free test function in time-space.\\
	Similarly from \eqref{ELagmin2} and \eqref{ELagmin2*} we furnish the following  
	\begin{equation}\label{eqtheta1tau1}
		\begin{array}{ll}
			&\displaystyle c_{1}\int^{t_{1}}_{0}\int_{\Omega^{1}_{\tau}}\frac{(\theta_{1,\tau}(x+\tau u_{1,\tau},t+\tau)-\theta_{1,\tau}(x,t)}{\tau}\zeta_{1}(t+\tau)\circ\Psi^{1}_{\tau}+k_{1}\int^{t_{1}}_{0}\int_{\Omega^{1,+}_{\tau}}\nabla\theta^{+}_{1,\tau}\cdot\nabla\zeta_{1}\\
			&\displaystyle+\lambda\int^{t_{1}}_{0}\int_{\partial\Omega^{2,+}_{\tau}}(\theta^{+}_{1,\tau}-\theta^
			{+}_{2,\tau})\zeta_{1} -\int^{t_{1}}_{0}\int_{\Omega^{1}_{\tau}}\mu^{M}_{1}(\theta_{1,\tau})|Du_{1,\tau}|^{2}\zeta_{1}\circ\Psi^{1}_{\tau}\\
			&\displaystyle
			-{\kappa}\int^{t_{1}}_{0}\int_{\Omega^{1}_{\tau}}\min\bigg\{|\nabla^{k_{0}}u_{1,\tau}|^{2},\tau^{-1}\bigg\}\zeta_{1}\circ\Psi^{1}_{\tau}
			\displaystyle-\frac{1}{2h}\int^{t_{1}}_{0}\int_{\Gamma}\bigg|\partial_{t}\widetilde{\eta}_{\tau}-\beta_{\tau}\bigg|^{2}\zeta_{1}\\
			&\displaystyle-\frac{1}{2h}\int^{t_{1}}_{0}\int_{\Omega^{1}_{\tau}}|u_{1,\tau}-w_{1,\tau}\circ(\Phi^{1}_{\tau})^{-1}|^{2}\zeta_{1}\circ\Psi^{1}_{\tau}(\det\nabla\Phi^{1}_{\tau})^{-1}\\
			&\displaystyle-\kappa\int^{t_{1}}_{0}\int_{\Gamma}\min\bigg\{\bigg|\nabla^{k_{0}+1}\partial_{t}\widetilde{\eta}_{\tau}\bigg|^{2},\tau^{-1}\bigg\}\zeta_{1}=0,
		\end{array}
	\end{equation}
	and 
	\begin{equation}\label{eqtheta1tau2}
		\begin{array}{ll}
			&\displaystyle c_{2}\int^{t_{1}}_{0}\int_{\Omega^{2}_{\tau}}\frac{(\theta_{2}(x+\tau u_{2,\tau},t+\tau)-\theta_{2,\tau}(x,t)}{\tau}\zeta_{2}(t+\tau)\circ\Psi^{2}_{\tau}+k_{2}\int^{t_{1}}_{0}\int_{\Omega^{2,+}_{\tau}}\nabla\theta^{+}_{2,\tau}\cdot\nabla\zeta_{2}\\
			&\displaystyle-\lambda\int^{t_{1}}_{0}\int_{\partial\Omega^{2,+}_{\tau}}(\theta^{+}_{1,\tau}-\theta^{+}_{2,\tau})\zeta_{2} -\int^{t_{1}}_{0}\int_{\Omega^{2}_{\tau}}\mu^{M}_{2}(\theta_{2,\tau})|Du_{2,\tau}|^{2}\zeta_{2}\circ\Psi^{2}_{\tau}\\
			&\displaystyle
			-{\kappa}\int^{t_{1}}_{0}\int_{\Omega^{2}_{\tau}}\min\bigg\{|\nabla^{k_{0}}u_{2,\tau}|^{2},\tau^{-1}\bigg\}\zeta_{2}\circ\Psi^{2}_{\tau}\\
			&\displaystyle-\frac{1}{2h}\int^{t_{1}}_{0}\int_{\Omega^{2}_{\tau}}|u_{2,\tau}-w_{i,\tau}\circ(\Phi^{2}_{\tau})^{-1}|^{2}\zeta_{2}\circ\Psi^{2}_{\tau}(\det\nabla\Phi^{2}_{\tau})^{-1}=0
		\end{array}
	\end{equation}
	where $\zeta_{i}\in L^{\infty}(0,h;W^{1,p}(\Omega^{i}_{nbd})),$ for $p>3$ and $\zeta_{i}\geqslant \gamma$  for some neighborhood $\Omega^{i}_{nbd}$ containing $\Omega^{i}_{\tau}$ for all small enough $\tau>0.$\\  
	The next goal is to perform the limit passage $\tau\rightarrow 0$ in the discrete formulations \eqref{equetatau}, \eqref{eqtheta1tau1} and \eqref{eqtheta1tau2}. In this direction we would further obtain some strong convergences of the unknowns involved. For that the idea is to use variant of Aubin-Lions theorem away from the moving interface which requires estimating the time derivative of the concerning unknown. At this stage we will only be able to recover the strong convergences of $\theta_{i,\tau}$ and $\eta_{\tau}.$ The strong convergence of $\theta_{i,\tau}$ plays a crucial role in the passage of limit to \eqref{equetatau}, specifically to the nonlinear term $\displaystyle \int^{h}_{0}\int_{\Omega^{i}_{\tau}}\mu^{M}_{i}(\theta_{i,\tau})D(u_{i,\tau}):D(\psi_{i,\tau}).$
	\subsubsection{Strong convergence of $\theta_{i,\tau}$}\label{strongconvtheta}
	This section is devoted for the proof of 
	\begin{equation}\label{claimstrngconvuptobndry}
		\begin{array}{l}
			\displaystyle\theta_{i,\tau}\rightarrow \theta_{i}\,\,\mbox{in}\,\,L^{s}(\Omega^{i}_{\tau}\times(0,h)),\,\,\mbox{for any}\,s\in [1,\frac{5}{3})
		\end{array}
	\end{equation}
	and in that direction we estimate the time derivative of $\theta_{i,\tau}$ in a domain away from the moving interface using the equation \eqref{eqtheta1tau1}.\\
	First let us consider the equation \eqref{ELagmin2} solved by $\theta_{1,\tau}.$
	We consider a parabolic cylinder of the form $B\times J$ such that $2B\times 2J\Subset \Omega^{1}_{\tau}\times (0,h),$ for small enough $\tau,$ which is possible because of the uniform convergence \eqref{weakconv}$_{2}$. Now consider $\zeta_{1}\in C^{\infty}_{c}(B\times J).$\\ 
	Next one observes that
	\begin{equation}\label{writingtimeder}
		\begin{array}{ll}
			I&\displaystyle=\int^{h}_{0}\int_{\Omega^{1}_{\tau}}\frac{(\theta_{1,\tau}(x+\tau u_{1,\tau},t+\tau)-\theta_{1,\tau}(x,t)}{\tau}\zeta_{1}(t+\tau)\circ\Psi^{1}_{\tau}\\
			&\displaystyle =\int^{h}_{0}\int_{\Omega^{i,+}_{\tau}}\frac{\theta_{1,\tau}(x,t+\tau)-\theta_{1,\tau}(x-\tau u_{1,\tau},t)}{\tau}\zeta_{1}(\det\nabla\Psi^{1}_{\tau})^{-1}.\\
			&\displaystyle =\int_{J}\int_{B} \frac{\theta_{1,\tau}(x,t+\tau)-\theta_{1,\tau}(x,t)}{\tau}\zeta_{1}(\det\nabla\Psi^{1}_{\tau})^{-1}\\
			&\displaystyle\qquad+\int_{J}\int_{B}\frac{\theta_{1,\tau}(x,t)-\theta_{1,\tau}(x-\tau u_{1,\tau},t)}{\tau}\zeta_{1}(\det\nabla\Psi^{1}_{\tau})^{-1}=I_{1}+I_{2}.
		\end{array}
	\end{equation}
	The second summand of the final line of \eqref{writingtimeder} can be written as follows
	\begin{equation}\label{reqriteI2}
		\begin{array}{ll}
			I_{2}&\displaystyle = \int_{J}\int_{B}\frac{\theta_{1,\tau}(x,t)-\theta_{1,\tau}(x-\tau u_{1,\tau},t)}{\tau}\zeta_{1}(\det\nabla\Psi^{1}_{\tau})^{-1}\\
			&\displaystyle =\int_{J}\int_{B}\frac{\theta_{1,\tau}(x,t)}{\tau}\zeta_{1}(\det\nabla\Psi^{1}_{\tau})^{-1}-\int_{J}\int_{B}\frac{\theta_{1,\tau}(x,t)}{\tau}\zeta_{1}(x+\tau u_{1,\tau},t)\\
			&\displaystyle =\int_{J}\int_{B}\frac{\theta_{1,\tau}}{\tau}\zeta_{1}\left((\det\nabla\Psi^{1}_{\tau})^{-1}-1\right)+\int_{J}\int_{B}\frac{\theta_{1,\tau}}{\tau}\left(\zeta_{1}-\zeta_{1}(x+\tau u_{1,\tau},t)\right)\\[4.mm]
			&\displaystyle=I^{1}_{2}+I^{2}_{2}.
		\end{array}
	\end{equation}
	Since $(\det\nabla\Psi^{1}_{\tau})^{-1}=1+o(\tau),$ in view of \eqref{Lpp2}, $I^{1}_{2}$ admits of the following estimate
	$$|I^{1}_{2}|\leqslant C\|\zeta_{1}\|_{L^{\infty}(J;W^{1,p}(B))}.$$
	Whereas $I^{2}_{2}$ can be estimated as follows
	\begin{equation}\label{est2ndterm}
		\begin{array}{ll}
			\displaystyle |I_{2}^{2}|&\displaystyle \leqslant C\bigg|\int_J\int_{B}\frac{1}{\tau}\bigg(\int^{1}_{0}\partial_{s}\zeta_{1}(x+s\tau u_{1,\tau},t)ds\bigg)\theta_{1,\tau}\bigg|\\
			&\displaystyle \leqslant C\int^{1}_{0} \bigg|\int_{J}\int_{B}\nabla\zeta_{1}(x+s\tau u_{1,\tau},t)\cdot u_{1,\tau}\theta_{1,\tau}\bigg|ds\\[2.mm]
			&\displaystyle \leqslant C\|\theta_{1,\tau}\|_{L^{2}(J;L^{p^{*}}(B))}\|u_{1,\tau}\|_{L^{2}(J;L^{\infty}(B))}\|\nabla\zeta_{1}\|_{L^{\infty}(J;L^{p}(B))},\\[2.mm]
			&\displaystyle \leqslant C\bigg(\eta^{0},\theta^{0}_{i},h,\|\beta\|_{L^{2}(\Gamma\times[0,h])},\|w_{i}\|_{L^{2}(\Omega^{i}_{0}\times[0,h])}\bigg)\|\zeta_{1}\|_{L^{\infty}(J;W^{1,p}(B))},
		\end{array}
	\end{equation}
	for $p>3.$ In the above calculation to obtain the third line from the second we have used that $\theta_{1,\tau}$ is bounded in $L^{\infty}(J;L^{1}(B))\cap L^{\frac{5}{3}-}(B\times J)$ (we refer to \eqref{weakconv}$_{4}$ and \eqref{Lpp2}), which by interpolation implies that $\theta_{1,\tau}$ is bounded in particular in $L^{2}(J;L^{p^{*}}),$ for $p>3.$\\
	Hence we have that
	\begin{equation}\label{I2fnl}
		\begin{array}{l}
			\displaystyle|I_{2}|\leqslant C\|\zeta_{1}\|_{L^{\infty}(J;W^{1,p}(B))},
		\end{array}
	\end{equation}
	for $p$ sufficiently large.\\
	Using \eqref{writingtimeder}, $I_{1}$ can be estimated as $|I_{1}|\leqslant |I|+|I_{2}|.$ Since $|I_{2}|$ is already majorized in \eqref{I2fnl}, it only remains to estimate $|I|$ by using the equation \eqref{eqtheta1tau1}. We claim that
	\begin{equation}\label{estimateI}
		\begin{array}{ll}
			|I|\leqslant C\bigg(\eta^{0},\theta^{0}_{i},h,\|\beta\|_{L^{2}(\Gamma\times[0,h])},\|w_{i}\|_{L^{2}(\Omega^{i}_{0}\times[0,h])}\bigg)\|\zeta_{1}\|_{L^{\infty}(J;W^{2,p}(B))}.
		\end{array}
	\end{equation} 
	To prove our claim we estimate the second to eighth summands of \eqref{eqtheta1tau1}. While performing these estimates we will repeatedly use the bounds obtained in Corollary \ref{estimatesinterpole1} without mentioning them all the time.  For clarity of presentation we denote
	\begin{equation}\label{decompI}
		\displaystyle I=\int_{J}\int_{B}\frac{\theta_{1,\tau}(x,t+\tau)-\theta_{1,\tau}(x-\tau u_{1,\tau},t)}{\tau}\zeta_{1}(\det\nabla\Psi^{1}_{\tau})^{-1}=-\sum_{j=2}^{8}J_{i},
	\end{equation}
	where $J_{j}$ denotes respectively the second to eighth summands appearing in the left hand side of \eqref{eqtheta1tau1}.
	Now $|J_{2}|$ is dominated by $C\|\nabla\theta_{1,\tau}\|_{L^{1}(0,h;L^{\frac{5}{4}-}(B))}\|\nabla\zeta_{1}\|_{L^{\infty}(J;W^{1,p}(B))}$ and hence by the right hand side of \eqref{estimateI}. Next $|J_{3}|$ vanishes since $\zeta_{1}$ is compactly supported in $B\times J.$ The fourth and the fifth terms admits of the following estimates
	\begin{equation}\nonumber
		\begin{array}{ll}
			\displaystyle|J_{4}|&\displaystyle=\bigg|\int^{h}_{0}\int_{\Omega^{1}_{\tau}}\mu^{M}_{i}(\theta_{1,\tau})|Du_{1,\tau}|^{2}\zeta_{1}\circ\Psi^{1}_{\tau}\bigg|\\
			&\displaystyle\leqslant C\|\nabla u_{1,\tau}\|^{2}_{L^{2}(0,h;L^{2}(B))}\|\zeta_{1}\|_{L^{\infty}(B\times J)}\\
			&\displaystyle\leqslant C\bigg(\eta^{0},\theta^{0}_{i},h,\|\beta\|_{L^{2}(\Gamma\times[0,h])},\|w_{i}\|_{L^{2}(\Omega^{i}_{0}\times[0,h])}\bigg)\|\zeta_{1}\|_{L^{\infty}(J;W^{1,p}(B))}.
		\end{array}
	\end{equation}
	Using \eqref{fromCoret}, the fifth to eighth terms can be altogether estimated as follows
	\begin{equation}\nonumber
		\begin{array}{ll}
			\sum_{j=5}^{8} |J_{j}|\leqslant C\bigg(\eta^{0},\theta^{0}_{i},h,\|\beta\|_{L^{2}(\Gamma\times[0,h])},\|w_{i}\|_{L^{2}(\Omega^{i}_{0}\times[0,h])}\bigg)\|\zeta_{1}\|_{L^{\infty}(J;W^{1,p}(B))}.
		\end{array}
	\end{equation}
	This concludes the proof of \eqref{estimateI}. Since $|I_{1}|\leqslant |I|+|I_{2}|,$ \eqref{est2ndterm} and \eqref{estimateI} together furnishes 
	\begin{equation}\label{estI1}
		\begin{array}{l}
			\displaystyle	|I_{1}|\leqslant C\bigg(\eta^{0},\theta^{0}_{i},h,\|\beta\|_{L^{2}(\Gamma\times[0,h])},\|w_{i}\|_{L^{2}(\Omega^{i}_{0}\times[0,h])}\bigg)\|\zeta_{1}\|_{L^{\infty}(J;W^{2,p}_{0}(B))}.
		\end{array}
	\end{equation}
	Since $C^{\infty}_{c}(B\times J)$ is dense in the weak$^*$ topology of $L^{\infty}(J;W^{2,p}_{0}(B))$ one obtains
	\begin{equation}\label{dualbndtderthetatau}
		\begin{array}{l}
			\displaystyle \|\partial_{t}\theta_{1,\tau}\|_{L^{1}(J;W^{-2,p}(B))}\leqslant C\bigg(\eta^{0},\theta^{0}_{i},h,\|\beta\|_{L^{2}(\Gamma\times[0,h])},\|w_{i}\|_{L^{2}(\Omega^{i}_{0}\times[0,h])}\bigg).
		\end{array}
	\end{equation}
	Hence in view of \eqref{thetaW1s2}, the fact that $W^{1,\frac{5}{4}-}(B)$ is compactly embedded in $ L^{{s}}(B)$ for $s\in [1,\frac{15}{7})$ and the classical Aubin-Lions lemma one infers that
	\begin{equation}\label{strongconvthetatau}
		\begin{array}{l}
			\displaystyle\theta_{1,\tau}\rightarrow \theta_{1}\,\,\mbox{in}\,\,L^{s}(B\times(0,h)),\,\,\mbox{for any}\,s\in [1,\frac{15}{7}).
		\end{array}
	\end{equation} 
	From the arbitrariness of the cylinder $B\times J,$ \eqref{strongconvthetatau} implies the a.e. convergence of $\theta_{1,\tau}$ to $\theta_{1}$ in $\Omega^{1}_{\tau}.$ This a.e. convergence combined together with the $\tau$ independent bound of $\theta_{1,\tau}$ in $L^{s}(\Omega^{1}_{\tau}\times(0,h))$ for $s\in [1,\frac{5}{3})$ and Vitali convergence theorem renders \eqref{claimstrngconvuptobndry} for $i=1.$\\ 
	Similarly one can show that
	\begin{equation}\nonumber
		\begin{array}{l}
			\displaystyle\theta_{2,\tau}\rightarrow \theta_{2}\,\,\mbox{in}\,\,L^{s}(\Omega^{2}_{\tau}\times(0,h)),\,\,\mbox{for any}\,s\in [1,\frac{5}{3})
		\end{array}
	\end{equation}
	This completes the proof of \eqref{claimstrngconvuptobndry}.\\
	In view of the obtained convergences \eqref{weakconv} and \eqref{claimstrngconvuptobndry} we now pass $\tau\rightarrow 0$ in the approximate equations \eqref{equetatau}, \eqref{eqtheta1tau1} and \eqref{eqtheta1tau2}.
	\subsubsection{Construction of test functions and passage $\tau\rightarrow 0$ in \eqref{equetatau}}\label{passtau0}
	This section is devoted to pass limit $\tau\rightarrow 0$ in the approximate momentum equation \eqref{equetatau}.\\ 
	\underline{\textit{Construction of test functions:}} 
	We start by fixing a function $\psi\in C^{\infty}([0,T]\times \overline{\Omega})$ (we will denote by $\psi_{i},$ the restriction of $\psi$ in $\Omega^{i}$) such that $\div\,\psi=0$ and next define $b^{\tau}$ as follows:
	\begin{equation}\label{constestdeltalev2}
		b^{\tau}=\Tr_{\Sigma_{\eta^{\tau}}}\psi\cdot\nu,
	\end{equation}
	where the notion of trace $\Tr_{\Sigma_{\eta}}$ was introduced in Lemma \ref{Lem:TrOp}. Note that $\psi\in C^{\infty}([0,T]\times\overline{\Omega^{i}_{\tau}})$ for all $\tau>0$ sufficiently small.\\
	Now in view of the uniform in $\tau$ bounds of $\eta^{\tau}$ (we refer to \eqref{fromCoret}) one has the following convergences of $b^{\tau}$
	\begin{equation}\label{convbh2}
		\begin{alignedat}{2}
			b^{\tau}&\rightharpoonup^{*} b&&\mbox{ in } L^{\infty}(0,T;W^{k_{0}+1,2}(\Gamma))\cap W^{1,2}(0,T;W^{k_{0},2}(\Gamma)),\,\,\mbox{for}\,\,k_{0}>3,\\
			b&=\Tr_{\Sigma_{\eta}}&&\psi\cdot \nu.
		\end{alignedat}
	\end{equation}
	The convergence \eqref{convbh2}$_{1}$ further infers that
	\begin{equation}\label{strngconvbh2}
		b^{\tau}\rightarrow b\,\,\mbox{in}\,\, L^{\infty}(0,T;W^{2+,2}(\Gamma))
	\end{equation}
	by using the classical Aubin-Lions theorem.\\
	Precisely, one considers the pair of test functions $(\psi,b^{\tau})$ in \eqref{equetatau} and $(\psi,b)$ in the limiting momentum balance.\\
	\underline{\textit{Passing $\tau\rightarrow 0$ in \eqref{equetatau}}:} We obtain the following strong convergence (up to a non-relabeled subsequence) of $\eta_{\tau}$ as a consequence of \eqref{weakconv}$_{1}$ and \eqref{weakconv}$_{3}$ and the classical Aubin-Lions theorem
	\begin{equation}\label{strngconvetatau}
		\begin{array}{l}
			\eta_{\tau}\rightarrow \eta\,\,\mbox{in}\,\, L^{\infty}(0,T;W^{2,4}(\Gamma)).
		\end{array}
	\end{equation}
	Convergence \eqref{strngconvbh2}, \eqref{strngconvetatau} along with \eqref{weakconv} suffice to conclude that
	\begin{equation}\label{limpassageenenergyeta}
		\begin{array}{ll}
			&	\displaystyle \int^{t}_{0}\int_{\Gamma}\partial_{t}\widetilde{\eta}_{\tau}\partial_{t}b^{\tau}\rightarrow \int^{t}_{0}\int_{\Gamma}\partial_{t}\eta\partial_{t}b,\qquad \int^{t}_{0}\int_{\Gamma}\nabla^{k_{0}+1}\partial_{t}\widetilde{\eta}_{\tau}\partial_{t}b^{\tau}\rightarrow \int^{t}_{0}\int_{\Gamma}\nabla^{k_{0}+1}\partial_{t}\eta\partial_{t}b\\
			&\displaystyle	\int_0^t\langle DK_{\kappa}(\eta^{+}_{\tau}),b^{\tau}\rangle\rightarrow \int_0^t\langle D_{\kappa}(\eta),b\rangle,
		\end{array}
	\end{equation}
	for any $b\in C^{0}([0,T];W^{k_{0}+1,2}(\Gamma)),$ were we have used the structure of $DK_{\kappa}(\eta)$ (cf. \eqref{elasticityoperator},\eqref{amab},\eqref{Geta},\eqref{Gij'},\\ \eqref{exaR} and \eqref{DefDk}).\\
	Now let us consider the sixth term of \eqref{equetatau}. One readily has that $\mu^{M}_{i}(\theta_{i,\tau})\chi_{\Omega^{i}_{\tau}}\in L^{\infty}(\Omega\times (0,h))$ and it converges $a.e.$ to $\mu^{M}_{i}(\theta_{i})\chi_{\Omega^{i}}$ (which is a consequence of \eqref{strongconvthetatau}). Hence in particular (using Vitali convergence) $\mu^{M}_{i}\chi_{\Omega^{i}_{\tau}}$ converges strongly to $\mu^{M}_{i}(\theta_{i})\chi_{\Omega^{i}}$ in $L^{r}(\Omega\times(0,h))$ for any $1\leqslant r<\infty.$ Hence in view of the weak convergence \eqref{weakconv}$_{6}$ one has that
	\begin{equation}\label{conv6thterm}
		\begin{array}{l}
			\displaystyle 	\int^{t_{1}}_{0}\int_{\Omega^{i}_{\tau}}\mu^{M}_{i}(\theta_{i,\tau})D(u_{i,\tau}):D(\psi)\rightarrow \int^{t_{1}}_{0}\int_{\Omega^{i}}\mu^{M}_{i}(\theta_{i})D(u_{i}):D(\psi).
		\end{array}
	\end{equation}
	In view of \eqref{limpassageenenergyeta}, \eqref{conv6thterm} and the available weak convergences from \eqref{weakconv}, one can pass $\tau\rightarrow 0$ in \eqref{equetatau} to furnish the following:
	\begin{equation}\label{momentumafterpassingtau}
		\begin{array}{ll}
			&\displaystyle \int^{t_{1}}_{0}\langle DK_{\kappa}(\eta),b\rangle+\frac{1}{h}\int^{t_{1}}_{0}\int_{\Gamma}\left(\partial_{t}{\eta}-\beta\right)b+\sum_{i}\frac{1}{h}\int^{t_{1}}_{0}\int_{\Omega^{i}(t)}\left(u_{i}-w_{i}\circ(\Phi^{i})^{-1}\right)\psi\\
			&\displaystyle+{\kappa}\sum_{i}\int^{t_{1}}_{0}\int_{\Omega^{i}(t)}\nabla^{k_{0}}u_{i}:\nabla^{k_{0}}\psi+\sum_{i}\int^{t_{1}}_{0}\int_{\Omega^{i}(t)}\mu^{M}_{i}(\theta_{i})D(u_{i}):D(\psi)\\
			&\displaystyle +\kappa\int^{t_{1}}_{0}\int_{\Gamma}\nabla^{k_{0}+1}\partial_{t}\eta:\nabla^{k_{0}+1}b=0.
		\end{array}
	\end{equation}
	for a.e. $t_{1}\in [0,h]$ with $\partial_{t}\eta\nu=u_{i}(\eta,t)$ and $\psi\circ \eta=b$ on $\Gamma.$\\
	Now let us discuss the passage of $\tau\rightarrow 0$ to the approximated heat evolutions \eqref{eqtheta1tau1} and \eqref{eqtheta1tau2}.
	\subsubsection{Passage $\tau\rightarrow 0$ in  \eqref{eqtheta1tau1} and \eqref{eqtheta1tau2}}\label{passtau0heat} Let us now pass $\tau\rightarrow 0$ in \eqref{eqtheta1tau1} and \eqref{eqtheta1tau2}.\\
	We use globally defined test functions $\zeta\in C^{\infty}_{c}([0,h];C^{\infty}_{c}(\overline{\Omega})),$ $\zeta\geqslant0$ and restrict it to construct $\zeta_{i}=\zeta\suchthat_{\Omega^{i}_{\tau}}\in C^{\infty}_{c}([0,h];C^{\infty}_{c}(\overline{\Omega^{i}_{\tau}})).$ Further in the limiting equation we would use the same notation $\zeta_{i}$ to denote the restriction of $\zeta\in C^{\infty}_{c}([0,h];C^{\infty}_{c}(\overline{\Omega}))$ in $\overline{\Omega^{i}(t)}.$ Later using the fact that $C^{\infty}_{c}([0,h];C^{\infty}_{c}(\overline{\Omega^{i}(t)}))$ is dense in $C^{\infty}_{c}([0,h];W^{1,p}(\Omega^{i}(t)))$ (for $p>3,$ which is true since at this level the structure is regularized implying $\Omega^{i}(t)$ has a uniformly Lipschitz boundary for a.e. $t\in(0,h)$), we will conclude the weak formulation of the limiting heat equation (limit as $\tau$ approaches $0$) with test functions $\zeta_{i}\in C^{\infty}_{c}([0,h];W^{1,p}(\Omega^{i})).$\\
	In what follows we explain the limit passage in \eqref{eqtheta1tau1}. Similar arguments applies to \eqref{eqtheta1tau2}.\\
	Now in connection with the first term in the left hand side of \eqref{eqtheta1tau1} we compute the following
	\begin{equation}\label{rewritetimequottheta}
		\begin{array}{ll}
			& \displaystyle \int^{t_{1}}_{0}\int_{\Omega^{1}_{\tau}}\frac{(\theta_{1,\tau}(x+\tau u_{1,\tau},t+\tau)-\theta_{1,\tau}(x,t)}{\tau}(\zeta_{1}(t+\tau)\circ\Psi^{1}_{\tau})\,\\
			&\displaystyle =\int^{t_{1}+\tau}_{\tau}\int_{\Omega^{1}_{\tau}}\frac{\theta_{1,\tau}(x,t)}{\tau}\zeta_{1}(x,t)(\det\nabla\Psi^{1}_{\tau})^{-1}-\int^{t_{1}}_{0}\int_{\Omega^{1}_{\tau}}\frac{\theta_{1,\tau}(x,t)\zeta_{1}(x,t)}{\tau}\\
			&\,\,\displaystyle -\int^{t_{1}}_{0}\int_{\Omega^{1}_{\tau}}\theta_{1,\tau}(x,t)\frac{\zeta_{1}(x+\tau u_{1,\tau},t+\tau)-\zeta_{1}(x,t)}{\tau}\\
			&=\displaystyle -\frac{1}{\tau}\int^{\tau}_{0}\int_{\Omega^{1}_{0}}\theta_{1}^{0}(x)\zeta_{1}(x,t)+\frac{1}{\tau}\int^{t_{1}+\tau}_{t_{1}}\int_{\Omega^{1}_{\tau}}\theta_{1,\tau}(x,t)\zeta_{1}(x,t)\\
			&\,\,\displaystyle +\int^{t_{1}+\tau}_{\tau}\int_{\Omega_{\tau}^{1}}\frac{\theta_{1,\tau}(x,t)}{\tau}\zeta_{1}(x,t)\left((\det\nabla\Psi^{1}_{\tau})^{-1}-1\right)\\
			&\,\,\displaystyle-\int^{t_{1}}_{0}\int_{\Omega^{1}_{\tau}}\theta_{1,\tau}(x,t)\frac{\zeta_{1}(x+\tau u_{1,\tau},t+\tau)-\zeta_{1}(x,t)}{\tau}\\[2.mm]
			&\displaystyle=\sum_{i=1}^{4}I^{i}_{\theta}.	
		\end{array}
	\end{equation}
	for a.e. $t_{1}\in[0,h].$ \\ 
	Using Lebesgue differentiation theorem one obtains
	$$\lim_{\tau\rightarrow 0}I^{1}_{\theta}=-\int_{\Omega^{1}_{0}}\theta^{0}_{1}(x)\zeta_{1}(x,0).$$
	Next let us show that
	\begin{equation}\label{limI2thta}
		\begin{array}{l}
			\displaystyle	\lim_{\tau\rightarrow 0}I^{2}_{\theta}=\int_{\Omega^{1}(t_{1})}\theta_{1}(x,t_{1})\zeta_{1}(x,t_{1}).
		\end{array}
	\end{equation}
	In that direction we write
	\begin{equation}\label{decomI2}
		\begin{array}{l}
			\displaystyle \frac{1}{\tau}\int^{t_{1}+\tau}_{t_{1}}\int_{\Omega^{1}_{\tau}}\theta_{1,\tau}\zeta_{1}-\int_{\Omega^{1}(t_{1})}\theta_{1}\zeta_{1}=\frac{1}{\tau}\int^{t_{1}+\tau}_{t_{1}}\int_{\mathbb{R}^{3}}\bigg(\theta_{1,\tau}\mathcal{X}_{\Omega^{1}_{\tau}}-\theta_{i}\mathcal{X}_{\Omega^{1}(t_{1})}\bigg)\zeta_{1}.
		\end{array}
	\end{equation}
	Now note that
	\begin{equation}\label{showavfL1conv}
		\begin{array}{ll}
			&\displaystyle \int^{h}_{0}\left| \frac{1}{\tau}\int^{t_{1}+\tau}_{t_{1}}\int_{\mathbb{R}^{3}}\bigg(\theta_{1,\tau}\mathcal{X}_{\Omega^{1}_{\tau}}-\theta_{1}\mathcal{X}_{\Omega^{1}(t_{1})}\bigg)\zeta_{1} \right|\\[2.mm]
			&\displaystyle \leqslant \frac{1}{\tau}\bigg(\int^{h}_{0}\int^{t_{1}+\tau}_{0}\int_{\mathbb{R}^{3}}\left|\bigg(\theta_{1,\tau}\mathcal{X}_{\Omega^{1}_{\tau}}-\theta_{1}\mathcal{X}_{\Omega^{1}(t_{1})}\bigg)\zeta_{1}\right|-\int^{h}_{0}\int^{t_{1}}_{0}\int_{\mathbb{R}^{3}}\left|\bigg(\theta_{1,\tau}\mathcal{X}_{\Omega^{1}_{\tau}}-\theta_{1}\mathcal{X}_{\Omega^{1}(t_{1})}\bigg)\zeta_{1}\right|\bigg)\\[2.mm]
			&\displaystyle=\frac{1}{\tau}\bigg(\int^{h+\tau}_{\tau}\int^{t_{1}}_{0}\int_{\mathbb{R}^{3}}\left|\bigg(\theta_{1,\tau}\mathcal{X}_{\Omega^{1}_{\tau}}-\theta_{1}\mathcal{X}_{\Omega^{1}(t_{1})}\bigg)\zeta_{1}\right|+\int^{0}_{h}\int^{t_{1}}_{0}\int_{\mathbb{R}^{3}}\left|\bigg(\theta_{1,\tau}\mathcal{X}_{\Omega^{1}_{\tau}}-\theta_{1}\mathcal{X}_{\Omega^{1}(t_{1})}\bigg)\zeta_{1}\right|\bigg)\\[2.mm]
			&\displaystyle \leqslant \frac{1}{\tau}\int^{h+\tau}_{h}\int^{t_{1}}_{0}\int_{\mathbb{R}^{3}}\left|\bigg(\theta_{1,\tau}\mathcal{X}_{\Omega^{1}_{\tau}}-\theta_{1}\mathcal{X}_{\Omega^{1}(t_{1})}\bigg)\zeta_{1}\right|\\[2.mm]
			&\displaystyle \leqslant \left\|(\theta_{1,\tau}\mathcal{X}_{\Omega^{1}_{\tau}}-\theta_{1}\mathcal{X}_{\Omega^{1}(t_{1})})\zeta_{1}\right\|_{L^{1}(\mathbb{R}^{3}\times(0,h))}
		\end{array}
	\end{equation}
	In view of \eqref{claimstrngconvuptobndry} and \eqref{showavfL1conv} we conclude that the term appearing in \eqref{decomI2} converges to zero for $a.e.$ $t_{1}\in[0,h].$ This proves \eqref{limI2thta}.\\
	Further since $\det\nabla\Psi^{1}_{\tau}=1+o(\tau),$ one has the following concerning the integrand of $I^{3}_{\theta}$
	$$\frac{\theta_{1,\tau}(x,t)}{\tau}\zeta_{1}(x,t)\left((\det\nabla\Psi^{1}_{\tau})^{-1}-1\right)\chi_{\Omega^{1}_{\tau}\times(\tau,h)} \rightarrow 0,\,\,\mbox{a.e.\,as}\,\, \tau\rightarrow 0.$$
	This almost everywhere convergence along with \eqref{claimstrngconvuptobndry} and Vitali convergence theorem yields
	$$I^{3}_{\theta}\rightarrow 0,\,\,\mbox{as}\,\,\tau\rightarrow 0.$$
	Next since $\zeta_{1}$ is smooth both in time and space,
	$$\frac{\zeta_{1}(x+\tau u_{1,\tau},t+\tau)-\zeta_{1}(x,t)}{\tau}\rightarrow (\partial_{t}+u_{1}\cdot\nabla)\zeta_{1},\,\,\mbox{a.e.\,as}\,\tau\rightarrow 0.$$
	Further using mean value inequality and \eqref{claimstrngconvuptobndry} one has that the integrand corresponding to $I_{\theta}^{4}$ is bounded in $L^{r}(\Omega^{1}_{\tau}\times(0,h))$ for some $r>1$ by a constant $C(\|\zeta_{1}\|_{W^{1,\infty}},\|u_{1}\|_{L^{2}(W^{k_{0},2})})$ independent of $\tau.$ Hence once again applying Vitali convergence theorem infers
	$$\displaystyle I^{4}_{\theta}\rightarrow -\int^{t_{1}}_{0}\int_{\Omega^{1}}\theta_{i}(x,t)(\partial_{t}+u_{1}\cdot\nabla)\zeta_{1}\,\,\mbox{as}\,\,\tau\rightarrow 0.$$
	From the above convergences one concludes the following concerning the first term of \eqref{eqtheta1tau}:
	\begin{equation}\label{thetatimconv}
		\begin{array}{ll}
			&\displaystyle c_{1}\int^{h}_{0}\int_{\Omega^{1}_{\tau}}\frac{(\theta_{1,\tau}(x+\tau u_{1,\tau},t+\tau)-\theta_{1,\tau}(x,t)}{\tau}\zeta_{1}(t+\tau)\circ\Psi^{1}_{\tau}\\
			&\displaystyle\longrightarrow -c_{1}\int_{\Omega^{1}_{0}}\theta^{0}_{1}(x)\zeta_{1}(x,0)+c_{i}\int_{\Omega^{1}(t_{1})}\theta_{1}(x,t_{1})\zeta_{1}(x,t_{1}) -c_{i}\int^{h}_{0}\int_{\Omega^{1}}\theta_{1}(x,t)(\partial_{t}+u_{1}\cdot\nabla)\zeta_{1}
		\end{array}
	\end{equation}
	for any $\zeta_{1}$ (as the constructed at the beginning of this section) and for any $t_{1}\in[0,h].$\\
	The passage of limit $\tau\rightarrow 0$ in the second and the third terms of \eqref{eqtheta1tau1} is comparatively easier since the integrands are linear in $\theta_{1,\tau}.$ Indeed while performing this passage in the third term, one uses the weak convergence of the trace of $\theta_{1,\tau}$ (which follows from \eqref{weakconv}$_{6}$).\\
	Now considering the $\limsup_{\tau\rightarrow 0}$ of \eqref{eqtheta1tau1} infers
	\begin{equation}\label{aftrtau0heat1}
		\begin{array}{ll}
			&\displaystyle -c_{1}\int_{\Omega^{1}_{0}}\theta^{0}_{1}(x)\zeta_{1}(x,0)+c_{1}\int_{\Omega^{1}(t_{1})}\theta_{1}(x,t_{1})\zeta_{1}(x,t_{1})-c_{1}\int^{t_{1}}_{0}\int_{\Omega^{1}}\theta_{1}(x,t)(\partial_{t}+u_{1}\cdot\nabla)\zeta_{1}\\
			&\displaystyle+k_{1}\int^{t_{1}}_{0}\int_{\Omega^{1}}\nabla\theta_{1}\cdot\nabla\zeta_{1}+\lambda\int^{t_{1}}_{0}\int_{\partial\Omega^{2}}(\theta_{1}-\theta_{2})\zeta_{1}\\
			&\displaystyle+\limsup_{\tau\rightarrow 0}\bigg(-\int^{t_{1}}_{0}\int_{\Omega^{1}_{\tau}}\mu^{M}_{1}(\theta_{1,\tau})|Du_{1,\tau}|^{2}\zeta_{1}-{\kappa}\int^{t_{1}}_{0}\int_{\Omega^{1}_{\tau}}\min\bigg\{|\nabla^{k_{0}}u_{1,\tau}|^{2},\tau^{-1}\bigg\}\zeta_{1}\circ\Psi^{1}_{\tau}\\
			&\displaystyle-\frac{1}{2h}\int^{t_{1}}_{0}\int_{\Gamma}\bigg|\partial_{t}\widetilde{\eta}_{\tau}-\beta_{\tau}\bigg|^{2}\zeta_{1}-\frac{1}{2h}\int^{t_{1}}_{0}\int_{\Omega^{1}_{\tau}}|u_{1,\tau}-w_{1,\tau}\circ(\Phi^{1}_{\tau})^{-1}|^{2}\zeta_{1}\circ\Psi^{1}_{\tau}(\det\nabla\Phi^{1}_{\tau})^{-1}\\
			&\displaystyle -\kappa\int^{t_{1}}_{0}\int_{\Gamma}\min\bigg\{\bigg|\nabla^{k_{0}+1}\partial_{t}\widetilde{\eta}_{\tau}\bigg|^{2},\tau^{-1}\bigg\}\zeta_{1}\bigg)=0,
		\end{array}
	\end{equation}
	for any $\zeta_{1}\in C^{\infty}_{c}([0,h];C^{\infty}_{c}(\overline{\Omega^{1}_{\tau}})),$ and for $a.e.$ $t_{1}\in[0,h].$\\
	Similarly taking $\limsup_{\tau\rightarrow 0}$ into \eqref{eqtheta1tau2} furnishes:
	\begin{equation}\label{aftrtau0heat2*}
		\begin{array}{ll}
			&\displaystyle -c_{2}\int_{\Omega^{2}_{0}}\theta^{0}_{2}(x)\zeta_{2}(x,0)+c_{2}\int_{\Omega^{2}(t_{1})}\theta_{2}(x,t_{1})\zeta_{2}(x,t_{1})-c_{2}\int^{t_{1}}_{0}\int_{\Omega^{2}}\theta_{2}(x,t)(\partial_{t}+u_{2}\cdot\nabla)\zeta_{2}\\
			&\displaystyle+k_{2}\int^{t_{1}}_{0}\int_{\Omega^{2}}\nabla\theta_{2}\cdot\nabla\zeta_{2}-\lambda\int^{t_{1}}_{0}\int_{\partial\Omega^{2}}(\theta_{1}-\theta_{2})\zeta_{2}\\
			&\displaystyle+\limsup_{\tau\rightarrow 0}\bigg(-\int^{t_{1}}_{0}\int_{\Omega^{2}_{\tau}}\mu^{M}_{2}(\theta_{2,\tau})|Du_{2,\tau}|^{2}\zeta_{2}-{\kappa}\int^{t_{1}}_{0}\int_{\Omega^{2}_{\tau}}\min\bigg\{|\nabla^{k_{0}}u_{2,\tau}|^{2},\tau^{-1}\bigg\}\zeta_{2}\circ\Psi^{2}_{\tau}\\
			&\displaystyle-\frac{1}{2h}\int^{t_{1}}_{0}\int_{\Omega^{2}_{\tau}}|u_{2,\tau}-w_{2,\tau}\circ(\Phi^{2}_{\tau})^{-1}|^{2}\zeta_{2}\circ\Psi^{2}_{\tau}(\det\nabla\Phi^{2}_{\tau})^{-1}=0,
		\end{array}
	\end{equation}	
	for any $\zeta_{2}\in C^{\infty}_{c}([0,h];C^{\infty}_{c}(\overline{\Omega^{2}_{\tau}})),$ and for $a.e.$ $t_{1}\in[0,h].$\\
	Hence in particular using the test functions $\zeta_{i}\in C^{\infty}_{c}([0,h];C^{\infty}_{c}(\overline{\Omega^{i}_{\tau}}))$ with $\zeta_{i}\geqslant 0,$ we obtain the following formulation of the heat evolution from \eqref{aftrtau0heat1} and \eqref{aftrtau0heat2*}  
	\begin{equation}\label{aftrtau0heat2}
		\begin{array}{ll}
			&\displaystyle c_{i}\int^{t_{1}}_{0}\frac{d}{dt}\int_{\Omega^{i}(t)}\theta_{i}(x,t)\zeta_{i}(x,t)-c_{i}\int^{t_{1}}_{0}\int_{\Omega^{i}(t)}\theta_{i}(x,t)(\partial_{t}+u_{i}\cdot\nabla)\zeta_{i}+k_{i}\int^{t_{1}}_{0}\int_{\Omega^{i}(t)}\nabla\theta_{i}\cdot\nabla\zeta_{i}\\
			&\displaystyle+\lambda\mathcal{I}_{i}\int^{t_{1}}_{0}\int_{\partial\Omega^{2}(t)}(\theta_{1}-\theta_{2})\zeta_{i}
			\displaystyle  =\liminf_{\tau\rightarrow 0}\left\langle \mathcal{D}^{\tau,i},\zeta_{i}\right\rangle\\
		\end{array}
	\end{equation}
	for $a.e.$ $t_{1}\in [0,h],$ where the duality pairing $\left\langle \mathcal{D}^{\tau,i},\zeta_{i}\right\rangle$ is given as
	\begin{alignat}{2}
		\displaystyle \left\langle \mathcal{D}^{\tau,i},\zeta_{i}\right\rangle=&\displaystyle \int^{t_{1}}_{0}\int_{\Omega^{i}_{\tau}}\mu^{M}_{i}(\theta_{i,\tau})|Du_{i,\tau}|^{2}\zeta_{i}+{\kappa}\int^{t_{1}}_{0}\int_{\Omega^{i}_{\tau}}\min\bigg\{|\nabla^{k_{0}}u_{i,\tau}|^{2},\tau^{-1}\bigg\}\zeta_{i}\circ\Psi^{i}_{\tau}\nonumber
		\displaystyle\\
		&\displaystyle+\mathcal{X}_{i}\frac{1}{2h}\int^{t_{1}}_{0}\int_{\Gamma}\bigg|\partial_{t}\widetilde{\eta}_{\tau}-\beta_{\tau}\bigg|^{2}\zeta_{i}
		\displaystyle+\frac{1}{2h}\int^{t_{1}}_{0}\int_{\Omega^{i}_{\tau}}|u_{i,\tau}-w_{i,\tau}\circ(\Phi^{i}_{\tau})^{-1}|^{2}\zeta_{i}\circ\Psi^{i}_{\tau}(\det\nabla\Phi^{i}_{\tau})^{-1}\label{defH2}\\
		&\displaystyle +\kappa\mathcal{X}_{i}\int^{t_{1}}_{0}\int_{\Gamma}\min\bigg\{\bigg|\nabla^{k_{0}+1}\partial_{t}\widetilde{\eta}_{\tau}\bigg|^{2},\tau^{-1}\bigg\}\zeta_{i},\nonumber
	\end{alignat}
	where $\mathcal{I}_{1}=1,$ $\mathcal{I}_{2}=-1$ and $\mathcal{X}_{1}=1$ and $\mathcal{X}_{2}=0.$ Note that the notations $\mathcal{I}_{i}$ and $\mathcal{X}_{i}$ are introduced only to write the two heat equations solved by $\theta_{1}$ and $\theta_{2}$ in a unified manner.\\
	Now an interesting question is: \textit{`whether the limit (as $\tau\rightarrow 0$) of  $\mathcal{D}^{\tau,i}$ is merely a measure or it does exist as a $L^{1}$ function'}? In fact next we are going to prove that the limit of $\mathcal{D}^{\tau,i}$ exist and is integrable. As a corollary we are further going to recover the desired \textit{`energy balance'} solved by the limiting unknowns $\eta,$ $u_{i}$ and $\theta_{i}.$\\[4.mm]
	\underline{\textit{Identification of $\lim_{\tau\rightarrow 0}\left\langle \mathcal{D}^{\tau,i},\zeta_{i}\right\rangle$ upto a subsequence and energy balance:}} 
	The first step focuses in suitable testing of the approximating and the limiting momentum equation to identify strong limits of the dissipative terms. We wish to test \eqref{ELagmin1} by $\displaystyle-\bigg(\frac{\eta_{k+1}-\eta_{k}}{\tau},u_{k+1}\bigg)(h-\tau(k+1)),$ and sum over the indexing set $\{0,...,N-1\}$. Since $h-(k+1)\tau=\tau(N-k-1),$ one first observes that
	\begin{equation}\nonumber
		\begin{array}{ll}
			\langle DK_\kappa(\eta_{k+1}), (N-k-1)(\eta_{k+1}-\eta_k)\rangle &\displaystyle= \langle DK(\eta_{k+1}), (N-k-1)(\eta_{k+1}-\eta_k)\rangle \\[2.mm]
			&\displaystyle\quad+\kappa \langle \nabla^{k_0+1}\eta_{k+1}, (N-k-1)\nabla^{k_{0}+1}(\eta_{k+1}-\eta_k)\rangle.
		\end{array}
	\end{equation}
	Now in view of the following identity:
	\begin{align*}
		\sum_{k=0}^{N-1} (a_{k+1}-a_k) a_{k+1}(N-k-1)
		= - (N-1)\frac{a_{0}^2}{2} + \sum_{k=1}^{N-1}\frac{a_{k}^2}{2} + \sum_{k=0}^{N-1} (N-k-1)  \frac{(a_{k+1}-a_k)^2}{2}.
	\end{align*}
	one has the following (using $a_k\sim \nabla^{k_0+1}\eta_{k}$), 
	\begin{align*}
		&\sum_{k=0}^{N-1}\langle \nabla^{k_0+1}\eta_{k+1}, (N-k-1)\nabla^{k_{0}+1}(\eta_{k+1}-\eta_k)\rangle
		\\
		&\quad = - (N-1)\frac{\norm{\nabla^{k_0+1}\eta_{0}}^2_{L^{2}}}{2} + \sum_{k=1}^{N-1}\frac{\norm{\nabla^{k_0+1}\eta_{k}}^2_{L^{2}}}{2} + \sum_{k=0}^{N-1} (N-k-1)  \frac{\norm{\nabla^{k_0+1}\eta_{k+1}-\nabla^{k_0+1}\eta_{k}}^2_{L^{2}}}{2}
		\\
		&=: \frac{1}{\tau}\bigg(-(h-\tau)\frac{\norm{\nabla^{k_0+1}\eta_{0}}^2_{L^{2}}}{2} + \int_0^h\frac{\norm{\nabla^{k_0+1}\eta_\tau}^2_{L^{2}}}{2} +\frac{\mathcal{D}_\tau}{\kappa}\bigg),
	\end{align*}
	where we used the interpolant notations and introducing $\mathcal{D}_\tau$ as the dissipation related to the implicit time-stepping. Now, setting $t_\tau= (k+1)\tau$ on $[k\tau,(k+1)\tau)$ we find that the aforementioned testing implies
	\begin{equation}\label{testaproxTdiff}
		\begin{array}{ll}
			&\displaystyle	\frac{\kappa}{2}\int_{0}^{h}\int_{\Gamma} |\nabla^{k_{0}+1}\eta_{\tau}|^{2} +\frac{1}{h}\int^{h}_{0}\int_{\Gamma}|\partial_{t}\widetilde{\eta}_{\tau}|^{2}(h-t_\tau)+\kappa\int^{h}_{0}\int_{\Gamma} |\nabla^{k_{0}+1}\partial_{t}\widetilde{\eta}_{\tau}|(h-t_\tau)\\
			&\displaystyle+\frac{1}{h}\sum_{i}\int^{h}_{0}\int_{\Omega^{i}_{\tau}}|u_{i,\tau}|^{2}(\det\nabla\Phi^{i}_{\tau})^{-1}(h-t_\tau)+\kappa\sum_{i}\int^{h}_{0}\int_{\Omega^{i}_{\tau}}|\nabla^{k_{0}}u_{i,\tau}|^{2}(h-t_\tau)\\
			&\displaystyle+\int^{h}_{0}\int_{\Omega^{i}_{\tau}}\mu^{M}_{i}(\theta_{i,\tau})|Du_{i,\tau}|^{2}(h-t_{\tau})\\
			&\displaystyle=\frac{h-\tau}{2}\int_\Gamma\abs{\nabla^{k_0+1}\eta^{0}}^{2}+\int^{h}_{0}
			\langle DK(\eta_{\tau}), (t_\tau-h) \partial_t\tilde{\eta}_\tau\rangle\\
			&\displaystyle+
			\frac{1}{h}\int^{h}_{0}\int_{\Omega^{i}_{\tau}}w_{i,\tau}\circ(\Phi^{i}_{\tau})^{-1}(\det\nabla\Phi^{i}_{\tau})^{-1}\cdot u_{i,\tau }(h-t_\tau)
			+\frac{1}{h}\int^{h}_{0}\int_{\Gamma}\beta_{\tau} \partial_t\tilde{\eta}_\tau(h-t_\tau)-\mathcal{D}_{\tau}.
		\end{array}
	\end{equation}
	Observe that the right hand side is in a shape such that it will converge to a respective right hand side for the limit equation. This eventually implies the strong convergence of all the terms on the left hand side. This works due to the parabolic nature of the $\tau$ level.
	Hence let us test \eqref{momentumafterpassingtau} by $-(\partial_{t}\eta,u)(h-t)$ to furnish:
	\begin{equation}\label{testlimitTdiff}
		\begin{array}{ll}
			&\displaystyle	\frac{\kappa}{2}\int_{0}^{h}|\nabla^{k_{0}+1}\eta|^{2}+\frac{1}{h}\int^{h}_{0}\int_{\Gamma}|\partial_{t}{\eta}|^{2}(h-t)+\kappa\int^{h}_{0}\int_{\Gamma} |\nabla^{k_{0}+1}\partial_{t}{\eta}|^2(h-t)\\
			&\displaystyle+\frac{1}{h}\sum_{i}\int^{h}_{0}\int_{\Omega^{i}}|u_{i}|^{2}(h-t)+\kappa\sum_{i}\int^{h}_{0}\int_{\Omega^{i}}|\nabla^{k_{0}}u_{i}|^{2}(h-t)+\int^{h}_{0}\int_{\Omega^{i}}\mu^{M}_{i}(\theta_{i})|Du_{i}|^{2}(h-t)\\
			&\displaystyle=K_{\kappa}(\eta^{0})h-\int^{h}_{0}K(\eta)+\frac{1}{h}\int^{h}_{0}\int_{\Omega^{i}}|w_{i}\circ(\Phi^{i})^{-1}|^{2}\cdot u_{i}(h-t)\\
			&\displaystyle\quad+\frac{1}{h}\int^{h}_{0}\int_{\Gamma}\beta\partial_{t}\eta(h-t).
		\end{array}
	\end{equation}
	Now  considering the limit $\tau\rightarrow 0$ ($t_{\tau}\rightarrow t$) in \eqref{testaproxTdiff} and comparing with \eqref{testlimitTdiff} one infers:
	\begin{equation}\label{convleftside}
		\begin{array}{ll}
			&\displaystyle	\lim\limits_{\tau\rightarrow 0}\bigg(\frac{\kappa}{2}\int_{0}^{h}\int_{\Gamma} |\nabla^{k_{0}+1}\eta_{\tau}|^{2} +\frac{1}{h}\int^{h}_{0}\int_{\Gamma}|\partial_{t}\widetilde{\eta}_{\tau}|^{2}(h-t_\tau)+\kappa\int^{h}_{0}\int_{\Gamma} |\nabla^{k_{0}+1}\partial_{t}\widetilde{\eta}_{\tau}|(h-t_\tau)\\
			&\displaystyle+\frac{1}{h}\sum_{i}\int^{h}_{0}\int_{\Omega^{i}_{\tau}}|u_{i,\tau}|^{2}(\det\nabla\Phi^{i}_{\tau})^{-1}(h-t_\tau)+\kappa\sum_{i}\int^{h}_{0}\int_{\Omega^{i}_{\tau}}|\nabla^{k_{0}}u_{i,\tau}|^{2}(h-t_\tau)\\
			&\displaystyle+\int^{h}_{0}\int_{\Omega^{i}_{\tau}}\mu^{M}_{i}(\theta_{i,\tau})|Du_{i,\tau}|^{2}(h-t_{\tau})\bigg)\\
			&\displaystyle	=\frac{\kappa}{2}\int_{0}^{h}|\nabla^{k_{0}+1}\eta|^{2}+\frac{1}{h}\int^{h}_{0}\int_{\Gamma}|\partial_{t}{\eta}|^{2}(h-t)+\kappa\int^{h}_{0}\int_{\Gamma} |\nabla^{k_{0}+1}\partial_{t}{\eta}|^2(h-t)\\
			&\displaystyle+\frac{1}{h}\sum_{i}\int^{h}_{0}\int_{\Omega^{i}}|u_{i}|^{2}(h-t)+\kappa\sum_{i}\int^{h}_{0}\int_{\Omega^{i}}|\nabla^{k_{0}}u_{i}|^{2}(h-t)+\int^{h}_{0}\int_{\Omega^{i}}\mu^{M}_{i}(\theta_{i})|Du_{i}|^{2}(h-t),
		\end{array}
	\end{equation}
	since $|\displaystyle\mathcal{D}_{\tau}|\leqslant\kappa h\tau\frac{\|\nabla^{k_{0}+1}\partial_{t}\widetilde{\eta}_{\tau}\|^{2}_{L^{2}(\Gamma\times(0,h))}}{2}$ and hence $\mathcal{D}_{\tau}\rightarrow 0$ by using \eqref{fromCoret}.\\ 
	Now in view of the available weak convergences \eqref{weakconv} and 
	the strong convexity of the $L^{2}$ norms one in particular has the convergence of the individual integrals appearing in the left hand side of \eqref{convleftside} and hence the following convergences of the dissipative terms
	\begin{equation}\label{weightedconv}
		\begin{array}{lll}
			& \displaystyle \sqrt{\mu^{M}_{i}(\theta_{i,\tau})}|Du_{i,\tau}|\sqrt{(h-t_{\tau})}\longrightarrow \sqrt{\mu^{M}_{i}(\theta_{i})}|Du_{i}|\sqrt{(h-t)} &\,\,\mbox{in}\,\, L^{2}(\Omega^{i}_{\tau}\times(0,h)),\\[2.mm]
			&\displaystyle |\nabla^{k_{0}}u_{i,\tau}|\sqrt{(h-t_{\tau})}\longrightarrow |\nabla^{k_{0}}u_{i}|\sqrt{(h-t)}&\,\,\mbox{in}\,\, L^{2}(\Omega^{i}_{\tau}\times(0,h)),\\[2.mm]
			&\displaystyle |u_{i,\tau}-w_{i,\tau}\circ(\Phi^{i}_{\tau})^{-1}|\sqrt{(h-t_{\tau})}\det(\nabla\Phi^{i}_{\tau})^{-1}\longrightarrow |u_{i}-w_{i}\circ(\Phi^{i})^{-1}|\sqrt{(h-t)}&\,\,\mbox{in}\,\, L^{2}(\Omega^{i}_{\tau}\times(0,h)),\\[2.mm]
			&\displaystyle |\partial_{t}\widetilde{\eta}_{\tau}-\beta_{\tau}|\sqrt{(h-t_{\tau})}\longrightarrow |\partial_{t}\eta-\beta|\sqrt{(h-t)}&\,\,\mbox{in}\,\, L^{2}(\Gamma\times(0,h)),\\[2.mm]
			&\displaystyle |\nabla^{k_{0}+1}\partial_{t}\widetilde{\eta}_{\tau}|\sqrt{(h-t_{\tau})}\longrightarrow |\nabla^{k_{0}+1}\partial_{t}\eta|\sqrt{(h-t)}&\,\,\mbox{in}\,\, L^{2}(\Gamma\times(0,h)),
		\end{array}
	\end{equation}
	for any $t\in[0,h]$ when $\tau\rightarrow 0.$ Consequently one has 
	\begin{equation}\label{dropweight}
		\begin{array}{lll}
			& \displaystyle \sqrt{\mu^{M}_{i}(\theta_{i,\tau})}|Du_{i,\tau}|\longrightarrow \sqrt{\mu^{M}_{i}(\theta_{i})}|Du_{i}|&\,\,\mbox{in}\,\, L^{2}(\Omega^{i}_{\tau}\times(0,t_{1})),\\[2.mm]
			&\displaystyle |\nabla^{k_{0}}u_{i,\tau}|\longrightarrow |\nabla^{k_{0}}u_{i}|&\,\,\mbox{in}\,\, L^{2}(\Omega^{i}_{\tau}\times(0,t_{1})),\\[2.mm]
			&\displaystyle |u_{i,\tau}-w_{i,\tau}\circ(\Phi^{i}_{\tau})^{-1}\sqrt{|\det(\nabla\Phi^{i}_{\tau})^{-1}}\longrightarrow |u_{i}-w_{i}\circ(\Phi^{i})^{-1}|&\,\,\mbox{in}\,\, L^{2}(\Omega^{i}_{\tau}\times(0,t_{1})),\\[2.mm]
			&\displaystyle |\partial_{t}\widetilde{\eta}_{\tau}-\beta_{\tau}|\longrightarrow |\partial_{t}\eta-\beta|&\,\,\mbox{in}\,\, L^{2}(\Gamma\times(0,t_{1})),\\[2.mm]
			&\displaystyle |\nabla^{k_{0}+1}\partial_{t}\widetilde{\eta}_{\tau}|\longrightarrow |\nabla^{k_{0}+1}\partial_{t}\eta|&\,\,\mbox{in}\,\, L^{2}(\Gamma\times(0,t_{1})),
		\end{array}
	\end{equation}
	for any  $t_{1}\in (0,h).$\\
	In particular \eqref{dropweight}$_{2,5}$ guarantees that:
	\begin{equation}\label{conmins} 
		\begin{array}{ll}
			& \displaystyle\int^{t_{1}}_{0}\int_{\Omega^{i}_{\tau}}\min\bigg\{|\nabla^{k_{0}}u_{i,\tau}|^{2},\tau^{-1}\bigg\}\rightarrow \int^{t_{1}}_{0}\int_{\Omega^{i}} |\nabla^{k_{0}+1}u_{i}|^{2},\\ &\displaystyle\int^{t_{1}}_{0}\int_{\Gamma}\min\bigg\{\bigg|\nabla^{k_{0}+1}\partial_{t}\widetilde{\eta}_{\tau}\bigg|^{2},\tau^{-1}\bigg\}\rightarrow \int^{t_{1}}_{0}\int_{\Gamma} |\nabla^{k_{0}+1}\partial_{t}\eta|^{2}.
		\end{array}
	\end{equation}
		for $a.e.$ $t_{1}\in (0,h).$ Hence \eqref{dropweight} and \eqref{conmins} at once infers 
		\begin{equation}\label{strngconvHtauizeta1}
			\begin{array}{l}
				\displaystyle \lim_{\tau\rightarrow 0} \left\langle \mathcal{D}^{\tau,i},\zeta_{i}\right\rangle=\left\langle\mathcal{D}^{i},\zeta_{i}\right\rangle,
			\end{array}
		\end{equation}
		for $\zeta_{i}\in C^{\infty}_{c}([0,h];C^{\infty}_{c}(\overline{\Omega})),$ for any $t_{1}\in(0,h),$ where $\langle \mathcal{D}^{i},\zeta_{i}\rangle$ is defined as:
		\begin{equation}
			\begin{array}{ll}
				\displaystyle \left\langle \mathcal{D}^{i},\zeta_{i}\right\rangle =&\displaystyle  \int^{t_{1}}_{0}\int_{\Omega^{i}}\mu^{M}_{i}(\theta_{i})|Du_{i}|^{2}\zeta_{i}+ {\kappa}\int^{t_{1}}_{0}\int_{\Omega^{i}}|\nabla^{k_{0}}u_{i}|^{2}\zeta_{i}+\mathcal{X}_{i}\frac{1}{2h}\int^{t_{1}}_{0}\int_{\Gamma}\bigg|\partial_{t}{\eta}-\beta\bigg|^{2}\zeta_{i}\\[4.mm]
				&\displaystyle+\frac{1}{2h}\int^{t_{1}}_{0}\int_{\Omega^{i}}|u_{i}-w_{i}\circ(\Phi^{i})^{-1}|^{2}\zeta_{i}
				+\kappa\mathcal{X}_{i}\int^{t_{1}}_{0}\int_{\Gamma}\bigg|\nabla^{k_{0}+1}\partial_{t}{\eta}\bigg|^{2}\zeta_{i},
			\end{array}
		\end{equation}
		\underline{\textit{Obtainment of a heat identity}} As a consequence of \eqref{strngconvHtauizeta1} and \eqref{aftrtau0heat2}, one obtains the following  equation solved by the temperature $\theta_{i}:$
		\begin{equation}\label{aftrtau0heatfinaltau}
			\begin{array}{ll}
				&\displaystyle c_{i}\int^{t_{1}}_{0}\frac{d}{dt}\int_{\Omega^{i}(t)}\theta_{i}(x,t)\zeta_{i}(x,t)-c_{i}\int^{t_{1}}_{0}\int_{\Omega^{i}(t)}\theta_{i}(x,t)(\partial_{t}+u_{i}\cdot\nabla)\zeta_{i}+k_{i}\int^{t_{1}}_{0}\int_{\Omega^{i}(t)}\nabla\theta_{i}\cdot\nabla\zeta_{i}\\
				&\displaystyle+\lambda\mathcal{I}_{i}\int^{t_{1}}_{0}\int_{\partial\Omega^{2}(t)}(\theta_{1}-\theta_{2})\zeta_{i}
				\displaystyle  =\left\langle \mathcal{D}^{i},\zeta_{i}\right\rangle\\
			\end{array}
		\end{equation}
		for $a.e.$ $t_{1}\in (0,h),$ for any $\zeta_{i}\in C^{\infty}_{c}([0,h];C^{\infty}_{c}(\overline{\Omega^{i}}))$ and $\zeta_{i}\geqslant 0.$ 
		The recovery of an identity for the heat evolution is a particular quality of our approximation taht seems necessary for the construction; that might actually make it a suitable candidate for numerical approximations.\\[2.mm]
		\underline{\textit{Obtainment of the energy balance:}} Let us use test functions of the form $(b,\psi)=(\partial_{t}\eta,u_1\chi_{\Omega^1}+u_2\chi_{\Omega^2})$ in \eqref{momentumafterpassingtau} (which is possible in view of the fact that $\partial_{t}\eta\in L^{2}(0,h;W^{k_{0}+1,2}(\Gamma))$, giving a sense to the duality pairing $\langle DK_{\kappa}(\eta),\partial_{t}\eta\rangle$) to furnish \eqref{frommomentumaftertestingdiss} and next use $\zeta_{i}=1$ in \eqref{aftrtau0heatfinaltau}, sum the resulting expressions to furnish the energy balance \eqref{hlevelenergy}. 
		\subsubsection{Obtainment of entropy evolution inequality}
		As a consequence, one can further use test functions of the form $\varphi'(\theta_{i}),$ (where $\varphi(\theta)$ is monotone and concave for $\theta>0$) in  \eqref{aftrtau0heat2} (the use of such test functions can be justified by a density argument) and use Raynolds transport theorem to conclude the following entropy evolution
		\begin{equation}\label{entropybh}
			\begin{array}{ll}
				\displaystyle\sum_{i=1}^{2}\left(c_{i}\frac{d}{dt}\int_{\Omega^{i}}\varphi(\theta_{i})+k_{i}\int_{\Omega^{i}}|\nabla\theta_{i}|^{2}\varphi''(\theta_{i})\right)+\lambda\int_{\partial\Omega^{2}}\left(\theta_{1}-\theta_{2}\right)\left(\varphi'(\theta_{1})-\varphi'(\theta_{2}))\right)\geqslant 0.
			\end{array}
		\end{equation}
		Indeed $\varphi(\cdot),$ should be chosen such that all the integrals in \eqref{entropybh} make sense. For instance, since $\theta_{i}\geqslant\gamma>0$ can be chosen to be $\theta^{(\lambda+1)}_{i}$ for $\lambda\in (-1,0)$ or the physical entropy $\ln\theta_{i}.$
		\subsubsection{Conclusion and the proof of Theorem \ref{Thmmaintaulev}}
		So far we have proved the existence of a triplet $(u_{i},\theta_{i},\eta)$ solving all the assertions of Theorem \ref{Thmmaintaulev} except the fact that the viscosity coefficient of the fluid $\mu^{M}_{i}(\cdot)$ ($c.f.$ \eqref{approxmui} ) is a penalised version of the original viscosity $\mu_{i}.$ Recall that the only reason behind this penalisation was to guarantee a positive lower bound of the viscosity in the absence of a minimal principle of temperature. Now since we have the non degeneracy \eqref{positivitytheta} of $\theta_{i}$ and all the required estimates ($c.f.$ Corollary \eqref{estimatesinterpole1}) independent of the penalisation parameter $M,$ we can pass $M\rightarrow\infty$ to conclude the proof of the assertions $(i)$ to $(viii)$ of Definition \ref{Def:taulayer} and consequently Theorem \ref{Thmmaintaulev} follows.
		\section{Inertial layer {$h\rightarrow 0$}}\label{ht0lev}
		The goal of this section is to pass to the limit $h\rightarrow 0$ in  \eqref{discretemomen1}-\eqref{intenballh}  and to obtain a solution to the regularized system (with $\kappa$ being fixed). Let us define the notion of weak solution for the regularized system.
		\begin{definition}\label{Def:reg}
			A triplet $(u_{i},\theta_{i},\eta)$ is said to be a solution of the regularized system (with $\kappa$ being the regularizing parameter) in $(0,T)$ if:
			\begin{equation}\label{regularity3}
				\begin{array}{ll}
					&\displaystyle u_{i}\in  L^{2}(0,T;W^{k_{0},2}(\Omega^{i}))\cap L^{\infty}(0,T;L^{2}(\Omega^{i})),\,\,\mathrm{div}\,u_{i}=0\,\,\mbox{in}\,\,\Omega^{i},\\
					& \displaystyle\theta_{i}\in L^{\infty}(0,T;L^{1}(\Omega^{i}))\cap L^{p}(\Omega^{i}\times(0,T))\cap L^{s}(0,T;W^{1,s}(\Omega^{i})),\,\,\mbox{for all}\,\,(p,s)\in[1,\frac{5}{3})\times [1,\frac{5}{4}),\\
					&\displaystyle \eta \in L^{\infty}(0,T;W^{k_{0}+1,2}(\Gamma))\cap W^{1,2}(0,T;W^{k_{0}+1,2}(\Gamma)),\,\,\mbox{for}\,\, k_{0}>3,\\
				\end{array}
			\end{equation}
			and the following hold\\[2.mm]
			(i) A decomposition of $\Omega$ of the form \eqref{Omegat12} with \eqref{varphieta}-\eqref{Sigmat} hold.\\[2.mm]
			(ii) {\textit{The momentum balance holds in the sense:}}\\
			\begin{equation}\label{momentplvlh}
				\begin{array}{ll}
					&\displaystyle \int_{0}^{t}\bigg(\langle DK_{\kappa}(\eta),b\rangle-\int_{\Gamma}\partial_{t}\eta\partial_{t}b+\kappa\int_{\Gamma}\nabla^{k_{0}+1}\partial_{t}\eta\nabla^{k_{0}+1}b\bigg)\\
					&\displaystyle+\sum_{i}\int^{t}_{0}\bigg(-\langle u_{i},\partial_{t}\psi-u_{i}\cdot\nabla\psi\rangle+\kappa\int_{\Omega^{i}}\nabla^{k_{0}}u_{i}:\nabla^{k_{0}}\psi+\int_{\Omega^{i}}\mu_{i}(\theta_{i})D(u_{i}):D(\psi)\bigg)\\[2.mm]
					&\displaystyle=-\langle \partial_{t}\eta(\cdot,t)b(\cdot,t)\rangle +\langle \eta_{1}^{0},b(\cdot,0)\rangle-\langle u_{i}(\cdot,t),\psi(\cdot,t)\rangle+\langle u^{0}_{i},\psi(\cdot,0)\rangle
				\end{array}
			\end{equation}
			for $a.e.$ $t\in (0,T),$ for all $b\in L^{\infty}(0,T;W^{k_{0}+1,2}(\Gamma))\cap W^{1,\infty}(0,T;L^{2}(\Gamma)),$ for all $\psi\in C^{\infty}_0([0,T]\times \Omega)$  with $\mbox{div}\,\psi=0$ on $\Omega$ satisfying
			\begin{equation}
				\begin{array}{l}
					\displaystyle \Tr_{\Sigma_{\eta}}u_{i}=\partial_{t}\eta\nu\quad\mbox{and}\quad \Tr_{\Sigma_{\eta}}\psi=b\nu\quad\mbox{on}\quad \Gamma.
				\end{array}
			\end{equation}\\[2.mm]
			(iii) {\textit{The heat evolution holds in the sense:}}\\
			\begin{equation}\label{heatfnlhlev}
				\begin{array}{ll}
					&\displaystyle c_{i}\int^{t}_{0}\frac{d}{dt}\langle \theta_{i},\zeta_{i}\rangle-c_{i}\int^{t}_{0}\langle \theta_{i},(\partial_{t}+u_{i}\cdot\nabla)\zeta_{i}\rangle+k_{i}\int^{t_{1}}_{0}\int_{\Omega^{i}}\nabla\theta_{i}\cdot\nabla\zeta_{i}
					\displaystyle+\lambda\mathcal{I}_{i}\int^{t}_{0}\int_{\partial\Omega^{2}}(\theta_{1}-\theta_{2})\zeta_{i}\\
					&\displaystyle=\int^{t}_{0}\int_{\Omega^{i}}\mu_{i}(\theta_{i})|Du_{i}|^{2}\zeta_{i}+\kappa\int^{t}_{0}\int_{\Omega^{i}}|\nabla^{k_{0}}u_{i}|^{2}\zeta_{i}+\kappa\mathcal{X}_{i}\int^{t}_{0}\int_{\Gamma}|\nabla^{k_{0}+1}\partial_{t}\eta|^{2}\zeta_{i}
				\end{array}
			\end{equation}
			for $a.e.$ $t\in [0,T],$ and for $\zeta_{i}\in C^{\infty}_{c}([0,T];C^{\infty}_{c}(\overline{\Omega^{i}}))$ where $\mathcal{I}_{1}=1,$ $\mathcal{I}_{2}=-1,$ $\mathcal{X}_{1}=1$ and $\mathcal{X}_{2}=0.$\\[2.mm] 
			(iv) \textit{Positivity of temperature:}\\
			\begin{equation}\label{postempht0lev}
				\begin{array}{l}
					\displaystyle \theta_{i}\geqslant \gamma\,\,\mbox{in}\,\,\Omega^{i}\times(0,T)\,\,\mbox{provided}\,\,\theta^{0}_{i}\geqslant\gamma>0\,\,\mbox{on}\,\,\Omega^{i}_{0}.
				\end{array}
			\end{equation}
			(v) \textit{The total energy is conserved:}\\
			\begin{equation}\label{energybalhlev}
				\begin{array}{ll}
					\mathbb{E}_{tot}(\theta_{i},u_{i},\partial_{t}\eta,\eta)(t)=\mathbb{E}_{tot}(\theta^{0}_{i},u^{0}_{i},\eta^{0},\eta^{0}_{1}),
				\end{array}
			\end{equation}
			where the total energy of the system is defined as
			$$\mathbb{E}_{tot}(\theta_{i},u_{i},\partial_{t}\eta,\eta)=\sum^{2}_{i=1}\int_{\Omega^{i}}\bigg(c_{i}\theta_{i}+\frac{1}{2}|u_{i}|^{2}\bigg)+\int_{\Gamma}\bigg(\frac{|\partial_{t}\eta|^{2}}{2}+K_{\kappa}(\eta)\bigg).$$\\[2.mm]
			(vi)\textit{An identity involving the dissipation terms}:
			\begin{equation}\label{disstypesthlevfhlvl}
				\begin{array}{ll}
					&\displaystyle K_{\kappa}(\eta(t))+\frac{1}{2}\int_{\Gamma}|\partial_{t}\eta(t)|^{2}+\frac{1}{2}\sum_{i}\int_{\Omega^{i}}|u_{i}(t)|^{2}\\
					&\displaystyle+\sum_{i}\left(\int^{t}_{0}\int_{\Omega^{i}}\mu_{i}(\theta_{i})|Du_{i}|^{2}+\kappa\int^{t}_{0}\int_{\Omega^{i}}|\nabla^{k_{0}}u_{i}|^{2}+\kappa\int^{t}_{0}\int_{\Gamma}|\nabla^{k_{0}+1}\partial_{t}\eta|^{2}\right)\\
					&\displaystyle = K_{\kappa}(\eta^{0})+\frac{1}{2}\int_{\Gamma}|\eta^{0}_{1}|^{2}+\frac{1}{2}\int_{\Omega^{i}_{0}}|u^{0}_{i}|^{2}
				\end{array}
			\end{equation}
			for $a.e.$ $t\in[0,T].$\\[2.mm]
			(vii) \textit{A general entropy inequality holds in the following sense:}\\
			\begin{equation}\label{entrpybalhlev}
				\begin{array}{l}
					\displaystyle\sum_{i=1}^{2}\left(c_{i}\frac{d}{dt}\int_{\Omega^{i}}\varphi(\theta_{i})+k_{i}\int_{\Omega^{i}}|\nabla\theta_{i}|^{2}\varphi''(\theta_{i})\right)+\lambda\int_{\partial\Omega^{2}}\left(\theta_{1}-\theta_{2}\right)\left(\varphi'(\theta_{1})-\varphi'(\theta_{2}))\right)\geqslant 0,
				\end{array}
			\end{equation}
			for any $\varphi(\theta)$ such that $\varphi(\cdot)$ is concave and monotone for $\theta>0,$ such that all the integrals above make sense.
		\end{definition}
		Now let us state the central result of the present section.\\
		\begin{theorem}\label{Th:mainhlev}
			Let all the assumptions $(i)-(v)$ stated in Theorem \ref{Th:main} hold along with the further regularity $\eta_{0}\in W^{k_{0},2}(\Gamma).$ Then there exist $T_F^{\kappa}\in(0,\infty]$ and a weak solution to the regularized problem in the interval $(0,T)$ for any $T<T_F^{\kappa}$ in the sense of Definition \ref{Def:reg}.\\
			Finally, $T_F^{\kappa}$ is finite only if 
			\begin{equation}\label{degenfstkind2}
				\text{ either }\lim_{s\to T_F^{\kappa}}\eta(s,y)\searrow a_{\partial\Omega}\text{ or }\lim_{s\to T_F^{\kappa}}\eta(s,y)\nearrow b_{\partial\Omega}
			\end{equation}
			for some $y\in\Gamma.$\\ 
			Note that in view of \eqref{tblrnbd}-\eqref{safedis} the moving elastic interface avoids any collision with the outer boundary $\partial\Omega$ in the maximal interval of existence $(0,T_{F}^{\kappa}).$
		\end{theorem}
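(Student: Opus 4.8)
The plan is to obtain the regularized solution as the limit $h\to 0$ of the time-delayed scheme of Theorem \ref{Thmmaintaulev}, concatenated over the windows $[nh,(n+1)h]$. \emph{Step 1 (construction and uniform bounds).} I would solve the time-delayed problem of Definition \ref{Def:taulayer} successively, feeding as the previous data $\beta,w_i$ on $[nh,(n+1)h]$ the structural and fluid velocities $\partial_t\eta^h,u_i^h$ of the preceding window (the latter pulled back to $\Omega^i_0$ along the flow maps); this runs as long as $\eta^h$ stays in a fixed compact subinterval of $(a_{\partial\Omega},b_{\partial\Omega})$, which by \eqref{tblrnbd}--\eqref{safedis} keeps the interface off $\partial\Omega$ and all constructions of Section \ref{Sec:GEE} available. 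Summing the $h$-level balances \eqref{hlevelenergy}, \eqref{frommomentumaftertestingdiss} across the windows, the accumulated artificial dissipation is nonnegative and yields, uniformly in $h$, the bounds $\eta^h\in L^\infty(W^{k_0+1,2}(\Gamma))$, $\partial_t\eta^h\in L^\infty(L^2(\Gamma))\cap L^2(W^{k_0+1,2}(\Gamma))$, $u_i^h\in L^\infty(L^2)\cap L^2(W^{k_0,2})$, together with $\int_0^T\int|u_i^h-w_i^h\circ(\Phi^i)^{-1}|^2+\int_0^T\int_\Gamma|\partial_t\eta^h-\beta^h|^2\le Ch$; the temperature bounds $\theta_i^h\in L^\infty(L^1)\cap L^p(\Omega^i\times(0,T))\cap L^s(0,T;W^{1,s})$ with $\theta_i^h\ge\gamma$ follow from the $h$-robust entropy estimate of Lemma \ref{estthetakp1} and Corollary \ref{estimatesinterpole1}.

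\emph{Step 2 (temperature compactness).} As in Section \ref{strongconvtheta}, the $\kappa$-regularity of $\eta^h$ gives uniform $C^{1,\alpha}$ control of $\eta^h$ in space and time, so parabolic cylinders can be fitted into $\Omega^i_h(t)\times(0,T)$ independently of $h$; estimating $\partial_t\theta_i^h$ locally in $L^1(W^{-2,p})$ from \eqref{intenballh} and invoking the classical Aubin--Lions lemma and Vitali's theorem yields $\theta_i^h\to\theta_i$ strongly in $L^s(\Omega^i_h\times(0,T))$ for $s<\tfrac53$, hence $\mu_i(\theta_i^h)\chi_{\Omega^i_h}\to\mu_i(\theta_i)\chi_{\Omega^i}$ strongly in every $L^r$, $r<\infty$; since $\eta^h\to\eta$ uniformly, the domains $\Omega^i_h(t)$ converge to $\Omega^i(t)$.

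\emph{Step 3 (velocity compactness --- the main obstacle).} The inertial terms carry no parabolic smoothing, so the delayed difference quotients in \eqref{discretemomen1} must be treated by the Aubin--Lions type Theorem \ref{thmBbL}: I would take $f_n=(u_i^h,\partial_t\eta^h)$ in the coupled space of interface velocities, $g_n$ the associated momentum functional, and build the approximants $f_{n,\delta}$ from the solenoidal extension operators of Proposition \ref{smestdivfrex} and Corollary \ref{corextension}, which supply the approximability condition~(2) and, through \eqref{Fdivetaest1}, the bounds in the smaller space needed for~(4). The decisive point is the equi-continuity condition~(3): testing \eqref{discretemomen1} with these extensions, the time-delayed increment $\tfrac1\tau\int_0^\tau\langle g_n(t)-g_n(t+s),f_{n,\delta}(t)\rangle\,ds$ is controlled by $C(\delta)\tau^\alpha(A_n(t)+1)$ using the uniform dissipation bound of Step~1 and the extra spatial regularity that the $\kappa$-terms give $u_i^h$; this is the adaptation of the compactness argument of \cite{MuhaSch} to the time-delayed, coupled, moving-geometry situation. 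Its conclusion $\int\langle f_n,g_n\rangle\to\int\langle f,g\rangle$, combined with the energy identity exactly as in \eqref{convleftside}, forces $Du_i^h\to Du_i$, $\nabla^{k_0}u_i^h\to\nabla^{k_0}u_i$, $\nabla^{k_0+1}\partial_t\eta^h\to\nabla^{k_0+1}\partial_t\eta$ strongly in $L^2$ and $u_i^h\to u_i$ strongly in $L^2(L^2)$; in particular the inertial artificial dissipations tend to $0$.

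\emph{Step 4 (limit passage and the maximal time).} With the strong convergence of $u_i^h$ the Lagrangian discretization of the material derivative passes to the convective term; with $\theta_i^h\to\theta_i$ the viscous term $\mu_i(\theta_i^h)Du_i^h:D\psi$ converges; and since $DK_\kappa$ is a fixed polynomial in $(\eta^h,\nabla\eta^h,\nabla^2\eta^h)$ plus $\kappa\nabla^{k_0+1}$ with $L^\infty$ coefficients, $\langle DK_\kappa(\eta^h),b\rangle$ converges using $\eta^h\to\eta$ strongly in $W^{2,q}(\Gamma)$. This gives the momentum balance \eqref{momentplvlh} and, the dissipations now converging to their limiting values, the heat identity \eqref{heatfnlhlev}, while \eqref{postempht0lev} is preserved. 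Testing \eqref{momentplvlh} with $(\partial_t\eta,u_1\chi_{\Omega^1}+u_2\chi_{\Omega^2})$ gives \eqref{disstypesthlevfhlvl}, and adding it to \eqref{heatfnlhlev} with $\zeta_i=1$ gives the energy conservation \eqref{energybalhlev}; choosing $\zeta_i=\varphi'(\theta_i)$ with $\varphi$ concave and monotone and using weak lower semicontinuity and Reynolds' transport theorem yields the entropy inequality \eqref{entrpybalhlev}. Finally, let $T_F^\kappa$ be the supremum of times over which this construction runs with $\eta$ valued in a compact subinterval of $(a_{\partial\Omega},b_{\partial\Omega})$: if $T_F^\kappa<\infty$ and neither alternative in \eqref{degenfstkind2} occurs, then the state at $T_F^\kappa$ is an admissible initial datum --- at the $\kappa$-level the degeneracy of $\bar\gamma$ is harmless, since $\kappa\int_\Gamma|\nabla^{k_0+1}\eta|^2$ already controls $W^{k_0+1,2}(\Gamma)$ --- so the scheme restarts on $[T_F^\kappa,T_F^\kappa+\delta]$, contradicting maximality; hence $T_F^\kappa$ is finite only in case \eqref{degenfstkind2}, and by \eqref{tblrnbd}--\eqref{safedis} no collision with $\partial\Omega$ occurs on $(0,T_F^\kappa)$.
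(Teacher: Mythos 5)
Your overall architecture coincides with the paper's: concatenate the time-delayed solutions of Theorem \ref{Thmmaintaulev} over windows of length $h$ with the previous velocities fed in as $\beta,w_i$ (cf. \eqref{setincond}--\eqref{redefflmap}), extract uniform bounds from \eqref{enbalancehlvl} and \eqref{disstypesthlev}, get local-in-space compactness of $\theta_i^h$ by the interior Aubin--Lions argument, use the modified Aubin--Lions Theorem \ref{thmBbL} together with the solenoidal extensions of Proposition \ref{smestdivfrex}/Corollary \ref{corextension} for the velocities, pass to the limit, and recover the energy balance, heat identity and entropy inequality by testing the limit momentum equation with $(\partial_t\eta,u_i)$ and comparing with the $\liminf$ of the approximate energy identity; the continuation argument for $T_F^\kappa$, with the observation that the $\kappa$-regularizer removes the Koiter-degeneracy alternative, is also as intended.

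There is, however, one concrete misstep in your Step~3. You propose to apply Theorem \ref{thmBbL} with $f_n=(u_i^h,\partial_t\eta^h)$ (the raw velocities) and to conclude $u_i^h\to u_i$ strongly in $L^2(L^2)$. The equi-continuity condition (3) cannot be verified for the raw velocities: the only time-regularity information the scheme provides is the lag-$h$ difference quotient appearing in \eqref{mombalhapp}, and integrating it over $(\sigma,t)$ produces
\begin{equation*}
\frac{1}{h}\int_\sigma^t\big(\partial_t\eta^h(s)-\partial_t\eta^h(s-h)\big)\,ds=(\partial_t\eta^h)_h(t)-(\partial_t\eta^h)_h(\sigma),
\end{equation*}
i.e.\ the increment of the $h$-time-average \eqref{tmavg}, not of $\partial_t\eta^h$ itself (and similarly for $u_i^h$, with the flow-map composition). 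This is exactly why the paper formulates Proposition \ref{strngconvavg} for $(u_i^h)_h,(\partial_t\eta^h)_h$: the strong $L^2$ convergence obtained at this stage is for the averages only, and the convective term is then handled by combining this with the weak convergence \eqref{weakconvhlev}$_4$ (cf.\ \eqref{convcutoff}). Relatedly, the strong convergences $Du_i^h\to Du_i$, $\nabla^{k_0}u_i^h\to\nabla^{k_0}u_i$, $\nabla^{k_0+1}\partial_t\eta^h\to\nabla^{k_0+1}\partial_t\eta$ do not come out of the Aubin--Lions step itself; they are obtained only after the limit momentum equation is available, by testing it with $(\partial_t\eta,u_i)$, comparing with the $\liminf$ of \eqref{enbalancehlvl} via weak lower semicontinuity (which, as you correctly note, also forces the inertial dissipation terms to vanish), and then invoking uniform convexity — so your ordering in Steps 3--4 needs to be rearranged accordingly. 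With the averaging inserted and the ordering fixed, your argument matches the paper's proof.
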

		\subsection{Construction of iterative approximation}
		Now we construct an approximative solution of a heat conducting FSI problem using solutions obtained in the previous section. More particularly we use Theorem \ref{Thmmaintaulev} first in the time interval $[0,h].$ Next we use the resulting functions $\theta_{i}(\cdot,h),$ $\eta(\cdot,h)$ $\partial_{t}\eta(\cdot,h)$ and a modified version of $u_{i}\circ\Phi(t)\circ\Phi(h)^{-1}$ as $\theta^{0}_{i},$ $\eta^{0},$ $\beta$ and $w_{i}$ on $(h,2h).$ In view of the energy type equality \eqref{energybalhlev} they do qualify as valid initial data. We repeat this procedure in an iterative manner throughout the interval $I=[0,T].$\\
		Given a solutiion $(\theta^{h}_{i,l},\eta^{h}_{l},u^{h}_{i,l},\Phi^{h}_{i,l})$  in the interval $[0,h],$ (note that later this will correspond to [$(l-1)h,lh],$ but for now we will consider each of these intervals as $[0,h]$ and distinguish them by index $l$), we apply Theorem \ref{Thmmaintaulev} with
		\begin{equation}\label{setincond}
			\begin{array}{ll}
				&\displaystyle \eta^{h}_{l}(h)\,\,\mbox{as}\,\,\eta_{0},\\
				&\displaystyle\Omega=\cup^{2}_{i=1}\Omega^{i,h}_{l}\\
				&\displaystyle \partial\Omega^{2,h}_{l}=\Sigma_{\eta^{h}_{l}}\,\,\mbox{as}\,\, \partial\Omega^{2}_{0}\,\,\mbox{and}\,\, \partial\Omega^{1,h}_{l}=\Sigma_{\eta^{h}_{l}}\cup\partial\Omega\,\,\mbox{as}\,\, \partial\Omega^{1}_{0}\\
				&\displaystyle \theta^{h}_{i,l}(h)\,\,\mbox{as}\,\,\theta^{0}_{i},\\
				&\displaystyle \partial_{t}\eta^{h}_{l}(t)\,\,\mbox{as}\,\,\beta(t)\,\,\mbox{for all}\,\,t\in[0,h],\\
				&\displaystyle u^{h}_{i,l}(\cdot,t)\circ \Phi^{h}_{i,l}(t)\circ\Phi^{h}_{i,l}(h)^{-1}\,\,\mbox{as}\,\, w_{i}(\cdot,t),\,\,\mbox{for all}\,\, t\in [0,h].
			\end{array}
		\end{equation}
		The solution constructed by Theorem \ref{Thmmaintaulev} is denoted by $(\theta^{h}_{i,l+1},\eta^{h}_{l+1},u^{h}_{i,l+1},\Phi^{h}_{i,l+1}).$\\
		With these solutions in hand, we can now construct an $h-$approximation on the entire interval $I.$ Specifically, we set
		\begin{equation}\label{defapproxhlev}
			\begin{array}{lll}
				&\displaystyle \eta^{h}(t)=\eta^{h}_{l}(t-(l-1)h)& \mbox{for}\,\,t\in[(l-1)h,lh]\\
				&\displaystyle \theta^{h}_{i}(t)=\theta^{h}_{i,l}(t-(l-1)h)& \mbox{for}\,\,t\in[(l-1)h,lh]\\
				& \displaystyle u^{h}_{i}(t)=u^{h}_{i,l}(t-(l-1)h)& \mbox{for}\,\,t\in[(l-1)h,lh]\\
				&\displaystyle \Omega=\cup^{2}_{i=1}\Omega^{i,h}(t),\,\,\mbox{where}\,\, \Omega^{i,h}(t)=\Omega^{i,h}_{l}(t-(l-1)h)& \mbox{for}\,\,t\in[(l-1)h,lh]
			\end{array}
		\end{equation}
		and a redefined flow map for $t\in[(l-1)h,lh]$
		\begin{equation}\label{redefflmap}
			\begin{array}{lll}
				&\displaystyle \Phi^{i,h}_{s}(t)=\Phi^{h}_{i,l}(s+t-(l-1)h)\circ\Phi^{h}_{i,l}(t-(l-1)h)^{-1}& \mbox{for}\,\,t+s\in[(l-1)h,lh]\\
				&\displaystyle \Phi^{i,h}_{s}(t)=\Phi^{h}_{i,l+1}(s+t-lh)\circ\Phi^{h}_{i,l}(h)\circ\Phi^{h}_{i,l}(t-(l-1)h)^{-1}& \mbox{for}\,\,t+s\in[lh,(l+1)h]\\
				&\displaystyle \Phi^{i,h}_{s}(t)=\Phi^{h}_{i,l-1}(s+t-(l-2)h)\circ\Phi^{h}_{i,l-1}(h)^{-1}\circ\Phi^{h}_{i,l}(t-(l-1)h)^{-1}& \mbox{for}\,\,t+s\in[(l-2)h,(l-1)h]\\
			\end{array}
		\end{equation}
		and so on. 
		Now we write down the following equations solved by the quadruple $(\theta^{h}_{i},\eta^{h}_{i},u^{h}_{i},\Phi^{h}_{i,s}).$\\
		(i) \textit{Momentum balance:}\\
		\begin{equation}\label{mombalhapp}
			\begin{array}{ll}
				&\displaystyle \int^{t}_{0}\langle DK_{\kappa}(\eta^{h}),b\rangle+\int^{t}_{0}\int_{\Gamma}\frac{\partial_{t}\eta^{h}(\cdot)-\partial_{t}\eta^{h}(\cdot-h)}{h}b+\kappa\int^{t}_{0}\int_{\Gamma}\nabla^{k_{0}+1}\partial_{t}\eta^{h}\nabla^{k_{0}+1}b\\[2.mm]
				&\displaystyle +\sum_{i}\int^{t}_{0}\int_{\Omega^{i,h}}\frac{u^{h}_{i}(\cdot)-u^{h}_{i}(\cdot-h)\circ\Phi^{i,h}_{-h}}{h}\psi+\kappa\sum_{i}\int^{t}_{0}\int_{\Omega^{i,h}}\nabla^{k_{0}}u^{h}_{i}:\nabla^{k_{0}}\psi\\[2.mm]
				&\displaystyle+\sum_{i}\int^{t}_{0}\int_{\Omega^{i,h}}\mu_{i}(\theta_{i}^{h})Du^{h}_{i}:D\psi=0			\end{array}
		\end{equation}
		for a.e. $t\in[0,T],$ for all $b\in L^{\infty}(0,T;W^{k_{0}+1,2}(\Gamma))\cap W^{1,\infty}(0,T;L^{2}(\Gamma)),$ for all $\psi\in C^{\infty}_0([0,T]\times \Omega)$  with $\mbox{div}\,\psi=0$ on $[0,T]\times \Omega$ satisfying the compatibility conditions
		\begin{equation}
			\begin{array}{l}
				\displaystyle \Tr_{\Sigma_{\eta^{h}}}u_{i}^{h}=\partial_{t}\eta^{h}\nu\quad\mbox{and}\quad \Tr_{\Sigma_{\eta^{h}}}\psi=b\nu\quad\mbox{on}\quad \Gamma.
			\end{array}
		\end{equation}\\[2.mm]
		(ii)\textit{Heat evolution:}\\
		\begin{equation}\label{aftrtau0heat4}
			\begin{array}{ll}
				&\displaystyle c_{i}\int^{t}_{0}\frac{d}{dt}\int_{\Omega^{i,h}}\theta^{h}_{i}(x,t)\zeta_{i}(x,t)-c_{i}\int^{t}_{0}\int_{\Omega^{i,h}}\theta^{h}_{i}(x,t)(\partial_{t}+u^{h}_{i}\cdot\nabla)\zeta_{i}+k_{i}\int^{t}_{0}\int_{\Omega^{i,h}}\nabla\theta^{h}_{i}\cdot\nabla\zeta_{i}\\
				&\displaystyle+\lambda\mathcal{I}_{i}\int^{t}_{0}\int_{\partial\Omega^{2,h}}(\theta^{h}_{1}-\theta^{h}_{2})\zeta_{i}=\left\langle \mathcal{D}^{h,i},\zeta_{i}\right\rangle
			\end{array}
		\end{equation}
		for $a.e.$ $t\in [0,T]$ and for $\zeta_{i}\in C^{\infty}_{c}([0,T];C^{\infty}_{c}(\overline{\Omega^{i,h}}))$ and $\zeta_{i}\geqslant0.$ The duality coupling $ \left\langle \mathcal{D}^{h,i},\zeta_{i}\right\rangle$ is given as
		\begin{equation}\label{defHhi1zeta}
			\begin{array}{ll}
				&\displaystyle \left\langle \mathcal{D}^{h,i},\zeta_{i}\right\rangle\\
				&\displaystyle = \int^{t_{1}}_{0}\int_{\Omega^{i}}\mu_{i}(\theta_{i}^{h})|Du_{i}^{h}|^{2}\zeta_{i}+ {\kappa}\int^{t_{1}}_{0}\int_{\Omega^{i}}|\nabla^{k_{0}}u_{i}^{h}|^{2}\zeta_{i}+\mathcal{X}_{i}\frac{1}{2h}\int^{t_{1}}_{0}\int_{\Gamma}\bigg|\partial_{t}{\eta^{h}}-\partial_{t}\eta^{h}(\cdot-h)\bigg|^{2}\zeta_{i}\\[4.mm]
				&\displaystyle+\frac{1}{2h}\int^{t_{1}}_{0}\int_{\Omega^{i}}\bigg|u_{i}-u_{i}^{h}\circ(\Phi^{i,h}_{-h})\bigg|^{2}\zeta_{i}
				+\kappa\mathcal{X}_{i}\int^{t_{1}}_{0}\int_{\Gamma}\bigg|\nabla^{k_{0}+1}\partial_{t}{\eta}\bigg|^{2}\zeta_{i},
			\end{array}
		\end{equation}
		where $\mathcal{I}_{1}=1,$ $\mathcal{I}_{2}=-1,$ $\mathcal{X}_{1}=1$ and $\mathcal{X}_{2}=0.$\\
	(iii)\textit{Positivity of temperature:}\\
	\begin{equation}\label{postemphlev}
		\begin{array}{l}
			\displaystyle \theta_{i}^{h}\geqslant \gamma\,\,\mbox{in}\,\,\Omega^{i,h}\times(0,T)\,\,\mbox{provided}\,\,\theta^{0}_{i}\geqslant\gamma>0\,\,\mbox{on}\,\,\Omega^{i}_{0}.
		\end{array}
	\end{equation}
	(iv)\textit{Energy balance:}\\
	\begin{equation}\label{enbalancehlvl}
		\begin{array}{ll}
			&\displaystyle \sum_{i}\int_{\Omega^{i,h}}\theta^{h}_{i}(x,t)+K_{\kappa}(\eta^{h})+\frac{1}{2h}\int^{t}_{t-h}\int_{\Gamma}|\partial_{t}\eta^{h}|^{2}+\frac{1}{2h}\sum_{i}\int_{t-h}^{t}\int_{\Omega^{i,h}}|u_{i}^{h}|^{2}\\[2.mm]
			&\displaystyle = K_{\kappa}(\eta^{0})+\sum_{i}\int_{\Omega^{i}_{0}}\theta^{0}_{i}+\frac{1}{2}\int_{\Gamma}|\eta^{0}_{1}|^{2}+\frac{1}{2}\int_{\Omega^{i}_{0}}|u^{0}_{i}|^{2}.
		\end{array}
	\end{equation}
	for $a.e.$ $t\in[0,T].$\\
	(v)\textit{An identity involving the dissipation terms}:\\
	\begin{equation}\label{disstypesthlev}
		\begin{array}{ll}
			&\displaystyle K_{\kappa}(\eta^{h}(t))+\frac{1}{2h}\int^{t}_{t-h}\int_{\Gamma}|\partial_{t}\eta^{h}|^{2}+\frac{1}{2h}\sum_{i}\int_{t-h}^{t}\int_{\Omega^{i,h}(t)}|u_{i}^{h}|^{2}+\sum_{i} \left\langle \mathcal{D}^{h,i},1\right\rangle\\\
			&\displaystyle = K_{\kappa}(\eta^{0})+\frac{1}{2}\int_{\Gamma}|\eta^{0}_{1}|^{2}+\frac{1}{2}\int_{\Omega^{i}_{0}}|u^{0}_{i}|^{2}
		\end{array}
	\end{equation}
	for $a.e.$ $t\in[0,T],$ where $ \left\langle \mathcal{D}^{h,i},1\right\rangle$ can be obtained by using \eqref{defHhi1zeta}.\\
	(vi)\textit{Entropy inequality:}\\
	\begin{equation}\label{entropyineqhlvl}
		\begin{array}{l}
			\displaystyle\sum_{i=1}^{2}\left(c_{i}\frac{d}{dt}\int_{\Omega^{i,h}}\varphi(\theta^
			{h}_{i})+k_{i}\int_{\Omega^{i,h}}|\nabla\theta_{i}^{h}|^{2}\varphi''(\theta_{i}^{h})\right)+\lambda\int_{\partial\Omega^{2,h}}\left(\theta_{1}^{h}-\theta_{2}^{h}\right)\left(\varphi'(\theta_{1}^{h})-\varphi'(\theta_{2}^{h}))\right)\geqslant 0,
		\end{array}
	\end{equation}
	for any $\varphi(\theta)$ such that $\varphi(\cdot)$ is concave and monotonicaly increasing for $\theta>0.$\\[2.mm]
	{\textit{A further observation concerning the uniform estimate of the difference quotients in time variable:}}\\
	In order to give a sense to  $(\partial_{tt}\eta,\partial_{t}u_{i})$ (this will play a crucial role later in Section \ref{testsolhlev} while using $(u_{i},\partial_{t}\eta)$ as a test function in the mechanical equation) one observes the following as a consequence of a density argument and \eqref{mombalhapp}:
	\begin{equation}\label{diffquotest}
		\begin{array}{ll}
			&\displaystyle \bigg|\bigg\langle \bigg(\frac{\partial_{t}\eta^{h}(\cdot)-\partial_{t}\eta^{h}(\cdot-h)}{h}, \frac{u^{h}_{i}(\cdot)-u^{h}_{i}(\cdot-h)\circ \Phi^{i,h}_{-h}}{h} \bigg), (b,\psi_{i})\bigg\rangle_{\mathcal{Y}^{*},\mathcal{Y}}\bigg|
			\displaystyle \leqslant C\bigg\|(b,\psi_{i})\bigg\|_{\mathcal{Y}},
		\end{array}
	\end{equation}
	where $\mathcal{Y}$ denotes the space:
\begin{equation}\label{DefmY}
	\begin{array}{l}
\displaystyle\mathcal{Y}=\{(b,\psi_{i})\in L^{2}(0,T;W^{k_{0},2}(\Gamma))\times L^{2}(0,T; W^{k_{0}+1,2}(\Omega^{i,h}))\suchthat \,\,\mbox{div}\,u^{h}_{i}=0,\,\,\mbox{tr}_{\Sigma_{\eta^{h}}}\psi=b\nu\,\,\mbox{on}\,\,\Gamma\},
\end{array}
\end{equation}
	and the constant $C$ is independent of $h.$\\[2.mm]
	Based on the estimates \eqref{enbalancehlvl} and \eqref{disstypesthlev} one at once has the following convergences:
	\begin{equation}\label{weakconvhlev}
		\begin{array}{lll}
			&\displaystyle {\eta}^{h}\rightharpoonup^* \eta\,\,&\displaystyle\mbox{in}\,\, L^{\infty}(0,T;W^{k_{0}+1,2}(\Gamma))\,\,k_{0}>3,\\[2.mm]
			&\displaystyle \eta^{h}\rightarrow \eta\,\,&\displaystyle\mbox{in}\,\,C^{\alpha}(\Gamma\times[0,T])\,\,\mbox{for some}\,\,\alpha\in(0,1),\\[2.mm]
			&\displaystyle \partial_{t}{\eta}^{h}\rightharpoonup \partial_{t}\eta\,\,&\displaystyle\mbox{in}\,\, L^{2}(0,T;W^{k_{0}+1,2}(\Gamma)),\\[2.mm]
			&\displaystyle u_{i}^{h}\rightharpoonup u_{i}\,\,&\displaystyle \mbox{in}\,\, L^{2}(0,T;W^{k_{0},2}(\Omega^{i,h})),\\[2.mm]
			&\displaystyle \theta^{h}_{i}\rightharpoonup^* \theta_{i}\,\,&\displaystyle \mbox{in}\,\, L^{\infty}(0,T;L^{1}(\Omega^{i,h}))
		\end{array}
	\end{equation}
	Using \cite[Lemma 3.6]{BreitKamSch} one  can show the follwoing (for the details of argument we refer to \cite[Section 4.1]{BreitKamSch}):
	\begin{equation}\label{bnddet}
		\begin{array}{l}
			\displaystyle \underline{c}\leqslant \det\nabla\Phi^{i,h}_{s}\leqslant\overline{c},
		\end{array}
	\end{equation}
	where $\underline{c}$ and $\overline{c}$ are inependent of $h$ and $s$ (however diverging as $T\rightarrow\infty$ or $\kappa\rightarrow 0$) and
	\begin{equation}\label{convPhih}
		\begin{array}{l}
			\displaystyle \Phi^{i,h}_{s}\rightarrow \Phi^{i}\,\,\displaystyle \mbox{in}\,\, C^{0}([0,T];C^{1,\alpha}(\Omega^{i}_{0}))\,\,\mbox{for some}\,\,\alpha\in(0,1).
		\end{array}
	\end{equation}
	\subsubsection{Further estimates of the temperature}\label{frthrtempest}
	Using test functions of the form $\zeta_{i}(x,t)=\varphi'(\theta^{h}_{i})$ with $\varphi(\theta^{h}_{i})=(\theta^{h}_{i})^{\lambda+1},$ $\lambda\in(-1,0)$ (which can be justified by a density argument) in \eqref{aftrtau0heat4} and summing over $i\in\{1,2\}$ one arrives at
	\begin{equation}\label{aprioriestthetahi}
		\begin{array}{ll}
			&\displaystyle\sum_{i=1}^{2}\int_{I}\int_{\Omega^{i,h}}\mu_{i}(\theta_{i}^{h})|D(u_{i}^{h})|^{2}(\theta^{h}_{i})^{\lambda}-\sum_{i=1}^{2}k_{i}\int_{I}\int_{\Omega^{i,h}}\lambda|\nabla\theta_{i}^{h}|^{2}(\theta^{h}_{i})^{\lambda-1}\\
			&\displaystyle \leqslant\frac{1}{(\lambda+1)}\left(\sum_{i=1}^{2}c_{i}\|(\theta_{i}^{h}(T))^{\lambda+1}\|_{L^{1}(\Omega^{i}_{\eta(T)})}-\sum_{i=1}^{2}c_{i}\|(\theta^{0}_{i})^{\lambda+1}\|_{L^{1}(\Omega^{i}_{0})}\right),
		\end{array}
	\end{equation}
	where $I=(0,T),$ since 	$$\left(\varphi'(\theta_{1}^{h})-\varphi'(\theta_{2}^{h})\right)\cdot(\theta_{1}^{h}-\theta_{2}^{h})=\varphi''(\xi_{\theta_{1}^{h},\theta_{2}^{h}})(\theta_{1}^{h}-\theta_{2}^{h})^{2}\leqslant 0 \,\,\mbox{on}\,\,\Sigma_{\eta},$$
	for some $\xi_{\theta_{1}^{h},\theta_{2}^{h}}$ lying on the line joining $\theta^{h}_{1}$ and $\theta^{h}_{2}.$\\
	From \eqref{aprioriestthetahi} only clearly has an unifrom bound on $\|(\nabla\theta_{i}^{h})^{\frac{\lambda+1}{2}}\|_{L^{2}(\Omega^{i,h})},$ which along with \eqref{enbalancehlvl} and standard interpolation arguments furnish:
	\begin{equation}\label{Lpthetaih}
		\begin{array}{ll}
			&\displaystyle\|\theta_{i}^{h}\|_{L^{p}(0,T;L^{p}(\Omega^{i,h}))}\leqslant C(\eta^{0},\theta^{0}_{i},\eta^{0}_{1},u^{0}_{i})\,\,\mbox{for}\,\,p\in[1,\frac{5}{3}),\\
			&\displaystyle 	\displaystyle\int_{(0,T)}\|\theta_{i}^{h}\|^{s}_{W^{1,s}(\Omega^{i,h})}\leqslant C(\eta^{0},\theta^{0}_{i},\eta^{0}_{1},u^{0}_{i})\,\,\mbox{for all}\,\, s\in[1,\frac{5}{4}).
		\end{array}
	\end{equation}
	\subsection{Compactness of temperature}\label{compacttemhlev}
	We consider a parabolic cylinder of the form $B\times J$ such that $2B\times 2J\Subset \Omega^{i,h}\times (0,T),$ for small enough $h,$ which is possible because of the uniform convergence \eqref{weakconvhlev}$_{2}$. Now consider $\zeta_{i}\in C^{\infty}_{c}(B\times J)$ in \eqref{aftrtau0heat4}.\\ 
	Consequently one has the following estimate:
	\begin{equation}\label{estimatederttheta}
		\begin{array}{ll}
			&\displaystyle  \bigg|\iint_{B\times J}\partial_{t}\theta^{h}_{i}\zeta_{i}\bigg|\\
			&\displaystyle \leqslant C\bigg(\|\theta^{h}_{i}\|_{L^{\infty}(L^{1})}\|u^{h}_{i}\|_{ L^{2}(W^{k_{0},2})}\|\nabla\zeta_{i}\|_{L^{\infty}(L^{\infty})}+\|\theta^{h}_{i}\|_{L^{\frac{5}{4}-}(W^{1,\frac{5}{4}-})}\|\nabla\zeta_{i}\|_{L^{\infty}(L^{\infty})}\\
			&\displaystyle\qquad\qquad + \left|\left\langle \mathcal{D}^{h,i},1\right\rangle\right| \|\zeta_{i}\|_{L^{\infty}(L^{\infty})}\bigg)\\
			&\displaystyle \leqslant C(\eta^{0},\theta^{0}_{i},\eta^{0}_{1},u^{0}_{i})\|\zeta_{i}\|_{L^{\infty}(J;W^{2,p}(B))},
		\end{array}
	\end{equation}
	for $p>3,$ where we have used \eqref{Lpthetaih}$_{2}$, $L^{\infty}(L^{1})$ norm bound of $\theta^{h}_{i}$ from \eqref{enbalancehlvl} and the bound on $\mathcal{D}^{h,i}(t)$ from \eqref{disstypesthlev}. Now one can use \eqref{Lpthetaih}$_{1}$ and imitate the exact line of arguments used in showing \eqref{claimstrngconvuptobndry} to prove that
	\begin{equation}\label{claimstrngconvuptobndrythetah}
		\begin{array}{l}
			\displaystyle\theta_{i}^{h}\rightarrow \theta_{i}\,\,\mbox{in}\,\,L^{s}(\Omega^{i,h}\times(0,T)),\,\,\mbox{for any}\,s\in [1,\frac{5}{3}).
		\end{array}
	\end{equation}
	\subsection{Compactness of average velocities}
	This section is devoted to prove the strong convergence of the fluid velocities as well as the velocity of the structure. We follow a strategy based on an application of a modified Aubin-Lions Theorem  \ref{thmBbL}.  Such a strategy is inspired from \cite{MuhaSch, malSpSch}.  Since the arguments we use differ considerably from the ones used in  \cite{MuhaSch, malSpSch}, we would prefer to present a detailed proof.\\
	For further use let us now introduce the notation $(\cdot)_{h}$ to denote time averaged quantities, to be more specific we denote:
	\begin{equation}\label{tmavg}
		\begin{array}{ll}
			&\displaystyle (u^{h}_{i})_{h}(t)=\frac{1}{h}\int^{t}_{t-h}u^{h}_{i}(\cdot,s)ds,\quad (\partial_{t}\eta^{h})_{h}(t)=\frac{1}{h}\int^{t}_{t-h}\partial_{t}\eta^{h}(\cdot,s)ds.
		\end{array}
	\end{equation} 
	Note that $(u^{h}_{i})_{h}$ and $(\partial_{t}\eta^{h})_{h}$ have similar convergence properties as that of $u^{h}_{i}$ and $\partial_{t}\eta^{h}$ respectively which are listed in \eqref{weakconvhlev}.
	In order to state the next result we introduce the notation $(\partial_{t}\eta^{h})_{\delta}$ to denote a smooth approximation (which is constructed by a standard mollification) of the structural velocity $\partial_{t}\eta^{h}.$ At this moment one further recalls the smooth solinoidal extension operator $E_{i,\eta,\delta}$ introduced in Corollary \ref{corextension}. 
	\begin{prop}\label{strngconvavg}
		Upto a subsequence one has the following
		\begin{equation}\label{convdual}
			\begin{array}{ll}
				&\displaystyle \int^{T}_{0}\|(\partial_{t}\eta^{h})_{h}\|^{2}_{L^{2}(\Gamma)}+\int^{T}_{0}\|(u^{h}_{i})_{h}\|^{2}_{L^{2}(\Omega^{i,h})}\\
				&\displaystyle =\int^{T}_{0}\langle (\partial_{t}\eta^{h})_{h},(\partial_{t}\eta^{h})_{h} \rangle_{\Gamma}+\int^{T}_{0}\langle (u^{h}_{i})_{h},(u^{h}_{i})_{h} \rangle_{\Omega^{i,h}} \longrightarrow \int^{T}_{0}\|\partial_{t}\eta\|^{2}_{L^{2}(\Gamma)}+\int^{T}_{0}\|u_{i}\|^{2}_{L^{2}(\Omega^{i})}.
			\end{array}
		\end{equation}
		Further, the same subsequence may be assumed to satisfy $(\partial_{t}\eta^{h})_{h}\longrightarrow \partial_{t}\eta$ strongly in $L^{2}(\Gamma\times(0,T))$ and $(u^{h}_{i})_{h}\longrightarrow u_{i}$ in $L^{2}(\Omega^{i,h}\times(0,T)).$\\
	\end{prop}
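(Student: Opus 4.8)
The plan is to apply the modified Aubin--Lions Theorem~\ref{thmBbL} with the choices $X = W^{s,2}(\Gamma) \times W^{1,2}(\Omega^{i,h})$ (for a suitable $0 < s < \frac12$, say $s = \sigma^*$) paired with its dual, and the ``coupling'' functionals $f_h = ((\partial_t\eta^h)_h, (u_i^h)_h)$ and $g_h = ((\partial_t\eta^h)_h, (u_i^h)_h)$ as well — the point being that the duality pairing $\langle f_h, g_h\rangle$ is exactly the $L^2$ norm squared appearing in \eqref{convdual}. Here $Z$ is taken so that $X^* \hookrightarrow\hookrightarrow Z^*$; since $W^{s,2}(\Gamma) \hookrightarrow L^2(\Gamma)$ compactly and, away from the moving interface, $W^{1,2} \hookrightarrow L^2$ compactly on the fixed sub-domains exhausting $\Omega^{i,h}$, the compactness hypothesis (4) holds after the usual treatment of the time-variable geometry (extending by zero outside $\Omega^{i,h}(t)$, using the uniform convergence \eqref{weakconvhlev}$_2$ of $\eta^h$ so that any compactly contained cylinder $2B\times 2J \Subset \Omega^{i,h}\times(0,T)$ is admissible for small $h$). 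First I would record the weak convergences: by \eqref{weakconvhlev} and the remark following \eqref{tmavg}, $(\partial_t\eta^h)_h \rightharpoonup \partial_t\eta$ in $L^2(0,T;W^{k_0+1,2}(\Gamma))$ and $(u_i^h)_h \rightharpoonup u_i$ in $L^2(0,T;W^{k_0,2}(\Omega^{i,h}))$, which gives hypothesis (1).

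Next I would verify the approximability condition (2): for the $\Gamma$-component take $f_{h,\delta} = (\partial_t\eta^h)_\delta$, the standard mollification in time, and for the fluid component use the solenoidal extension $E_{i,\eta^h,\delta}$ from Corollary~\ref{corextension} applied to a mollified boundary datum, so that the interfacial coupling $\mathrm{tr}_{\Sigma_{\eta^h}} = $ boundary value is preserved at the discrete level. The estimate $\|f_h - f_{h,\delta}\|_{L^2(X)} \le \epsilon$ for $\delta \le \delta_\epsilon$ follows from the $L^2(0,T;W^{k_0+1,2})$ resp.\ $L^2(0,T;W^{k_0,2})$ bounds combined with the mollification error estimate \eqref{estEetadel}, and the bound $\|f_{h,\delta}\|_{L^1(0,T;Z)} \le C(\delta)$ is immediate once $Z$ is a space into which the mollified/extended functions embed with a $\delta$-dependent constant (e.g.\ $Z = W^{-1,2}$-type or $L^2$ itself paired appropriately). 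The weak-$*$ limit $f_{h,\delta} \rightharpoonup^* f_\delta$ in $L^s(X)$ along a subsequence is then standard.

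The main work — and the main obstacle — is hypothesis (3), the equi-continuity of $g_h$: one must show
\[
\sup_{h \le h_\delta}\left|\frac{1}{\tau}\int_0^\tau \langle g_h(t) - g_h(t+\sigma), f_{h,\delta}(t)\rangle\, d\sigma\right| \le C(\delta)\,\tau^\alpha(A_h(t)+1),
\]
and this is where the time-delayed structure of the $h$-level equations must be exploited. The idea is to use the momentum balance \eqref{mombalhapp} tested with the extension $E_{i,\eta^h,\delta}$ of $f_{h,\delta}$ (which is an admissible, divergence-free, interface-compatible test function): the increment $g_h(t)-g_h(t+\sigma)$ of the time-averaged velocities is controlled by the difference quotients $\frac{\partial_t\eta^h(\cdot)-\partial_t\eta^h(\cdot-h)}{h}$ and $\frac{u_i^h(\cdot)-u_i^h(\cdot-h)\circ\Phi^{i,h}_{-h}}{h}$ appearing in \eqref{mombalhapp}, which when integrated against a fixed smooth $f_{h,\delta}$ over a short interval $[t,t+\tau]$ produce a telescoping/averaging gain of a power of $\tau$. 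The terms $\langle DK_\kappa(\eta^h), \cdot\rangle$, the artificial dissipations $\kappa\int\nabla^{k_0}u_i^h:\nabla^{k_0}(\cdot)$, $\kappa\int\nabla^{k_0+1}\partial_t\eta^h:\nabla^{k_0+1}(\cdot)$, and the viscous term $\int\mu_i(\theta_i^h)Du_i^h:D(\cdot)$ all carry uniform-in-$h$ bounds from \eqref{disstypesthlev}/\eqref{Lpthetaih}, and against the $\delta$-smoothed test functions they contribute the $C(\delta)(A_h(t)+1)$ factor with $A_h$ built from these (time-integrable, uniformly bounded) dissipation densities. The convective term $\int u_i^h\cdot\nabla u_i^h \cdot(\cdot)$ is handled using the $L^\infty$-in-space bound on $Du_i^h$ that is available at this regularized level. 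One subtlety specific to the moving-domain, time-delayed setting is that $\Phi^{i,h}_{-h}$ shifts the fluid extension; using \eqref{convPhih} and \eqref{bnddet} one absorbs the change-of-variables error into lower-order terms. Putting the bounds together with the choice $\alpha$ determined by the time-mollification scale and Hölder in time gives (3). With all four hypotheses verified, Theorem~\ref{thmBbL} yields $\int_0^T\langle f_h,g_h\rangle \to \int_0^T\langle f,g\rangle$, i.e.\ the $L^2$-norm convergence \eqref{convdual}; combined with the weak convergence $(\partial_t\eta^h)_h \rightharpoonup \partial_t\eta$, $(u_i^h)_h \rightharpoonup u_i$ and the uniform convexity of the $L^2$ norm, this upgrades to strong convergence in $L^2(\Gamma\times(0,T))$ and $L^2(\Omega^{i,h}\times(0,T))$ respectively, which is the second assertion of the proposition.
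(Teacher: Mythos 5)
There is a genuine gap at the heart of your plan: you take $f_h=g_h=((\partial_t\eta^h)_h,(u_i^h)_h)$ and then try to verify hypotheses (2) and (3) of Theorem \ref{thmBbL} simultaneously, but these two requirements are mutually incompatible for that choice. Condition (3) is checked by inserting $f_{h,\delta}$ (suitably lifted) as a test function in the momentum balance \eqref{mombalhapp}, so the fluid component of $f_{h,\delta}$ must be divergence free \emph{and} satisfy the coupling $\Tr_{\Sigma_{\eta^h}}\psi=b\nu$ with its shell component. If, as you propose, the fluid component is the solenoidal extension $E_{i,\eta^h,\delta}$ of the mollified shell velocity, then the coupling holds, but this object is in no sense close to $(u_i^h)_h$ in the bulk: the extension agrees with the averaged fluid velocity only in its interface trace, so $\|f_h-f_{h,\delta}\|_{L^s(X)}$ is an order-one quantity and condition (2) fails. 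If instead you mollify $(u_i^h)_h$ itself to satisfy (2), the mollification is neither solenoidal near the moving boundary nor trace-compatible with the mollified shell velocity, so it is not an admissible test function and condition (3) cannot be verified through the momentum equation. This is precisely why the paper first splits the quantity into $\mathbb{I}_1+\mathbb{I}_2$ (cf.\ \eqref{I1pl2}): in $\mathbb{I}_1$ the ``$f$'' is the pair $\bigl((\partial_t\eta^h)_h-\mathcal{K}_{i,\eta^h}((\partial_t\eta^h)_h),\,\mathcal{F}^{\div}_{i,\eta^h}(\cdots)\bigr)$, whose $\delta$-approximation is again an admissible coupled test function, while in $\mathbb{I}_2$ the ``$f$'' is the trace-free remainder $(u_i^h)_h-\mathcal{F}^{\div}_{i,\eta^h}(\cdots)$, which is approximated by compactly supported solenoidal fields inside the moving domain (via \cite[Lemma A.13]{RuzLen} plus mollification, using the uniform convergence of $\eta^h$), so that $(0,f_{h,\delta})$ is admissible in \eqref{mombalhapp}. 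Without this decomposition the scheme does not close.

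Two further points you omit and which the paper needs: the extension requires the corrector $\mathcal{K}_{i,\eta^h}$, and to identify the limit of $\mathbb{I}_1+\mathbb{I}_2$ with $\int_0^T(\|\partial_t\eta\|_{L^2(\Gamma)}^2+\|u_i\|_{L^2(\Omega^i)}^2)$ one must prove $\mathcal{K}_{i,\eta}(\partial_t\eta)=0$ in the limit (the divergence-theorem argument \eqref{gaussapp}--\eqref{Keta0}); your proposal never addresses this. Also, your functional frame $X=W^{s,2}(\Gamma)\times W^{1,2}(\Omega^{i,h})$ with $h$-dependent domains is not suitable for a uniform application of Theorem \ref{thmBbL}; the paper works on the fixed neighborhood $\Omega^i_{m,M}$ with extension by zero and low-order fractional spaces $W^{\pm s,2}$, $0<s<\tfrac14$, exactly so that the characteristic-function truncation and the compact embedding (hypothesis (4)) are uniform in $h$.
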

	\begin{proof}
		Let us first observe that
		\begin{equation}\label{I1pl2}
			\begin{array}{ll}
				&\displaystyle \int^{T}_{0}\langle (\partial_{t}\eta^{h})_{h},(\partial_{t}\eta^{h})_{h} \rangle_{\Gamma}+\int^{T}_{0}\langle (u^{h}_{i})_{h},(u^{h}_{i})_{h} \rangle_{\Omega^{i,h}} \\
				&\displaystyle= \int^{T}_{0}\bigg(\langle (\partial_{t}\eta^{h})_{h},(\partial_{t}\eta^{h})_{h}\rangle_{\Gamma}+\bigg\langle (u^{h}_{i})_{h},\mathcal{F}^{div}_{i,\eta^{h}}\bigg((\partial_{t}\eta^{h})_{h}-\mathcal{K}_{i,\eta^{h}}((\partial_{t}\eta^{h})_{h})\bigg)\bigg\rangle_{\Omega^{i,h}}\bigg)\\
				&\displaystyle \qquad+\int^{T}_{0}\bigg\langle (u^{h}_{i})_{h},(u^{h}_{i})_{h}-\mathcal{F}^{div}_{i,\eta^{h}}\bigg((\partial_{t}\eta^{h})_{h}-\mathcal{K}_{i,\eta^{h}}((\partial_{t}\eta^{h})_{h})\bigg)\bigg\rangle_{\Omega^{i,h}}\\
				&\displaystyle =\mathbb{I}_{1}+\mathbb{I}_{2}
			\end{array}
		\end{equation}
		\textit{Convergence of $\mathbb{I}_{1}:$}\\
		To this end we will use Theorem \ref{thmBbL}. For $\mathbb{I}_{1}$ we choose
		\begin{equation}\label{fgh}
			\begin{array}{ll}
				&\displaystyle f_{h}=\left((\partial_{t}\eta^{h})_{h}-\mathcal{K}_{i,\eta^{h}}((\partial_{t}\eta^{h})_{h}),\mathcal{F}^{div}_{i,\eta^{h}}\bigg((\partial_{t}\eta^{h})_{h}-\mathcal{K}_{i,\eta^{h}}((\partial_{t}\eta^{h})_{h})\bigg)\right),\\ &\displaystyle g_{h}=\left((\partial_{t}\eta^{h})_{h},(u^{h}_{i})_{h}\right).
			\end{array}
		\end{equation}
		Note that $\mathcal{F}^{div}_{i,\eta^{h}}\bigg((\partial_{t}\eta^{h})_{h}-\mathcal{K}_{i,\eta^{h}}((\partial_{t}\eta^{h})_{h})\bigg)$ is a solenoidal extension of $(\partial_{t}\eta^{h})_{h}-\mathcal{K}_{i,\eta^{h}}((\partial_{t}\eta^{h})_{h}).$\\
		Now to proceed further one defines
		$$X=L^{2}(\Gamma)\times W^{-s,2}(\Omega^{i}_{m,M}),\,\,
		X'=L^{2}(\Gamma)\times W^{s,2}(\Omega^{i}_{m,M})$$
		and
		$$Z=H^{s_{0}}(\Gamma)\times H^{s_{0}}(\Omega^{i}_{m,M}),$$
		where we choose $0<s<s_{0}<\frac{1}{4},$ while for integrability in time we restrict ourselves to $L^{2}.$ We further define
		$$f_{h,\delta}=\bigg(((\partial_{t}\eta^{h})_{h})_{\delta}-\mathcal{K}_{i,\eta^{h}}(((\partial_{t}\eta^{h})_{h})_{\delta}),E_{i,\eta^{h},\delta}((\partial_{t}\eta^{h})_{h})\bigg),$$
		where one recalls that 
		$$E_{i,\eta^{h},\delta}((\partial_{t}\eta^{h})_{h})=\mathcal{F}^{div}_{i,\eta^{h}}\bigg(((\partial_{t}\eta^{h})_{h})_{\delta}-\mathcal{K}_{i,\eta^{h}}((\partial_{t}\eta^{h})_{h})_{\delta}\bigg)$$
		The first assumption of Theorem \ref{thmBbL} follows by weak compactness in Hilbert spaces. In particular one uses the strong convergence \eqref{weakconvhlev}$_{2}$ of $\eta^{h}$ and the structure \eqref{consf2eta}-\eqref{deff2eta} of $\mathcal{F}^{div}_{2,\eta}$ (similarly of $\mathcal{F}^{div}_{1,\eta}$) to furnish the required weak convergences.\\
		Next the second assumption of Theorem \ref{thmBbL} follows by using the linearity of the maps $\mathcal{K}_{i,\eta},$ $\mathcal{F}^{div}_{i,\eta}$ the Corollary \ref{corextension} and a standard estimate for mollification:
		\begin{equation}\label{difffhdel}
			\begin{array}{l}
				\displaystyle \|f_{h}-f_{h,\delta}\|_{L^{2}}\leqslant c\|(\partial_{t}\eta^{h})_{h}*\psi_{\delta}-(\partial_{t}\eta^{h})_{h}\|_{L^{2}(\Gamma)}\leqslant c\delta^{s}\|(\partial_{t}\eta^{h})_{h}\|_{W^{s,2}(\Gamma)}\leqslant  c\delta^{s}\|\partial_{t}\eta^{h}\|_{W^{s,2}(\Gamma)},
			\end{array}
		\end{equation}
		where $\psi_{\delta}$ is the mollifier function.\\
		Finally we check the third assumption of Theorem \ref{thmBbL}. To that end we compute the following for $0<t<\sigma<t+\sigma_{0}$
		\begin{equation}\label{diffgnts}
			\begin{array}{ll}
				&\displaystyle \bigg|\langle g_{h}(t)-g_{h}(\sigma),f_{h,\delta}(t)\rangle\bigg|\\
				&\displaystyle\leqslant \bigg|\langle\int^{\sigma}_{t}\partial_{t}g_{h}(s)ds, f_{h,\delta}(t)\rangle\bigg|\\
				&\displaystyle \leqslant\bigg|\int^{\sigma}_{t}\langle \partial_{t}(\partial_{t}\eta^{h})_{h}(s),(((\partial_{t}\eta^{h})_{h})_{\delta}-\mathcal{K}_{i,\eta^{h}}(((\partial_{t}\eta^{h})_{h})_{\delta}))(t)\rangle_{\Gamma}ds\\
				&\displaystyle\qquad+\int^{\sigma}_{t}\langle \partial_{t}(u^{h}_{i})_{h}(s),E_{i,\eta^{h}(s),\delta}((\partial_{t}\eta^{h})_{h}(t))\rangle_{\Omega^{i}_{m,M}} ds\bigg|\\
				&\displaystyle \qquad+\bigg|\int^{\sigma}_{t}\langle \partial_{t}(u^{h}_{i})_{h}(s),E_{i,\eta^{h}(s),\delta}((\partial_{t}\eta^{h})_{h}(t))-E_{i,\eta^{h}(t),\delta}((\partial_{t}\eta^{h})_{h}(t)) \rangle_{\Omega^{i}_{m,M}}\bigg|\\
				&\displaystyle =\mathcal{A}_{1}+\mathcal{A}_{2}.
			\end{array}
		\end{equation}
		In the above calculation we have followed our usual convention and extended $\partial_{t}(u^{h}_{i})_{h}$ by zero outside $\Omega^{i,h}.$ In form of the next lemma we derive an estimate for $\mathcal{A}_{1}.$ which is essential to verify the third assumption of Theorem \ref{thmBbL}. 
		\begin{lem}
			There exists a constant $C(\delta)>0$ depending only on the final time $T$ and the initial data such that
			\begin{equation}\label{estA1}
				\begin{array}{ll}
					&\displaystyle	\mathcal{A}_{1}=\bigg|\int^{t}_{\sigma}\bigg(\langle \partial_{t}(\partial_{t}\eta^{h})_{h}(s), (((\partial_{t}\eta^{h})_{h})_{\delta}-\mathcal{K}_{i,\eta^{h}}(((\partial_{t}\eta^{h})_{h})_{\delta}))(t)\rangle_{\Gamma}\\
					&\displaystyle\qquad+\langle \partial_{t}(u^{h}_{i})_{h}(s),E_{i,\eta^{h}(s),\delta}((\partial_{t}\eta^{h})_{h}(t))\rangle_{\Omega^{i}_{m,M}} ds\bigg)\bigg|\\
					&\displaystyle \qquad\leqslant C(\delta)|t-\sigma|^{\frac{1}{4}}
				\end{array}
			\end{equation}
			for $\sigma_{0}$ sufficiently small.
		\end{lem}
		\begin{proof}
			Let us first observe that
			\begin{equation}\label{caldual}
				\begin{array}{ll}
					&\displaystyle \mathcal{I}=\int^{t}_{\sigma}\bigg(\langle \partial_{t}(\partial_{t}\eta^{h})_{h}(s),(((\partial_{t}\eta^{h})_{h})_{\delta}-\mathcal{K}_{i,\eta^{h}}(((\partial_{t}\eta^{h})_{h})_{\delta}))(t)\rangle_{\Gamma}\\[3.mm]
					&\displaystyle\qquad\qquad+\langle \partial_{t}(u^{h}_{i})_{h}(s),E_{i,\eta^{h},\delta}((\partial_{t}\eta^{h})_{h}(t))\rangle_{\Omega^{i}_{m,M}}\\
					&\displaystyle \qquad -\int_{\Omega^{i,h}}\frac{1}{h}\bigg(u^{h}_{i}(s-h)-u^{h}_{i}(s-h)\circ\Phi^{i,h}_{-h}(s)\bigg)E_{i,\eta^{h}(s),\delta}((\partial_{t}\eta^{h})_{h}(t))
					\bigg)ds\\[3.mm]
					&\displaystyle =\int^{t}_{\sigma}\bigg(\int_{\Gamma}\bigg(\frac{\partial_{t}\eta^{h}(s)-\partial_{t}\eta^{h}(s-h)}{h}\bigg)(((\partial_{t}\eta^{h})_{h})_{\delta}-\mathcal{K}_{i,\eta^{h}}(((\partial_{t}\eta^{h})_{h})_{\delta}))(t)\\
					&\displaystyle\qquad+\int_{\Omega^{i,h}}\bigg(\frac{u^{h}_{i}(s)-u^{h}_{i}(s-h)\circ\Phi^{i,h}_{-h}(t)}{h}E_{i,\eta^{h}(s),\delta}((\partial_{t}\eta^{h})_{h}(t))\bigg)\bigg)ds\\[3.mm]
					
				\end{array}
			\end{equation}
			Now by using the momentum balance \eqref{mombalhapp}, more specifically using $((((\partial_{t}\eta^{h})_{h})_{\delta}-\mathcal{K}_{i,\eta^{h}}(((\partial_{t}\eta^{h})_{h})_{\delta}))(t)\\
			,E_{i,\eta^{h}(s),\delta}((\partial_{t}\eta^{h})_{h}(t)))$ as a test function in \eqref{mombalhapp}, one furnishes the following from \eqref{caldual}
			\begin{equation}\label{frommomcal}
				\begin{array}{ll}
					\displaystyle \mathcal{I}
					\displaystyle =&\displaystyle-\int^{t}_{\sigma}\bigg(\langle DK_{\kappa}(\eta^{h}(s)),(((\partial_{t}\eta^{h})_{h})_{\delta}-\mathcal{K}_{i,\eta^{h}}(((\partial_{t}\eta^{h})_{h})_{\delta}))(t) \rangle_{\Gamma}\\
					&\displaystyle+\kappa\int_{\Gamma}\nabla^{k_{0}+1}\partial_{t}\eta^{h}(s)\nabla^{k_{0}+1}(((\partial_{t}\eta^{h})_{h})_{\delta}-\mathcal{K}_{i,\eta^{h}}((\partial_{t}\eta^{h})_{h})_{\delta})(t)\\
					&\displaystyle +\kappa\sum_{i}\int_{\Omega^{i,h}}\nabla^{k_{0}}u^{h}_{i}(\cdot,s)\nabla^{k_{0}}(E_{i,\eta^{h}(s),\delta}(\partial_{t}\eta^{h})_{h}(t))\\
					&\displaystyle +\sum_{i}\int_{\Omega^{i,h}}\mu_{i}(\theta_{i}^{h})Du^{h}_{i}(\cdot,s):D(E_{i,\eta^{h}(s),\delta}(\partial_{t}\eta^{h})_{h}(t))\bigg)ds.
				\end{array}
			\end{equation}
			Now using bounds of $\eta^{h},$ $u^{h}_{i}$ and $\theta^{h}_{i}$ obtained from \eqref{enbalancehlvl} - \eqref{disstypesthlev} and further applying \eqref{higherderest1*} in order to estimate $\nabla^{k_{0}}(E_{i,\eta^{h}(s),\delta}(\partial_{t}\eta^{h})_{h}(t))$ (note that we have considered $\xi(s)$ as $((\partial_{t}\eta^{h})_{h})_{\delta}(t)$ in \eqref{higherderest1*}, which is a constant function in $s$ variable), one infers
			\begin{equation}\label{estmcI}
				\begin{array}{l}
					\displaystyle |\mathcal{I}|\leqslant C(\delta)|t-\sigma|^{\frac{1}{2}}\|(\partial_{t}\eta^{h})_{h}(t)\|_{L^{2}(\Gamma)}\leqslant C(\delta)|t-\sigma|^{\frac{1}{2}}\|\partial_{t}\eta^{h}(t)\|_{L^{2}(\Gamma)}\leqslant C(\delta)|t-\sigma|^{\frac{1}{2}}.
				\end{array}
			\end{equation}
			Now  in what follows we estimate the third term in the expression of $\mathcal{I}:$
			\begin{equation}\label{estthrdtrmI}
				\begin{array}{ll}
					&\displaystyle \bigg|\int^{t}_{\sigma}\int_{\Omega^{i,h}}\frac{1}{h}\bigg(u^{h}_{i}(s-h)-u^{h}_{i}(s-h)\circ\Phi^{i,h}_{-h}(s)\bigg)E_{i,\eta^{h}(s),\delta}((\partial_{t}\eta^{h})_{h}(t))ds\bigg|\\
					&\displaystyle =\int^{t}_{\sigma}\bigg|\int_{\Omega^{i,h}(s-h)}u^{h}_{i}(s-h),\frac{E_{i,\eta^{h}(s),\delta}((\partial_{t}\eta^{h})_{h}(t))-E_{i,\eta^{h}(s),\delta}((\partial_{t}\eta^{h})_{h}(t))\circ\Phi^{i,h}_{h}(s-h)}{h}\bigg|ds\\[3.mm]
					&\displaystyle \leqslant C\int^{t}_{\sigma}\|u^{h}_{i}(s)\|_{L^{4}(\Omega^{i,h})}\bigg\|\frac{E_{i,\eta^{h}(s),\delta}((\partial_{t}\eta^{h})_{h}(t))-E_{i,\eta^{h}(s),\delta}((\partial_{t}\eta^{h})_{h}(t))\circ\Phi^{i,h}_{h}(s-h)}{h}\bigg\|_{L^{\frac{4}{3}}(\Omega^{i,h})}ds\\[3.mm]
					&\displaystyle \leqslant C\int^{t}_{\sigma}\|u^{h}_{i}(s)\|_{L^{4}(\Omega^{i,h})}\|\nabla E_{i,\eta^{h}(s),\delta}((\partial_{t}\eta^{h})_{h}(t))\|_{L^{4}(\Omega^{i,h})}ds\\[3.mm]
					&\displaystyle \leqslant C\|u^{h}_{i}(s)\|_{L^{2}(0,T;L^{4}(\Omega^{i,h}))}\bigg(\int^{t}_{\sigma}\|\nabla E_{i,\eta^{h}(s),\delta}((\partial_{t}\eta^{h})_{h}(t))\|^{2}_{L^{4}(\Omega^{i,h})}ds\bigg)^{\frac{1}{2}}\\
					&\displaystyle \leqslant C(\delta)|t-\sigma|^{\frac{1}{4}},
				\end{array}
			\end{equation}
			where in the above calculation to get the fourth line from the third we have used the first inequality of \eqref{nxtestdiffmean} with $p=\frac{4}{3}.$ The final line of \eqref{estthrdtrmI} from the penultimate one follows by using \eqref{Fdivetaest1}$_{1}.$\\
			Note that in the previous calculation it is important to use the fact that $(\partial_{t}\eta^{h})_{h}(t)$ is independent of the $s$ variable.\\ 
			Finally in view of \eqref{estmcI} and \eqref{estthrdtrmI} one concludes the proof of \eqref{estA1}.
		\end{proof}
		Let us now estimate the term $\mathcal{A}_{2}$ appearing in \eqref{diffgnts}.
		\begin{lem}
			There exists a constant $C(\delta)>0$ depending only on the final time $T$ and the initial data such that
			\begin{equation}\label{estA2}
				\begin{array}{ll}
					\displaystyle	\mathcal{A}_{2}&\displaystyle=\bigg|\int^{\sigma}_{t}\langle \partial_{t}(u^{h}_{i})_{h}(s),E_{i,\eta^{h}(s),\delta}((\partial_{t}\eta^{h})_{h}(t))-E_{i,\eta^{h}(t),\delta}((\partial_{t}\eta^{h})_{h}(t)) \rangle_{\Omega^{i}_{m,M}}\bigg|\\
					&\displaystyle \leqslant C(\delta)|t-\sigma|^{\frac{1}{2}}
				\end{array}
			\end{equation}
			for $\sigma_{0}$ sufficiently small.
		\end{lem}
		\begin{proof}
			Let us estimate $\mathcal{A}_{2}$ as follows
			\begin{equation}\label{estA2comm}
				\begin{array}{ll}
					\displaystyle	\mathcal{A}_{2}&\displaystyle=\bigg|\int^{\sigma}_{t}\langle \partial_{t}(u^{h}_{i})_{h}(s),E_{i,\eta^{h}(s),\delta}((\partial_{t}\eta^{h})_{h}(t))-E_{i,\eta^{h}(t),\delta}((\partial_{t}\eta^{h})_{h}(t)) \rangle_{\Omega^{i}_{m,M}}\bigg|\\[3.mm]
					&\displaystyle =\bigg|\int^{\sigma}_{t}\langle \partial_{t}(u^{h}_{i})_{h}(s),\int^{s}_{t}\partial_{\alpha}E_{\eta^{h}(\alpha),\delta}((\partial_{t}\eta^{h})_{h}(t))d\alpha\rangle ds\bigg|\\[3.mm]
					&\displaystyle \leqslant \bigg|\langle (u^{h}_{i})_{h}(\sigma),\int^{\sigma}_{t}\partial_{\alpha}E_{\eta^{h}(\alpha),\delta}((\partial_{t}\eta^{h})_{h}(t))d\alpha\rangle\bigg|+\bigg|\int^{\sigma}_{t}\langle (u^{h}_{i})_{h}(s),\partial_{s}E_{\eta^{h}(s)}((\partial_{t}\eta^{h})_{h}(t))\rangle ds\bigg|\\[3.mm]
					&\displaystyle\leqslant |t-\sigma|^{\frac{1}{2}}\|(u^{h}_{i})_{h}\|_{L^{\infty}(L^{2}(\Omega^{i}_{m,M}))}\|\partial_{s}E_{\eta^{h}(s)}(\partial_{t}\eta^{h})_{h}(t)\|_{L^{2}(L^{2}(\Omega^{i}_{m,M}))}\\[3.mm]
					&\displaystyle \leqslant C(\delta)|t-\sigma|^{\frac{1}{2}},
				\end{array}
			\end{equation}
			
			where in the previous computation, the forth line from the third follows since
			\begin{equation}\label{estdalphaE}
				\begin{array}{ll}
					&\displaystyle \bigg\|\int^{\sigma}_{t}\partial_{\alpha} E_{\eta^{h}(\alpha)}((\partial_{t}\eta^{h})_{h}(t))d\alpha\bigg\|^{2}_{L^{2}(\Omega^{i}_{m,M})}\\[4.mm]
					&\displaystyle =\int_{\Omega^{i}_{m,M}}\bigg|\int^{\sigma}_{t}\partial_{\alpha}E_{\eta^{h}(\alpha)}(\partial_{t}\eta^{h})_{h}(t)d\alpha\bigg|^{2}\\[3.mm]
					&\displaystyle =|t-\sigma|^{2}\int_{\Omega^{i}_{m,.M}}\bigg|\frac{1}{|t-\sigma|}\int^{\sigma}_{t}\partial_{\alpha}E_{\eta^{h}(\alpha)}(\partial_{t}\eta^{h})_{h}(t)d\alpha\bigg|^{2}\\[3.mm]
					&\displaystyle \leqslant|t-\sigma|\int_{\Omega^{i}_{m,M}}\int^{\sigma}_{t}|\partial_{\alpha}E_{\eta^{h}(\alpha)}(\partial_{t}\eta^{h})_{h}(t)|^{2}d\alpha,
				\end{array}
			\end{equation}
			please note that we have used Jensen's inequality above.\\
			The final line of \eqref{estA2comm} from the penultimate one is a consequence of the estimates \eqref{disstypesthlev} and \eqref{estEetadel}.\\
			This finishes the proof of \eqref{estA2}.
		\end{proof}
		Now in view of \eqref{diffgnts}, \eqref{estA1} and \eqref{estA2} one concludes that for $\sigma_{0}$ sufficiently small the following holds
		\begin{equation}\label{esteqcont}
			\begin{array}{l}
				\displaystyle  \bigg|\langle g_{h}(t)-g_{h}(\sigma),f_{h,\delta}(t)\rangle\bigg|\leqslant C(\delta)|t-\sigma|^{\frac{1}{4}}
			\end{array}
		\end{equation}
		Now in order to prove the follwoing by an application of Theorem \ref{thmBbL}
		\begin{equation}\label{strngconvI1}
			\begin{array}{l}
				\displaystyle	\mathbb{I}_{1}\longrightarrow \int^{T}_{0}\bigg(\|\partial_{t}\eta\|^{2}_{L^{2}(\Gamma)}+\langle u_{i},\mathcal{F}^{div}_{i,\eta}(\partial_{t}\eta)\rangle_{\Omega^{i}}\bigg).
			\end{array}
		\end{equation}
		one needs to show that 
		\begin{equation}\label{Keta0}
			\begin{array}{l}
				\mathcal{K}_{i,\eta}(\partial_{t}\eta)=0.
			\end{array}
		\end{equation}
		To that end, one first uses $\mbox{div}\,\mathcal{F}^{div}_{i,\eta}(\partial_{t}\eta-\mathcal{K}_{i,\eta}(\partial_{t}\eta))=0$ to have
		\begin{equation}\label{gaussapp}
			\begin{array}{ll}
				&\displaystyle \mathcal{K}_{i,\eta}(\partial_{t}\eta)\int_{\partial\Omega^{2,h}}\nu(\varphi^{-1}(\pi(x)))\cdot \nu_{\eta(x)}dx=\int_{\partial\Omega^{2,h}} u_{i}\cdot\nu_{\eta}(x)dx=0
			\end{array}
		\end{equation}
		since $\div\,u_{i}=0$ and $u_{i}(x)=\partial_{t}\eta\nu(\varphi^{-1}(\pi(x)))$ for $x\in \partial\Omega^{2,h}.$\\
		Since $\eta(\cdot)$ can be considered as  a graph over the surface $\partial\Omega$ which has a well defined tangent plane almost everywhere, we conclude that
		$$\nu(\varphi^{-1}(\pi(x)))\cdot \nu_{\eta}(x)>0\,\,\mbox{a.e.}$$
		In view of \eqref{gaussapp} the last inequality infers \eqref{Keta0}.
		Now let us show the convergence of $\mathbb{I}_{2}.$ This finishes the proof of \eqref{strngconvI1}. \\
		Next we focus in proving the convergence of $\mathbb{I}_{2}$ by using Theorem \ref{thmBbL}.\\
		\textit{Convergence of $\mathbb{I}_{2}:$}\\
		Let us set $g_{h}=(u^{h}_{i})_{h}\mathcal{X}_{\Omega^{i,h}}$ and $f_{h}=\bigg((u^{h}_{i})_{h}-\mathcal{F}^{div}_{i,\eta^{h}}\bigg((\partial_{t}\eta^{h})_{h}-\mathcal{K}_{i,\eta^{h}}((\partial_{t}\eta^{h})_{h})\bigg)\bigg)\mathcal{X}_{\Omega^{i,h}}.$ We further set $X=W^{-s,2}(\Omega^{i}_{m,M})$ and consequently $X'=W^{s,2}(\Omega^{i}_{m,M})$ for some $s\in(0,\frac{1}{4}).$ Note that as usual we will extend the functions to $\Omega^{i}_{m,M}$ by defining them zero outside $\Omega^{i,h}.$ Further in time we consider our functions to be square integrable.\\
		Now the assumptions $(1)$ and $(4)$ of Theorem \ref{thmBbL} follows by the standard compactness arguments. In particular one applies \cite[Prop. 2.28]{LeRu14} to furnish the uniform $W^{s,2}(\Omega^{i}_{m,M})$ ($s\leqslant\frac{1}{4}$) bound of $g_{h}.$\\
		For the proof of item $(2)$ one uses similar arguments as the ones used in showing \eqref{difffhdel}. In particular since $(u^{h}_{i})_{h}$ is extended by zero outside $\Omega^{i,h}$ to define a function on $\Omega^{i}_{m,M},$ we have enough room for mollification.\\ 
		In view of \eqref{weakconvhlev}$_{2}$ we can find an $h_{\delta}>0$ and $\tau_{\delta}>0$ small enough such that
		\begin{equation}\label{diffetaetah}
			\begin{array}{l}
				\displaystyle \sup_{h\leqslant h_{\delta}}\sup_{\tau\in(t-\tau_{\delta},t+\tau_{\delta})\cap[0,T]}\|\eta(x,t)-\eta^{h}(x,t)\|_{\infty}\leqslant \delta.
			\end{array}
		\end{equation}
		Second, let us fix, $0<s_{0}<s$ and $\epsilon>0.$ Using \cite[Lemma A.13]{LeRu14}, there exist a $\delta_{\epsilon}$ and a sequence $\widetilde{f_{h,\delta}}$ such that
		$$\displaystyle \mbox{supp}\,(\widetilde{f_{h,\delta}}(t))\subset\Omega^{i}_{\eta^{h}(t)-3\delta},$$
		(one recalls that $\Omega^{i}_{\eta^{h}(t)}\equiv\Omega^{i,h}$) for all $3\delta\leqslant \delta_{\epsilon},$ that is divergence free and  
		\begin{equation}\label{difffhfhdel}
			\begin{array}{l}
				\displaystyle\|f_{h}-\widetilde{f_{h,\delta}}\|_{W^{-s_{0},2}(\Omega^{i}_{m,M})}\leqslant \epsilon\|f_{h}\|_{L^{2}(\Omega^{i}_{m,M})}.
			\end{array}
		\end{equation}
		Next one mollifies the solinoidal function $\widetilde{f_{h,\delta}}$ to define
		\begin{equation}\label{molification}
			\begin{array}{l}
				\displaystyle f_{h,\delta}=\widetilde{f_{h,\delta}}*\psi_{\delta},
			\end{array}
		\end{equation}
		where $\psi$ is the standard mollifier in space. In the direction of showing item $(3)$ of Theorem \ref{thmBbL}, one computes the following by using \eqref{difffhfhdel}
		\begin{equation}\label{diffestfhdel}
			\begin{array}{ll}
				&\displaystyle \|f_{h}-f_{h,\delta}\|_{W^{-s,2}(\Omega^{i}_{m,M})}\\
				&\displaystyle \leqslant \|\widetilde{f_{h,\delta}}-f_{h,\delta}\|_{W^{-s,2}(\Omega^{i}_{m,M})}+\|\widetilde{f_{h,\delta}}-f_{h}\|_{W^{s_{0},2}(\Omega^{i}_{m,M})}\\
				&\displaystyle \leqslant C\delta^{s-s_{0}}\|\widetilde{f_{h,\delta}}\|_{W^{s_{0},2}(\Omega^{i}_{m,M})}+\epsilon\|f_{h}\|_{L^{2}(\Omega^{i}_{m,M})}\\
				&\displaystyle \leqslant C\epsilon \|f_{h}\|_{L^{2}(\Omega^{i}_{m,M})}
			\end{array}
		\end{equation}
		for $\delta$ small enough independent of $(s-s_{0}).$ Further in view of \eqref{diffetaetah} and properties of mollification we have that $f_{h,\delta}$ is compactly supported in $\Omega^{i,h}.$ This enables us to use the pair $(0,f_{h,\delta})$ (note that $f_{h,\delta}$ is solenoidal by construction) as  a test function in the momentum balance equation \eqref{mombalhapp} over the interval $[t,t+\tau]$ for $t\in[0,T-\tau].$ Hence item $(3)$ of Theorem \ref{thmBbL} follows by mimicing the line of arguments used in proving \eqref{estA1}.\\
		This finishes the proof of the fact that (indeed one uses \eqref{Keta0})
		\begin{equation}\label{convI2}
			\begin{array}{l}
				\displaystyle \mathbb{I}_{2}\longrightarrow \int^{T}_{0}\langle u_{i},u_{i}-\mathcal{F}^{div}_{i,\eta}(\partial_{t}\eta)\rangle_{\Omega^{i}}.
			\end{array}
		\end{equation}
		Finally \eqref{strngconvI1} and \eqref{convI2} together infers \eqref{convdual}.\\
		The convergence \eqref{convdual} along with the uniform convexity of $L^{2}$ implies that $(\partial_{t}\eta^{h})_{h}\longrightarrow \partial_{t}\eta$ strongly in $L^{2}(\Gamma\times(0,T))$ and $(u^{h}_{i})_{h}\longrightarrow u_{i}$ in $L^{2}(\Omega^{i,h}\times(0,T)).$
	\end{proof}
	\subsection{Choice of test functions and passage to the limit in the momentum equation}\label{constructiontesthlev}
	Indeed the test function pairs $(\psi,b^{h})$ used in the approximate layer ($cf.$ \eqref{mombalhapp}) solve a compatibility condition of the form $\psi\circ\varphi_{\eta^{h}}=b^{h}$ on $\Sigma_{\eta^{h}}$ but they might not solve the same on the limiting interface $\Sigma_{\eta}.$ Hence one needs to be careful while making the choice of suitable test function pairs for the momentum equation.\\
	In the spirit of the first part of Section \ref{passtau0}, entitled `\textit{Construction of test functions}' we can construct pairs $(\psi^{h},b^{h})$ and $(\psi,b)$ such that the first can be used as a test function in \eqref{mombalhapp} and the later in the limiting momentum balance. Furthermore, one has 
	\begin{equation}\label{constestdeltalev}
		\begin{array}{ll}
			&\displaystyle 	b^{h}=\Tr_{\Sigma_{\eta^{h}}}\psi\cdot\nu,\quad 	b=\Tr_{\Sigma_{\eta}}\psi\cdot\nu,\\
			&\displaystyle 	b^{h}\rightarrow b\,\,\mbox{in}\,\, L^{\infty}(0,T;W^{k_{0}+1,2}(\Gamma))
		\end{array}
	\end{equation}
	\subsubsection{Passage to the limit in momentum balance}\label{ctestfn}
	Next we use the pair of test function $(b^{h},\psi^{h})$ in the discrete momentum balance equation \eqref{mombalhapp} and proceed towards the limit passage $h\rightarrow 0.$\\
	In view of the strong convergence and the structure \eqref{constestdeltalev} of $b^{h}$ and the weak convergence \eqref{weakconvhlev}$_{3}$ one obtains the following
	\begin{equation}\label{convdbltmder}
		\begin{array}{l}
			\displaystyle	\int^{t}_{0}\int_{\Gamma}\frac{\partial_{t}\eta^{h}(\cdot)-\partial_{t}\eta^{h}(\cdot-h)}{h}b^{h}=-\int^{t}_{0}\int_{\Gamma}\partial_{t}\eta^{h}(\cdot)\frac{b^{h}(\cdot)-b^{h}(\cdot-h)}{h}\rightarrow -\int^{t}_{0}\int_{\Gamma}\partial_{t}\eta(\cdot)\partial_{t}b(\cdot).
		\end{array}
	\end{equation}
	Once again using the fact that
	$$\eta^{h}\rightarrow \eta\,\,\mbox{in}\,\, L^{\infty}(0,T;W^{2+,2}(\Gamma))$$
	(which follows from \eqref{weakconvhlev}$_{1,3}$ ) 
	one renders the following
	\begin{equation}\label{convDKlevh}
		\begin{array}{l}
			\displaystyle \int^{t}_{0}\langle DK_{\kappa}(\eta^{h}),b^{h}\rangle \rightarrow \int^{t}_{0}\langle DK_{\kappa}(\eta),b\rangle.
		\end{array}
	\end{equation}
	Indeed in order to show \eqref{convDKlevh}, one exploits the structure of $DK_{\kappa},$ cf. \eqref{elasticityoperator},\eqref{amab},\eqref{Geta},\eqref{Gij'},\eqref{exaR} and \eqref{DefDk}.\\
	Next in order to show the convergence 
	\begin{equation}\label{convtmderu}
		\begin{array}{ll}
			&\displaystyle \int^{t}_{0}\int_{\Omega^{i,h}}\frac{u^{h}_{i}(\cdot)-u^{h}_{i}(\cdot-h)\circ\Phi^{i,h}_{-h}}{h}\psi^{h}\rightarrow -\int^{t}_{0}\left\langle  u_{i},\partial_{t}\psi+u_{i}\cdot\nabla\psi\right\rangle_{\Omega^{i}},
		\end{array}
	\end{equation}
	one can imitate the arguments used in \cite[Proof of Theorem 1.2, Section 4]{BenKamSch} with no further difficulties. Indeed to implement the arguments from \cite{BenKamSch}, one needs the convergence
	\begin{equation}\label{convcutoff}
		\begin{array}{ll}
			\displaystyle \int^{t}_{0}\langle (u^{h}_{i})_{\delta},A^{i}_{\delta}(u^{h}_{i})_{h}\rangle_{\Omega^{h}_{i}}\rightarrow \int^{t}_{0}\langle (u_{i})_{\delta},A^{i}_{\delta}u_{i}\rangle_{\Omega^{i}}
		\end{array}
	\end{equation}
	as $h\rightarrow 0$ where $A^{i}_{\delta}\in C^{0}([0,T];C^{\infty}_{c}(\Omega^{i}_{h})),$ for $h>0$ sufficiently small and satisfies $A^{i}_{\delta}(t)\rightarrow_{\delta}\mathcal{X}_{\Omega^{i}(t)}$ almost everywhere. Further note that the notation $(u^{h}_{i})_{\delta}$ in \eqref{convcutoff} denotes the mollification in space of $u^{h}_{i}.$ In our case the convergence \eqref{convcutoff} is a consequence of the strong convergence $(u^{h}_{i})_{h}\rightarrow u_{i}$ in $L^{2}(\Omega^{i,h}\times(0,T))$ ($c.f.$ Proposition \ref{strngconvavg}) and the weak convergence \eqref{weakconvhlev}$_{4}.$\\
	Now we pass the limit into the sixth term of \eqref{mombalhapp}. In view of the positive lower bound of the temperature and the strong convergence \eqref{claimstrngconvuptobndrythetah}, one readily has that $\mu_{i}(\theta_{i}^{h})\chi_{\Omega^{i,h}}\in L^{\infty}(\Omega\times (0,T))$ and it converges $a.e.$ to $\mu_{i}(\theta_{i})\chi_{\Omega^{i}}.$ Hence in particular $\mu_{i}(\theta_{i}^{h})D\psi^{h}_{i}\mathcal{X}^{i,h}$ converges strongly to $\mu_{i}(\theta_{i})D\psi\mathcal{X}_{\Omega^{i}}$ in $L^{r}(\Omega\times(0,T))$ for any $1\leqslant r<\infty.$ Consequently using \eqref{weakconvhlev}$_{4}$ one obtains
	\begin{equation}\label{L2convDu2}
		\begin{array}{ll}
			\displaystyle	\int^{t}_{0}\int_{\Omega^{i,h}}\mu_{i}(\theta^{h}_{i})Du^{h}_{i}:D\psi^{h}\rightarrow  	\int^{t}_{0}\int_{\Omega^{i}}\mu_{i}(\theta_{i})Du_{i}:D\psi.
		\end{array}
	\end{equation}  
	Now in view of the convergences listed in \eqref{weakconvhlev} and further exploiting the linearity one can at once pass limit in the rest of the terms appearing in \eqref{mombalhapp}.\\
	This fimishes the proof of th identity \eqref{momentplvlh}.\\
	{\textit{A comment on the bound of $(\partial_{tt}\eta,\partial_{t}u_{i}+(u_{i}\cdot\nabla)u_{i})$}:} One may also construct suitable test functions in the spirit of Section \ref{constructiontesthlev}, and pass limit in  \eqref{diffquotest}, to have the following duality estimate of the distributional time derivative of $(\partial_{t}\eta,u_{i}):$
	\begin{equation}\label{dualesttmder}
		\begin{array}{l}
			\displaystyle\bigg \| \bigg( \partial_{tt}\eta, (\partial_{t}+u_{i}\cdot\nabla) u_{i} \bigg)\bigg\|_{\mathcal{Y}^{*}}\leqslant C,
		\end{array}
	\end{equation} 
	where $\mathcal{Y}$ is as introduced in \eqref{DefmY}, for some constant independent of $h,$ but may depend on the given data.
	\subsection{Passage to the limit in the heat evolution and obtainment of energy balance and entropy inequality}\label{heatrecovery}
	One can choose globally defined test functions $\zeta\in C^{\infty}_{c}([0,T];C^{\infty}_{c}(\overline{\Omega})),$ $\zeta\geqslant 0$ and use their restrictions $\zeta_{i}=\zeta\suchthat_{\Omega^{i,h}}$ as test functions in the approximated fluid domain $\Omega^{i,h}.$ Concerning the limiting heat equation we will use the same notation $\zeta_{i}$ to denote the restriction of $\zeta$ into $\Omega^{i}.$\\
	We can now pass $h\rightarrow 0$ into \eqref{aftrtau0heat4}, use \eqref{claimstrngconvuptobndrythetah} and \eqref{weakconvhlev}$_{4}$ to furnish
	\begin{equation}\label{aftrhlevheat}
		\begin{array}{ll}
			&\displaystyle c_{i}\int^{t_{1}}_{0}\frac{d}{dt}\int_{\Omega^{i}(t)}\theta_{i}(x,t)\zeta_{i}(x,t)-c_{i}\int^{t_{1}}_{0}\int_{\Omega^{i}(t)}\theta_{i}(x,t)(\partial_{t}+u_{i}\cdot\nabla)\zeta_{i}+k_{i}\int^{t_{1}}_{0}\int_{\Omega^{i}(t)}\nabla\theta_{i}\cdot\nabla\zeta_{i}\\
			&\displaystyle+\lambda\mathcal{I}_{i}\int^{t_{1}}_{0}\int_{\partial\Omega^{2}(t)}(\theta_{1}-\theta_{2})\zeta_{i}
			\displaystyle  =\liminf_{h\rightarrow 0}\left\langle \mathcal{D}^{h,i},\zeta_{i}\right\rangle\\
		\end{array}
	\end{equation}
	for $a.e.$ $t_{1}\in [0,h],$ where $\mathcal{I}_{1}=1,$ $\mathcal{I}_{2}=-1$ and the duality $\left\langle \mathcal{D}^{h,i},\zeta_{i}\right\rangle$ is introduced in \eqref{defHhi1zeta}.\\
	Next we are going to identify the limit of $\mathcal{D}^{h,i}$ with an integrable function.  
	\subsubsection{{\textit{Identification of $\lim_{h\rightarrow 0}\left\langle \mathcal{D}^{h,i},\zeta_{i}\right\rangle$ and energy balance:}}}\label{testsolhlev}
	In this section we derive an energy balance identity for the regularized system ($\kappa$ being the regularization parameter). We first use test function  $(b,\psi)=(\partial_{t}\eta,u_{i})$ in \eqref{momentplvlh} and next track the artificial dissipation terms by using the heat evolution equation. The use of such a test function is possible since $\partial_{t}\eta\in L^{2}(0,h;W^{k_{0}+1,2}(\Gamma)),$ which is crucial to give a sense to the duality pairing $\langle DK_{\kappa}(\eta),\partial_{t}\eta\rangle.$ In the aforementioned testing, one further uses \eqref{dualesttmder} to infer suitable meaning to $\displaystyle \bigg\langle  \bigg(\partial_{tt}\eta,\partial_{t}u_{i}+(u_{i}\cdot\nabla)u_{i}\bigg),(\partial_{t}\eta,u_{i})\bigg \rangle_{\mathcal{Y}^{*},\mathcal{Y}},$ where $\mathcal{Y}$ is introduced in \eqref{DefmY}.
	
	Consequently one arrives at the following identity involving \underline{\textit{dissipation}}:
	\begin{equation}\label{momenaftrtest}
		\begin{array}{ll}
			&\displaystyle K_{\kappa}(\eta(t))+\frac{1}{2}\int_{\Gamma}|\partial_{t}\eta(t)|^{2}+\frac{1}{2}\sum_{i}\int_{\Omega_{i}}|u_{i}(t)|^{2}\\[2.mm]
			&\displaystyle +\int^{t}_{0}\bigg(\kappa\int_{\Gamma}|\nabla^{k_{0}+1}\partial_{t}\eta|^{2}+\kappa\sum_{i}\int_{\Omega^{i}}|\nabla^{k_{0}}u_{i}|^{2}+\sum_{i}\int_{\Omega^{i}}\mu_{i}(\theta_{i})|Du_{i}|^{2}\bigg)\\[3.mm]
			&\displaystyle =K_{\kappa}(\eta_{0})+\frac{1}{2}\int_{\Gamma}|\eta^{0}_{1}|^{2}+\frac{1}{2}\sum_{i}\int_{\Omega^{i}_{0}}|u^{0}_{i}|^{2}.
		\end{array}
	\end{equation}
	On the other hand considering $\zeta_{i}=1$ in \eqref{aftrhlevheat}, summing over $i\in\{1,2\}$ and next adding the resulting expression with \eqref{momenaftrtest}, we furnish
	\begin{equation}\label{aftersummingheatandtestedmom2}
		\begin{array}{ll}
			&\displaystyle \sum_{i}c_{i}\int_{\Omega^{i}(t)}\theta_{i}(t)+K_{\kappa}(\eta(t))+
			\frac{1}{2}\int_{\Gamma}|\partial_{t}\eta(t)|^{2}+\frac{1}{2}\sum_{i}\int_{\Omega_{i}}|u_{i}(t)|^{2}\\
			&\displaystyle = K_{\kappa}(\eta_{0})+\sum_{i}\int_{\Omega^{i}_{0}}\theta^{0}_{i}+\frac{1}{2}\int_{\Gamma}|\eta^{0}_{1}|^{2}+\frac{1}{2}\sum_{i}\int_{\Omega^{i}_{0}}|u^{0}_{i}|^{2}
			\displaystyle +\sum_{i}\bigg(\liminf_{h\rightarrow 0}\left\langle \mathcal{D}^{h,i},1\right\rangle-\left\langle\mathcal{D}^{i},1\right\rangle\bigg)
		\end{array}
	\end{equation}
	where  $\left\langle \mathcal{D}^{h,i},1\right\rangle$ is as introduced in \eqref{defHhi1zeta} (corresponding to $\zeta_{i}=1$) and $\left\langle \mathcal{D}^{i},1\right\rangle$ is defined as follows
	\begin{equation}
		\begin{array}{l}
			\displaystyle	\left\langle \mathcal{D}^{i},1\right\rangle=\kappa\int^{t}_{0}\int_{\Gamma}|\nabla^{k_{0}+1}\partial_{t}\eta|^{2}+\kappa\int^{t}_{0}\int_{\Omega^{i}}|\nabla^{k_{0}}u_{i}|^{2}+\int^{t}_{0}\int_{\Omega^{i}}\mu_{i}(\theta_{i})|Du_{i}|^{2}.
		\end{array}
	\end{equation}
	Further  the identity \eqref{enbalancehlvl} along with the available convergences (in particular Proposition \ref{strngconvavg} ) and weak lower semi-continuity of convex functionals at once renders
	\begin{equation}\label{afterwklwrsemenergy}
		\begin{array}{ll}
			&\displaystyle \sum_{i}c_{i}\int_{\Omega^{i}(t)}\theta_{i}(t)+K_{\kappa}(\eta(t))+
			\frac{1}{2}\int_{\Gamma}|\partial_{t}\eta(t)|^{2}+\frac{1}{2}\sum_{i}\int_{\Omega_{i}}|u_{i}(t)|^{2}\\
			&\displaystyle \leqslant K_{\kappa}(\eta_{0})+\sum_{i}\int_{\Omega^{i}_{0}}\theta^{0}_{i}+\frac{1}{2}\int_{\Gamma}|\eta^{0}_{1}|^{2}+\frac{1}{2}\sum_{i}\int_{\Omega^{i}_{0}}|u^{0}_{i}|^{2}.
		\end{array}
	\end{equation}
	Now \eqref{aftersummingheatandtestedmom2} and \eqref{afterwklwrsemenergy} together furnish the following \underline{\textit{energy balance}} 
	\begin{equation}\label{fromenidentity2}
		\begin{array}{ll}
			&\displaystyle \sum_{i}c_{i}\int_{\Omega^{i}(t)}\theta_{i}(t)+K_{\kappa}(\eta(t))+
			\frac{1}{2}\int_{\Gamma}|\partial_{t}\eta(t)|^{2}+\frac{1}{2}\sum_{i}\int_{\Omega_{i}}|u_{i}(t)|^{2}\\
			&\displaystyle = K_{\kappa}(\eta_{0})+\sum_{i}\int_{\Omega^{i}_{0}}\theta^{0}_{i}+\frac{1}{2}\int_{\Gamma}|\eta^{0}_{1}|^{2}+\frac{1}{2}\sum_{i}\int_{\Omega^{i}_{0}}|u^{0}_{i}|^{2}.
		\end{array}
	\end{equation}
	and further one obtains that
	\begin{equation}\label{identifyliminfnumdiss2}
		\begin{array}{ll}
			\liminf_{h\rightarrow 0} \left\langle \mathcal{D}^{h,i},1\right\rangle=\left\langle \mathcal{D}^{i},1\right\rangle.
		\end{array}
	\end{equation}
	On the other hand please observe that the role of $\liminf_{h\rightarrow 0}$ in \eqref{aftrhlevheat} can be replaced  by $\limsup_{h\rightarrow 0}$ and afterwards following the arguments used in showing \eqref{identifyliminfnumdiss2} we get that
	\begin{equation}\nonumber
		\begin{array}{ll}
			\limsup_{h\rightarrow 0} \left\langle \mathcal{D}^{h,i},1\right\rangle=\left\langle \mathcal{D}^{i},1\right\rangle,
		\end{array}
	\end{equation}
	which together with \eqref{identifyliminfnumdiss2} furnish
	\begin{equation}\label{limH11tau2}
		\begin{array}{l}
			\displaystyle 	\lim_{h\rightarrow 0} \left\langle \mathcal{D}^{h,i},1\right\rangle=\left\langle \mathcal{D}^{i},1\right\rangle.
		\end{array}
	\end{equation}
	Now the convergence \eqref{limH11tau2} along with the uniform convexity of $L^{2}$ norms one first concludes the strong convergence of the individual integrands involved in the expression of $\mathcal{D}^{h,i}$ and thereby infers that 
	\begin{equation}\label{strngconvHtauizeta12}
		\begin{array}{l}
			\displaystyle \lim_{h\rightarrow 0} \left\langle \mathcal{D}^{h,i},\zeta_{i}\right\rangle=\left\langle \mathcal{D}^{i},\zeta_{i}\right\rangle.
		\end{array}
	\end{equation}
	\subsubsection{Recovery of heat identity} As a consequence of \eqref{aftrhlevheat} and \eqref{strngconvHtauizeta12}, one obtains the following  equation solved by the temperature $\theta_{i}:$
	\begin{equation}\label{aftrtau0heatfinalh}
		\begin{array}{ll}
			&\displaystyle c_{i}\int^{t}_{0}\frac{d}{dt}\int_{\Omega^{i}(t)}\theta_{i}(x,t)\zeta_{i}(x,t)-c_{i}\int^{t}_{0}\int_{\Omega^{i}(t)}\theta_{i}(x,t)(\partial_{t}+u_{i}\cdot\nabla)\zeta_{i}+k_{i}\int^{t}_{0}\int_{\Omega^{i}(t)}\nabla\theta_{i}\cdot\nabla\zeta_{i}\\
			&\displaystyle+\lambda\mathcal{I}_{i}\int^{t}_{0}\int_{\partial\Omega^{2}(t)}(\theta_{1}-\theta_{2})\zeta_{i}\\
			&\displaystyle  =\kappa\int^{t}_{0}\int_{\Gamma}|\nabla^{k_{0}+1}\partial_{t}\eta|^{2}\zeta_{i}+\kappa\int^{t}_{0}\int_{\Omega^{i}}|\nabla^{k_{0}}u_{i}|^{2}\zeta_{i}+\int^{t}_{0}\int_{\Omega^{i}}\mu_{i}(\theta_{i})|Du_{i}|^{2}\zeta_{i}\\
		\end{array}
	\end{equation}
	for $a.e.$ $t\in [0,T]$ and for $\zeta_{i}\in C^{\infty}_{c}([0,T];C^{\infty}_{c}(\overline{\Omega^{i}})),$ $\zeta_{i}\geqslant 0.$
	\subsubsection{Obtainment of entropy evolution inequality}
	One can further use test functions of the form $\varphi'(\theta_{i}),$ (where $\varphi(\theta)$ is monotone and concave for $\theta>0$) in  \eqref{aftrhlevheat} (the use of such test functions can be justified by a density argument) to conclude the following entropy evolution
	\begin{equation}\label{entropybh2}
		\begin{array}{ll}
			\displaystyle\sum_{i=1}^{2}\left(c_{i}\frac{d}{dt}\int_{\Omega^{i}}\varphi(\theta_{i})+k_{i}\int_{\Omega^{i}}|\nabla\theta_{i}|^{2}\varphi''(\theta_{i})\right)+\lambda\int_{\partial\Omega^{2}}\left(\theta_{1}-\theta_{2}\right)\left(\varphi'(\theta_{1})-\varphi'(\theta_{2}))\right)\geqslant 0.
		\end{array}
	\end{equation}
	Indeed $\varphi(\cdot),$ should be chosen such that all the integrals in \eqref{entropybh2} make sense. For instance, since $\theta_{i}\geqslant\gamma>0,$ $\varphi(\theta_{i})$ can be chosen to be $\theta^{(\lambda+1)}_{i}$ for $\lambda\in (-1,0)$ or $\ln\theta_{i}$ (the physical entropy).
		\subsection{Minimal principle for temperature} As a direct consequence of \eqref{claimstrngconvuptobndrythetah} and \eqref{postemphlev}, one has that
		\begin{equation}\label{minpritemphfnl}
			\begin{array}{l}
				\displaystyle \theta_{i}\geqslant \gamma>0\quad\mbox{in}\quad \Omega^{i}\times (0,T)\,\,\mbox{provided}\,\, \theta^{0}_{i}\geqslant\gamma>0\,\,\mbox{on}\,\,\Omega^{i}_{0}.
			\end{array}
		\end{equation}
		\subsection{Conclusion and the proof of Theorem \ref{Th:mainhlev}} Throughout the present section we have shown all the assertions listed in Theorem \ref{Th:mainhlev}:\\
		$(i)$ The regularity \eqref{regularity3} of $(u_{i},\theta_{i},\eta)$ follows from \eqref{weakconvhlev}, \eqref{Lpthetaih} and  \eqref{fromenidentity2}.\\
		$(ii)$ The decomposition of $\Omega$ as asserted in item $(ii)$ of Definition \ref{Def:reg} follows from \eqref{setincond}$_{2,3}$ and the strong cpnvergence \eqref{weakconvhlev}$_{2}.$\\
		$(iii)$ The momentum balance and the heat evolution listed in item $(ii)$ and $(iii)$ are proved in Section \ref{ctestfn} and \eqref{heatrecovery}.\\
		$(iii)$ The minimal principle of temperatures are obtained in \eqref{minpritemphfnl}.\\
		$(iv)$ As a consequence of recovering the heat equation, we have shown the energy balance and an identity involving the dissipation terms in \eqref{fromenidentity2} and \eqref{momenaftrtest} respectively.\\
		$(iv)$ Finally we track the entropy evolution in \eqref{entropybh2}.\\
		This finishes the proof of Theorem \ref{Th:mainhlev}.
	
	
	\section{Removal of the regularizing parameter $\kappa$ and proof of Theorem \ref{Th:main}}
	First let us write down the following equations and inequalities solved by the triplet $(u^{\kappa}_{i},\theta^{\kappa}_{i},\eta^{\kappa}_{i}),$ which are obtained in Theorem \ref{Th:mainhlev}.\\
	(i)\textit{The following decomposition of $\Omega$ holds:}
	\begin{equation}\label{Omegat1kappa}
		\begin{array}{ll}
			&\Omega={{\Omega^{1,\kappa}(t)}}\cup\overline{\Omega^{2,\kappa}(t)},\quad \partial{\Omega^{2,\kappa}(t)}=\Sigma_{\eta^{\kappa}},\\[2.mm] &\partial({\Omega^{1,\kappa}(t)})=\partial\Omega\cup \Sigma_{\eta^{\kappa}},
		\end{array}
	\end{equation}  
	where \eqref{varphieta}-\eqref{Sigmat} hold with $\eta$ and $\Sigma_{\eta}$ replaced respectively by $\eta^{\kappa}$ and $\Sigma_{\eta^{\kappa}}.$\\[2.mm]
	(ii) {\textit{A momentum balance holds in the sense:}}\\
	\begin{equation}\label{momentplvlh2}
		\begin{array}{ll}
			&\displaystyle \int_{0}^{t}\bigg(\langle DK_{\kappa}(\eta^{\kappa}),b\rangle-\int_{\Gamma}\partial_{t}\eta^{\kappa}\partial_{t}b+\kappa\int_{\Gamma}\nabla^{k_{0}+1}\partial_{t}\eta^{\kappa}\nabla^{k_{0}+1}b\bigg)\\
			&\displaystyle+\sum_{i}\int^{t}_{0}\bigg(-\langle u_{i}^{\kappa},\partial_{t}\psi-u_{i}^{\kappa}\cdot\nabla\psi\rangle+\kappa\int_{\Omega^{i,\kappa}}\nabla^{k_{0}}u_{i}^{\kappa}:\nabla^{k_{0}}\psi+\int_{\Omega^{i,\kappa}}\mu_{i}(\theta_{i}^{\kappa})D(u_{i}^{\kappa}):D(\psi)\bigg)\\
			&\displaystyle=-\langle \partial_{t}\eta^{\kappa}(\cdot,t),b(\cdot,t)\rangle +\langle \eta_{1}^{0},b(\cdot,0)\rangle-\langle u_{i}^{\kappa}(\cdot,t),\psi(\cdot,t)\rangle+\langle u^{0}_{i},\psi(\cdot,0)\rangle
		\end{array}
	\end{equation}
	for $a.e.$ $t\in (0,T),$ for all $b\in L^{\infty}(0,T;W^{k_{0}+1,2}(\Gamma))\cap W^{1,\infty}(0,T;L^{2}(\Gamma)),$  $\psi\in C^{\infty}_0([0,T]\times \Omega)$ and $\mbox{div}\,\psi=0.$ Further the following compatibility conditions hold:
	\begin{equation}\label{couplingkappa}
		\begin{array}{l}
			\displaystyle \Tr_{\Sigma_{\eta^{\kappa}}}u_{i}^{\kappa}=\partial_{t}\eta^{\kappa}\nu\quad\mbox{and}\quad \Tr_{\Sigma_{\eta^{\kappa}}}\psi=b\nu\quad\mbox{on}\quad \Gamma.
		\end{array}
	\end{equation}\\[2.mm]
(iii) {\textit{The heat evolution holds in the sense:}}\\
\begin{equation}\label{heatfnlhlev2}
\begin{array}{ll}
	&\displaystyle c_{i}\int^{t}_{0}\frac{d}{dt}\langle \theta_{i}^{\kappa},\zeta_{i}\rangle-c_{i}\int^{t}_{0}\langle \theta_{i}^{\kappa},(\partial_{t}+u_{i}^{\kappa}\cdot\nabla)\zeta_{i}\rangle+k_{i}\int^{t_{1}}_{0}\int_{\Omega^{i,\kappa}}\nabla\theta_{i}^{\kappa}\cdot\nabla\zeta_{i}
	\displaystyle+\lambda\mathcal{I}_{i}\int^{t}_{0}\int_{\partial\Omega^{2,\kappa}}(\theta_{1}^{\kappa}-\theta_{2}^{\kappa})\zeta_{i}\\
	&\displaystyle=\int^{t}_{0}\int_{\Omega^{i,\kappa}}\mu_{i}(\theta_{i}^{\kappa})|Du_{i}^{\kappa}|^{2}\zeta_{i}+\kappa\int^{t}_{0}\int_{\Omega^{i,\kappa}}|\nabla^{k_{0}}u_{i}^{\kappa}|^{2}\zeta_{i}+\kappa\mathcal{X}_{i}\int^{t}_{0}\int_{\Gamma}|\nabla^{k_{0}+1}\partial_{t}\eta^{\kappa}|^{2}\zeta_{i}=\left\langle \mathcal{D}^{\kappa,i},\zeta_{i}\right\rangle
\end{array}
\end{equation}
where 
for $a.e.$ $t\in [0,T]$ and for $\zeta_{i}\in C^{\infty}_{c}([0,T];C^{\infty}_{c}(\overline{\Omega^{i,\kappa}})),$ where where $\mathcal{I}_{1}=1,$ $\mathcal{I}_{2}=-1,$ $\mathcal{X}_{1}=1$ and $\mathcal{X}_{2}=0.$\\[2.mm] 
(iv) \textit{Positivity of temperature:}\\
\begin{equation}\label{postempht0lev2}
\begin{array}{l}
	\displaystyle \theta_{i}^{\kappa}\geqslant \gamma\,\,\mbox{in}\,\,\Omega^{i,\kappa}\times(0,T)\,\,\mbox{provided}\,\,\theta^{0}_{i}\geqslant\gamma>0\,\,\mbox{on}\,\,\Omega^{i}_{0}.
\end{array}
\end{equation}
(v) \textit{The total energy is conserved:}\\
\begin{equation}\label{energybalhlev2}
\begin{array}{ll}
	\mathbb{E}_{tot}(\theta_{i}^{\kappa},u_{i}^{\kappa},\partial_{t}\eta^{\kappa},\eta^{\kappa})(t)=\mathbb{E}_{tot}(\theta^{0}_{i},u^{0}_{i},\eta^{0},\eta^{0}_{1}),
\end{array}
\end{equation}
where the total energy of the syatem is defined as
$$\mathbb{E}_{tot}(\theta_{i},u_{i},\partial_{t}\eta,\eta)=\sum^{2}_{i=1}\int_{\Omega^{i,\kappa}}\bigg(c_{i}\theta_{i}+\frac{1}{2}|u_{i}|^{2}\bigg)+\int_{\Gamma}\bigg(\frac{|\partial_{t}\eta|^{2}}{2}+K_{\kappa}(\eta)\bigg).$$\\[2.mm]
(vi)\textit{An identity estimating the dissipative terms}:\\
\begin{equation}\label{disstypesthlevfhlvl2}
\begin{array}{ll}
	&\displaystyle K_{\kappa}(\eta^{\kappa}(t))+\frac{1}{2}\int_{\Gamma}|\partial_{t}\eta^{\kappa}(t)|^{2}+\frac{1}{2}\sum_{i}\int_{\Omega^{i,\kappa}}|u_{i}^{\kappa}(t)|^{2}\\
	&\displaystyle+\sum_{i}\left(\int^{t}_{0}\int_{\Omega^{i,\kappa}}\mu_{i}(\theta_{i}^{\kappa})|Du_{i}^{\kappa}|^{2}+\kappa\int^{t}_{0}\int_{\Omega^{i,\kappa}}|\nabla^{k_{0}}u_{i}^{\kappa}|^{2}+\kappa\int^{t}_{0}\int_{\Gamma}|\nabla^{k_{0}+1}\partial_{t}\eta^{\kappa}|^{2}\right)\\
	&\displaystyle = K_{\kappa}(\eta^{0})+\frac{1}{2}\int_{\Gamma}|\eta^{0}_{1}|^{2}+\frac{1}{2}\int_{\Omega^{i}_{0}}|u^{0}_{i}|^{2}
\end{array}
\end{equation}
for $a.e.$ $t\in[0,T].$\\[2.mm]
(vii) \textit{A general entropy inequality holds in the following sense:}\\
\begin{equation}\label{entrpybalhlev2}
\begin{array}{l}
	\displaystyle\sum_{i=1}^{2}\left(c_{i}\frac{d}{dt}\int_{\Omega^{i,\kappa}}\varphi(\theta_{i}^{\kappa})+k_{i}\int_{\Omega^{i,\kappa}}|\nabla\theta_{i}^{\kappa}|^{2}\varphi''(\theta_{i}^{\kappa})\right)+\lambda\int_{\partial\Omega^{2,\kappa}}\left(\theta_{1}^{\kappa}-\theta_{2}^{\kappa}\right)\left(\varphi'(\theta_{1}^{\kappa})-\varphi'(\theta_{2}^{\kappa}))\right)\geqslant 0,
\end{array}
\end{equation}
for any $\varphi(\theta)$ such that $\varphi(\cdot)$ is concave and monotone for $\theta>0$ such that all the integrals above make sense.\\
Based on the estimates \eqref{energybalhlev2} and \eqref{disstypesthlevfhlvl2} and the positive lower bound of the temperature, one at once has the following convergences:
\begin{equation}\label{weakconvhlev2}
\begin{array}{lll}
	&\displaystyle {\eta}^{\kappa}\rightharpoonup^* \eta\,\,&\displaystyle\mbox{in}\,\, L^{\infty}(0,T;W^{2,2}(\Gamma)),\\[2.mm]
	&\displaystyle \eta^{\kappa}\rightarrow \eta\,\,&\displaystyle\mbox{in}\,\,C^{\frac{1}{4}}(\Gamma\times[0,T]),\\[2.mm]
	&\displaystyle \partial_{t}{\eta}^{\kappa}\rightharpoonup \partial_{t}\eta\,\,&\displaystyle\mbox{in}\,\, L^{\infty}(0,T;L^{2}(\Gamma)),\\[2.mm]
	&\displaystyle u_{i}^{\kappa}\rightharpoonup u_{i}\,\,&\displaystyle \mbox{in}\,\, L^{2}(0,T;W^{1,2}(\Omega^{i,\kappa})),\\[2.mm]
	&\displaystyle u_{i}^{\kappa}\rightharpoonup^{*} u_{i} \,\,&\displaystyle \mbox{in}\,\, L^{\infty}(0,T;L^{2}(\Omega^{i,\kappa})),\\[2.mm]
	&\displaystyle \theta^{\kappa}_{i}\rightharpoonup^* \theta_{i}\,\,&\displaystyle \mbox{in}\,\, L^{\infty}(0,T;L^{1}(\Omega^{i,\kappa}))
\end{array}
\end{equation}
Indeed the convergence \eqref{weakconvhlev2}$_{4}$ implies that
\begin{equation}
\begin{array}{ll}
	u^{\kappa}_{i}\rightharpoonup u_{i}\,\,\mbox{in}\,\, L^{2}(0,T;L^{6-}(\Omega^{i,\kappa})).
\end{array}
\end{equation}
Further one can use similar arguments as the ones used in Section \ref{frthrtempest} to show the following improved uniform estimates on the temperature
\begin{equation}\label{Lpthetaih2}
\begin{array}{ll}
	&\displaystyle\|\theta_{i}^{\kappa}\|_{L^{p}(0,T;L^{p}(\Omega^{i,\kappa}))}\leqslant C(\eta^{0},\theta^{0}_{i},\eta^{0}_{1},u^{0}_{i})\,\,\mbox{for}\,\,p\in[1,\frac{5}{3}),\\
	&\displaystyle 	\displaystyle\int_{(0,T)}\|\theta_{i}^{\kappa}\|^{s}_{W^{1,s}(\Omega^{i,\kappa})}\leqslant C(\eta^{0},\theta^{0}_{i},\eta^{0}_{1},u^{0}_{i})\,\,\mbox{for all}\,\, s\in[1,\frac{5}{4}).
\end{array}
\end{equation}
Now one uses the heat equation \eqref{heatfnlhlev2} and follow arguments used in Section \ref{compacttemhlev} to show the strong convergence 
\begin{equation}\label{strngconvtemp2}
\begin{array}{l}
	\theta^{\kappa}_{i}\rightarrow \theta_{i}\,\,\mbox{in}\,\,L^{s}(\Omega^{i,\kappa}\times(0,T)),\,\,\mbox{for any}\,\,  s\in[1,\frac{5}{3}).
	\displaystyle 
\end{array}
\end{equation}
Towards proving \eqref{strngconvtemp2}, one need to slightly modify the calculation presented in \eqref{estimatederttheta} by replacing the first term of the second line by $\|\theta^{\kappa}_{i}\|_{L^{\frac{5}{3}-}(L^{\frac{5}{3}-})}\|u^{\kappa}_{i}\|_{L^{\frac{5}{2}+}(L^{\frac{5}{2}+})}\|\nabla\zeta_{i}\|_{L^{\infty}(L^{\infty})},$ which is uniformly bounded since $u^{\kappa}_{i}\in L^{2}(0,T;L^{6-}(\Omega^{i,\kappa}))\cap L^{\infty}(0,T;L^{2}(\Omega^{i,\kappa}))\hookrightarrow L^{\frac{5}{2}+}(0,T;L^{\frac{5}{2}+}(\Omega^{i,\kappa})).$
One further needs to use the uniform bound on the dissipation $\left\langle \mathcal{D}^{\kappa,i},1\right\rangle$ furnished by \eqref{disstypesthlevfhlvl2}.
\subsection{Compactness of the shell-energy}\label{compactnessshelenergy}
The next result is a very important one stating that $\eta^{\kappa}$ (uniformly in $\kappa$) enjoys better regularity in space. On one hand this extra regularity helps in the limit passage $\kappa\rightarrow 0_{+}$ in the term $\int^{t}_{0}\langle DK_{\kappa}(\eta^{\kappa}),b\rangle$ (concerning the shell energy) and on the other hand it renders the fact that the fluid boundary is not just H\"{o}lder but is Lipschitz in space (to be precise in $L^{2}(C^{0,1})$).\\ 
In the context of an incompressible fluid-structure interaction problem the ingenious idea of improving the structural regularity first appeared in \cite{MuhaSch} and was later adapted for compressible fluid-structure interaction problems in \cite{Breit, MitNes}. 
\begin{lemma}\label{improvebndetadelta}
Let the quadruple $(u^{\kappa}_{i},\eta^{\kappa},\theta^{\kappa}_{i})$ solve \eqref{momentplvlh2}-\eqref{heatfnlhlev2}. Then the following holds uniformly in $\kappa:$
\begin{align}\label{improvedregeta}
	\begin{aligned}
		\sqrt{\kappa}\|\eta^{\kappa}\|_{L^{2}(0,T;W^{k_0+1+\sigma^{*},2}(\Gamma))}+\sqrt{\kappa}\|\eta^{\kappa}\|_{L^{\infty}(0,T;W^{k_{0}+\sigma^{*}})}&\leqslant c\\
		\|\eta^{\kappa}\|_{L^{2}(0,T;W^{2+\sigma^{*},2}(\Gamma))} +
		\|\partial_{t}\eta^{\kappa}\|_{L^{2}(0,T;W^{\sigma^{*},2}(\Gamma))} &\leqslant c,
	\end{aligned}
\end{align}
for some $0<\sigma^{*}<\frac{1}{2},$ where the constant $c$ in the last inequality may depend on $\Gamma,$ the initial data and the $W^{2,2}$ coercivity size of $\overline{\gamma}(\eta)$ ($\overline{\gamma}(\eta)$ has been introduced in \eqref{ovgamma}).
\end{lemma}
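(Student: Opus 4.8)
## Proof strategy for Lemma \ref{improvebndetadelta}

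\textbf{Overall approach.} The plan is to follow the difference-quotient (Nirenberg translation) method of \cite{MuhaSch}, adapted to the present variational/regularized setting. The key point is that the Koiter energy, through the bending term $a_R(t,\eta,b)$, controls $\overline\gamma(\eta)\nabla^2\eta$ in $L^2(\Gamma)$ (cf. \eqref{exaR} and the coercivity hypothesis $\overline\gamma(\eta_0)>0$, propagated by the uniform bound \eqref{disstypesthlevfhlvl2} and the $C^{1,\alpha}$ control of $\eta^\kappa$ from \eqref{weakconvhlev2}$_2$). One tests the momentum balance \eqref{momentplvlh2} with a structure test function built from a second-order finite difference of $\partial_t\eta^\kappa$ in a tangential direction on $\Gamma$, paired (through a solenoidal extension) with the corresponding fluid test function. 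Because $\Gamma$ is a torus, tangential translations $\eta^\kappa(\cdot+h e_\ell)$ are admissible and preserve all structural function spaces. The extra regularizing terms $\kappa\nabla^{k_0+1}$ in \eqref{momentplvlh2} only help: they produce non-negative contributions after this testing and give the first line of \eqref{improvedregeta}.

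\textbf{Key steps, in order.} First I would fix a tangential coordinate direction $e_\ell$ on $\Gamma$ and for $s>0$ small define the symmetric difference quotient $D^{-s}D^s\eta^\kappa$ and the test pair: on the structure side $b = D^{-s}D^s\partial_t\eta^\kappa$ (suitably handled in time, i.e. integrated against the equation), and on the fluid side $\psi$ a divergence-free extension of $b\nu$ via $\mathcal F^{\div}_{i,\eta^\kappa}$ (Proposition \ref{smestdivfrex}), legitimate since $\partial_t\eta^\kappa\in L^2(0,T;W^{k_0+1,2}(\Gamma))$ and the $W^{k_0,2}$ machinery controls the extension by \eqref{Fdivetaest1} and \eqref{higherderest1*}. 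Second, I would expand $\langle DK_\kappa(\eta^\kappa), D^{-s}D^s\partial_t\eta^\kappa\rangle$: the principal bending part yields, after summation by parts in $s$ and in time, a term of the form $\tfrac12\tfrac{d}{dt}\int_\Gamma \mathcal A\big(\overline\gamma(\eta^\kappa)D^s\nabla^2\eta^\kappa\big):\big(\overline\gamma(\eta^\kappa)D^s\nabla^2\eta^\kappa\big)$ plus commutator terms where the $s$-difference falls on $\overline\gamma(\eta^\kappa)$, on $\mathcal A$, or on the lower-order polynomial $P_0(\eta^\kappa,\nabla\eta^\kappa)$; all commutators are controlled using the $C^{1,\alpha}(\Gamma)$ bound on $\eta^\kappa$ and absorbed. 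Third, the fluid and coupling terms $-\langle u_i^\kappa,\partial_t\psi - u_i^\kappa\cdot\nabla\psi\rangle$, the viscous term, and the $\kappa$-terms are estimated using the uniform energy bounds \eqref{disstypesthlevfhlvl2}, Corollary \ref{corextension}, and the $L^2(0,T;W^{1,2})$ bound of $u_i^\kappa$; the delicate inertial term is handled exactly as the time-derivative terms in Section \ref{constructiontesthlev}, exploiting that the translation acts only tangentially on $\Gamma$ and that the solenoidal extension has the $\partial_t$-estimate in \eqref{Fdivetaest1}. Fourth, letting $s\to 0$ and using $\overline\gamma(\eta^\kappa)\ge \overline\gamma_0>0$ (from coercivity), the Gagliardo seminorm $\|\nabla^2\eta^\kappa\|_{W^{\sigma^*,2}}$ emerges with $\sigma^*<\tfrac12$ determined by the loss in the difference-quotient estimate for the trace term (the $W^{1,2}(\Omega^i)\to W^{1-\frac1r,r}(\Sigma_\eta)$ trace from Lemma \ref{Lem:TrOp}); integrating in time gives $\eta^\kappa\in L^2(0,T;W^{2+\sigma^*,2}(\Gamma))$. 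Fifth, $\partial_t\eta^\kappa\in L^2(0,T;W^{\sigma^*,2}(\Gamma))$ follows from the coupling $\Tr_{\Sigma_{\eta^\kappa}}u_i^\kappa = \partial_t\eta^\kappa\nu$ together with the fractional trace estimate of Lemma \ref{Lem:TrOp} applied to $u_i^\kappa\in L^2(0,T;W^{1,2}(\Omega^{i,\kappa}))$, which lands in $L^2(0,T;W^{1-\frac1r,r}(\Sigma_{\eta^\kappa}))$ for $r<2$, and hence in $W^{\sigma^*,2}(\Gamma)$ for a suitable $\sigma^*$ after re-parametrizing by $\varphi_{\eta^\kappa}$.

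\textbf{Main obstacle.} The hard part will be the commutator and coupling estimates in the second and third steps: one must show that when the tangential difference quotient $D^s$ hits $\overline\gamma(\eta^\kappa)$, $\mathcal A(\eta^\kappa)$, the lower-order polynomial $P_0$, or the (non-Lipschitz-in-time, moving) interface through the trace and the solenoidal extension $\mathcal F^{\div}_{i,\eta^\kappa}$, every resulting term can be bounded by $\varepsilon\|D^s\nabla^2\eta^\kappa\|_{L^2}^2$ plus quantities uniformly bounded in $\kappa$. This requires carefully tracking the dependence of the extension operator on $\eta^\kappa$ (the estimates \eqref{Fdivetaest1}, \eqref{higherderest1*}, \eqref{estEetadel} are tailored for exactly this) and using that $\eta^\kappa\to\eta$ strongly in $C^{1,\alpha}$ to keep the geometry controlled; the $\kappa$-uniformity is what makes it non-trivial and forces the precise value range $0<\sigma^*<\tfrac12$. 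This is precisely the adaptation of \cite[Section 4]{MuhaSch} announced in the introduction, with the additional bookkeeping of the temperature-dependent viscosity $\mu_i(\theta_i^\kappa)$, which however only enters through its uniform upper and lower bounds \eqref{muiN} and the strong convergence \eqref{strngconvtemp2}, and thus causes no essential new difficulty here.
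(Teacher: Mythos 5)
Your overall toolkit is the right one (tangential difference quotients tested into \eqref{momentplvlh2}, the solenoidal extension of Proposition \ref{smestdivfrex} with the corrector $\mathcal{K}_{i,\eta^\kappa}$, commutator control via the $C^{1,\alpha}$ bound on $\eta^\kappa$, the sign of the $\kappa$-regularizers, and the trace argument of Lemma \ref{Lem:TrOp} for $\partial_t\eta^\kappa\in L^2(0,T;W^{\sigma^*,2}(\Gamma))$), but the central choice of test function is not admissible and this is a genuine gap. You propose $b=D^s_{-h}D^s_h\partial_t\eta^\kappa$, i.e.\ a double difference quotient of the structural \emph{velocity}. In the weak formulation \eqref{momentplvlh2} the inertial term appears as $-\int_0^t\int_\Gamma\partial_t\eta^\kappa\,\partial_t b$ (plus endpoint terms), so this choice requires $\partial_t b=D^s_{-h}D^s_h\partial_{tt}\eta^\kappa$, and no control of $\partial_{tt}\eta^\kappa$ is available at the $\kappa$-level (nor is $b\in W^{1,\infty}(0,T;L^2(\Gamma))$, as the admissibility class demands). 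Equivalently, pairing $DK_\kappa(\eta^\kappa)$ with a velocity difference quotient to produce $\tfrac{d}{dt}\int_\Gamma\mathcal{A}\bigl(\overline{\gamma}(\eta^\kappa)D^s_h\nabla^2\eta^\kappa\bigr):\bigl(\overline{\gamma}(\eta^\kappa)D^s_h\nabla^2\eta^\kappa\bigr)$ would, if it were legitimate, yield an $L^\infty$-in-time bound on $\|\eta^\kappa\|_{W^{2+\sigma^*,2}}$, which is stronger than \eqref{improvedregeta} and is precisely what this method cannot deliver; the obstruction is the same missing time regularity.

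The paper (following the scheme of \cite[Section 4]{MuhaSch}) instead establishes the fractional estimate for $\partial_t\eta^\kappa$ \emph{first}, by trace estimates from $u_i^\kappa\in L^2(0,T;W^{1,2}(\Omega^{i,\kappa}))$ and the coupling $\Tr_{\Sigma_{\eta^\kappa}}u^\kappa_i=\partial_t\eta^\kappa\nu$, and then tests \eqref{momentplvlh2} with the pair
\[
\Bigl(\mathcal{F}^{\div}_{i,\eta^{\kappa}}\bigl(D^s_{-h}D^s_h\eta^{\kappa}-\mathcal{K}_{i,\eta^\kappa}(D^s_{-h}D^s_h\eta^{\kappa})\bigr),\;D^s_{-h}D^s_h\eta^\kappa-\mathcal{K}_{i,\eta^\kappa}(D^s_{-h}D^s_h\eta^{\kappa})\Bigr),
\]
i.e.\ a double difference quotient of the \emph{displacement} (corrected to permit a divergence-free extension). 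With this choice the inertial pairing becomes, after moving one time derivative, essentially $-\int_0^T\|D^s_h\partial_t\eta^\kappa\|_{L^2(\Gamma)}^2$ plus lower-order terms, which is exactly absorbed by the previously obtained trace estimate; the Koiter stress term then yields the elliptic-type gain $\|\overline{\gamma}(\eta^\kappa)D^s_h\nabla^2\eta^\kappa\|_{L^2}^2$ integrated in time (hence the $L^2(0,T;W^{2+\sigma^*,2})$ bound, not $L^\infty$), and the $\kappa$-regularizers contribute the nonnegative quantities in the first line of \eqref{improvedregeta}. Your fifth step thus needs to be promoted to the first step, your main test function replaced by the displacement difference quotient minus the corrector $\mathcal{K}_{i,\eta^\kappa}$ (omitting the corrector the extension is in general not solenoidal), and the limit is taken in the translation parameter $h$ for fixed fractional exponent $s=\sigma^*<\tfrac12$, rather than letting the exponent tend to zero. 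Your remarks on the commutators, on the temperature-dependent viscosity entering only through its bounds, and on the role of \eqref{Fdivetaest1} and \eqref{higherderest1*} are consistent with the paper's argument.
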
  
\begin{proof}
The proof follows almost line by line as it was shown  \cite[Section 4]{MuhaSch}: The fractional differentiability of the time derivative follows by trace estimates. The higher fractional differentiability of $\eta$ follows by testing with double difference quotients. In order to give a sketch of that we introduce,
\[
D^s_
h(\xi)(x):=\frac{\xi(x+he)-\xi(x)}{h^s}\text{ where }e\text{ is any unit vector.}
\]
Then one uses the following test function
\[
(\mathcal{F}^{div}_{i,\eta^{\kappa}}(D^s_
{-h} D^s_
h\eta^{\kappa}-\mathcal{K}_{i,\eta^\kappa}(D^s_
{-h} D^s_
h\eta^{\kappa})),D^s_
{-h} D^s_
h\eta-\mathcal{K}_{i,\eta}(D^s_
{-h} D^s_
h\eta^{\kappa}))
\]
in \eqref{momentplvlh2}, relying on the extension introduced in Proposition~\ref{smestdivfrex}. Let us first discuss what happens on the shell. The information comes from the stress related to the Koiter energy and is deduced using \cite[Lemma 4.5]{MuhaSch}. The estimates relating to the inertia term is explained in \cite[Section 4.2]{MuhaSch}. Further as $\mathcal{K}_{i,\eta}(D^s_
{-h} D^s_
h\eta)$ is constant in space it has no impact on the regularization terms. This means that the structural regularizers (both for the dissipation and energy) contribute the following positive terms:
\[
\kappa \norm{\nabla^{{k_0}+1}D^s_
	h\eta^\kappa}^2_{L^2((0,T)\times \Gamma)}+\kappa \norm{\nabla^{k_0}D^s_
	h\eta^\kappa}^2_{L^\infty((0,T);L^{2}(\Gamma))},
\]
assuming that the initial condition $\eta_{0}\in W^{k_{0}+1,2}(\Gamma).$
All terms appearing in the fluid equation can be estimated as explained in  \cite[Subsection 4.2]{MuhaSch}. In particular, the appearance of the temperature in the viscosity does not change the argument at all. Since we have reserved one derivative more for the deformation $\eta^{\kappa},$ the additional regularization term for the fluid part can be estimated as follows:
\begin{equation}\nonumber
	\begin{array}{ll} &\displaystyle\kappa\int^{t}_{0}\int_{\Omega^{i,\kappa}}\nabla^{k_0} u^\kappa_{i}:\nabla^{k_{0}}(\mathcal{F}^{div}_{i,\eta^{\kappa}}(D^s_
		{-h} D^s_
		h\eta^{\kappa}-\mathcal{K}_{i,\eta}(D^s_
		{-h} D^s_
		h\eta^{\kappa})))\leqslant c,
	\end{array}
\end{equation}
where $c$ in the last inequality may depend on $\Omega,$ $\Gamma,$ the initial data and the $W^{2,2}$ coercivity size of $\overline{\gamma}(\eta)$ ($\overline{\gamma}(\eta)$ has been introduced in \eqref{ovgamma}). The last inequality is obtained as a consequence of \eqref{disstypesthlevfhlvl2}, the estimate \eqref{higherderest1*} and the uniform in $\kappa$ bound of $\eta^{\kappa}$ in $L^{\infty}(0,T;W^{2,2}(\Gamma)\cap \sqrt{\kappa}W^{k_{0}+1,2}(\Gamma))\cap L^{2}(0,T;\sqrt{\kappa}W^{k_{0}+1+s,2}(\Gamma)).$\\
This finishes the sketch of the proof of \eqref{improvedregeta}.
\end{proof}
As a consequence of \eqref{improvedregeta}$_{2}$ and the classical Aubin-Lions lemma we have the following strong convergence of $\eta^{\kappa}:$\\
\begin{equation}\label{strngconvetadel}
\eta^{\kappa}\rightarrow \eta \,\,\mbox{in}\,\, L^{2}(0,T;W^{2+,2}(\Gamma)).
\end{equation}
The convergence \eqref{strngconvetadel} will be used in particular for the limit passage in the term related to the Koiter energy in the weak-formulation of the momentum equation. The notation $2+$ signifies a number greater than $2$.
\subsection{Compactness of $(u^{\kappa}_{i},\partial_{t}\eta^{\kappa})$} The next lemma infers the strong convergence of the approximate velocity field $(u^{\kappa}_{i},\partial_{t}\eta^{\kappa}).$ 
\begin{lem}\label{strngconvvel}
Let the quadruple $(u^{\kappa}_{i},\eta^{\kappa},\theta^{\kappa}_{i})$ solve \eqref{momentplvlh2}-\eqref{heatfnlhlev2}. Then the following strong convergence hold
\begin{equation}\label{strngconvvel2}
	\left(\partial_{t}\eta^{\kappa}u^{\kappa}_{i}\right)\rightarrow (\partial_{t}\eta,u_{i})\,\,\mbox{in}\,\,L^{2}(0,T;L^{2}(\Omega^{i,\kappa}))
\end{equation}
\end{lem}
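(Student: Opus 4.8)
The plan is to prove Lemma~\ref{strngconvvel} by the same strategy that gave the strong convergence of the time-averaged velocities at the previous ($h$) level, namely an application of the modified Aubin-Lions Theorem~\ref{thmBbL}, but now working \emph{directly} with $(u^{\kappa}_{i},\partial_{t}\eta^{\kappa})$ rather than with a time-average, since at the $\kappa$ level the momentum equation \eqref{momentplvlh2} is a genuine (non-delayed) evolution equation. First I would fix, for each $\kappa$, the solenoidal extension $\mathcal{F}^{\div}_{i,\eta^{\kappa}}$ from Proposition~\ref{smestdivfrex} and, as in \eqref{I1pl2}, split
\[
\int^{T}_{0}\Big(\|\partial_{t}\eta^{\kappa}\|^{2}_{L^{2}(\Gamma)}+\|u^{\kappa}_{i}\|^{2}_{L^{2}(\Omega^{i,\kappa})}\Big)
=\mathbb{I}_{1}^{\kappa}+\mathbb{I}_{2}^{\kappa},
\]
where $\mathbb{I}_{1}^{\kappa}$ pairs $(u^{\kappa}_{i})$ against the solenoidal extension of $\partial_{t}\eta^{\kappa}$ and $\mathbb{I}_{2}^{\kappa}$ pairs $u^{\kappa}_{i}$ against the divergence-free remainder $u^{\kappa}_{i}-\mathcal{F}^{\div}_{i,\eta^{\kappa}}(\partial_{t}\eta^{\kappa}-\mathcal{K}_{i,\eta^{\kappa}}(\partial_{t}\eta^{\kappa}))$. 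For each piece I would choose the Banach-space triple exactly as in the proof of Proposition~\ref{strngconvavg}: $X=L^{2}(\Gamma)\times W^{-s,2}(\Omega^{i}_{m,M})$, $X^{*}=L^{2}(\Gamma)\times W^{s,2}(\Omega^{i}_{m,M})$, $Z=H^{s_{0}}(\Gamma)\times H^{s_{0}}(\Omega^{i}_{m,M})$ with $0<s<s_{0}<\tfrac14$, with $f^{\kappa}$ the (extension of the) deformation velocity and $g^{\kappa}$ the fluid velocity.

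The verification of hypotheses (1), (2), (4) of Theorem~\ref{thmBbL} is routine: (1) is weak compactness in Hilbert spaces using \eqref{weakconvhlev2}, (2) is a mollification estimate in the spirit of \eqref{difffhdel} together with the linearity and continuity of $\mathcal{K}_{i,\eta}$, $\mathcal{F}^{\div}_{i,\eta}$ (Corollary~\ref{corextension}), and (4) is the compact embedding $X^{*}\hookrightarrow\hookrightarrow Z^{*}$ plus the uniform fractional bound on the velocity traces via \cite[Prop.~2.28]{LeRu14} and the uniform $W^{2,2}$--$W^{\sigma^{*},2}$ bounds on $\eta^{\kappa}$, $\partial_{t}\eta^{\kappa}$ from Lemma~\ref{improvebndetadelta}. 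Crucially, the uniform-in-$\kappa$ control of $\eta^{\kappa}$ in $L^{\infty}(W^{2,2})\cap L^{2}(W^{2+\sigma^{*},2})$ now \emph{replaces} the $L^{\infty}(W^{k_{0}+1,2})$ control that was available (but $\kappa$-dependent) at the $h$ level, so the extension estimates \eqref{Fdivetaest1} suffice and one never needs the high-order norms with their diverging constants. The equi-continuity assumption (3) is the heart of the matter: for $0<t<\sigma<t+\sigma_{0}$ one writes $\langle g^{\kappa}(t)-g^{\kappa}(\sigma),f^{\kappa}_{\delta}(t)\rangle=\int_{\sigma}^{t}\langle\partial_{t}g^{\kappa}(\tau),f^{\kappa}_{\delta}(t)\rangle\,d\tau$ and substitutes the test pair $\big((\partial_{t}\eta^{\kappa})_{\delta}-\mathcal{K}_{i,\eta^{\kappa}}((\partial_{t}\eta^{\kappa})_{\delta})(t),\,E_{i,\eta^{\kappa}(\tau),\delta}(\partial_{t}\eta^{\kappa}(t))\big)$ — which is admissible in \eqref{momentplvlh2} because of the coupling \eqref{couplingkappa} and the fact that $\mathcal{K}_{i,\eta^{\kappa}}(\partial_{t}\eta^{\kappa})=0$, established as in \eqref{gaussapp}--\eqref{Keta0} using $\div u^{\kappa}_{i}=0$ and $\nu(\varphi^{-1}(\pi(x)))\cdot\nu_{\eta^{\kappa}}>0$ a.e. Each term on the right-hand side of \eqref{momentplvlh2} is then bounded by $C(\delta)|t-\sigma|^{\alpha}(A^{\kappa}(t)+1)$ with $\alpha\in(0,\tfrac12]$ and $A^{\kappa}\in L^{1}(0,T)$ uniformly, exactly along the lines of the estimates \eqref{estmcI}, \eqref{estthrdtrmI}, \eqref{estA2comm}: the elastic term uses \eqref{convDKlevh}-type bounds on $DK_{\kappa}$ together with $\sqrt{\kappa}$-weighted control of the regularizers (which are positive and constant-in-space under $\mathcal{K}_{i,\eta}$, hence harmless), the dissipative viscous term uses the uniform energy bound \eqref{disstypesthlevfhlvl2} and the positive lower bound on $\theta^{\kappa}_{i}$, and the commutator term $\langle\partial_{t}u^{\kappa}_{i}(\tau),E_{i,\eta^{\kappa}(\tau),\delta}-E_{i,\eta^{\kappa}(t),\delta}\rangle$ is handled via $\partial_{\alpha}E_{i,\eta^{\kappa}(\alpha),\delta}$ and \eqref{estEetadel}.

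The main obstacle, I expect, is precisely making assumption (3) work \emph{uniformly in $\kappa$} while handling the $\kappa$-regularization terms $\kappa\int_{\Gamma}\nabla^{k_{0}+1}\partial_{t}\eta^{\kappa}\nabla^{k_{0}+1}b$ and $\kappa\int_{\Omega^{i,\kappa}}\nabla^{k_{0}}u^{\kappa}_{i}:\nabla^{k_{0}}\psi$ appearing in \eqref{momentplvlh2}. On the shell these cause no trouble because the test function there is $D^{s}_{-h}D^{s}_{h}\eta$-free — i.e.\ the corrector $\mathcal{K}_{i,\eta^{\kappa}}(\partial_{t}\eta^{\kappa})$ is constant in space and $(\partial_{t}\eta^{\kappa})_{\delta}$ is smooth — so one only needs $\sqrt{\kappa}\|\nabla^{k_{0}+1}\eta^{\kappa}\|_{L^{\infty}L^{2}}\le c$ from \eqref{disstypesthlevfhlvl2}. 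For the fluid regularizer one must estimate $\kappa\int_{0}^{t}\int_{\Omega^{i,\kappa}}\nabla^{k_{0}}u^{\kappa}_{i}:\nabla^{k_{0}}E_{i,\eta^{\kappa}(\tau),\delta}(\partial_{t}\eta^{\kappa}(t))$, and here the high-order extension estimate \eqref{higherderest1*} together with $\sqrt{\kappa}\,\|u^{\kappa}_{i}\|_{L^{2}W^{k_{0},2}}\le c$ (from \eqref{disstypesthlevfhlvl2}) and $\sqrt{\kappa}\,\|\eta^{\kappa}\|_{L^{2}W^{k_{0}+1+\sigma^{*},2}}\le c$ (from Lemma~\ref{improvebndetadelta}) gives a bound of the form $C(\delta)|t-\sigma|^{1/2}$ with constant independent of $\kappa$, since the two $\sqrt{\kappa}$ factors absorb the single $\kappa$. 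Once (1)--(4) hold, Theorem~\ref{thmBbL} yields $\mathbb{I}_{1}^{\kappa}\to\int_{0}^{T}(\|\partial_{t}\eta\|^{2}_{L^{2}(\Gamma)}+\langle u_{i},\mathcal{F}^{\div}_{i,\eta}(\partial_{t}\eta)\rangle)$ and $\mathbb{I}_{2}^{\kappa}\to\int_{0}^{T}\langle u_{i},u_{i}-\mathcal{F}^{\div}_{i,\eta}(\partial_{t}\eta)\rangle$; adding them gives convergence of the $L^{2}$ norms, and uniform convexity of $L^{2}$ upgrades the weak convergences \eqref{weakconvhlev2}$_{3,4}$ to the strong convergence \eqref{strngconvvel2}. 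A minor point to watch is the identification of limits across the moving interface (the characteristic functions $\mathcal{X}_{\Omega^{i,\kappa}}$), which is handled exactly as before using the uniform convergence \eqref{weakconvhlev2}$_{2}$.
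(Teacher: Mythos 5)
Your proposal is correct and follows essentially the same route as the paper: the paper proves this lemma by adapting the generalized Aubin--Lions argument of Proposition \ref{strngconvavg} (via \cite[Lemma 6.3]{MuhaSch}), working directly with $(u^{\kappa}_{i},\partial_{t}\eta^{\kappa})$ since there is no time delay at this level, and the only genuine modification it highlights is precisely the one you address — controlling the extra terms $\kappa\int\nabla^{k_{0}+1}\partial_{t}\eta^{\kappa}\,\nabla^{k_{0}+1}(\partial_{t}\eta^{\kappa}(t))_{\delta}$, $\kappa\int\nabla^{k_{0}+1}\eta^{\kappa}\,\nabla^{k_{0}+1}(\partial_{t}\eta^{\kappa}(t))_{\delta}$ and $\kappa\int\nabla^{k_{0}}u^{\kappa}_{i}\,\nabla^{k_{0}}E_{i,\eta^{\kappa}(s),\delta}(\partial_{t}\eta^{\kappa}(t))$ by splitting $\kappa=\sqrt{\kappa}\cdot\sqrt{\kappa}$ and using the uniform $\sqrt{\kappa}$-weighted bounds from \eqref{disstypesthlevfhlvl2} and Lemma \ref{improvebndetadelta} together with \eqref{higherderest1*}. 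No gap; your write-up just makes explicit what the paper leaves as a sketch.
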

The proof of Lemma \ref{strngconvvel} shares some similarities with proof of Proposition \ref{strngconvavg}, but simpler. For a precise reference we would like to quote \cite[Lemma 6.3]{MuhaSch}. One needs to slightly modify the proof of \cite[Lemma 6.3]{MuhaSch} due to the appearance of $$\displaystyle\kappa\int^{t}_{\sigma}\int_{\Gamma}\nabla^{k_{0}+1}\partial_{t}\eta^{\kappa}(s)\nabla^{k_{0}+1}(\partial_{t}\eta^{\kappa}(t))_{\delta}dy ds,\,\,\kappa\int^{t}_{\sigma}\int_{\Gamma}\nabla^{k_{0}+1}\eta^{\kappa}(s)\nabla^{k_{0}+1}(\partial_{t}\eta^{\kappa}(t))_{\delta}dy ds$$
and 
$$\kappa\int^{t}_{\sigma}\int_{\Omega^{i,\kappa}}\nabla^{k_{0}}u^{\kappa}_{i}\nabla^{k_{0}}(E_{i,\eta^{\kappa}(s),\delta}(\partial_{t}\eta^{\kappa}(t)))dxds,$$
where $(\cdot)_{\delta}$ denotes the mollification in the space variable and $E_{i,\eta(s),\delta}(\cdot)$ is as introduced in Corollary \ref{corextension}.
In view of the uniform in $\kappa$ bounds of $\|\partial_{t}\eta^{\kappa}\|_{L^{2}(0,T;\sqrt{\kappa}W^{k_{0},2}(\Gamma))\cap L^{\infty}(0,T;L^{2}(\Gamma))},$ $\|\eta^{\kappa}\|_{L^{\infty}(0,T;\sqrt{\kappa}W^{k_{0}+1}(\Gamma))}$ and $\|u\|_{L^{2}(0,T;\sqrt{\kappa}(W^{k_{0},2}(\Omega^{i,\kappa})))}$ the above mentioned adaptation can be performed in a straight forward manner.
\subsection{Conclusion and the proof of Theorem \ref{Th:main}}
In view of the obtained convergences of $(u^{\kappa}_{i},\eta^{\kappa},\theta^{\kappa}_{i}),$ let us now prove Theorem \ref{Th:main}.\\
The regularity of the limiting functions \eqref{regularity}, follows from \eqref{weakconvhlev2}, \eqref{Lpthetaih2} and \eqref{improvedregeta}.\\
The decomposition of $\Omega$ of the form \eqref{Omegat12}- \eqref{varphieta}-\eqref{Sigmat} hold in view of \eqref{Omegat1kappa}.\\
The coupling criterian $\Tr_{\Sigma_{\eta}}u=\partial_{t}\eta\nu$ is a consequence of the available convergences and \eqref{couplingkappa}, whereas the positivity \eqref{postempht0levmain} follows from \eqref{postempht0lev2}.\\
Now let us discuss the passage of $\kappa\rightarrow 0$ in the equations \eqref{momentplvlh2} to \eqref{entrpybalhlev2}.
\subsubsection{Passage to the limit in \eqref{momentplvlh2}} As was already discussed in Section \ref{passtau0} and Section \ref{constructiontesthlev}, here too we need look for test functions which solve the compatibility condition $b\nu=\Tr_{\Sigma_{\eta}}\psi$ at the limiting interface $\Sigma_{\eta},$ since it is not apriori guaranteed by a similar compatibity at the approximate layer. Following the approach of Section \ref{passtau0}, one constructs a pair $(\psi,b^{\kappa} ),$ such that $\psi\in C^{\infty}([0,T]\times \overline{\Omega})$ and
\begin{equation}\label{constestkappalev}
\begin{array}{ll}
	&\displaystyle 	b^{\kappa}=\Tr_{\Sigma_{\eta^{\kappa}}}\psi\cdot\nu,\quad 	b=\Tr_{\Sigma_{\eta}}\psi\cdot\nu,\\
	&\displaystyle 	b^{\kappa}\rightarrow b\,\,\mbox{in}\,\, L^{2}(0,T;W^{2+,2}(\Gamma)),
\end{array}
\end{equation}
where the last convergence is a consequence of the improved regularity \eqref{improvedregeta}. The convergence \eqref{constestkappalev} along with \eqref{strngconvetadel} and \eqref{strngconvvel2} infer that
\begin{equation}\label{convintetatest}
\begin{array}{ll}
	&\displaystyle \int^{t}_{0}\int_{\Gamma}\partial_{t}\eta^{\kappa}\partial_{t}b^{\kappa}\rightarrow \int^{t}_{0}\int_{\Gamma}\partial_{t}\eta\partial_{t}b,\qquad\displaystyle \int^{t}_{0}\langle DK_{\kappa}(\eta^{\kappa}),b^{\kappa}\rangle \rightarrow \int^{t}_{0}\langle DK(\eta),b\rangle.
\end{array}
\end{equation}
Specifically for the second convergence in \eqref{convintetatest}, one first verifies that $DK_{\kappa}(\eta^{\kappa})\rightharpoonup DK(\eta)$ in $L^{2}(0,T;L^{1+}(\Gamma)),$ in view of the convergences \eqref{weakconvhlev2}$_{1}$, \eqref{strngconvetadel} and the structure of $DK_{\kappa}$ (cf. \eqref{elasticityoperator},\eqref{amab},\eqref{Geta},\eqref{Gij'},\eqref{exaR} and \eqref{DefDk}). In particlular one can observe that as $\kappa\rightarrow 0,$ $\displaystyle\kappa\int^{t}_{0}\int_{\Gamma}\nabla^{k_{0}+1}\eta^{\kappa}\nabla^{k_{0}+1}b^{\kappa}\rightarrow 0,$
since 
$$\left|\kappa\int^{t}_{0}\int_{\Gamma}\nabla^{k_{0}+1}\eta^{\kappa}\nabla^{k_{0}+1}b^{\kappa}\right|\leqslant \sqrt{\kappa}\|\nabla^{k_{0}+1}\eta^{\kappa}\|_{L^{2}(0,T;L^{2}(\Gamma))}\sqrt{\kappa}\|\nabla^{k_{0}+1}b^{\kappa}\|_{L^{2}(0,T;L^{2}(\Gamma))},$$
where the first term is bounded and the second converges to zero.\\
A similar reasoning renders the fact that $\displaystyle\kappa\int^{t}_{0}\int_{\Gamma} \nabla^{k_{0}+1}\partial_{t}\eta^{\kappa}\nabla^{k_{0}+1}b\rightarrow 0$ and $\displaystyle\kappa \int^{t}_{0}\int_{\Omega^{i,\kappa}}\nabla^{k_{0}}u^{\kappa}_{i}:\nabla^{k_{0}}\psi\rightarrow 0$ as $\kappa\rightarrow 0.$\\
In view of the arguments above and the weak convergences \eqref{weakconvhlev2} and the strong convergences \eqref{strngconvtemp2} and \eqref{strngconvvel2} (the strong convergence of $u^{\kappa}_{i}$ is essential to pass limit in the fluid convection) one concludes the obtainment of \eqref{momentplvlhmain}.
\subsubsection{Passage to the limit in \eqref{heatfnlhlev2} and \eqref{energybalhlev2}} In view of \eqref{strngconvtemp2} one observes that $\mu_{i}(\theta_{i}^{\kappa})\zeta^{\kappa}_{i}\mathcal{X}_{\Omega^{i,\kappa}}$ converges strongly to $\mu_{i}(\theta_{i})\zeta_{i}\mathcal{X}_{\Omega^{i}}$ in $L^{r}(\Omega\times(0,T)),$ for any $1\leqslant r<\infty$ and any $\zeta\in C^{\infty}_{c}([0,T];C^{\infty}_{c}(\overline{\Omega}))$ with $\zeta\geqslant 0.$ Note that $\zeta^{\kappa}_{i}$ and $\zeta_{i}$ are respectively the restrictions of $\zeta$ into $\Omega^{i,\kappa}$ and $\Omega^{i}.$ This convergence of $\mu_{i}(\theta_{i}^{\kappa})\zeta^{\kappa}_{i}\mathcal{X}_{\Omega^{i,\kappa}}$ along with the weak convergence \eqref{weakconvhlev2}$_{4}$ and the uniform bound on\\ $\|\sqrt{\mu_{i}(\theta^{\kappa}_{i})\zeta_{i}^{\kappa}}Du^{\kappa}_{i}\|_{L^{2}(0,T;L^{2}(\Omega^{i,\kappa}))}$ (see \eqref{disstypesthlevfhlvl2}), we obtain
$$\sqrt{\mu_{i}(\theta^{\kappa}_{i})\zeta_{i}^{\kappa}}Du^{\kappa}_{i}\mathcal{X}_{\Omega^{i,\kappa}}\hookrightarrow \sqrt{\mu_{i}(\theta_{i})\zeta_{i}}Du_{i}\mathcal{X}_{\Omega^{i}}\,\,\mbox{in}\,\, L^{2}(L^{2}).$$
Now using weak lower semicontinuity of norms one concludes the proof of \eqref{heatfnlhlevmain} after passing $\kappa\rightarrow 0$ in \eqref{heatfnlhlev2}.\\
Next we would explain the passage of $\kappa\rightarrow 0$ in \eqref{energybalhlev2}. Observe that by interpolation
$$\displaystyle\sqrt{\kappa}\|\nabla^{k_{0}+1}\eta^{\kappa}(t)\|_{L^{2}(\Gamma)}\leqslant C\kappa^{\frac{\sigma^{*}}{2(k_{0}-1+\sigma^{*})}}\left(\sqrt{\kappa}\|\eta^{\kappa}(t)\|_{W^{k_{0}+1+\sigma^{*},2}(\Gamma)}\right)^{\frac{k_{0}-1}{k_{0}-1+\sigma^{*}}}\|\eta^{\kappa}(t)\|^{\frac{\sigma^{*}}{k_{0}-1+\sigma^{*}}}_{W^{2,2}(\Gamma)}.$$
Now in view of \eqref{improvedregeta}$_{1}$ and \eqref{disstypesthlevfhlvl2}, both the quantities $\sqrt{\kappa}\|\eta^{\kappa}(t)\|_{W^{k_{0}+1+\sigma^{*},2}(\Gamma)}$ and $\|\eta^{\kappa}(t)\|_{W^{2,2}(\Gamma)}$ are bounded with constants independent of $\kappa$ (but can depend on $t$) for $a.e.$ $t\in[0,T].$ Hence as $\kappa\rightarrow 0,$ one recovers that
\begin{equation}\label{convregenergy}
\begin{array}{l}
	\displaystyle \kappa \int_{\Gamma} |\nabla^{k_{0}+1}\eta^{\kappa}(t)|^{2}\rightarrow 0,
\end{array}
\end{equation}
for $a.e.$ $t\in [0,T].$ Hence using \eqref{strngconvetadel}, \eqref{weakconvhlev2}, \eqref{convregenergy} and the structure of $K(\eta)$ ((cf. \eqref{elasticityoperator},\eqref{amab},\eqref{Geta},\eqref{Gij'},\eqref{exaR}), we get that
\begin{equation}\label{Kkappaconv}
\begin{array}{l}
	\displaystyle \int_{\Gamma} K_{\kappa}(\eta^{\kappa}(t))\rightarrow \int_{\Gamma} K(\eta(t)),\,\,\mbox{for a.e.}\,\,t\in[0,T].
\end{array}
\end{equation}
Finally using the convergences \eqref{strngconvtemp2}, \eqref{strngconvvel} and \eqref{Kkappaconv}, one furnishes the energy balance \eqref{energybalhlevmain} (for $a.e.$ $t\in[0,T]$) by passing $\kappa\rightarrow 0$ in \eqref{energybalhlev2}.

\subsubsection{A minimal time where the solution avoids degeneracy}\label{minintex}
The solution we obtained can undergo degeneracy at finite time, either in the form of \eqref{degenfstkind} or \eqref{degensndkind} due to the loss of $W^{2,2}(\Gamma)$ coercivity of the non-linear Koiter shell. But in the following we show that there always exist a time interval where both kind of degeneracies can be avoided.\\
\underline{\textit{Avoiding degeneracy of the form \eqref{degenfstkind}}} 
From \eqref{weakconvhlev2}$_{2},$ we have that for $(t,x)\in (0,T)\times\Gamma$
\begin{equation}\label{estetak}
\eta^{\kappa}_{0}-ct^\frac{1}{4}\leq \eta^{\kappa}_{0}(x)-|\eta^\kappa(t,x)-\eta^{\kappa}_{0}(x)|\leq \eta^\kappa(t,x)\leq \eta^{\kappa}_{0}(x)+|\eta^\kappa(t,x)-\eta^{\kappa}_{0}(x)|\leq \eta^{\kappa}_{0}+ct^\frac{1}{4}
\end{equation}
with $c$ independent of all regularizing parameters. Now since $\eta^{\kappa}_{0}$ uniformly converges to $\eta_{0},$ in view of \eqref{estetak}, one can furnish the existence of a small enough time where the degeneracy \eqref{degenfstkind} can be excluded.\\[2.mm]
\underline{\textit{Excluding degeneracy of the form \eqref{degensndkind}}} The next lemma excludes the possibility of \eqref{degensndkind} in a small enough time interval. The proof of Lemma \ref{Lem:Coer} can be done by following the arguments used in proving Lemma $6.5,$ \cite{MitNes}.
\begin{lemma}\label{Lem:Coer}
Let the assumptions of Theorem \ref{Th:main} be satisfied and $K_{\kappa}(\eta^{\kappa})$ is bounded as in \eqref{disstypesthlevfhlvl2}.
Then if $\overline{\gamma}(\eta)\neq 0$ we have $\eta(t)\in W^{2,2}(\Gamma)$ and moreover 
\begin{equation}\label{coercivityrel}
	\begin{array}{l}
		\displaystyle
		\sup_{t\in[0,T]}\int_{\Gamma}\overline{\gamma}^{2}(\eta)|\nabla^{2}\eta|^{2}\leqslant C_{0},
	\end{array}
\end{equation}
for some constant $C_{0}$ depending on the initial data. Furthermore, let $\overline{\gamma}(\eta_{0})>0$ then there is a minimal time $T_*$ depending on the initial configuration such that $\overline{\gamma}(\eta)>0$ and \eqref{coercivityrel} hold in $(0,T_{*})$.
\end{lemma}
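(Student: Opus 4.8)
The plan is to mimic the strategy of \cite[Lemma 6.5]{MitNes}, transplanted to our regularized setting, and to crucially exploit the $\kappa$-uniform bounds already collected in \eqref{disstypesthlevfhlvl2} and \eqref{improvedregeta}. First I would recall the structure of the Koiter energy: by \eqref{Koiterenergy}, \eqref{Rijcomp} and the definition \eqref{ovgamma} of $\overline\gamma$, the bending part of $K(\eta)$ controls $\int_\Gamma |\mathcal{A}\mathbb{R}(\eta):\mathbb{R}(\eta)|$, and since $\mathcal A$ is (pointwise, on the reference surface) uniformly elliptic on symmetric $2\times2$ matrices, this is comparable to $\int_\Gamma |\mathbb{R}(\eta)|^2$. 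Writing $R_{ij}(\eta)=\overline\gamma(\eta)\partial^2_{ij}\eta+P_0(\eta,\nabla\eta)$ with $P_0$ a polynomial that is at most quadratic in $\nabla\eta$ (hence controlled in $L^2$ by the uniform $W^{2,2}\cap C^{0,\alpha}$-bound on $\eta^\kappa$ coming from \eqref{weakconvhlev2} and $W^{2,2}\hookrightarrow W^{1,4}$ on the two-dimensional torus $\Gamma$), one obtains
\begin{equation}\label{eq:coerfromK}
\int_\Gamma \overline\gamma^2(\eta^\kappa)|\nabla^2\eta^\kappa|^2 \leq C\Big(K_\kappa(\eta^\kappa)+\|\eta^\kappa\|_{W^{2,2}(\Gamma)}^{?}+1\Big)\leq C_0,
\end{equation}
uniformly in $\kappa$ and in $t\in[0,T]$, by \eqref{disstypesthlevfhlvl2}. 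Passing $\kappa\to0$ with the strong convergence \eqref{strngconvetadel} (so $\nabla^2\eta^\kappa\to\nabla^2\eta$ in $L^2$ for a.e. $t$) and $\eta^\kappa\to\eta$ uniformly, together with Fatou, yields \eqref{coercivityrel} for the limit $\eta$, and in particular $\eta(t)\in W^{2,2}(\Gamma)$ whenever $\overline\gamma(\eta(t))$ stays away from $0$.

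Next I would establish the quantitative non-degeneracy of $\overline\gamma$ on a short time interval. The key facts are: $\overline\gamma(\cdot)$ is, by \eqref{ovgamma}, an explicit quadratic polynomial in $\eta$ with coefficients that are $C^2$ (hence bounded and Lipschitz) on $\Gamma$, so $|\overline\gamma(\eta^\kappa(t,x))-\overline\gamma(\eta^\kappa_0(x))|\leq C\,|\eta^\kappa(t,x)-\eta^\kappa_0(x)|\,(1+\|\eta^\kappa\|_{L^\infty})$; and by the Hölder-in-time estimate \eqref{weakconvhlev2}$_2$ (more precisely its $\kappa$-uniform predecessor, the embedding $L^\infty(W^{2,2})\cap W^{1,2}(L^2)\hookrightarrow C^{0,1/4}([0,T];C^0(\Gamma))$ applied to $\eta^\kappa$, with a constant independent of $\kappa$ thanks to \eqref{disstypesthlevfhlvl2}) we get $\|\eta^\kappa(t)-\eta^\kappa(0)\|_{C^0(\Gamma)}\leq c\,t^{1/4}$. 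Since $\eta^\kappa_0\to\eta_0$ uniformly and $\overline\gamma(\eta_0)>0$ on the compact set $\Gamma$, there is $\gamma_*>0$ with $\overline\gamma(\eta_0)\geq 2\gamma_*$, and then for all small $\kappa$ and all $t\leq T_*$ with $T_*^{1/4}$ chosen so small that $c\,C\,T_*^{1/4}(1+\sup_\kappa\|\eta^\kappa\|_{L^\infty})\leq \gamma_*/2$ and the initial perturbation $\|\overline\gamma(\eta^\kappa_0)-\overline\gamma(\eta_0)\|_{C^0}\leq\gamma_*/2$, we have $\overline\gamma(\eta^\kappa(t,x))\geq \gamma_*>0$ uniformly. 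Passing to the limit (uniform convergence $\eta^\kappa\to\eta$) gives $\overline\gamma(\eta(t,x))\geq\gamma_*>0$ on $(0,T_*)\times\Gamma$, which together with \eqref{eq:coerfromK} is exactly the claim. Note $T_*$ depends only on $\Gamma$, $\varphi$, the $C^0$-bound and the $W^{2,2}$-/energy-bound of the initial data, i.e. on the initial configuration alone.

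The main obstacle I anticipate is the uniformity of the constant $C_0$ in \eqref{eq:coerfromK} and, relatedly, making \eqref{improvedregeta}$_2$ genuinely independent of $\kappa$ \emph{up to the degeneracy time}: the coercivity argument of \cite[Section 4]{MuhaSch} that underlies Lemma~\ref{improvebndetadelta} already presupposes a lower bound on $\overline\gamma(\eta^\kappa)$, so there is a potential circularity — one needs the non-degeneracy to get the regularity, and the regularity to pass limits in the energy. The way around it is to run the argument in the correct order on the \emph{common} small interval $[0,T_*]$: first fix $T_*$ purely from the initial data and the $\kappa$-uniform $C^{0,1/4}$ time-modulus (which only uses the basic energy bound \eqref{disstypesthlevfhlvl2}, not the improved one), obtaining $\overline\gamma(\eta^\kappa)\geq\gamma_*$ on $[0,T_*]$ uniformly in $\kappa$; then on $[0,T_*]$ the hypotheses of Lemma~\ref{improvebndetadelta} hold with constants depending only on $\gamma_*$ and the data, so \eqref{eq:coerfromK} and \eqref{coercivityrel} follow; finally let $\kappa\to0$. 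A secondary technical point is justifying the limit $\kappa\to 0$ in the bending term of $K_\kappa$: one splits $\mathbb{R}(\eta^\kappa):\mathbb{R}(\eta^\kappa)$ using \eqref{Rijcomp}, uses $\overline\gamma(\eta^\kappa)\to\overline\gamma(\eta)$ uniformly and $\nabla^2\eta^\kappa\to\nabla^2\eta$ in $L^2$ from \eqref{strngconvetadel}, controls $P_0(\eta^\kappa,\nabla\eta^\kappa)$ in $L^2$ via the uniform $W^{2,2}\hookrightarrow W^{1,4}$ bound, and invokes weak lower semicontinuity for the ``$\geq$'' direction together with the energy identity for the matching ``$\leq$'' — exactly as in the passage to \eqref{Kkappaconv}. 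These are routine once the interval $[0,T_*]$ and the uniform lower bound $\gamma_*$ are in hand.
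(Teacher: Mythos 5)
Your overall route --- the weighted coercivity estimate obtained from the bending part of the Koiter energy via the decomposition \eqref{Rijcomp}, combined with a short-time non-degeneracy argument based on a $\kappa$-uniform H\"older-in-time modulus for $\eta^{\kappa}$ and the uniform convergence $\eta^{\kappa}\to\eta$ --- is exactly the argument the paper relies on (it defers to the proof of Lemma 6.5 of \cite{MitNes}), and your ordering of the steps on a common interval $[0,T_*]$ to avoid circularity with Lemma \ref{improvebndetadelta} is the right way to organize it.

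There is, however, one step whose justification as written does not hold up: you bound $P_{0}(\eta^{\kappa},\nabla\eta^{\kappa})$ in $L^{2}$ (and extract the time modulus) from ``the uniform $W^{2,2}\cap C^{0,\alpha}$ bound on $\eta^{\kappa}$ coming from \eqref{weakconvhlev2}''. A $\kappa$- and time-uniform $L^{\infty}(W^{2,2})$ bound is precisely what may fail when $\overline{\gamma}(\eta)$ degenerates, since the non-linear Koiter energy is not $W^{2,2}$-coercive; and if such a bound were available, \eqref{coercivityrel} would be an immediate consequence of the upper bound on $\overline{\gamma}$ (recall $\eta^{\kappa}$ takes values in the bounded interval $(a_{\partial\Omega},b_{\partial\Omega})$), so your displayed inequality would add nothing --- the unresolved exponent ``$?$'' in it is a symptom of this. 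The correct source of these bounds is the \emph{membrane} part of the energy: by \eqref{Geta} the tensor $\mathbb{G}(\eta)$ has $\partial_{i}\eta\,\partial_{j}\eta$ as its leading contribution, so positivity of $\mathcal{A}$ together with \eqref{disstypesthlevfhlvl2} and the confinement of $\eta^{\kappa}$ gives a uniform bound on $\|\nabla\eta^{\kappa}(t)\|_{L^{4}(\Gamma)}$, hence on $\|\eta^{\kappa}(t)\|_{C^{0}(\Gamma)}$ via $W^{1,4}(\Gamma)\hookrightarrow C^{0}(\Gamma)$. Since $P_{0}$ is at most quadratic in $\nabla\eta$ with coefficients depending on $\varphi$ and $\eta$, this yields $\|P_{0}(\eta^{\kappa},\nabla\eta^{\kappa})(t)\|_{L^{2}}\leqslant C$ uniformly, and then $\|\overline{\gamma}(\eta^{\kappa})\nabla^{2}\eta^{\kappa}(t)\|_{L^{2}}\leqslant\|\mathbb{R}(\eta^{\kappa})(t)\|_{L^{2}}+\|P_{0}(t)\|_{L^{2}}\leqslant C_{0}$ follows from the bending part of \eqref{disstypesthlevfhlvl2} alone; Fatou together with \eqref{strngconvetadel} and uniform convergence of $\overline{\gamma}(\eta^{\kappa})$ then gives \eqref{coercivityrel} for the limit. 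Likewise the time modulus used to define $T_{*}$ should be derived from $\eta^{\kappa}\in L^{\infty}(0,T;W^{1,4}(\Gamma))\cap W^{1,\infty}(0,T;L^{2}(\Gamma))$ (both uniform by \eqref{disstypesthlevfhlvl2}), which yields $C^{0,\alpha}([0,T];C^{0}(\Gamma))$ for some $\alpha>0$ --- any positive exponent suffices --- rather than from the $W^{2,2}$ bound. With these replacements your argument closes and coincides with the cited one.
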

\subsubsection{Maximal interval of existence}\label{Maxtimeex}
In Section \ref{minintex}, we have shown that for a positive minimal time degeneracy of the solution can be excluded. We can start by solving the problem in $[0,T_{\min}]$. Next considering  $(\eta,u_{i},\theta_{i})(T_{\min})$ as new initial conditions we repeat the existence proof in the interval $(T_{\min},2T_{\min}).$ We can iterate the procedure unitl a degeneracy occurs. That way we obtain a maximal time $T_F\in(0,\infty]$ such that $(\eta,u_{i},\theta_{i})$ is a weak solution on the interval $(0,T)$ for any $T<T_F$. If $T_F$ is finite than either the $W^{2,2}$--coercivity of the Koiter energy is violated or $\lim_{s\to T_F}\eta(s,y)\in\{a_{\partial\Omega},b_{\partial\Omega}\}$. The extension procedure is standard nowadays and we refer to \cite[Theorem 3.5]{LeRu14} and \cite[Theorem 1.1]{MuhaSch} for details.
\subsubsection{The superconductive case, $\lambda=\infty$.}
The construction of a solution for the case $\lambda=\infty$ is analogous to the case $\lambda\in [0,\infty)$. The difference is that in this case the temperature is a global object that is continuous over the interface. This means that already the minimisation in \eqref{min2}
(without the $\lambda$-term) has to be taken simply over all $\theta\in C^\infty(\Omega)$. By that minimization an approximative object is constructed, with which one may pass to the limit in exactly the same fashion as in the other cases.

\section{Appendix}
The following lemma allows to compare between certain mean-values and is in particular used during the calculation performed in \eqref{estthrdtrmI}:
\begin{lem}
\cite[Lemma 5.4.]{malSpSch} Let $q:\Omega^{i,h}\times[0,T]\rightarrow \mathbb{R}^{2}$ is integrable and define
$$\widetilde{q}(y,t):=\frac{1}{h}\int^{t}_{t-h}q(\Phi^{i,h}_{s}(t)(y),t+s)ds,$$
(where the notation $\Phi^{i,h}_{s}(t)$ was introduced in \eqref{redefflmap}) then for all $p\in [1,2),$ $p_{1}\in[1,\infty]$ and all $h>0,$ we find that if $\nabla q\in L^{p_{1}}(0,T;L^{p}(\Omega^{i,h})),$ then 
\begin{equation}\label{estdiffmean}
	\begin{array}{ll}
		&\displaystyle \bigg\|\frac{1}{h}\int^{t}_{t-h} q(y,t+s)ds-\widetilde{q}(t)\bigg\|_{L^{\infty}(0,T;L^{p}(\Omega^{i,h}))}\leqslant Ch^{\frac{p_{1}-1}{p_{1}}}\|\nabla q\|_{L^{p_{1}}(0,T;L^{\frac{2p}{2-p}}(\Omega^{i,h}))},
	\end{array}
\end{equation}
where $C$ depends on the energy estimates only. Further we find for $q\in L^{\frac{2p}{p-2}}(\Omega^{i,h})$ that
\begin{equation}\label{nxtestdiffmean}
	\begin{array}{ll}
		&\displaystyle \bigg\|q-q\circ\Phi^{i,h}_{-h}(t)\bigg\|_{L^{p}(\Omega^{i,h}(t))}\leqslant  Ch\|\nabla q\|_{L^{\frac{2p}{2-p}}(\Omega^{i,h})}\,\,\mbox{and}\,\, \bigg\|q-q\circ\Phi^{i,h}_{-h}(t)\bigg\|_{L^{2}(\Omega^{i,h}(t))}\leqslant Ch\mbox{Lip}(q),
	\end{array}
\end{equation}
where $\mbox{Lip}(q)$ denotes the Lipschitz constant for the function $q.$
\end{lem}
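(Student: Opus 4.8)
The whole argument rests on three facts about the $h$-layer. First, $\Phi^{i,h}_s(t)$ is a genuine (piecewise in time) flow of the fluid velocity: differentiating \eqref{redefflmap} and using \eqref{flowmap} one checks that $\partial_s\Phi^{i,h}_s(t)(y)=u^h_i(\Phi^{i,h}_s(t)(y),t+s)$ with $\Phi^{i,h}_0(t)=\mathrm{Id}$. Second, the energy identities \eqref{enbalancehlvl}--\eqref{disstypesthlev} give a bound for $u^h_i$ in $L^\infty(0,T;L^2(\Omega^{i,h}))\cap L^2(0,T;W^{k_0,2}(\Omega^{i,h}))$ that is \emph{uniform in $h$}; in particular $W^{k_0,2}\hookrightarrow W^{1,\infty}$ (as $k_0>3$) makes the flow well defined. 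Third, $\underline c\le\det\nabla\Phi^{i,h}_s(t)\le\overline c$ uniformly in $h,s,t$ by \eqref{bnddet} (combine \eqref{bnddetnablaPhi}--\eqref{bnddetnablaPsi}). All functions are extended by zero outside the relevant moving domain as usual. The constant $C$ below depends only on $\|u^h_i\|_{L^\infty_tL^2_x}$, $\underline c$, $\overline c$ --- the ``energy estimates''.

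\textbf{Step 1: displacement estimate.} First I would record that, for $-h\le s\le 0$, integrating the flow ODE and changing variables $z=\Phi^{i,h}_\sigma(t)(y)$,
\begin{equation*}
\big\|\Phi^{i,h}_s(t)(\cdot)-\cdot\big\|_{L^2(\Omega^{i,h})}\le\int_{s}^{0}\big\|u^h_i(\Phi^{i,h}_\sigma(t)(\cdot),t+\sigma)\big\|_{L^2}\,d\sigma\le\underline c^{-1/2}\!\int_{t-h}^{t}\|u^h_i(\sigma)\|_{L^2(\Omega^{i,h})}\,d\sigma\le C\,h.
\end{equation*}
It is exactly the $L^\infty_t L^2_x$ bound (rather than only $L^2$ in time) that produces the factor $h$ here instead of $h^{1/2}$.

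\textbf{Step 2: the two inequalities \eqref{nxtestdiffmean}.} Writing $q(y)-q(\Phi^{i,h}_{-h}(t)(y))=-\int_{-h}^0\nabla q(\Phi^{i,h}_\sigma(t)(y))\cdot u^h_i(\Phi^{i,h}_\sigma(t)(y),t+\sigma)\,d\sigma$, taking the $L^p_y$ norm inside the $\sigma$-integral, changing variables $z=\Phi^{i,h}_\sigma(t)(y)$, and using H\"older with the pair $\big(\tfrac{2p}{2-p},2\big)$ (note $\tfrac{2-p}{2p}+\tfrac12=\tfrac1p$), one gets
\begin{equation*}
\big\|q-q\circ\Phi^{i,h}_{-h}(t)\big\|_{L^p(\Omega^{i,h})}\le C\|\nabla q\|_{L^{2p/(2-p)}(\Omega^{i,h})}\int_{t-h}^{t}\|u^h_i(\sigma)\|_{L^2}\,d\sigma\le C\,h\,\|\nabla q\|_{L^{2p/(2-p)}(\Omega^{i,h})},
\end{equation*}
the last step by Step 1's time integral. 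The $\mathrm{Lip}(q)$ variant is the same computation replaced by $|q(y)-q(\Phi^{i,h}_{-h}(t)(y))|\le\mathrm{Lip}(q)|y-\Phi^{i,h}_{-h}(t)(y)|$ followed directly by Step 1.

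\textbf{Step 3: the averaged inequality \eqref{estdiffmean}.} After the substitution $s=\sigma'-t\in[-h,0]$ one has $\widetilde q(y,t)-\tfrac1h\int_{t-h}^t q(y,\sigma')\,d\sigma'=\tfrac1h\int_{-h}^0\big(q(\Phi^{i,h}_s(t)(y),t+s)-q(y,t+s)\big)\,ds$; freezing the time slot $t+s$ in the first argument and moving along the trajectory,
\begin{equation*}
q(\Phi^{i,h}_s(t)(y),t+s)-q(y,t+s)=-\int_s^0\nabla q\big(\Phi^{i,h}_\sigma(t)(y),t+s\big)\cdot u^h_i\big(\Phi^{i,h}_\sigma(t)(y),t+\sigma\big)\,d\sigma.
\end{equation*}
Taking $L^p_y$, changing variables and applying H\"older as in Step 2 yields a double integral bounded by $\tfrac Ch\int_{-h}^0\!\int_s^0\|\nabla q(t+s)\|_{L^{2p/(2-p)}}\|u^h_i(t+\sigma)\|_{L^2}\,d\sigma\,ds$; the inner $\sigma$-integral is $\le Ch$ by Step 1, cancelling the $\tfrac1h$, and H\"older in $s$ over a window of length $h$ with exponents $(p_1,p_1')$ leaves $h^{(p_1-1)/p_1}\|\nabla q\|_{L^{p_1}((t-h,t);L^{2p/(2-p)})}$. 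Passing to the supremum over $t\in[0,T]$ gives \eqref{estdiffmean}.

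\textbf{Main obstacle.} Everything is a bounded change of variables, the fundamental theorem of calculus along the trajectories, and H\"older; the one load-bearing point --- the one I would flag --- is the uniform-in-$h$ displacement bound of Step 1, $\|\Phi^{i,h}_s(t)-\mathrm{Id}\|_{L^2}\le C|s|$. Without it one only gets $h^{1/2}$ from $\int_{t-h}^t\|u^h_i\|_{L^2}\le h^{1/2}\|u^h_i\|_{L^2_tL^2_x}$, which is not enough to absorb the prefactors $\tfrac1h$ appearing in \eqref{estdiffmean}--\eqref{nxtestdiffmean} (and hence in the use \eqref{estthrdtrmI}). The estimate is available only because the $h$-level energy identity \eqref{enbalancehlvl}/\eqref{disstypesthlev} delivers a genuinely $L^\infty(0,T;L^2)$ velocity bound, together with the two-sided Jacobian control \eqref{bnddet} needed for the change of variables; the forward-flow version (as used at $\Phi^{i,h}_h(s-h)$) is entirely symmetric.
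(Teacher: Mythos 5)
Your argument is correct and is essentially the standard proof of this lemma (the paper itself only cites \cite[Lemma 5.4]{malSpSch} without reproving it): the fundamental theorem of calculus along the trajectories of $\Phi^{i,h}_s(t)$, the two-sided Jacobian bound \eqref{bnddet} to change variables, H\"older with the pairing $\tfrac{2-p}{2p}+\tfrac12=\tfrac1p$, and the uniform $L^\infty(0,T;L^2)$ velocity bound from the energy identity to turn $\int_{t-h}^t\|u^h_i\|_{L^2}$ into $Ch$. You also correctly identify (and implicitly repair) the typo $L^{2p/(p-2)}$ in the statement, and your flagged ``load-bearing point'' --- that $L^2_tL^2_x$ alone would only yield $h^{1/2}$ --- is exactly the right observation.
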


\section*{Acknowledgments}
The author S.Mitra is funded by the grant ANRF/ECRG/2024/002168/PMS under the`Prime Minister Early Career Research Grant Scheme, ANRF, Govt. of India'. S. Mitra also acknowledges the support from YFRSG scheme (IITI/YFRSG-Dream Lab/2025-26/Phase-IV/09) provided by IIT Indore to work with Assoc. Prof. Dr. S. Schwarzacher in Charles University, Prague in July, 2025. 


\end{document}